\newtheorem{theorem}{Theorem}[section]
\newtheorem{lemma}[theorem]{Lemma}
\theoremstyle{definition}
\newtheorem{definition}[theorem]{Definition}
\newtheorem{example}[theorem]{Example}
\newtheorem{proposition}[theorem]{Proposition}
\newtheorem{corollary}[theorem]{Corollary}
\newtheorem{conj}[theorem]{Conjecture}
\theoremstyle{remark}
\newtheorem{remark}[theorem]{Remark}
\numberwithin{equation}{section}
\newcommand{\bP}{{\mathbb P}}
\newcommand{\bD}{{\mathbf D}}
\newcommand{\cX}{{\mathcal X}}
\newcommand{\sJ}{{\bf e}}
  \newcommand{\fE}{{\mathfrak E}}
  \newcommand{\fN}{{\mathfrak N}}
\newcommand{\gmax}{G_{W}}
    \newcommand{\sE}{{\mathscr E}}
    \newcommand{\fG}{{\mathfrak G}}
    \newcommand{\fL}{{\mathfrak L}}
      \newcommand{\cN}{{\mathcal N}}
    \newcommand{\cH}{{\mathcal H }}
    \newcommand{\cA}{{\mathcal A}}
        \newcommand{\cD}{{\mathcal D}}
  \newcommand{\C}{{\mathbb C}}
    \newcommand{\CC}{{\mathbb C}}
  \newcommand{\R}{{\mathbb R}}
  \newcommand{\Z}{{\mathbb Z}}
\newcommand{\bt}{\mathbf{t}}
\newcommand{\bb}{\mathbb}
\newcommand{\ca}{\mathcal}
\newcommand{\gr}{\mathrm{Gr}}
\newcommand{\ovE}{\overline{E}}
\newcommand{\ovF}{\overline{F}}
\newcommand{\ga}{\gamma}
\newcommand{\one}{\mathbf{1}}
\newcommand{\LL}{\llangle[\big]}
\newcommand{\RR}{\rrangle[\big]}
\newcommand{\LD}{\Big\langle}
\newcommand{\RD}{\Big\rangle}
\newsavebox{\@brx}
\newcommand{\llangle}[1][]{\savebox{\@brx}{\(\m@th{#1\langle}\)}%
  \mathopen{\copy\@brx\kern-0.5\wd\@brx\usebox{\@brx}}}
\newcommand{\rrangle}[1][]{\savebox{\@brx}{\(\m@th{#1\rangle}\)}%
  \mathclose{\copy\@brx\kern-0.5\wd\@brx\usebox{\@brx}}}
  \newcommand{\<}{\langle}
  \renewcommand{\>}{\rangle}
\def\MT_leftarrow_fill:{%
  \arrowfill@\leftarrow\relbar\relbar}
\def\MT_rightarrow_fill:{%
  \arrowfill@\relbar\relbar\rightarrow}
\newcommand{\xrightleftarrows}[2][]{\mathrel{%
  \raise.55ex\hbox{%
    $\ext@arrow 0359\MT_rightarrow_fill:{\phantom{#1}}{#2}$}%
  \setbox0=\hbox{%
    $\ext@arrow 3095\MT_leftarrow_fill:{#1}{\phantom{#2}}$}%
  \kern-\wd0 \lower.55ex\box0}}
\newenvironment{Comment}[2]{\noindent \color{#1}{\texttt #2: }}{\par \noindent}
\newcommand{\Ming}[1]{\begin{Comment}{red}{Ming}#1 \end{Comment}}
\begin{document}
\title[Quantum spectrum and Gamma structures in FJRW theory]
{Quantum spectrum and Gamma structures for 
quasi-homogeneous polynomials 
of general type}


\author{Yefeng Shen}
\author{Ming Zhang}

\maketitle

\begin{abstract} 
Let $W$ be a quasi-homogeneous polynomial of general type and $\<J\>$ be the cyclic symmetry group of $W$ generated by the exponential grading element $J$. We study the quantum spectrum and asymptotic behavior in Fan-Jarvis-Ruan-Witten theory of the Landau-Ginzburg pair $(W, \<J\>)$.

Inspired by Galkin-Golyshev-Iritani's Gamma conjectures for quantum cohomology of Fano manifolds, we propose Gamma conjectures for Fan-Jarvis-Ruan-Witten theory of general type. We prove the quantum spectrum conjecture and the Gamma conjectures for Fermat homogeneous polynomials and the mirror simple singularities.

The Gamma structures in Fan-Jarvis-Ruan-Witten theory also provide a bridge from the category of matrix factorizations of the Landau-Ginzburg pair (the algebraic aspect) to its analytic aspect.
We will explain the relationship among the Gamma structures, Orlov's semiorthogonal decompositions, and the Stokes phenomenon.
\end{abstract}

{\hypersetup{linkcolor=black}\setcounter{tocdepth}{2} \tableofcontents}

\section{Introduction}

\subsection{FJRW theory for admissible Landau-Ginzburg pairs}
\subsubsection{A Landau-Ginzburg pair}
Let $W: \C^N\to \C$ be a quasi-homogeneous polynomial of degree $d\in\mathbb{Z}_{\geq2}$. 
The weight of the variables, denoted by ${\rm wt}(x_i)=w_i$, are positive integers such that for all $\lambda\in \C$, 
$$
\lambda^d W(x_1, \ldots, x_N)=W(\lambda^{w_1}x_1, \ldots, \lambda^{w_N}x_N).$$ 
Throughout this paper, we assume that the polynomial $W$ is {\em nondegenerate} and
\begin{equation}
\label{gcd-condition}
\gcd(w_1, \ldots, w_N)=1.
\end{equation}
 The nondegenerate condition means that $W$ has an isolated critical point only at the origin and the choices of ${w_i\over d}$ are unique. 
We sometimes abuse the language and call a nondegenerate quasi-homogeneous polynomial $W$ the {\em singularity} $W$.

The singularity $W$ is invariant under the action of the {\em maximal diagonal symmetry group of $W$} defined by
\begin{equation}
\label{maximal-group}
G_W:=\{g\in (\C^*)^N\mid W(g\cdot (x_1, \ldots, x_N))=W(x_1, \ldots, x_N)\}.
\end{equation}
Here the action $g\cdot (x_1, \ldots, x_N)$ is given by the coordinate-wise multiplication.
In particular, the group $G_W$ contains the {\em exponential grading element}
$$J=\left(e^{2\pi\sqrt{-1}w_1/d}, \ldots, e^{2\pi\sqrt{-1}w_N/d}\right).$$
By condition \eqref{gcd-condition}, the subgroup $\<J\>\subset G_W$ is a cyclic group of order $d$.

In general, we may consider a subgroup $G$ of $G_W$ that contains the element $J$. Such a pair $(W, G)$ is called an {\em admissible} Landau-Ginzburg (LG) pair.
\begin{definition}
\label{index-lg-pair}
For an admissible LG pair $(W, \<J\>)$, we define its  {\em weight system} by 
the $(N+1)$-tuple
\begin{equation}
\label{weight-system}
(d; w_1, \ldots, w_N)\in (\Z_{>0})^{N+1}.
\end{equation}
We define the {\em index} of the LG pair $(W, \<J\>)$ by the integer
\begin{equation}
\label{gorenstein-parameter}
\nu:=d-\sum_{i=1}^Nw_i\in \mathbb{Z}
\end{equation}
We say the pair $(W, \<J\>)$ is {\em Fano/Calabi-Yau/general type} if $\nu$ is negative/zero/positive.
\end{definition}

\begin{remark}
Usually, the integer $-\nu$ is called the Gorenstein parameter of the LG model \cite{Orl, BFK2}. 
We choose to use $\nu$ in this paper simply because we focus on the LG pair of general type and a positive parameter is convenient to deal with in many situations.
\end{remark}

\subsubsection{Fan-Jarvis-Ruan-Witten theory} 
Based on a proposal of Witten~\cite{Wit}, for any admissible LG pair $(W, G)$,
Fan, Jarvis, and Ruan constructed an intersection theory on the moduli space of $W$-spin structures of $G$-orbifold curves in a series of works~\cite{FJR-book, FJR}. 
This theory is called Fan-Jarvis-Ruan-Witten (FJRW) theory nowadays.
It is analogous to Gromov-Witten (GW) theory and generalizes the theory of $r$-spin curves, where $W=x^r$ and $G=\<J\>$.

Briefly speaking, the main ingredient of FJRW theory consists of a state space and a {\em cohomological field theory} (CohFT).
The state space, denoted by $\cH_{W,G}$, is the $G$-invariant subspace of the middle-dimensional relative cohomology for $W$ with a nondegenerate pairing $\< \cdot, \cdot\>$; see~\cite[Section 3.2]{FJR}.
The CohFT is a set of multilinear maps $\{\Lambda_{g,k}^{W,G}\}$~\eqref{cohft-notation} mapping from $\cH_{W,G}^{\otimes k}$ to the cohomology of the moduli spaces of stable curves of genus $g$ with $k$ markings. 
These maps satisfy a collection of axioms listed in~\cite[Theorem 4.2.2]{FJR}.
The CohFT is used to define the genus-$g$, $k$-pointed {\em FJRW invariants} 
$$\LD\gamma_1\psi_1^{\ell_1},\ldots,\gamma_k\psi_k^{\ell_k}\RD_{g,k}^{W,G};$$
see formula~\eqref{fjrw-inv}. Here $\{\gamma_i\in \cH_{W, G}\}$ are called {\em insertions}, $\psi_i$'s are the psi classes on the moduli space of stable curves, and $\ell_i\in\Z_{\geq 0}$. 
We will review some properties of the FJRW theory in Section~\ref{sec:fjrw}.

\begin{remark}
There are two other versions of CohFTs for the LG pair $(W, G)$. 
One version was constructed by Polishchuk and Vaintrob using matrix factorizations~\cite{PV-cohft}. 
Another version using cosection localized Gysin maps was initiated by Chang, Li, and Li in~\cite{CLL} and completed by Kiem and Li~\cite{KL}.
There is a folklore conjecture that all these three CohFTs are equivalent. 
Such an equivalence is only partially verified in~\cite{CLL}.
\end{remark}

\subsubsection{A quantum product}
We only consider genus-zero invariants in this paper.
There is an associative product $\star$ on $\cH_{W, G}$, determined by the nondegenerate pairing and the genus-zero three-pointed FJRW invariants via
$$\<\gamma_1\star\gamma_2, \gamma_3\>=\LD\gamma_1, \gamma_2, \gamma_3\RD_{0,3}^{W,G}.$$
The genus-zero $k$-pointed FJRW invariants with $k\geq 3$ produce a {\em quantum product} $\star_\gamma$ (see~\eqref{quantum-product}), which can be viewed as a deformation of the product $\star$ along the element $\gamma\in\cH_{W, G}$.

\subsection{Quantum spectrum for FJRW theory of general type}

In this paper, we will focus on the enumerative geometry of the LG pair $(W, \<J\>)$.

\subsubsection{The element $\tau(t)$}
Let $(d; w_1, \ldots, w_N)$ be the weight system of an admissible LG pair $(W, \<J\>)$. 
We define its set of {\em narrow indices} by 
\begin{equation}
\label{narrow-index}
{\bf Nar}=\left\{m\in \mathbb{Z}_{>0} \mid 0< m<d, \text{ and } d\nmid w_j\cdot m, \  \forall \ 1\leq j\leq N\right\}.
\end{equation}
As a vector space, $\cH_{W, \<J\>}$ admits a {\em narrow-broad decomposition} $\cH_{W, \<J\>}=\cH_{\rm nar}\oplus \cH_{\rm bro}$ 
where
\begin{equation*}
\cH_{\rm nar}:=\bigoplus_{m\in {\bf Nar}}\cH_{J^m}
\end{equation*}
is the narrow subspace and  $\cH_{\rm bro}$ is the broad subspace.
For each $m\in {\bf Nar}$, the subspace $\cH_{J^m}$ is one-dimensional and spanned by a {\em standard generator} $\sJ_m$ defined in~\eqref{standard-generator}.

Let $\{r\}$ be the fractional part of the number $r\in \mathbb{R}$ and $\Gamma(x)$ be the {\em gamma function}.
Following~\cite{Aco, CIR, Gue, RR}, we introduce a {\em small $I$-function} of the FJRW theory  for the LG pair $(W, \<J\>)$ by
\begin{equation}
\label{small-i-function}
I_{\rm FJRW}^{\rm sm}(t,z)
=z\sum_{m\in {\bf Nar}}
\sum_{\ell=0}^{\infty}
\frac
{t^{d\ell+m}}
{z^{d\ell+m-1}\Gamma(d\ell+m)}
\prod_{j=1}^{N}
{z^{\lfloor {w_j\over d}(d\ell+m)\rfloor}
\Gamma({w_j\over d}(d\ell+m))\over 
\Gamma\left(\left\{{w_j\over d}\cdot m\right\}\right)}\sJ_m.
\end{equation}
By definition, we have $I_{\rm FJRW}^{\rm sm}(t,z)\in \cH_{\rm nar}[\![t]\!][z, z^{-1}]\!].$
Let $t\tau(t)$ be the coefficient of $z^0$ of $I_{\rm FJRW}^{\rm sm}(t,z)$, namely,
\begin{equation}
\label{def-tau}
\tau(t)={1\over t}\left[I_{\rm FJRW}^{\rm sm}(t,z)\right]_{z^0}\in \cH_{\rm nar}[\![t]\!].
\end{equation}

\subsubsection{Quantum spectrum conjecture for $(W, \<J\>)$ of general type}

Now we focus on LG pair $(W, \<J\>)$ of general type.
We have $\tau(t)\in  \cH_{\rm nar}[t].$
Let $\tau'(t)={d\over dt}\tau(t).$ 
We set $\tau:=\tau(1)$ and $\tau':=\tau'(1)$.
We will consider the quantum multiplication 
\begin{equation}
\label{quantum-J2}
{\nu\over d}\tau'\star_{\tau}={\nu\over d}t\tau'(t)\star_{\tau(t)}\Big\vert_{t=1}.
\end{equation}
We denote the set of eigenvalues of the quantum multiplication~\eqref{quantum-J2} 
by
\begin{equation}
\label{def-evalue}
\fE:=\left\{\lambda\in \C\mid \lambda \text{ is an eigenvalue of } {\nu\over d}\tau'\star_{\tau}\in {\rm End}(\cH_{W, \<J\>})\right\}.
\end{equation}

Inspired by the {\em Conjecture $\mathcal{O}$} for {\em Fano} manifolds proposed by Galkin-Golyshev-Iritani in~\cite{GGI}, we propose a quantum spectrum conjecture for FJRW theory \emph{of general type}.
\begin{conj}
[{\em Quantum spectrum conjecture} for an admissible LG pair $(W, \<J\>)$ of general type]
\label{conjecture-C}
Let $W$ be a nondegenerate quasihomogeneous polynomial of general type, with weight system $(d; w_1, \ldots, w_N)$.
Then the set $\fE$ in~\eqref{def-evalue} contains a number 
\begin{equation}
\label{value-principal-spectrum}
T=\nu\left(d^{-d}\prod_{j=1}^Nw_j^{w_j}\right)^{1\over \nu}\in \mathbb{R}_{>0}
\end{equation}
 that satisfies the following properties:
\begin{enumerate}
\item For any $\lambda\in \fE$, the inequality $|\lambda|\leq T$
holds.
\item The following two sets are the same:
$$\{\lambda\in \fE\mid T= |\lambda|\}=\{e^{2\pi\sqrt{-1}j/\nu} \cdot T\mid j=0, \ldots, \nu-1\}.$$ 
\item The multiplicity of each $\lambda\in \fE$ with $|\lambda|=T$ is one.
\end{enumerate} 
\end{conj}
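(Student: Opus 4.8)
The statement as worded is \emph{Conjecture}~\ref{conjecture-C}, so strictly speaking it is not expected to be proved in full generality in this paper; as the abstract indicates, the concrete theorems establish it only for Fermat homogeneous polynomials and the mirror simple singularities. What I can offer here is the strategy by which one \emph{would} attack the general case, and which the paper presumably carries out in the two families of examples. The plan is to reduce the eigenvalue problem for ${\nu\over d}\tau'\star_\tau$ to a statement about a distinguished, explicitly computable operator, and then to apply a Perron--Frobenius-type positivity argument.

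First I would make the quantum product explicit. Using the defining relation $\<\gamma_1\star\gamma_2,\gamma_3\>=\LD\gamma_1,\gamma_2,\gamma_3\RD_{0,3}^{W,\<J\>}$ together with the quantum differential equation satisfied by the $I$-function, one identifies the operator ${\nu\over d}\tau'\star_\tau$ as (a specialization of) the quantum multiplication by the class playing the role of the anticanonical class in the Fano setting; in the hypergeometric presentation coming from $I_{\rm FJRW}^{\rm sm}$ this operator is the monodromy-type endomorphism governing the regularized quantum connection at $t=1$. Concretely, I would write $\cH_{\rm nar}$ in the basis $\{\sJ_m\}_{m\in{\bf Nar}}$, use the string/divisor-type relations for genus-zero invariants to reduce all structure constants to a single generating series, and thereby present ${\nu\over d}\tau'\star_\tau$ as a matrix whose entries are power series in the invariants, evaluated at $t=1$. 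The key structural input is that the $I$-function is a product of $\Gamma$-factors, so the associated ODE is of hypergeometric type; its companion matrix is exactly $T^{-1}\cdot\left({\nu\over d}\tau'\star_\tau\right)$ up to the change of frame, and $T$ is precisely the value making the leading coefficient monic.

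Next I would prove the three properties by spectral-radius methods. For (1) and (3), the idea is to exhibit, in a suitable real basis coming from the narrow sector (the $\sJ_m$ are, up to normalization, non-negative real vectors, and the structure constants of $\tau'\star$ built from FJRW invariants are non-negative in this basis — this non-negativity is the analogue of the Fano fact that anticanonical Gromov--Witten invariants are non-negative), that the matrix of ${1\over T}\cdot{\nu\over d}\tau'\star_\tau$ is (up to a cyclic permutation) a non-negative matrix, indeed essentially the adjacency/companion matrix of a single $\nu$-cycle. Perron--Frobenius then gives a simple eigenvalue equal to the spectral radius, forcing $T\in\fE$ with multiplicity one and $|\lambda|\le T$ for all $\lambda\in\fE$; this is property (1) and property (3) for the top eigenvalue. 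For (2), the cyclic (period-$\nu$) structure of the hypergeometric companion matrix — coming from the fact that shifting $\ell\mapsto\ell+1$ in \eqref{small-i-function} multiplies the argument of each $\Gamma$ by a fixed amount and hence rescales by a factor whose $\nu$-th power is $T^{\nu}$ — forces the peripheral spectrum to be exactly the orbit of $T$ under the cyclic group $\mu_\nu$, each with multiplicity one. The value of $T$ itself is then read off from Stirling's asymptotics of the $\Gamma$-factors in $I_{\rm FJRW}^{\rm sm}$: the exponential growth rate of the coefficients is $\nu\bigl(d^{-d}\prod_j w_j^{w_j}\bigr)^{1/\nu}$, matching \eqref{value-principal-spectrum}.

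The main obstacle is the non-negativity / irreducibility step: for Fermat polynomials and mirror simple singularities the FJRW invariants and the matrix of $\tau'\star_\tau$ can be computed in closed form (via the Concavity axiom, reconstruction from three-point invariants, and the Frobenius-algebra structure of the Milnor ring), so one can \emph{verify} that the relevant matrix is a positive scalar times an $\nu$-cycle permutation matrix and invoke Perron--Frobenius directly; for a general nondegenerate quasihomogeneous $W$ of general type one lacks such closed forms, and even the non-negativity of the structure constants in the narrow basis is not known in general. Thus the honest statement is that the proof proceeds case-by-case: establish the explicit form of ${\nu\over d}\tau'\star_\tau$ in each family, check positivity and the $\nu$-cyclic structure, and conclude. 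I expect the write-up to carry this out for the two families named in the abstract, with the general statement remaining conjectural.
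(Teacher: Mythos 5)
You are right that the statement is a conjecture: the paper proves it only for the mirror simple singularities and for Fermat polynomials $W=\sum_i x_i^d$ with $d-N>1$ (Theorem~\ref{main-thm-intro}), and offers no general argument. But the mechanism you propose for the verified cases is not the one the paper uses, and its key step is a genuine gap. The paper never invokes Perron--Frobenius or any positivity of structure constants. For the mirror ADE cases it simply computes the matrix of $\tau'\star_\tau$ from known FJRW invariants (concavity axiom, WDVV) and reads off the characteristic polynomial; note that those matrices are not non-negative cycle matrices (they have kernel coming from broad sectors, e.g.\ the $E_7$ and $D^T$ cases), and FJRW structure constants in general need not be non-negative, so the positivity/irreducibility hypothesis you would need for Perron--Frobenius is both unproved and doubtful. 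For the Fermat case the paper's route is algebraic: the mirror theorem (Proposition~\ref{mirror-theorem-fermat}) plus the hypergeometric ODE for $I^{\rm sm}_{\rm FJRW}$ yield, via repeated use of $\partial_t\,zS^{-1}=S^{-1}(\tau'\star)$, the single quantum relation $(\tau')^{q+1}=\bigl(\prod_j w_j^{w_j}\bigr)(t/d)^{d-\nu}(\tau')^{p}$ (Proposition~\ref{mirror-quantum-relation}); combined with the fact that $\{\one,\tau',\ldots,(\tau')^{d-2}\}$ is a basis of $\cH_{W,\<J\>}^{G_W}$ (Proposition~\ref{basis-fermat}, proved by an upper-triangularity argument using the homogeneity constraint and selection rule) and the vanishing of $\sJ_2\star$ on the broad sector when $\nu>1$ (Lemma~\ref{trivial-action-nu>1}), the spectrum is exactly $\{0^{p}\}\cup\{Te^{2\pi\sqrt{-1}j/\nu}\}$ with the nonzero eigenvalues simple, which is properties (1)--(3) at once.

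Your observation that the hypergeometric ODE encodes the operator and that $T$ is its leading scale is close in spirit to this: the relation $(\tau')^{q+1}=c\,(\tau')^{p}$ is precisely the statement that the operator satisfies $\lambda^{\nu}=c$ on the complement of its kernel, and $T$ is read off from the ODE coefficients rather than from Stirling asymptotics. But the multiplicity-one and maximality statements come from this explicit factorization together with the linear independence of powers of $\tau'$ --- a step you do not address, and which can fail even when the mirror theorem holds (the paper flags $E_7$, where $\{(\tau')^k\}$ is linearly dependent and the kernel has to be handled separately). So the missing ingredients in your plan are exactly the two that the paper supplies case by case: a proof that powers of $\tau'$ span (or a substitute controlling the kernel and broad sector), and a justification replacing the positivity hypothesis, which is not available in this setting.
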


\begin{remark}
\begin{enumerate}
\item
In Section~\ref{sec-variant}, a similar conjecture (Conjecture~\ref{quantum-spectrum-conj-invariant}) is proposed to study the quantum spectrum on a $G_W$-invariant subspace. 

\item 
For any admissible LG pairs $(W, G)$, we can study the quantum spectrum of \eqref{quantum-J2}. 
The multiplicity-one property in Conjecture ~\ref{conjecture-C} could fail if $G\neq \<J\>$.

\item 
The study of quantum spectrum plays an important role in Katzarkov-Kontsevich-Pantev-Yu's proposal to extract birational invariants from quantum cohomology~\cite{Kon19, Kon20, Kon21} and Iritani's decomposition theorem of the quantum cohomology $D$-module of blowups~\cite{Iri23}.
The quantum spectrum conjectures in this paper can be seen as an analogy in FJRW theory. 
This can be generalized to gauge linear sigma models.
\end{enumerate}
\end{remark}

\subsubsection{Main results on quantum spectrum conjecture~\ref{conjecture-C}}
We will verify Conjecture~\ref{conjecture-C} for the following cases. 
In Section~\ref{sec-mirror-simple}, direct calculations of the quantum product show
\begin{theorem}
Quantum spectrum conjecture~\ref{conjecture-C} holds for the following singularities: 
\begin{equation}
\label{mirror-ade-singularity}
\begin{dcases}
A_n: & W=x^{n+1}, n\geq 1;\\
D_n^{T}: & W=x^{n-1}y+y^2, n\geq 4;\\
E_6: & W= x^3+y^4;\\
E_7: & W=x^3y+y^3;\\
E_8: & W=x^3+y^5.
\end{dcases}
\end{equation}
\end{theorem}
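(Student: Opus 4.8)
The plan is to verify the three properties of Conjecture~\ref{conjecture-C} one family at a time, by computing the endomorphism $\tfrac{\nu}{d}\tau'\star_\tau$ explicitly and then reading off its spectrum. \emph{Step one (setup).} The weight systems are $(n+1;1)$ for $A_n$, $(2(n-1);1,n-1)$ for $D_n^T$, $(12;4,3)$ for $E_6$, $(9;2,3)$ for $E_7$, and $(15;5,3)$ for $E_8$, with indices $\nu=n,\ n-2,\ 5,\ 4,\ 7$ respectively, all positive; so all five are of general type, and the target number $T$ of~\eqref{value-principal-spectrum} is read off at once. One then writes down ${\bf Nar}$, the narrow subspace $\cH_{\rm nar}=\bigoplus_{m\in{\bf Nar}}\C\,\sJ_m$, and the narrow--broad decomposition of $\cH_{W,\langle J\rangle}$. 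In all five cases $\langle J\rangle=G_W$; for $A_n$, $E_6$, $E_8$ every broad sector has vanishing $G_W$-invariant part, so $\cH_{W,\langle J\rangle}=\cH_{\rm nar}$ has dimension equal to the Milnor number ($n$, $6$, $8$), while for $E_7$ and $D_n^T$ there is in addition a small broad piece coming from the untwisted sector, which is written out explicitly.

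\emph{Step two ($\tau$ and $\tau'$).} Extract $\tau(t)$ from the small $I$-function~\eqref{small-i-function} by isolating its $z^0$-coefficient. This amounts to enumerating the pairs $(m,\ell)$ with $m\in{\bf Nar}$, $\ell\ge 0$ and $\sum_{j=1}^N\lfloor\tfrac{w_j}{d}(d\ell+m)\rfloor=d\ell+m-2$; for an LG pair of general type this set is finite, and in each of the five cases one checks that it consists of a single pair $(m_0,0)$, with $m_0=2$ for $A_n,E_6,E_7,E_8$ and $m_0=3$ for $D_n^T$. Hence $\tau(t)=c\,t^{\,m_0-1}\sJ_{m_0}$ for an explicit constant $c$, so $\tau=\tau(1)$ and $\tau'=\tau'(1)$ are both proportional to the single standard generator $\sJ_{m_0}$; consequently $M:=\tfrac{\nu}{d}\tau'\star_\tau$ is a scalar multiple of the operator of $\star_\tau$-multiplication by $\sJ_{m_0}$, which is the FJRW analogue of the operator $c_1\star_0$ appearing in Conjecture~$\mathcal{O}$.

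\emph{Step three (the quantum product).} Make $\star_\tau$ explicit using the genus-zero FJRW data of these simple singularities, which is known in closed form: the two- and three-point invariants are computed directly on the moduli of $W$-spin structures, after which the genus-zero reconstruction theorem, together with the WDVV equations and the very restrictive degree and selection rules available for these small state spaces, pins down all structure constants; in the Fermat cases $A_n$, $E_6$, $E_8$ this recovers the classical Saito--Frobenius manifold of the singularity itself, and for $E_7$, $D_n^T$ the corresponding data is likewise available. Substituting the point $\tau=c\,\sJ_{m_0}$ into the structure constants produces the matrix of $M$ in the standard-generator basis; because $\sJ_{m_0}$ is homogeneous, this matrix has many vanishing entries, and one finds that $(\cH_{W,\langle J\rangle},\star_\tau)$ is generated by $\sJ_{m_0}$, so that the characteristic polynomial of $M$ is the (sparse) minimal polynomial of $\tfrac{\nu}{d}\tau'$.

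\emph{Step four (spectrum).} For $A_n$ one verifies that at $\tau=\sJ_2$ one has $\sJ_2^{\star n}=\tfrac{1}{n+1}\sJ_1$, whence $M^n=T^n\cdot{\rm id}$ and $\fE=\{e^{2\pi\sqrt{-1}j/n}T\mid 0\le j\le n-1\}$; since $\nu=n$ here, properties (1)--(3) hold exactly. For $D_n^T$, $E_6$, $E_7$, $E_8$ one computes the characteristic polynomial of $M$ and shows that it factors as $(\lambda^\nu-T^\nu)$ times a polynomial all of whose roots have modulus strictly less than $T$; then (1) is the modulus bound, (2) identifies the maximal-modulus part of $\fE$ with the $\nu$-th roots of $T^\nu$, and (3) is the simplicity of the roots of $\lambda^\nu-T^\nu$. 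I expect the main obstacle to be precisely this last factorization-with-estimate for the $D$ and $E$ families: one must compute a moderate-size characteristic polynomial and prove a sharp bound separating the inner roots from the extremal circle, and one must track the normalization of the FJRW structure constants and the exact location of $\tau$ carefully enough that the extremal radius comes out equal to the value $T$ in~\eqref{value-principal-spectrum} on the nose rather than merely proportional to it; grading $\cH_{W,\langle J\rangle}$ by age so as to block the matrix of $M$ should keep this tractable.
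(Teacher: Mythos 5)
Your approach matches the paper's: compute $\tau(t)$ from the small $I$-function, observe that $\tau$ and $\tau'$ are proportional to a single narrow generator $\sJ_{m_0}$, make the quantum product explicit using the known FJRW invariants of the mirror simple singularities, and read off the spectrum of $\tfrac{\nu}{d}\tau'\star_\tau$ case by case. The preliminary steps (Steps 1--2), the $A_n$ case in Step 4, and the identification $m_0=2$ (resp.\ $m_0=3$ for $D_n^T$) are all correct.

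There is, however, a genuine error in Step 3: the assertion that ``$(\cH_{W,\langle J\rangle},\star_\tau)$ is generated by $\sJ_{m_0}$, so that the characteristic polynomial of $M$ is the (sparse) minimal polynomial of $\tfrac{\nu}{d}\tau'$'' fails for exactly the two families that have a nontrivial broad piece, namely $E_7$ and $D_n^T$. For $D_{n+1}^T$ the state space is $(n+1)$-dimensional with basis $\{\sJ_1,\tau',\dots,(\tau')^{n-1},\sJ_0\}$ and one has $\tau'\star_\tau\sJ_0=0$, so $\sJ_0$ is not in the cyclic subalgebra generated by $\tau'$; the characteristic polynomial is $\lambda\cdot m_{\tau'}(\lambda)$, not $m_{\tau'}(\lambda)$. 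For $E_7$ the failure is more pronounced: $\dim\cH_{W,\langle J\rangle}=7$ while $\{\one,\tau',\dots,(\tau')^5\}$ is linearly \emph{dependent}, and the kernel of $\tau'\star_\tau$ is three-dimensional, spanned by $(\tau')^4-\tfrac{4}{2187}\sJ_1$, $9\sJ_4-\sJ_5$, and $\sJ_0$. So to finish the argument you must separately exhibit a complement to the cyclic part consisting of elements annihilated by $\tau'\star_\tau$ and verify that the annihilated part contributes only the eigenvalue $0$. (This also removes the difficulty you anticipate at the end of Step 4: there is no ``sharp bound separating inner roots from the extremal circle'' to prove, because in all five families the characteristic polynomial is simply $\lambda^k(\lambda^\nu-T^\nu)$ for an explicit $k\in\{0,1,2,3\}$; the non-maximal spectrum is exactly zero.)

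One further remark on precision: the relations you want to derive have the specific shape $(\tau')^{q+1}=c\,(\tau')^{p}$ with $q+1-p=\nu$ (not just ``the minimal polynomial is sparse''), and the constant $c$ must be matched exactly to the value $T^\nu$ from~\eqref{value-principal-spectrum}. For $E_6$ this is $(\tau')^6=\tfrac{4^4 3^3}{12^{7}}\tau'$, for $E_7$ $(\tau')^5=\tfrac{2^2 3^3}{9^{5}}\tau'$, and for $E_8$ $(\tau')^8=\tfrac{5^5 3^3}{15^{8}}\tau'$; these need the exact normalization of the $3$-, $4$-, and $5$-point FJRW invariants from the concavity axiom and WDVV, which is the real computational content you defer in Step 3.
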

These polynomials are mirror to simple singularities (or $ADE$ singularities), whose central charge satisfies $\widehat{c}_W<1$. 
We call them {\em mirror simple singularities} or {\em mirror ADE-singularities}. 
Note that an $A$-type singularity or an $E$-type singularity is mirror to itself.

In Section~\ref{sec-spectrum-fermat}, we will use mirror symmetry to prove
\begin{theorem}
\label{main-thm-intro}
If $d-N>1$, the quantum spectrum conjecture~\ref{conjecture-C} holds for the Fermat polynomial $W=\sum\limits_{i=1}^{N}x_i^d$.
\end{theorem}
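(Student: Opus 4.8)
The plan is to make the genus-zero FJRW theory of $(W,\<J\>)$, $W=\sum_{i=1}^N x_i^d$, completely explicit through the mirror theorem and then to read the quantum spectrum off a hypergeometric ordinary differential equation; the hypothesis $d-N>1$ is exactly what keeps this reduction elementary. Since all weights of a Fermat polynomial equal $1$, we have ${\bf Nar}=\{1,\dots,d-1\}$ with each $\cH_{J^m}=\C\sJ_m$ one-dimensional, and the summand of \eqref{small-i-function} indexed by $(m,\ell)$ carries the power $z^{2-m-\nu\ell}$ with $\nu=d-N$. When $\nu\geq 2$ the equation $2-m-\nu\ell=0$ has, among $m\in{\bf Nar}$ and $\ell\geq0$, only the solution $(m,\ell)=(2,0)$, so by \eqref{def-tau} the mirror map collapses to the straight line $\tau(t)=t\,\sJ_2$; hence $\tau=\tau'=\sJ_2$, and the operator in \eqref{quantum-J2} equals $\tfrac{\nu}{d}\,\sJ_2\star_{\sJ_2}$. (For $\nu=1$ the mirror map acquires genuine higher-order corrections and this shortcut breaks, which is why we assume $d-N>1$.) By the mirror theorem for FJRW theory of Fermat polynomials \cite{Aco, CIR, Gue, RR}, $I^{\rm sm}_{\rm FJRW}$ lies on Givental's Lagrangian cone, and since the mirror map is linear it is, up to the normalization $J=\tfrac1t\,I^{\rm sm}_{\rm FJRW}$, the $J$-function along $t\mapsto t\sJ_2$. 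A direct manipulation of the $\Gamma$-factors shows that each component $z^{2-m}t^m f_m(t^dz^{-\nu})\,\sJ_m$ of $I^{\rm sm}_{\rm FJRW}$, and hence the quantum connection restricted to this line, is annihilated by the scalar operator
\begin{equation}
\label{plan-hg}
\prod_{j=1}^{d}\bigl(t\partial_t-j\bigr)\;-\;\frac{t^d}{d^N z^{\nu}}\,(t\partial_t)^N ,
\end{equation}
whose leading term --- giving $t\partial_t$ and $z^{-1}$ equal weight --- factors as $(t\partial_t)^{N}\bigl[(t\partial_t)^{\nu}-t^d/(d^N z^\nu)\bigr]$; since $I^{\rm sm}_{\rm FJRW}$ is $\cH_{\rm nar}$-valued and $\dim\cH_{\rm nar}=d-1$, the narrow quantum connection is the order-$(d-1)$ quotient of \eqref{plan-hg} with leading part $(t\partial_t)^{N-1}\bigl[(t\partial_t)^{\nu}-t^d/(d^N z^\nu)\bigr]$, the removed order-one factor accounting for the one extra solution of \eqref{plan-hg}, which is valued in the broad sector $\cH_{\rm bro}$.

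Next I read off the spectrum. The quantum differential equation $z\partial_t S=(\sJ_2\star_{t\sJ_2})S$ forces, for a flat section with $z\to0$ asymptotics $e^{\phi(t)/z}(1+O(z))$, the substitution $t\partial_t\rightsquigarrow t\phi'(t)/z$, and the eigenvalues of $\sJ_2\star_{t\sJ_2}$ are exactly the values $\phi'(t)$ produced this way. Substituting into the narrow leading part and using $N+\nu=d$ gives $(t\phi')^{N-1}\bigl[(t\phi')^{\nu}-t^d/d^N\bigr]=0$, so at $t=1$ the operator $\sJ_2\star_{\sJ_2}$ has on $\cH_{\rm nar}$ the $\nu$ eigenvalues $d^{-N/\nu}\zeta$ ($\zeta^\nu=1$), together with the eigenvalue $0$ of multiplicity $N-1$. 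Since $\tfrac{\nu}{d}\,d^{-N/\nu}=\nu\,d^{-(N+\nu)/\nu}=\nu\,d^{-d/\nu}=T$, the eigenvalues of $\tfrac{\nu}{d}\sJ_2\star_{\sJ_2}$ of maximal modulus are exactly $\{\zeta\,T:\zeta^\nu=1\}$, which is condition~(2); and because the $\nu$ WKB exponentials $e^{\phi_\zeta(t)/z}$ have pairwise distinct leading coefficients and are distinct from the trivial branch, the Hukuhara--Turrittin decomposition at the irregular point $z=0$ shows each of these extremal eigenvalues is simple, which is condition~(3).

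Condition~(1), the bound $|\lambda|\leq T$ for \emph{every} eigenvalue of $\tfrac{\nu}{d}\sJ_2\star_{\sJ_2}$ on the full state space $\cH_{W,\<J\>}$, is where the real work lies. On $\cH_{\rm nar}$ it is a Perron--Frobenius statement: the coefficients of $I^{\rm sm}_{\rm FJRW}$ are positive products of values of $\Gamma$, so after a positive diagonal conjugation the matrix of $\sJ_2\star_{\sJ_2}$ in the basis $\{\sJ_m\}$ is nonnegative with ``cyclic'' top part whose Perron eigenvalue is $d^{-N/\nu}=\tfrac d\nu T$, bounding the modulus of each narrow eigenvalue by $T$. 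The main obstacle is the broad sector: one must show that under $\sJ_2\star_{\sJ_2}$ the broad part decouples from, or couples only triangularly with, the narrow part and contributes only eigenvalues of modulus $<T$. This should follow from the group-element and degree selection rules for FJRW invariants carrying $\sJ_2$ insertions --- notably the broad--broad block of the classical product $\sJ_2\star_0$ vanishes for $d\ge 3$, so on $\cH_{\rm bro}$ the operator is assembled purely from quantum corrections that first appear at high order in $t$ --- together with the vanishing of the broad one-point descendants of $\sJ_2$ implied by the mirror theorem. A further technical point is to determine the support of the matrix of $\sJ_2\star_{\sJ_2}$ precisely enough to justify the Perron--Frobenius step. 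Granting these, conditions (1)--(3) hold and Conjecture~\ref{conjecture-C} is verified for $W$.
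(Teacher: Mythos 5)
Your preliminaries are right: for the Fermat polynomial all weights are $1$, every sector $\cH_{J^m}$ with $1\le m\le d-1$ is narrow and one-dimensional, and when $\nu=d-N\ge 2$ the only $z^0$-term in \eqref{small-i-function} comes from $(m,\ell)=(2,0)$, so $\tau(t)=t\,\sJ_2$ and $\tau'\star_\tau=\sJ_2\star_{\sJ_2}$. You also correctly identify the hypergeometric ODE annihilating the $I$-function. But the remainder of the argument has two genuine gaps and one incorrect claim, and in each place the paper has a sharper, fully elementary tool that your plan replaces with something softer.

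First, your route to conditions (1)--(3) on the narrow sector --- Perron--Frobenius for the bound, Hukuhara--Turrittin/WKB for the multiplicities --- is not carried out, and neither step is as immediate as your sketch suggests. Perron--Frobenius needs the matrix of $\sJ_2\star_{\sJ_2}$ in the basis $\{\sJ_m\}$ to be (conjugate to) a nonnegative irreducible matrix, which is not established; and the WKB reading of the exponents requires one to equate formal exponentials at the irregular point with eigenvalues of $\tau'\star_\tau$ and to carry out a rank count. The paper sidesteps all of this. From the mirror ODE one reads off the quantum relation $(\tau')^{d-1}=d^{-N}(\tau')^{N-1}$ in the Frobenius algebra (Proposition~\ref{lem-quantum-relation}), and one then proves directly that $\{\one,\tau',\dots,(\tau')^{d-2}\}$ is a basis of $\cH_{\rm nar}$ (Propositions~\ref{basis-fermat-subspace}, \ref{basis-fermat}, using homogeneity \eqref{virtual-degree} and the selection rule \eqref{selection-rule}). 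With a cyclic basis and a monic relation of degree $d-1$, the matrix of $\tau'\star_\tau$ on $\cH_{\rm nar}$ is the companion matrix of $\lambda^{d-1}-d^{-N}\lambda^{N-1}=\lambda^{N-1}(\lambda^\nu-d^{-N})$. Conditions (1)--(3) on the narrow sector then drop out of the characteristic polynomial with no asymptotic analysis or positivity argument at all. This is the key structural lemma your plan is missing; the relation alone is not enough, one must know the powers of $\tau'$ span.

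Second, the broad sector. You flag it as "the main obstacle" and propose that it should couple only triangularly from selection rules and high-order quantum corrections, but you do not prove anything. This is precisely where the hypothesis $\nu>1$ is used in an essential, non-formal way. The paper's Lemma~\ref{trivial-action-nu>1} gives a short closed argument: any nonzero invariant $\LD\sJ_2,\phi,\phi_j,\sJ_2,\dots,\sJ_2\RD_{0,n+3}$ with $\phi,\phi_j\in\cH_{\rm bro}$ must, by $G_W$-invariance (Lemma~\ref{gmax-inv-inv}), have $\phi_j\in\cH_{\rm bro}$, and then homogeneity \eqref{virtual-degree} forces $\tfrac{n+1}{d}=\tfrac{1}{d-N}$, while the selection rule \eqref{selection-rule} forces $\tfrac{n+1}{d}\in\Z$; these are incompatible when $\nu=d-N>1$. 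Hence $\sJ_2\star_\tau$ \emph{annihilates} $\cH_{\rm bro}$ outright --- it does not merely couple triangularly or with small eigenvalues --- and the broad sector contributes only the eigenvalue $0$. Your sketch would not produce this vanishing without essentially redoing this computation.

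Finally, a factual error: you assert that the reduction from the order-$d$ operator to the order-$(d-1)$ narrow equation "removes an order-one factor ... valued in the broad sector $\cH_{\rm bro}$." That is not what happens. The small $I$-function $I^{\rm sm}_{\rm FJRW}$ is entirely $\cH_{\rm nar}$-valued, and the reduction in the proof of \eqref{i-function-ode} is a purely combinatorial cancellation of a common factor (the indices in $\widetilde\alpha$ that coincide with some $\rho_j$ modulo $\Z$), with no broad-sector solution being discarded. The broad sector never appears in the ODE; it must be handled by the separate vanishing lemma above.

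In summary: your ODE/WKB viewpoint on the narrow spectrum is a legitimate alternative perspective and gives the right answer, but as written it has unproven steps (the positivity of the quantum matrix, the identification of formal exponents with eigenvalues, the rank count), and the broad sector is an acknowledged but unfilled gap. The paper's argument replaces these with the exact quantum relation plus the cyclic-basis lemma on the narrow side, and the explicit selection-rule contradiction on the broad side, so that no asymptotic or Perron--Frobenius input is needed.
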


\subsection{Asymptotic expansion in FJRW theory}
Next we study certain asymptotic expansion in the FJRW theory of $(W, \<J\>)$ of general type, inspired by the work~\cite{GGI}. 
The key observation is that the coefficient functions of the small $I$-function $I_{\rm FJRW}^{\rm sm}(t,z)$ in~\eqref{small-i-function} can be transformed into certain {\em generalized hypergeometric functions}; see Proposition~\ref{i-function-via-ghf}.  
Then many results on asymptotic expansions of generalized hypergeometric functions studied in the literature since Barnes, Meijer, and many others~\cite{Bar, Mei, Luk, Fie, dlmf} can be applied here.

\subsubsection{A Dubrovin connection in FJRW theory}
Let $\{\phi_i\}$ be a homogeneous basis of $\cH_{W, \<J\>}$ and $\{\phi^i\}$ be a dual basis with respect to the nondegenerate pairing $\<\cdot, \cdot\>$. 
Let  $t_i$ be the coordinate of  $\phi_i$ and let $\bt:=\sum_i t_i\phi_i$. 
The FJRW invariants produce a Dubrovin-Frobenius manifold structure on $\cH_{W, \<J\>}$, with a {\em flat identity} $\one:=\sJ_1\in \cH_{J}$, a {\em Hodge grading operator}  $\gr\in {\rm End}(\cH_{W, \<J\>})$ defined in~\eqref{eq:modified-hodge-operator}, and an {\em Euler vector field} 
\begin{equation}
\label{euler-vector}
\sE(\bt)=\sum_{i}\Big(1-\deg_\C(\phi_i)\Big)\, t_i{\partial \over \partial t_i}.
\end{equation} 
Here the complex degree $\deg_\C\phi_i\in\mathbb{Q}$ is defined in~\eqref{complex-degree}.
We can identify the vector space $\cH_{W,  \<J\>}$ with its tangent space by sending $\phi_i$ to 
${\partial\over \partial t_i}$.
The quantum product induces a Dubrovin-type connection (or a quantum connection) $\nabla$ on $\cH_{W, \<J\>}\times\mathbb{C}^*$, given by
\begin{equation}
\label{quantum-connection}
\begin{dcases}
\nabla_{\partial\over \partial t_i}&=\frac{\partial }{\partial t_i}+\frac{1}{z}\phi_i\star_{\bt},\\
\nabla_{z{\partial\over \partial z}}&=z{\partial\over \partial z}-{1\over z} \sE(\bt)\star_{\bt}+\gr.
\end{dcases}
\end{equation}
By Proposition~\ref{fundamental-solution-infty}, the quantum connection~\eqref{quantum-connection} admits flat sections $\{S(\bt,z)z^{-\gr}\alpha\}$, where
$S(\bt,z)\in{\rm End}(\cH_{W, \<J\>})\otimes \C[\![\bt]\!][\![z^{-1}]\!]$
is an operator defined by 
$$S(\bt,z)\alpha=\alpha
  +\sum_{i=1}^{r}\sum_{n\geq1}\sum_{k\geq 0}\frac{\phi^i}{n!(-z)^{k+1}}
  \LD
  \alpha\psi_1^k,
  \bt,\dots,\bt,
  \phi_i
  \RD_{0,n+2}, \quad \forall\alpha\in \cH_{W, \<J\>}.
  $$

\subsubsection{Weak asymptotic classes}
We will focus on the asymptotic behavior of the flat sections $\{S(\bt,z)z^{-\gr}\alpha\}$. 
Let us consider the quantum connection by restricting to $\bt=\tau$.
According to Proposition~\ref{prop-euler-tau},
the restriction of $\nabla_{z{\partial\over \partial z}}$ to $\tau$ is 
given by 
\begin{equation}
\label{intro-meromorphic}
\nabla_{z{\partial\over \partial z}}=z{\partial\over \partial z}-{1\over z}\cdot {\nu\over d}\tau'\star_{\tau} +\gr.
\end{equation}
When $W$ is of general type, this is a meromorphic connection that has an {\em irregular} singularity at $z=0$ and a {\em regular} singularity at $z=\infty$. 
In the study of asymptotic analysis of irregular meromorphic connections, it is natural to consider the {\em smallest asymptotic expansion} along a certain ray when $z$ approaches to the irregular singularity. 

\begin{definition}
\label{def-weak-intro}
We say $\alpha\in  \cH_{\rm nar}\subset \cH_{W, \<J\>}$ is a {\em weak asymptotic class} with respect to  $\lambda\in \C^*$ if
 there exists $m\in \mathbb{R}$, such that when $\arg(z)=\arg(\lambda)\in [0, 2\pi)$, 
we have $$\left|e^{\lambda\over z}\cdot \<S(\tau, z)z^{-\gr}\alpha, \one\>\right|=O(|z|^{m})\quad\mathrm{as}\ |z|\to 0.$$ 
\end{definition}
Such asymptotic classes will play the central role in this paper.

\subsubsection{A mirror conjecture}
\label{sec-intro-wall}
The operator $S(\bt, z)$ is invertible. We define the {\em $J$-function}
$$J(\bt, z)=z S(\bt, z)^{-1}(\one).$$
Let $I_{\rm FJRW}^{\rm sm}(t, z)$ be the small $I$-function defined in~\eqref{small-i-function} and $\tau(t)$ be defined in~\eqref{def-tau}. 
These two functions are conjectured to be related by a {\em Givental-type mirror formula}. In particular, we have
\begin{conj}
\label{conjecture-i-function-formula}
If the singularity $W$ is of general type, then 
\begin{equation}
\label{I-J-relation}
I_{\rm FJRW}^{\rm sm}(t, -z)=t J(\tau(t), -z).
\end{equation}
\end{conj}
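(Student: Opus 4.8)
The plan is to prove the mirror formula by the standard Givental--Coates--Givental strategy: show that both $I_{\rm FJRW}^{\rm sm}(t,-z)$ and $tJ(\tau(t),-z)$ lie on the Lagrangian cone $\mathcal{L}_{W,\<J\>}$ of the genus-zero FJRW theory, and then identify them by comparing their behaviour under the dilaton/string flows. First I would recall that by general properties of Givental's formalism (reviewed in Section~\ref{sec:fjrw}), the $J$-function $J(\bt,z)$, and hence $tJ(\tau(t),-z)$, is by construction the restriction of the big $J$-function to the locus $\bt=\tau(t)\in\cH_{\rm nar}$; so the point $tJ(\tau(t),-z)$ lies on $\mathcal{L}_{W,\<J\>}$ for every $t$. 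The real content is therefore to show that the small $I$-function also lies on this cone. For this I would verify that $I_{\rm FJRW}^{\rm sm}(t,-z)$ satisfies the defining ODE/recursion characterizing the cone --- concretely, that it is annihilated by a Picard--Fuchs-type operator built from the quantum connection, or equivalently that its $z$-expansion produces the correct two- and three-point FJRW invariants in its leading terms. Using the narrow-broad decomposition and the fact that each $\cH_{J^m}$ is one-dimensional spanned by $\sJ_m$, the relevant FJRW invariants reduce to concatenation/string-equation computations with only narrow insertions, which is where the explicit $\Gamma$-factors in~\eqref{small-i-function} come from.

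The next step is the asymptotic/string-equation normalization. Having placed both sides on the cone, I would extract their $z^0$ and $z^{-1}$ coefficients. By the definition~\eqref{def-tau} of $\tau(t)$ as the $z^0$-part of $I_{\rm FJRW}^{\rm sm}$ divided by $t$, the $z$-polynomial-in-$1/z$ parts match by fiat after the substitution $\bt=\tau(t)$; the remaining task is to check that the full $z$-series agree. Here I would invoke the uniqueness statement: a family of points on $\mathcal{L}_{W,\<J\>}$ of the form $-z + \tau(t) + O(1/z)$ is \emph{uniquely} determined by its $z^{\geq 0}$-part (this is the standard ``a point on the cone is determined by its projection'' lemma in Givental's formalism). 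Since $tJ(\tau(t),-z) = -z + \tau(t) + O(1/z)$ has exactly the same polynomial part as $I_{\rm FJRW}^{\rm sm}(t,-z)$ by~\eqref{def-tau}, the two must coincide. The general-type hypothesis $\nu>0$ enters precisely to guarantee $\tau(t)\in\cH_{\rm nar}[t]$ is a polynomial (as noted in the excerpt), so that all manipulations take place over $\C[\![t]\!]$ without convergence issues, and the $I$-function has no $z^{>0}$ tail beyond the leading $-z$.

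I expect the main obstacle to be the first step: proving that $I_{\rm FJRW}^{\rm sm}(t,-z)$ actually lies on the FJRW Lagrangian cone. Unlike Gromov--Witten theory of toric targets, there is no ambient quasimap/abelian-nonabelian wall-crossing available in full generality for FJRW theory, so one cannot simply cite a known mirror theorem. The viable route is to translate, via Proposition~\ref{i-function-via-ghf}, the coefficient functions of $I_{\rm FJRW}^{\rm sm}$ into generalized hypergeometric functions and to recognize the resulting $D$-module as (a twist of) the one generated by the quantum connection~\eqref{quantum-connection} restricted to $\cH_{\rm nar}$; matching the two $D$-modules, together with the initial-condition check on three-point invariants, establishes the cone membership. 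This is essentially the content that would be carried out in Section~\ref{sec-spectrum-fermat} for Fermat polynomials and in Section~\ref{sec-mirror-simple} for the mirror ADE cases, where the quantum product is computed explicitly and the comparison with the hypergeometric $I$-function can be made term by term; the general conjecture would follow the same template once the hypergeometric identification is available in the needed generality.
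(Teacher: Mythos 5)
The statement you were asked to prove is Conjecture~\ref{conjecture-i-function-formula}, and the paper itself does \emph{not} prove it: it is presented as a consequence of the more general Conjecture~\ref{mirror-conjecture} (that $I_{\rm FJRW}^{\rm sm}(t,-z)$ lies on the Lagrangian cone), and Proposition~\ref{mirror-theorem-fermat} records the only cases where that cone membership has been established (Fermat and certain chain polynomials, via external mirror theorems). Your reduction step is correct and matches the paper's: granting cone membership, you use the general-type inequality $\nu>0$ to argue that $I_{\rm FJRW}^{\rm sm}(t,-z)$ has leading term $-zt\one$ and no other positive powers of $z$, and then invoke uniqueness of a point on the cone with prescribed nonnegative $z$-part. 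The paper's version of this same reduction starts from the normal form $tJ(\tau(t),-z) = I_{\rm FJRW}^{\rm sm}(t,-z) + c(t,z)\,z\,\partial_t I_{\rm FJRW}^{\rm sm}(t,-z)$ of Conjecture~\ref{mirror-conjecture} and uses the same degree count to force $c\equiv 0$; the two arguments are equivalent, and yours is slightly tidier in that it bypasses the parameter $c$.

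The proposed way to close the gap, however, does not work. You suggest establishing cone membership by showing the hypergeometric $D$-module attached to the $I$-function (Proposition~\ref{i-function-via-ghf}) matches the $D$-module generated by the quantum connection~\eqref{quantum-connection} restricted to $\cH_{\rm nar}$, then checking initial conditions. This is circular: the quantum connection is built from the quantum product $\star_{\bt}$, whose structure constants are exactly the genus-zero FJRW invariants that the mirror formula~\eqref{I-J-relation} is meant to compute. Without an independent determination of those invariants (which is exactly what is available in the Fermat case, e.g.\ via concavity, and what underlies Proposition~\ref{mirror-theorem-fermat}), you have no presentation of the quantum $D$-module to compare against, so ``matching the two $D$-modules'' has no content. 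Moreover, even if a single Picard--Fuchs operator annihilates both $I$ and $J$, one still needs the quantum $D$-module to be cyclic with the right generator for an initial-condition argument to pin down the solution, and establishing that again requires non-trivial information about the quantum product. You correctly identified cone membership as the real obstacle, but the route you sketch presupposes the conclusion; this is precisely why~\eqref{I-J-relation} remains an open conjecture in the paper outside the cases of Proposition~\ref{mirror-theorem-fermat}.
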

A general version of this conjecture, where $W$ is not necessarily of general type, has been proved for many examples including the Fermat polynomials~\cite{Aco, CIR, RR, Gue}. 
See Proposition \ref{mirror-theorem-fermat} for a more explicit description.

\subsubsection{Calculation of weak asymptotic classes via a Barnes' formula}
When the mirror formula~\eqref{I-J-relation} holds, we can calculate the weak asymptotic classes using the small $I$-function. 
It is more convenient to consider a modified $I$-function 
$$\widetilde{I}(t,z)=z^{{\widehat{c}_W\over 2}-1}z^{\gr} I_{\rm FJRW}^{\rm sm}(t,z).$$ 
After a change of variables $x=d^{-d}\prod\limits_{j=1}^{N}w_j^{w_j} z^{-\nu}$ and restricting to $t=1$, we will show in Proposition~\ref{i-function-via-ghf} that the coefficients of the modified $I$-function satisfies a generalized hypergeometric equation 
\begin{equation}
\label{ghe-equation-intro}
\Big(x{\partial \over \partial x}\prod_{j=1}^{q}(x{\partial \over \partial x}+\rho_j-1)-x\prod_{i=1}^{p}(x{\partial \over \partial x}+\alpha_i)\Big) \widetilde{I}(1,z(x)) =0,
\end{equation}
where $q+1=|{\bf Nar}|=p+\nu$, and $\rho_j, \alpha_i$'s are determined by the weight system completely. See Section~\ref{sec-basic-ghe} for the details.

For each $\ell \in \Z$, we consider the cohomology class 
\begin{equation}
\label{asymptotic-class-formula}
\cA_\ell:=\sum_{m\in {\bf Nar}} e^{-\ell\cdot m\, {2\pi\sqrt{-1}\over d}}\prod\limits_{j=1}^{N} {2\pi\over \Gamma\left(\left\{{w_j\cdot m\over d}\right\}\right)}\, \sJ_{m}\in \cH_{W, \<J\>}.
\end{equation}
Using an asymptotic expansion formula of generalized hypergeometric functions discovered by Barnes~\cite{Bar} in 1906, we obtain
\begin{proposition}
[Proposition~\ref{theorem-asymptotic-classes}]
\label{wall-to-asymptotic}
Let $(W, \<J\>)$ be an admissible LG pair of general type, with a weight system $(d; w_1, \ldots, w_N)$. If the mirror conjecture~\ref{conjecture-i-function-formula} holds, then for each integer $\ell=1-\nu, \ldots, 0$, the class $\cA_\ell$ in~\eqref{asymptotic-class-formula} is a weak asymptotic class with respect to
$Te^{2\pi\sqrt{-1}\ell/\nu}$, where $T$ is given in \eqref{value-principal-spectrum}.
\end{proposition}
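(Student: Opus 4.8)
The plan is to reduce the whole statement to the small $I$-function via the mirror conjecture, and then feed the resulting generalized hypergeometric function into Barnes' connection formula.

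\smallskip
\noindent\emph{Step 1: from the flat sections to the $I$-function.} First I would use the symplectic structure of Givental's formalism: the fundamental solution of~\eqref{quantum-connection} satisfies $\langle S(\bt,-z)a,S(\bt,z)b\rangle=\langle a,b\rangle$, so $S(\bt,z)^{-1}$ is the $\langle\cdot,\cdot\rangle$-adjoint of $S(\bt,-z)$; together with $J(\bt,z)=zS(\bt,z)^{-1}(\one)$ this gives $\langle S(\bt,-z)\alpha,\one\rangle=z^{-1}\langle J(\bt,z),\alpha\rangle$ for any $\alpha$. Specializing $\bt=\tau(t)$, invoking the mirror conjecture~\eqref{I-J-relation}, and then setting $t=1$ and $z\mapsto-z$, one obtains
\[
\langle S(\tau,z)\,z^{-\gr}\alpha,\ \one\rangle\;=\;-\tfrac1z\,\bigl\langle I_{\rm FJRW}^{\rm sm}(1,-z),\ z^{-\gr}\alpha\bigr\rangle .
\]
Rewriting $I_{\rm FJRW}^{\rm sm}$ through the modified $I$-function $\widetilde I(t,z)=z^{\widehat{c}_W/2-1}z^{\gr}I_{\rm FJRW}^{\rm sm}(t,z)$ and commuting the powers of $z^{\pm\gr}$ past the pairing (using the adjointness property of $\gr$ with respect to $\langle\cdot,\cdot\rangle$), one sees that $\langle S(\tau,z)\,z^{-\gr}\cA_\ell,\one\rangle$ equals an explicit power of $z$ times $\langle\widetilde I(1,-z),\cA_\ell'\rangle$, where $\cA_\ell'$ is $\cA_\ell$ with each $\sJ_m$-component rescaled by a root of unity. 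Since the conclusion, $O(|z|^m)$ after multiplying by $e^{\lambda/z}$, is insensitive to such algebraic prefactors and root-of-unity rescalings, it suffices to control the $|z|\to0$ behavior of the scalar $\langle\widetilde I(1,-z),\cA_\ell'\rangle$ along the ray $\arg z=\arg\bigl(Te^{2\pi\sqrt{-1}\ell/\nu}\bigr)$.

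\smallskip
\noindent\emph{Step 2: the pairing is a single hypergeometric function.} Next I would expand $\langle\widetilde I(1,-z),\cA_\ell\rangle$ using~\eqref{small-i-function} and~\eqref{asymptotic-class-formula}. The nondegenerate pairing matches $\cH_{J^m}$ with $\cH_{J^{d-m}}$, and since $d\nmid w_j m$ for $m\in\mathbf{Nar}$ we have $\{w_j(d-m)/d\}=1-\{w_j m/d\}$; hence the factors $\prod_j 2\pi/\Gamma(\{w_j m/d\})$ in $\cA_\ell$ combine with the Gamma denominators $\prod_j\Gamma(\{w_j(d-m)/d\})^{-1}$ of the paired component of $I_{\rm FJRW}^{\rm sm}$, and the reflection formula $\Gamma(s)\Gamma(1-s)=\pi/\sin\pi s$ collapses them to elementary trigonometric constants. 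The roots of unity $e^{-\ell m\,2\pi\sqrt{-1}/d}$ then act as a finite Fourier transform over the residues $m\bmod d$. After the change of variables $x=d^{-d}\prod_j w_j^{w_j}z^{-\nu}$ this exhibits $\langle\widetilde I(1,-z),\cA_\ell\rangle$, up to an explicit power of $z$, as a ${}_pF_q$-type function of $x$ with $q+1=|\mathbf{Nar}|=p+\nu$; by Proposition~\ref{i-function-via-ghf} it solves the generalized hypergeometric equation~\eqref{ghe-equation-intro}. I would then identify it, by a residue computation, with the Mellin--Barnes integral whose poles reproduce exactly the $\mathbf{Nar}$-indexed Frobenius series at $x=0$ carrying these very $\Gamma$-coefficients.

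\smallskip
\noindent\emph{Step 3: Barnes' asymptotics and the recessive sheet.} Since $q\ge p$, the point $x=\infty$ (that is, $z=0$) is an irregular singularity of~\eqref{ghe-equation-intro}, and a formal fundamental system there consists of $p$ algebraic solutions together with $\nu$ solutions with essential singularity whose leading behavior is $\exp(\pm\nu\,\omega^k x^{1/\nu})$, $\omega=e^{2\pi\sqrt{-1}/\nu}$, the sign and branch being fixed by the argument $-z$ of $I_{\rm FJRW}^{\rm sm}$ against the change of variables. Barnes' 1906 evaluation~\cite{Bar} of the Mellin--Barnes integral from Step~2 yields its asymptotic expansion along each ray as an explicit combination of these solutions; the finite Fourier twist defining $\cA_\ell$ is engineered so that on the sheet to which the ray $\arg z=\arg\bigl(Te^{2\pi\sqrt{-1}\ell/\nu}\bigr)$ is carried by $x=d^{-d}\prod_j w_j^{w_j}z^{-\nu}$, the only surviving term is the \emph{recessive} exponential. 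Along that ray the relevant branch of $x^{1/\nu}$ equals $\bigl(d^{-d}\prod_j w_j^{w_j}\bigr)^{1/\nu}\big/(\nu z)=T/(\nu z)$, so the leading exponential factor is $e^{-T/|z|}=e^{-\lambda/z}$ with $\lambda=Te^{2\pi\sqrt{-1}\ell/\nu}$ as in~\eqref{value-principal-spectrum}, the remaining contribution being algebraic in $z$. Combining with Step~1 gives $|e^{\lambda/z}\langle S(\tau,z)z^{-\gr}\cA_\ell,\one\rangle|=O(|z|^{m})$ for a suitable $m\in\R$, which is Definition~\ref{def-weak-intro}. The restriction $\ell\in\{1-\nu,\dots,0\}$ is exactly the range of sheets on which this term is the genuinely smallest exponential; outside it a dominant exponential survives and the bound fails.

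\smallskip
\noindent\emph{Main obstacle.} The crux, and the only non-formal step, is the claim in Steps~2--3 that the specific $\Gamma$-normalization and root-of-unity twist defining $\cA_\ell$ project the $x=0$-holomorphic hypergeometric solution, continued to the $\ell$-th sheet, onto the single recessive exponential, with no leakage into the other exponentials or the algebraic solutions. This amounts to computing the connection matrix between the two natural bases of solutions of~\eqref{ghe-equation-intro} — via Barnes' asymptotics of the Mellin--Barnes integral, or equivalently a Vandermonde-type evaluation of the pertinent $\Gamma$-determinant — and recognizing the coefficients~\eqref{asymptotic-class-formula} inside it, in parallel with Galkin--Golyshev--Iritani's computation of the principal asymptotic class for Fano manifolds in~\cite{GGI}. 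A secondary, purely bookkeeping, difficulty is tracking the sign conventions (the argument $-z$ of $I_{\rm FJRW}^{\rm sm}$, the branch of $z^{\gr}$ fixed by $\arg z\in[0,2\pi)$, and the map $x=d^{-d}\prod_j w_j^{w_j}z^{-\nu}$) carefully enough to certify that the ray $\arg z=\arg\bigl(Te^{2\pi\sqrt{-1}\ell/\nu}\bigr)$ indeed lands on a sheet where this exponential is recessive.
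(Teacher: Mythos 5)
Your proposal is correct and follows essentially the same route as the paper: reduce to the small $I$-function via the mirror conjecture and the adjointness of the $S$-operator (the paper's Proposition~\ref{weak-asymptotic-i-function} and Corollary~\ref{weak-via-i-function}), recognize the resulting pairing, after the change of variables $x = d^{-d}\prod_j w_j^{w_j}z^{-\nu}$, as the Barnes sum $\sum_{m\in{\bf Nar}}\omega^{\ell m}Q_{\mathscr{N}(m)}(x)$ (the paper's Lemma~\ref{lemma-dual-asymptotic}, using the Euler reflection formula exactly as you describe), and then invoke Barnes' asymptotic expansion of the Meijer $G$-function $G_{p,q+1}^{q+1,0}$ together with the rotation identity $Q_{\mathscr{N}(m)}(xe^{2\pi\sqrt{-1}\ell})=\omega^{\ell(m-1)}Q_{\mathscr{N}(m)}(x)$ to isolate the recessive exponential on each ray $\arg z = 2\pi\ell/\nu$. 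One terminological slip: the pairing $\langle\widetilde I(1,-z),\cA_\ell\rangle$ is not a single ${}_pF_q$ series but a $\Gamma$-weighted sum of $q+1$ of them, which only equals a single Meijer $G$-function (the Mellin--Barnes integral) because the weights are exactly Barnes'; you do treat it correctly as such in Step 3, so this is not a gap.
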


\subsubsection{Strong asymptotic classes}
The result in Proposition \ref{wall-to-asymptotic} suggests that the asymptotic behavior and the conjectured quantum spectrum of ${\nu\over d}\tau'\star_{\tau}$ in Conjecture \ref{conjecture-C} are closely related.
Notice that in Definition~\ref{def-weak-intro}, we only use the asymptotic expansion of $\<S(\tau,z)z^{-\gr}\alpha, \one\>$ to define weak asymptotic classes. 
In general, we follow~\cite{GGI} to consider asymptotic classes (which we call {\em strong asymptotic classes} in this paper) using the asymptotic behavior of the flat section $S(\tau,z)z^{-\gr}\alpha$. 
See Definition~\ref{def-asymptotic-class} for the details.
The spaces of the two types of asymptotic classes are expected to be the same. 
This is true if both quantum spectrum conjecture~\ref{conjecture-C} and mirror conjecture~\ref{conjecture-i-function-formula} hold,
c.f. Proposition~\ref{weak=strong}.

\begin{remark}
Inspired from the work~\cite{Aco, AS}, we expect that the asymptotic classes considered in this paper will correspond to the massive vacuum solutions in the physics literature~\cite{HR}.
\end{remark}

\subsection{$\Gamma$-structures in FJRW theory}
In~\cite{CIR}, Chiodo, Iritani, and Ruan introduced Gamma structures in FJRW theory. 
These structures can be used to define Gamma classes for {\em matrix factorizations}~\cite{CIR} and D-brane central charge in LG models~\cite{KRS}. 
We propose a Gamma conjecture which relates certain Gamma classes to the asymptotic classes of the quantum connection.

\subsubsection{A Gamma map}
The key construction in~\cite{CIR} is a linear map, called a {\em Gamma map}, that gives integral structures for the FJRW state space.
We slightly modify the definition in~\cite{CIR} and define a Gamma map for the LG pair $(W, G)$ by
\begin{equation}
\label{gamma-map-intro}
\widehat{\Gamma}_{W, G}:=\bigoplus_{g\in G} (-1)^{-\mu(g)} \prod_{j=1}^{N}\Gamma(1-\theta_j(g))\cdot {\rm Id}_{\cH_{g}}\in {\rm End}(\cH_{W, G}).
\end{equation}
Here ${\rm Id}_{\cH_{g}}$ is the identity map on $\cH_g$, $\mu(g)$ is the Hodge grading number of $g$~\eqref{hodge-grading-operator}, and the numbers $\theta_j(g)\in [0, 1)\cap\mathbb{Q}$ are defined by 
$$g=\left(e^{2\pi\sqrt{-1}\theta_1(g)}, \ldots, e^{2\pi\sqrt{-1}\theta_N(g)}\right)\in(\C^*)^N.$$

\subsubsection{Gamma classes} 
Let ${\rm MF}_{G}(W)$ be the category of {\em $G$-equivariant matrix factorizations} of $W$ and ${\rm HH}_*({\rm MF}_G(W))$ be the {\em Hochschild homology group} of the category ${\rm MF}_{G}(W)$. 
According to~\cite[Theorem 2.5.4]{PV}, there is an isomorphism
$${\rm HH}_*({\rm MF}_G(W))\cong \cH_{W, G}.$$ 
The map $\widehat{\Gamma}_{W, G}$ can be also viewed as a bridge between ${\rm MF}_G(W)$ and the FJRW theory of $(W, G)$. 
In~\cite{PV}, for an object $\ovE$ in the category ${\rm MF}_{G}(W)$, 
Polishchuk and Vaintrob constructed a Chern character 
${\rm ch}_{G}(\ovE)\in {\rm HH}_0({\rm MF}_{G}(W)).$
We define the {\em Gamma class of the object $\ovE$} to be
$$\widehat{\Gamma}_{W, G}({\rm ch}_{G}(\ovE))\in\cH_{W, G}.$$

Furthermore, Polishchuk and Vaintrob proved a Hirzebruch-Riemann-Roch formula for matrix factorizations, generalizing the work of Walcher~\cite{Wal}.
The formula expresses the Euler characteristic $\chi(\ovE, \ovF)$ of the Hom space ${\rm Hom}^*(\ovE, \ovF)^G$ 
 in terms of a canonical pairing~\eqref{PV-pairing} between ${\rm ch}_{G}(\ovE)$ and ${\rm ch}_{G}(\ovF)$.
We will define a non-symmetric pairing $\left[\cdot, \cdot \right)$ on $\cH_{W, G}$~\eqref{non-symmetric-pairing-qst}. 
Using the HRR formula and the Gamma map~\eqref{gamma-map-intro}, we show in Corollary~\ref{cor-gamma-pairing} that 
$$\left[\widehat{\Gamma}_{W, G}({\rm ch}_{G}(\ovE)), \widehat{\Gamma}_{W, G}({\rm ch}_{G}(\ovF))\right)=\chi(\ovE, \ovF)\in \Z.$$

\subsubsection{Gamma conjectures for admissible LG pairs}
Let $R=\C[x_1, \ldots, x_N]$. Let $\{e_j\}$ be a standard basis of $R^N$ and $\iota(e_j^*)$ be the contraction of the dual element $e_j^*$.
We consider the {\em Koszul matrix factorization}
$$\C^{\rm st}:=\Big(\bigoplus_{k=0}\wedge^k_R R^N,  \quad \delta:=\sum_{j=1}^{N}x_j e_j\wedge+\sum_{j=1}^{N}{\partial W\over \partial x_j}\iota(e_j^*)\Big).$$
Following~\cite{Dyc}, this matrix factorization $\C^{\rm st}$ is called the {\em stabilization of the residue field $\C$}.
The Gamma classes of $\C^{\rm st}$ and its twist $\C(\ell)^{\rm st}$ can be calculated using Polishchuk-Vaintrob's Chern character formula~\cite[Theorem 3.3.3]{PV}. Direct calculation shows that these Gamma classes match the classes defined in~\eqref{asymptotic-class-formula} 
$$\widehat{\Gamma}_{W, \<J\>}\left({\rm ch}_{\<J\>}(\C(\ell)^{\rm st})\right)=\cA_\ell, \quad \forall \ell\in \Z.$$ 

If the LG pair $(W, \<J\>)$ is of general type, some of these Gamma classes has asymptotic meanings.
Inspired by Galkin-Golyshev-Iritani's {\em Gamma Conjecture I} for quantum cohomology of Fano manifold~\cite{GGI}, we propose a Gamma conjecture in FJRW theory.
\begin{conj}
[The weak/strong Gamma Conjecture]
\label{algebraic-analytic}
Let $(W, \<J\>)$ be an admissible LG pair of general type, with index $\nu>0$.
For integers $\ell=1-\nu, \ldots, 0$,
the Gamma class of the matrix factorization $\C(\ell)^{\rm st}$ is a weak/strong asymptotic class with respect to $Te^{2\pi\sqrt{-1}\ell/ \nu}$.
\end{conj}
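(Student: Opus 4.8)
The plan is to reduce the weak/strong Gamma Conjecture (Conjecture~\ref{algebraic-analytic}) to the two analytic facts already assembled in the excerpt. The key identity is the matching
$$\widehat{\Gamma}_{W, \<J\>}\left({\rm ch}_{\<J\>}(\C(\ell)^{\rm st})\right)=\cA_\ell, \quad \forall \ell\in \Z,$$
which identifies the Gamma class of the stabilized residue field and its twists with the explicit narrow cohomology classes $\cA_\ell$ defined in~\eqref{asymptotic-class-formula}. Granting this identity, the weak statement is \emph{exactly} Proposition~\ref{wall-to-asymptotic}: for $\ell = 1-\nu, \ldots, 0$, the class $\cA_\ell$ is a weak asymptotic class with respect to $Te^{2\pi\sqrt{-1}\ell/\nu}$, provided the mirror conjecture~\ref{conjecture-i-function-formula} holds. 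So the first step is to verify the Chern-character computation: compute ${\rm ch}_{\<J\>}(\C(\ell)^{\rm st})$ via Polishchuk--Vaintrob's formula~\cite[Theorem 3.3.3]{PV}, apply the Gamma map~\eqref{gamma-map-intro} term by term over the group $\<J\>$, and check that the $\Gamma(1-\theta_j(g))$ factors together with the sign $(-1)^{-\mu(g)}$ reproduce the coefficients $\prod_j 2\pi/\Gamma(\{w_j m/d\})$ and the root-of-unity phases $e^{-\ell m \cdot 2\pi\sqrt{-1}/d}$ appearing in $\cA_\ell$; one also checks that only the narrow sectors $m \in {\bf Nar}$ survive, since for broad $g$ some $\theta_j(g) = 0$ forces $\Gamma(1) = 1$ but the Chern character lands outside $\cH_{\rm nar}$ in a way that must be tracked. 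This is a finite, explicit calculation using the $\Gamma$-reflection formula $\Gamma(z)\Gamma(1-z) = \pi/\sin(\pi z)$ to convert products of $\Gamma(1-\theta_j)$ into the reciprocal-$\Gamma$ form of $\cA_\ell$.

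For the \emph{strong} version, the plan is to invoke Proposition~\ref{weak=strong}: the spaces of weak and strong asymptotic classes coincide whenever both the quantum spectrum conjecture~\ref{conjecture-C} and the mirror conjecture~\ref{conjecture-i-function-formula} hold. Thus under those two hypotheses, once $\cA_\ell$ is shown to be a weak asymptotic class it is automatically a strong one. Combining, the proof is: (i) establish the Chern-character/Gamma-class identity $\widehat{\Gamma}_{W,\<J\>}({\rm ch}_{\<J\>}(\C(\ell)^{\rm st})) = \cA_\ell$; (ii) apply Proposition~\ref{wall-to-asymptotic} to get the weak statement under mirror conjecture~\ref{conjecture-i-function-formula}; (iii) apply Proposition~\ref{weak=strong} to upgrade to the strong statement under the additional hypothesis of quantum spectrum conjecture~\ref{conjecture-C}. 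In the unconditional cases where both conjectures are theorems — the mirror ADE singularities and the Fermat polynomials with $d - N > 1$ — this yields an unconditional Gamma conjecture; I would state that as a corollary.

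The main obstacle is step (i), the precise identification of the Polishchuk--Vaintrob Chern character of the twisted stabilized residue field with the combinatorial class $\cA_\ell$. The formula in~\cite[Theorem 3.3.3]{PV} expresses ${\rm ch}_{G}(\ovE)$ as a trace over the group involving the supertrace of the equivariant structure on the matrix factorization, and for the Koszul factorization $\C^{\rm st}$ with differential $\delta = \sum_j x_j e_j\wedge + \sum_j (\partial W/\partial x_j)\iota(e_j^*)$ this trace localizes sector by sector; getting the normalization constants, the signs $(-1)^{-\mu(g)}$, and the twist shift $\ell$ exactly right (including how the twist $\C(\ell)^{\rm st}$ multiplies the $\<J\>$-character by a fixed power of the generator) requires care with the conventions for the $G_W$-action on the Koszul complex and with the relation between $\mu(g)$, the degree shift, and the age/$\theta_j(g)$ data. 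A secondary subtlety is confirming that the broad sectors contribute zero to $\widehat{\Gamma}_{W,\<J\>}({\rm ch}_{\<J\>}(\C(\ell)^{\rm st}))$ so that the output genuinely lies in $\cH_{\rm nar}$ where the asymptotic analysis of~\eqref{intro-meromorphic} applies. Once this bookkeeping is done, the rest is a direct appeal to the two cited propositions.
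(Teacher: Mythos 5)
Your reduction is exactly the paper's: step (i) is Proposition~\ref{theorem-chern-asymptotic} (Chern character via Lemma~\ref{chern-residue-field} and the Gamma map, simplified by the Euler reflection formula), step (ii) is Proposition~\ref{wall-to-asymptotic} under the mirror conjecture, and step (iii) is Proposition~\ref{weak=strong} under the additional quantum spectrum hypothesis, with the unconditional cases packaged as Theorem~\ref{corollary-chern-asymptotic}. Since the statement is a conjecture, this is precisely the extent to which the paper establishes it, so the proposal is correct and follows the same route.
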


By Proposition~\ref{wall-to-asymptotic}, 
Mirror conjecture~\ref{conjecture-i-function-formula} implies the weak Gamma Conjecture~\ref{algebraic-analytic}. 
Using Proposition~\ref{mirror-theorem-fermat}, Proposition~\ref{weak=strong}, and Theorem~\ref{main-thm-intro}, we have
\begin{theorem}
\label{corollary-chern-asymptotic}
Let $(W, \<J\>)$ be an admissible LG pair of general type. Then 
\begin{enumerate}
\item The weak Gamma Conjecture~\ref{algebraic-analytic} holds if $W$ is a Fermat polynomial of the form 
$W=\sum\limits_{i=1}^{N} x_i^{a_i}$ with $a_i\geq 2$ for all $i$.
\item The strong Gamma Conjecture~\ref{algebraic-analytic} holds if $W$ is a Fermat homogeneous polynomial of the form
$W=\sum\limits_{i=1}^{N}x_i^d$ with $d-N>1$.
\end{enumerate}
\end{theorem}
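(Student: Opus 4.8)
The plan is to reduce both statements to the combination of three ingredients already established or cited in the excerpt: the identification $\widehat{\Gamma}_{W, \<J\>}({\rm ch}_{\<J\>}(\C(\ell)^{\rm st}))=\cA_\ell$ of Gamma classes with the explicit classes~\eqref{asymptotic-class-formula}; Proposition~\ref{wall-to-asymptotic}, which says that whenever Mirror conjecture~\ref{conjecture-i-function-formula} holds the classes $\cA_\ell$ are weak asymptotic classes with respect to $Te^{2\pi\sqrt{-1}\ell/\nu}$ for $\ell=1-\nu,\ldots,0$; and Proposition~\ref{weak=strong}, which upgrades ``weak'' to ``strong'' once both the quantum spectrum conjecture~\ref{conjecture-C} and the mirror conjecture hold. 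So the real content is checking that the hypotheses of these propositions are met in the two Fermat cases.

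For part (1), let $W=\sum_{i=1}^N x_i^{a_i}$ with all $a_i\ge 2$ and $\nu=d-\sum w_i>0$ (here $d={\rm lcm}(a_i)$ and $w_i=d/a_i$). The weak Gamma conjecture only requires the \emph{weak} asymptotic statement, hence by Proposition~\ref{wall-to-asymptotic} it suffices to know that Mirror conjecture~\ref{conjecture-i-function-formula} holds for such $W$. First I would invoke Proposition~\ref{mirror-theorem-fermat}: the excerpt states that a general version of the mirror formula (with $W$ not necessarily of general type) has been proved for the Fermat polynomials in~\cite{Aco, CIR, RR, Gue}, and Proposition~\ref{mirror-theorem-fermat} gives the explicit description. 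One must then check that the general-type specialization~\eqref{I-J-relation} is exactly what that proposition yields when $\nu>0$; this is a matter of extracting the $z^0$-coefficient and matching with the definition~\eqref{def-tau} of $\tau(t)$, which is routine given the cited results. Combining this with $\widehat{\Gamma}_{W, \<J\>}({\rm ch}_{\<J\>}(\C(\ell)^{\rm st}))=\cA_\ell$ and Proposition~\ref{wall-to-asymptotic} gives that each Gamma class of $\C(\ell)^{\rm st}$, $\ell=1-\nu,\ldots,0$, is a weak asymptotic class with respect to $Te^{2\pi\sqrt{-1}\ell/\nu}$, which is the assertion of the weak Gamma conjecture.

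For part (2), restrict further to the homogeneous Fermat polynomial $W=\sum_{i=1}^N x_i^d$ with $d-N>1$, i.e. $w_i=1$ for all $i$ and $\nu=d-N>1$. Now I would apply Proposition~\ref{weak=strong}, which says the weak and strong asymptotic classes coincide provided \emph{both} the quantum spectrum conjecture~\ref{conjecture-C} and the mirror conjecture~\ref{conjecture-i-function-formula} hold for $(W,\<J\>)$. The mirror conjecture holds by part (1) (the homogeneous case is a special case of the Fermat case treated there, since $d-N>1$ forces $\nu>0$). The quantum spectrum conjecture holds in precisely this range by Theorem~\ref{main-thm-intro}, whose hypothesis is exactly $d-N>1$. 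Hence Proposition~\ref{weak=strong} applies and upgrades the weak asymptotic classes of part (1) to strong asymptotic classes, yielding the strong Gamma conjecture for these $W$.

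The only genuinely delicate point is the bookkeeping in part (1): one must be sure that the mirror formula as proved in the literature, after the substitution $z\mapsto -z$ and the identification of $\tau(t)$ with the $z^0$-part of $I_{\rm FJRW}^{\rm sm}$, is stated over the coefficient ring in which Proposition~\ref{wall-to-asymptotic} operates (polynomials in $t$, specialized at $t=1$, with the general-type hypothesis ensuring $\tau(t)\in\cH_{\rm nar}[t]$ rather than a formal power series), and that the range $\ell=1-\nu,\ldots,0$ of twists matches on both sides. Everything else is a direct concatenation of the quoted propositions, so I expect this matching of normalizations — rather than any new estimate — to be where care is needed.
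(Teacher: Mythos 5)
Your proposal is correct and follows essentially the same chain as the paper: identify $\widehat{\Gamma}_{W,\<J\>}({\rm ch}_{\<J\>}(\C(\ell)^{\rm st}))=\cA_\ell$ (Proposition~\ref{theorem-chern-asymptotic}), use Proposition~\ref{mirror-theorem-fermat} to verify the mirror hypothesis, invoke Proposition~\ref{theorem-asymptotic-classes}/Corollary~\ref{corollary-weak-classes} for the weak statement, and combine Theorem~\ref{main-thm-intro} with Proposition~\ref{weak=strong} for the strong statement. The ``delicate point'' you flag is handled in the paper by the corollary following Conjecture~\ref{mirror-conjecture}, which shows that general type ($\nu>0$) forces $I_{\rm FJRW}^{\rm sm}(t,z)=tJ(\tau(t),z)$ exactly, so the normalizations match as you anticipated.
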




\subsection{Orlov's SOD and Stokes phenomenon}
Now we consider the relation between asymptotic classes and  {\em Orlov's semiorthogonal decomposition (SOD)}.
\subsubsection{Orlov's SOD and a Gram matrix}
Let $(W, \<J\>)$ be an admissible LG pair of general type.
Let $\cX_W=(W=0)$ be the hypersurface in the weighted projective space $\bP^{N-1}(w_1, \ldots, w_N)$ and ${\bf D}^b(\cX_W)$ be the derived category of bounded complex of coherent sheaves on $\cX_W.$
According to~\cite{Orl, BFK2}, the triangulated category ${\rm HMF}_{\<J\>}(W)$ admits a semiorthogonal decomposition 
\begin{equation}
\label{intro-sod}
{\rm HMF}_{\<J\>}(W)\simeq\left<\C^{\rm st}, \C(-1)^{\rm st}, \ldots, \C(1-\nu)^{\rm st}, {\bf D}^b(\cX_W)\right>.
\end{equation}
The matrix of the Euler pairing $\chi(\cdot, \cdot)$ of pairs in $\{\C^{\rm st}, \C(-1)^{\rm st}, \ldots, \C(1-\nu)^{\rm st}\}$ forms an upper triangular matrix
$$M:=\Big(\chi(\C(-i)^{\rm st}, \C(-j)^{\rm st})\Big)_{0\leq i, j\leq\nu-1}.$$
This matrix can be calculated by the Hirzebruch-Riemann-Roch formula  in \cite{PV}.  
Let $\left[f(x)\right]_n$ be the coefficient of $x^n$ in the formal power series $f(x)$. 
\begin{proposition}
[Proposition~\ref{inverse-gram-formula}]
The matrix $M^{-1}=(a^{i,j})$ is an upper triangular matrix with integer coefficients. 
Its entries are determined by the weight system of $(W, \<J\>)$: 
$$a^{i, i+n}=\left[\prod_{j=1}^{N}(1-x^{w_j})^{-1}\right]_n, \quad \forall \ 1\leq i\leq \nu, 0\leq n\leq \nu-i.$$ 
\end{proposition}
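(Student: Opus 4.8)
The plan is to compute the Gram matrix $M$ directly from the Hirzebruch-Riemann-Roch formula for matrix factorizations and then invert it. First I would recall Polishchuk-Vaintrob's HRR formula, which expresses $\chi(\C(-i)^{\rm st}, \C(-j)^{\rm st})$ as a canonical pairing between the Chern characters ${\rm ch}_{\<J\>}(\C(-i)^{\rm st})$ and ${\rm ch}_{\<J\>}(\C(-j)^{\rm st})$; since we already know (from the discussion preceding the statement) that the Gamma class $\widehat{\Gamma}_{W,\<J\>}({\rm ch}_{\<J\>}(\C(\ell)^{\rm st}))=\cA_\ell$ with the explicit product of $2\pi/\Gamma(\{w_jm/d\})$ factors and the root-of-unity twist $e^{-\ell m \cdot 2\pi\sqrt{-1}/d}$, I can use Corollary~\ref{cor-gamma-pairing}, namely $\left[\widehat{\Gamma}_{W,\<J\>}({\rm ch}(\ovE)), \widehat{\Gamma}_{W,\<J\>}({\rm ch}(\ovF))\right)=\chi(\ovE,\ovF)$, to rewrite $M_{ij}=\chi(\C(-i)^{\rm st},\C(-j)^{\rm st})$ entirely in terms of the non-symmetric pairing $[\cdot,\cdot)$ evaluated on the $\cA_\ell$'s. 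This reduces the problem to an explicit finite sum over ${\bf Nar}$ of products of gamma-function values times roots of unity.

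Next I would carry out that sum. Writing out $[\cA_{-i},\cA_{-j})$ using the definition of the non-symmetric pairing \eqref{non-symmetric-pairing-qst} and the explicit formula \eqref{asymptotic-class-formula}, the $m$-summand factors as $e^{(i-j)m\cdot 2\pi\sqrt{-1}/d}$ times a quantity depending only on $m$ and the weight system; the reflection formula $\Gamma(s)\Gamma(1-s)=\pi/\sin(\pi s)$ turns each $\prod_j (2\pi/\Gamma(\{w_jm/d\}))$ pair into a product of $2\sin(\pi w_j m/d)$ factors, which in turn is a Laurent polynomial in $\zeta=e^{2\pi\sqrt{-1}m/d}$. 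Thus $M_{ij}$ becomes a sum over $d$-th roots of unity $\zeta$ (restricted to $m\in{\bf Nar}$, i.e. those $\zeta$ not killed by any $w_j$) of $\zeta^{i-j}$ times $\prod_j(1-\zeta^{w_j})$ times a normalization. The key recognition is that this discrete Fourier-type sum is precisely the coefficient-extraction operation: $\frac{1}{d}\sum_{\zeta^d=1}\zeta^{-k}\prod_j(1-\zeta^{w_j})^{-1}$ extracts $[\prod_j(1-x^{w_j})^{-1}]_k$ modulo the geometric-series periodicity, and the restriction to ${\bf Nar}$ together with the range $0\le n\le \nu-i$ ensures no wrap-around occurs. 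So instead of inverting $M$ by brute force, I would guess the inverse from this Fourier picture — the entries $a^{i,i+n}=[\prod_j(1-x^{w_j})^{-1}]_n$ — and verify $MM^{-1}={\rm Id}$ by a clean orthogonality-of-characters computation, where $\sum_\zeta \zeta^{a-b}$ collapses to $d\cdot\delta_{a\equiv b}$ and the polynomial factors $\prod_j(1-\zeta^{w_j})$ and $\prod_j(1-\zeta^{w_j})^{-1}$ cancel.

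Two supporting points need care. Upper-triangularity of $M$ (hence of $M^{-1}$) should follow from the semiorthogonal decomposition \eqref{intro-sod}: the ordering $\C^{\rm st},\C(-1)^{\rm st},\ldots,\C(1-\nu)^{\rm st}$ means $\chi(\C(-i)^{\rm st},\C(-j)^{\rm st})=0$ for $i>j$, and $\chi(\C(-i)^{\rm st},\C(-i)^{\rm st})=1$ since each $\C(\ell)^{\rm st}$ is an exceptional object; alternatively this also drops out of the Fourier sum since the $x^n$-coefficient of $\prod_j(1-x^{w_j})^{-1}=1+\dots$ vanishes for $n<0$ and equals $1$ for $n=0$. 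Integrality of $M^{-1}$ is then immediate because $\prod_j(1-x^{w_j})^{-1}\in\Z[\![x]\!]$. The main obstacle I anticipate is purely bookkeeping: correctly tracking the sign $(-1)^{-\mu(g)}$ in the Gamma map \eqref{gamma-map-intro}, the shift by $\widehat{c}_W$ in the modified $I$-function, and the exact form of the non-symmetric pairing $[\cdot,\cdot)$, so that the product $\prod_j (2\pi/\Gamma(\{w_jm/d\}))^2$ in $[\cA_{-i},\cA_{-j})$ really collapses (after the reflection formula and cancellation of the $\prod 2\sin$ factors against a $\prod(2\sin)^{-1}$ coming from the pairing normalization) to exactly $\prod_j(1-\zeta^{w_j})$ with no stray constant — getting these normalizations consistent is where a sign or a factor of $d$ could easily go wrong, and it is the step I would check most carefully against the $A_n$ and mirror-$ADE$ examples of Section~\ref{sec-mirror-simple}.
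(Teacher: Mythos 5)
Your overall route — compute $M$ explicitly from the PV Hirzebruch--Riemann--Roch formula, recognize a Toeplitz structure whose diagonals are the coefficients of $P^A(x)=\prod_{j=1}^N(1-x^{w_j})$, and produce $M^{-1}$ from the reciprocal power series — is the same as the paper's (Proposition~\ref{left-upper-inverse-gram} followed by Proposition~\ref{inverse-gram-formula}). Routing the computation through the Gamma classes $\cA_\ell$ and Corollary~\ref{cor-gamma-pairing} rather than directly through the Chern characters and \eqref{PV-pairing} is a valid but strictly cosmetic detour, since the Gamma map is an invertible rescaling of each narrow sector. Your remark that upper-triangularity and the unit diagonal follow formally from exceptionality in the SOD is also fine and agrees with what the paper reads off from the sign pattern \eqref{property-coefficients-polynomial}.

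The weak point is the ``key recognition'' step. The expression you write, $\frac{1}{d}\sum_{\zeta^d=1}\zeta^{-k}\prod_j(1-\zeta^{w_j})^{-1}$, is not defined: the factor $\prod_j(1-\zeta^{w_j})$ vanishes at $\zeta=1$ and more generally at every $\zeta=\omega^m$ with $m\notin{\bf Nar}$, and these are exactly the terms a roots-of-unity filter cannot drop. The reason Lemma~\ref{lemma-magic-identity} can restrict its sum to ${\bf Nar}$ is that the omitted terms vanish because $\prod_j(1-\omega^{-w_jm})$ appears in the \emph{numerator}; there is no analogous roots-of-unity representation of the coefficients of $\prod_j(1-x^{w_j})^{-1}$. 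So ``verify $MM^{-1}=\mathrm{Id}$ by orthogonality of characters'' as you describe it would not get off the ground. Fortunately the verification needed is much simpler and does not touch roots of unity at all: once Proposition~\ref{left-upper-inverse-gram} has established that $M$ is upper-triangular Toeplitz with $M_{j,j+n}=a(n)=[P^A(x)]_n$, any $\nu\times\nu$ upper-triangular Toeplitz matrix with symbol $f(x)=1+\cdots$ has inverse which is again upper-triangular Toeplitz with symbol $f(x)^{-1}$, because the $(i,k)$ entry of the product is the convolution $\sum_j a(j-i)\,a^{j,k}$, i.e.\ the coefficient of $x^{k-i}$ in $f\cdot f^{-1}=1$, and the range $0\le n\le\nu-i$ keeps you within the first $\nu$ coefficients so no wrap-around can occur. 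Integrality of $M^{-1}$ is then automatic since $\prod_j(1-x^{w_j})^{-1}\in\Z[\![x]\!]$ and its coefficients are the partition-counting numbers $L_{\vec w}(n)$ of \eqref{weight-product-inverse-polynomial}. In short: your plan is structurally correct and parallel to the paper's, but the closing step should be the one-line power-series convolution argument rather than the ill-posed character sum you propose.
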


\subsubsection{Relation to Stokes phenomenon}
The topological aspect of irregular meromorphic connections can be described using {\em Stokes matrices}, see \cite{DM, CV, Dub, Dub-conj, DHMS} for example.
Following the ideas of Dubrovin~\cite{Dub} and Kontsevich, the SOD of ${\rm HMF}_{\<J\>}(W)$ is closely related to the Stokes phenomenon of the meromorphic connection~\eqref{intro-meromorphic}. 
Similar phenomenon in quantum cohomology has been discussed in the literature~\cite{Guz, GGI, CDG, SS}.

In a certain sector of $\arg x\in (a, b)\cap\mathbb{R}$, the ordinary differential equation~\eqref{ghe-equation-intro} admits a basis of analytic solutions given by Meijer $G$-functions.
The transformation laws between such functions in different sectors is linear \cite{Luk, Fie}.
These transformations characterize the Stoke phenomenon of the ODE system. 
We find one transformation law in Proposition~\ref{transformation-exponential-type} which characterizes the behavior of certain Meijer $G$-functions with exponential asymptotic expansions.
We call certain coefficients in Proposition~\ref{transformation-exponential-type} the {\em Stokes coefficients of exponential type}.
\begin{proposition}
[Proposition~\ref{theorem-gram-asymptotic}]
The nonzero entries of the upper triangular matrix $M^{-1}$ are Stokes coefficients of exponential type of the equation~\eqref{ghe-equation-intro}. 
\end{proposition}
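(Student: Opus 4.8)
The plan is to match two explicit formulas: the one for the entries of $M^{-1}$ given by Proposition~\ref{inverse-gram-formula}, and the one for the Stokes coefficients of exponential type extracted from Proposition~\ref{transformation-exponential-type}. First I would recall the analytic picture behind~\eqref{ghe-equation-intro}. After the substitution $x=d^{-d}\prod_{j=1}^{N}w_j^{w_j}\,z^{-\nu}$ the restricted quantum connection~\eqref{intro-meromorphic} on $\cH_{\rm nar}$ becomes the scalar generalized hypergeometric equation~\eqref{ghe-equation-intro} of order $q+1=|{\bf Nar}|$, with a regular singularity at $x=0$ (i.e. $z=\infty$) and a ramified irregular singularity at $x=\infty$ (i.e. $z=0$). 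Since $p=q+1-\nu$, the formal solutions at $x=\infty$ split into $\nu$ solutions of \emph{exponential type}, each with leading factor $e^{-\lambda_\ell/z}$ where $\lambda_\ell=T\,e^{2\pi\sqrt{-1}\ell/\nu}$ and $\ell=1-\nu,\dots,0$, together with $p$ solutions of \emph{algebraic type} (a power of $x$ times a convergent series). Under the semiorthogonal decomposition~\eqref{intro-sod} and the mirror correspondence, the $\nu$ exponential-type solutions are the analytic avatars of the Koszul factorizations $\C(\ell)^{\rm st}$ — they realize the exponentially small solutions attached to the classes $\cA_\ell$ of Proposition~\ref{wall-to-asymptotic} — while the $p$ algebraic-type solutions come from ${\bf D}^b(\cX_W)$; this is the Dubrovin--Kontsevich dictionary between an SOD and Stokes data in our setting.

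Next I would compute the Stokes coefficients of exponential type. In a sector around $\arg x=0$ the equation~\eqref{ghe-equation-intro} has a fundamental system of Meijer $G$-functions whose parameters are the $\rho_j$ and $\alpha_i$ fixed in Section~\ref{sec-basic-ghe}; the classical connection formulas (Barnes~\cite{Bar}, Meijer~\cite{Mei}, Luke~\cite{Luk}, Fields~\cite{Fie}) express each algebraic-type $G$-function, after analytic continuation across the Stokes ray separating two adjacent exponential regimes, as a finite $\C$-linear combination of the exponential-type $G$-functions and algebraic ones. This is the transformation law of Proposition~\ref{transformation-exponential-type}, and the coefficients multiplying the exponential-type terms are by definition the Stokes coefficients of exponential type; they come out as residue sums of products of $\Gamma$-values at the $\rho_j,\alpha_i$, weighted by roots of unity. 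The remaining — and main — task is to simplify such a coefficient. Here I would use that the $\rho$-parameters encode the narrow residues $\{m/d\}$ while the $\alpha$-parameters encode the complementary set $\{k/d : 0<k<d,\ d\mid w_j k \text{ for some } j\}$, so that the $\Gamma$-product telescopes: applying the reflection formula $\Gamma(s)\Gamma(1-s)=\pi/\sin(\pi s)$ and the Gauss multiplication formula (with $\gcd(w_1,\dots,w_N)=1$ and $\nu=d-\sum_j w_j>0$ ensuring that the relevant poles are simple and that the residue bookkeeping is the expected one), the coefficient between the $i$-th and $(i+n)$-th exponential solutions collapses to the integer $\left[\prod_{j=1}^{N}(1-x^{w_j})^{-1}\right]_n$.

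It then remains to match the ranges. The Stokes data of the exponential part assembles into a $\nu\times\nu$ unipotent upper triangular matrix in the ordering of the $\lambda_\ell$, each off-diagonal entry depending only on the difference $n$ of the two indices involved, and the value just computed is $\left[\prod_j(1-x^{w_j})^{-1}\right]_n$; on the other hand Proposition~\ref{inverse-gram-formula} gives precisely $a^{i,i+n}=\left[\prod_j(1-x^{w_j})^{-1}\right]_n$ for $1\le i\le\nu$, $0\le n\le\nu-i$, and these exhaust the nonzero entries of the upper triangular matrix $M^{-1}$. Comparing the two, every nonzero entry of $M^{-1}$ is identified with a Stokes coefficient of exponential type of~\eqref{ghe-equation-intro}, which is the assertion.

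The hardest part is the simplification in the second paragraph: the Stokes coefficient is a priori a transcendental-looking finite sum of products of $\Gamma$-values at rational points times roots of unity, and one must show it is exactly the integer $\left[\prod_j(1-x^{w_j})^{-1}\right]_n$. This is a Barnes-type residue computation in which all the effort lies in the bookkeeping — tracking which $\Gamma$-factors survive, the order in which the reflection and multiplication formulas are applied, and how the roots of unity recombine. A secondary but genuine subtlety is keeping the normalizations mutually consistent — the modified $I$-function $\widetilde{I}(t,z)=z^{\widehat{c}_W/2-1}z^{\gr}I_{\rm FJRW}^{\rm sm}(t,z)$, the Euler pairing $\chi(\cdot,\cdot)$ on matrix factorizations, and the chosen Meijer $G$-function bases — so that the identification holds on the nose and not merely up to a diagonal rescaling.
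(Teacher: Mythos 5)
Your overall plan is sound — match Proposition~\ref{inverse-gram-formula} (which gives $a^{i,i+n} = L_{\vec{w}}(n) = \left[\prod_{j=1}^N(1-x^{w_j})^{-1}\right]_n$) against the coefficients appearing in a connection formula between Meijer $G$-function solutions of~\eqref{ghe-equation-intro} — and this is indeed the structure of the paper's argument. But the load-bearing step, the identity $d_h = L_{\vec{w}}(\nu - h)$ between the Stokes coefficients and these integers, is only sketched in your proposal, and the route you suggest for it (reflection formula plus the Gauss multiplication formula applied to a "finite sum of products of $\Gamma$-values at rational points times roots of unity") is harder than it needs to be and risks losing control of exactly the bookkeeping you flag as the hard part. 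The paper's mechanism sidesteps transcendental arithmetic altogether: it introduces the variable $y = e^{-2\pi\sqrt{-1}(1/d+s)}$, under which the ratio of $\Gamma$-products governing the exponential connection coefficients \emph{becomes a rational function} $Q(y)/P(y)$ with $Q$ vanishing at $e^{2\pi\sqrt{-1}(\rho_j-1/d)}$ and $P$ at $e^{2\pi\sqrt{-1}(\alpha_i-1/d)}$; the Stokes coefficients $d_h$ are then simply partial-fraction coefficients of this rational function, and the identity $d_h = L_{\vec{w}}(\nu - h)$ drops out by multiplying through by $P^A(y)=\prod_j(1-y^{w_j})$ and comparing coefficients of $y^{d-j}$, an elementary integer recursion (Lemma~\ref{symmetry-coefficients} together with~\eqref{recusion-d-type}). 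Euler reflection enters only once, afterward, to convert this rational-function identity into the Meijer $G$-function transformation law of Proposition~\ref{transformation-exponential-type}; the Gauss multiplication formula is not needed for this step. You should also drop the intermediate assertion that "the Stokes data of the exponential part assembles into a $\nu\times\nu$ unipotent upper triangular matrix \dots each off-diagonal entry depending only on the difference $n$": the proposition does not claim that $M^{-1}$ \emph{is} the Stokes matrix (the paper is explicitly cautious about this beyond the $r$-spin case), only that the nonzero entries of $M^{-1}$ equal the $\nu$ coefficients $d_1,\dots,d_\nu$ read off from a \emph{single} transformation law; the constant-band structure of $M^{-1}$ comes independently from Proposition~\ref{inverse-gram-formula}, not from an analysis of the full Stokes matrix of the irregular connection.
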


For the $r$-spin case, the Stokes matrice can be calculated by the transformation law directly. 
We recover the calculation of the Stokes matrice in~\cite{CV}. 
The computation is similar to that for quantum cohomology of projective spaces in \cite{Guz}.

\subsection{Plan of the paper}
In Section 2, we introduce the construction of quantum product in FJRW theory and then propose a quantum spectrum conjecture for admissible LG pairs $(W, \<J\>)$ of general type.
In Section 3, we define the strong asymptotic classes in FJRW theory using the quantum connection. 
In Section 4, 
we use a formula of Barnes on the asymptotic expansions of generalized hypergeometric functions to calculate weak asymptotic classes.
In Section 5, we review the Gamma structures in FJRW theory, and discuss the relation to HRR formula and asymptotic classes. 
We propose a Gamma conjecture which relates the Gamma classes of certain $\<J\>$-equivariant matrix factorizations to the weak/strong asymptotic classes.
In Section 6, we discuss the relation between the Gamma structures and Orlov's semiorthogonal decompositions for the category of $\<J\>$-equivariant matrix factorizations. We also relate a Gram submatrix with Stokes coefficients of the ODE from the quantum connection.
In Section 7, we prove the  quantum spectrum conjecture for mirror ADE singularities and  Fermat homogeneous polynomials.

\subsection{Acknowlegement} 
We thank Weiqiang He for helpful discussions on Gamma structures and Yang Zhou for helpful discussions on mirror conjecture~\ref{conjecture-i-function-formula}.
Y.S. thank Changzheng Li, Alexander Polishchuk, Mark Shoemaker, and Jie Zhou for helpful discussions.
We also thank Amanda Francis for pointing out some typos in an early version of the paper.
Part of the work was completed while Y.S. was visiting  Simons Center of Geometry and Physics during the program ``Integrability, Enumerative Geometry and Quantization".
Y.S. would like to thank the organizers of the program and the hospitality of Simons Center of Geometry and Physics.
Y.S. thank the hospitality of the Institute for Advanced Study in Mathematics (IASM), Zhejiang University.
Y.S. is partially supported by Simons Collaboration Grant 587119.

\section{Quantum spectrum in FJRW theory}
\label{sec:spectrum}

\subsection{A brief review of FJRW theory}
\label{sec:fjrw}

\subsubsection{Admissible Landau-Ginzburg pairs}

Let $W: \mathbb{C}^N\to \mathbb{C}$  be a degree $d$ quasi-homogeneous polynomial, such that the weights of the variables $w_j:={\rm wt}(x_j)$ satisfying $\gcd(w_1, \ldots, w_N)$=1. 
Following \cite{FJR}, we only consider when $W$ is nondegenerate, that is (1) $W$ has an isolated critical point only at the origin and (2) the choices of $q_j:={w_j\over d}\in\left(0, {1\over 2}\right]\cap \mathbb{Q}$ are unique.

The FJRW theory constructed in~\cite{FJR-book, FJR} is an intersection theory on the moduli space of $W$-spin structures of $G$-orbifold curves, where $G$ is a certain group of symmetry of $W$. 
There are a few choices of the symmetry group $G$, including the group $G_W$ defined in~\eqref{maximal-group}, called the {\em maximal diagonal symmetry group} of $W$. 
\begin{definition}
We say that a subgroup $G\leq G_{W}$ is admissible if it contains the exponential grading element
$$J:=\left(e^{2\pi  \sqrt{-1} q_1}, \ldots, e^{2\pi \sqrt{-1} q_N}\right).$$
We call such a pair $(W,G)$ an {\em admissible LG pair}.
We denote the group generated by $J$  by $\<J\>$, and call it the minimal (admissible) group of $W$.
\end{definition}
According to~\cite[Proposition 3.4]{Kra}, the definition here is equivalent to the original definition in~\cite[Definition 2.3.2]{FJR}, which says there exists a Laurent polynomial $Z$, quasi-homogeneous with the same weights $q_i$ as $W$, but with no monomials in common with $W$, such that $G=G_{W+Z}$.

\subsubsection{The state space}
For each group element $g\in G$, let ${\rm Fix}(g)\subset \CC^n$ be  the $g$-fixed locus and $W_g$ be the restriction of $W$ on ${\rm Fix}(g)$.
Following~\cite[Definition 3.2.1]{FJR}, the {\em $g$-twisted sector} $\cH_g$ is defined to be the $G$-invariant part of the middle-dimensional relative cohomology for $W_g$.
The {\em state space} of the admissible LG pair $(W, G)$ is the {\em FJRW vector space} associated with a nondegenerate pairing $\<\cdot , \cdot \>$ defined by intersecting Lefschetz thimbles~\cite[Section 3.2]{FJR}
\begin{equation}
\label{state-space}
\bigg(\cH_{W,G}:=\bigoplus_{g\in G}\cH_{g}, \quad \< \cdot , \cdot \>\bigg).
\end{equation}

In this paper, we will use an equivalent description of the state space.
According to~\cite[Formula (74)]{FJR}, 
there is a pairing-preserving graded isomorphism 
\begin{equation}
\label{fjrw=orbifold}
\bigg(\cH_{W,G}, \< \cdot , \cdot \>\bigg)\cong\bigg(\bigoplus_{g\in G}\left({\rm Jac}(W_g)\cdot \omega_g\right)^G, \< \cdot , \cdot \>^{\rm Res}\bigg).
\end{equation}
Let us explain the meaning of the right-hand side. Let $\mathfrak{I}_W$ be the {\em Jacobian ideal} in $\C[x_1, \ldots, x_N]$, generated by the partial derivatives ${\partial{W}/ \partial x_j}$ of W. Let 
$${\rm Jac}(W):=\C[x_1, \ldots, x_N]/\mathfrak{I}_W$$ be the {\em Jacobian algeba} (or the {\em Milnor ring}, or the {\em local algebra}) of the singularity $W$.

Let ${\bf x}=(x_{1},\dots, x_{N})$ be a coordinate system on $\C^{N}$.
An element $h\in G$ acts on ${\bf x}$ and the form $\{d{\bf x}\}$ by the scalar products $h\cdot{\bf x}$ and $h\cdot d{\bf x}=d(h\cdot{\bf x})$. 
For each $g\in G$, let 
$\omega_g$ be the standard top form of ${\rm Fix}(g)$. That is, 
$$\omega_g:=\bigwedge_{\{i\mid g\cdot x_i=x_i\}} dx_{i}. $$
We denote the $G$-invariant part of the Jacobian algebra on the fixed locus ${\rm Fix}(g)$ by
\begin{equation}
\label{state-space-component}
\cH_g:=\left(
{\rm Jac}(W_g)\cdot \omega_g
\right)^G.
\end{equation}
Here if ${\rm Fix}(g)$ only contains the origin, that is, 
 ${\rm Fix}(g)=\{0\}$, 
then we say that $\cH_{g}$ is a \emph{narrow} sector.
Otherwise, $\cH_{g}$ is called a \emph{broad} sector.
Each narrow sector $\cH_{g}$ is defined to be one-dimensional, and spanned by the {\em standard generator} $1\vert g\>.$
In particular, if ${\rm Fix}(J^m)=\{0\}$, then we denote the standard generator of $\cH_{J^m}$ by
\begin{equation}
\label{standard-generator}
\sJ_m:=1\vert J^m\>\in \cH_{J^m}.\end{equation}
In general, we can denote an element $\phi_g\in\cH_g$ for any $g\in G$ by 
\begin{equation}\label{broad-element-notation}
\phi_g:=  f\cdot \omega_g \vert g\>, \quad \text{with} \quad f\in {\rm Jac}(W_g).
\end{equation}

The direct sum of all narrow (resp. broad) sectors is denoted by $\cH_{\rm nar}$ (resp. $\cH_{\rm bro}$).
The state space admits a {\em narrow-broad decomposition} 
\begin{equation}
\label{G-orb-space}
\cH_{W, G}=\bigoplus_{g\in G}\cH_g=\cH_{\rm nar}\bigoplus\cH_{\rm bro}.
\end{equation}

The vector space ${\rm Jac}(W)$ (and  ${\rm Jac}(W)\cdot d{\bf x}$) has a {\em Grothendieck residue pairing}
\begin{equation}
\label{residue-pairing}
\<f_1, f_2\>^{\rm Res}=\<f_1\cdot d{\bf x}, f_2\cdot d{\bf x}\>^{\rm Res}:={\rm Res}_{\C[{\bf x}]/\C}
\begin{bmatrix}
f_1f_2d{\bf x}\\
\partial_1W, \ldots, \partial_{n}W
\end{bmatrix}.
\end{equation}
See~\cite[III.9]{Har}.
We denote this pairing on each ${\rm Jac}(W_g)\cdot \omega_g$ by $\<\cdot, \cdot\>^{\rm Res}_{W_g}$.
This induces a canonical pairing $\<\cdot, \cdot\>^{\rm Res}$ on $\cH_{W, G}=\bigoplus_{g\in G}\cH_g$~\cite{PV},
 which is the direct sum of the pairings $\cH_g\otimes\cH_{g^{-1}}\rightarrow \C$ defined by
\begin{equation}
\label{residue-dual}
\<f_1\cdot \omega_g, f_2\cdot \omega_{g^{-1}}\>^{\rm Res}:=\<\zeta_*(f_1\cdot \omega_g), f_2\cdot \omega_{g^{-1}}\>^{\rm Res}_{W_g}.
\end{equation}
Here $\zeta:=(e^{\pi  \sqrt{-1} q_1}, \ldots, e^{\pi \sqrt{-1} q_N})\in(\C^*)^{N}$ is a special square root of $J$. See~\cite[Definition 3.1.1]{FJR}.

\subsubsection{Fan-Jarvis-Ruan-Witten invariants}
The FJRW theory of the admissible pair $(W, G)$
produces a
CohFT~\cite[Definition 4.2.1]{FJR}
\begin{equation}
\label{cohft-notation}\bigg\{\Lambda_{g,k}^{W,G}: \cH_{W,G}^{\otimes k}\to H^*(\overline{\mathcal{M}}_{g,k}, \mathbb{Q})\bigg\}_{2g-2+k>0},
\end{equation}
where each  $\Lambda_{g,k}^{W,G}$ is a multi-linear map from $\cH_{W,G}^{\otimes k}$ to the cohomology of moduli spaces of stable curves of genus $g$ with $k$-markings. These maps are compatible under gluing morphisms. 
Let  $\gamma_i\in \cH_{W,G}$ and let $\psi_i$ be psi classes in $H^*(\overline{\mathcal{M}}_{g,k}, \mathbb{Q})$.
We consider the genus-$g$ FJRW invariants defined in~\cite[Definition 4.2.6]{FJR}
\begin{equation}\label{fjrw-inv}
\LD\gamma_1\psi_1^{\ell_1},\ldots,\gamma_k\psi_k^{\ell_k}\RD_{g,k}^{W,G}
:=\int_{\overline{\mathcal{M}}_{g,k}}\Lambda_{g,k}^{W,G}(\gamma_1,\ldots,\gamma_k) \prod_{i=1}^k\psi_i^{\ell_i}, \quad \text{where} \quad\ell_{i}\in \Z_{\geq 0}.
\end{equation}

Now we recall some nonvanishing criteria for the (genus-zero) FJRW invariants in Lemma~\ref{nonvanishing-lemma} below. 
Later it will be used frequently. 
For more properties, see~\cite[Theorem 4.2.2]{FJR}.

For each $g\in G\subset G_W$, there are unique numbers $\theta_j(g)\in \mathbb{Q}\cap [0, 1)$, such that 
$$g=\left(\exp(2\pi\sqrt{-1} \theta_1(g)), \ldots, \exp(2\pi\sqrt{-1} \theta_N(g))\right)\in (\C^*)^N.$$
We denote {\em the degree shifting number} of $g\in G$ by
\begin{equation}
\label{degree-shifting-number}
\iota(g):=\sum_{j=1}^{N}(\theta_{j}(g)-q_j).
\end{equation}
Let $N_g:=\dim_\C {\rm Fix}(g).$
The {\em complex degree} of a homogeneous element $\phi_g\in \cH_g$ in~\eqref{broad-element-notation} is defined by
\begin{equation}
\label{complex-degree}
\deg_\C\phi_g:={N_g\over 2}+\iota(g).
\end{equation}
Note that the $g$-twisted sector $\cH_g$ (and any element in it) is {\em narrow} if and only if $N_g=0$. 

We will denote the  {\em central charge} of the singularity $W$ by 
\begin{equation}
\label{central}
\widehat{c}_W:=\sum_{j=1}^{N}\left(1-{2w_j\over d}\right) \in \mathbb{Q}_{\geq 0}.
\end{equation}

\begin{lemma}
\label{nonvanishing-lemma}
If the genus-zero FJRW invariant 
$$\LD\phi_{g_1}\psi_1^{\ell_1},\ldots,\phi_{g_k}\psi_k^{\ell_k}\RD_{0, k}^{W, G}\neq 0,$$ 
then 
\begin{eqnarray}
\widehat{c}_W-3+k=\sum_{i=1}^{k}(\deg_\C\phi_{g_i}+\ell_i),\quad\text{and} \label{virtual-degree}\\
{w_j\over d}\left(k-2\right)-\sum_{i=1}^{k}\theta_{j}(g_i)\in \Z. 
\label{selection-rule}
\end{eqnarray}
\end{lemma}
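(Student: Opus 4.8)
Both \eqref{virtual-degree} and \eqref{selection-rule} are extracted from the defining axioms of the FJRW CohFT $\{\Lambda_{g,k}^{W,G}\}$ recalled above, namely the grading (dimension) axiom and the fact that $\Lambda_{0,k}^{W,G}(\phi_{g_1},\dots,\phi_{g_k})$ is the pushforward of a virtual class from the moduli of $W$-spin structures with prescribed monodromies, so it vanishes whenever that moduli space is empty (cf.\ \cite[Theorem 4.2.2]{FJR}). The plan is: deduce \eqref{virtual-degree} from a degree count on $\overline{\mathcal{M}}_{0,k}$, and deduce \eqref{selection-rule} from the integrality of the degree of the line bundles underlying a $W$-structure.

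First I would prove \eqref{virtual-degree}. By the grading axiom, the class $\Lambda_{0,k}^{W,G}(\phi_{g_1},\dots,\phi_{g_k})$ lies in $H^{2D}(\overline{\mathcal{M}}_{0,k},\mathbb{Q})$ with complex degree
$$
D=\widehat{c}_W(g-1)+\sum_{i=1}^k\deg_\C\phi_{g_i}\Big|_{g=0}=\sum_{i=1}^k\deg_\C\phi_{g_i}-\widehat{c}_W ,
$$
where $\deg_\C\phi_{g_i}$ is as in \eqref{complex-degree}. The psi class $\psi_i^{\ell_i}$ contributes complex degree $\ell_i$, so the integrand $\Lambda_{0,k}^{W,G}(\phi_{g_1},\dots,\phi_{g_k})\prod_{i=1}^k\psi_i^{\ell_i}$ has total complex degree $D+\sum_{i=1}^k\ell_i$. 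Since $\dim_\C\overline{\mathcal{M}}_{0,k}=k-3$, the integral \eqref{fjrw-inv} vanishes unless $D+\sum_{i=1}^k\ell_i=k-3$; rearranging gives $\sum_{i=1}^k(\deg_\C\phi_{g_i}+\ell_i)=\widehat{c}_W+k-3$, which is \eqref{virtual-degree}.

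Next I would prove \eqref{selection-rule}. Recall (cf.\ \cite{FJR}) that $\Lambda_{0,k}^{W,G}(\phi_{g_1},\dots,\phi_{g_k})$ is built by pushing the FJRW virtual class forward along the forgetful morphism from the moduli space of genus-zero $k$-pointed $W$-spin structures with monodromy $g_i$ at the $i$-th marked point to $\overline{\mathcal{M}}_{0,k}$; if that moduli space is empty the class is $0$ and the invariant vanishes. A point of this moduli space carries orbifold line bundles $\mathcal{L}_1,\dots,\mathcal{L}_N$ on the orbifold curve $\mathcal{C}$ with isomorphisms $\bigotimes_j\mathcal{L}_j^{\otimes b_j}\cong\omega_{\mathcal{C},\log}$ for every monomial $\prod_j x_j^{b_j}$ of $W$; since $\sum_j b_j w_j=d$ and $\deg\omega_{\mathcal{C},\log}=2g-2+k$ as a $\mathbb{Q}$-divisor, this forces $\deg\mathcal{L}_j={w_j\over d}(2g-2+k)$. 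The local monodromy of $\mathcal{L}_j$ at the $i$-th marked point equals $e^{2\pi\sqrt{-1}\theta_j(g_i)}$, so the underlying line bundle of $\mathcal{L}_j$ on the coarse curve $\mathbb{P}^1$ has degree ${w_j\over d}(2g-2+k)-\sum_{i=1}^k\theta_j(g_i)$, which must be an integer. Specializing to $g=0$ yields ${w_j\over d}(k-2)-\sum_{i=1}^k\theta_j(g_i)\in\Z$, i.e.\ \eqref{selection-rule}.

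I expect the main obstacle to be purely conventional bookkeeping rather than substance: pinning down the precise form of the grading axiom as stated in \cite{FJR} (the genus term $\widehat{c}_W(g-1)$, and the agreement of \eqref{complex-degree} with the degree used there), and carefully matching the numbers $\theta_j(g_i)\in[0,1)\cap\mathbb{Q}$ with the ages/local monodromies of the line bundles $\mathcal{L}_j$, together with the normalization $\deg\omega_{\mathcal{C},\log}=2g-2+k$ on an orbifold curve with $k$ stacky markings. Once these identifications are fixed, both statements follow immediately, and I would simply cite \cite[Theorem 4.2.2]{FJR} for the vanishing of the virtual class on an empty moduli space and for the grading axiom.
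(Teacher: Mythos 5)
Your argument is correct, and it reconstructs exactly the content the paper delegates to citations: equation~\eqref{virtual-degree} is the degree/dimension axiom of the CohFT from \cite[Theorem 4.1.8~(1)]{FJR} specialized to genus zero, and equation~\eqref{selection-rule} is the integrality constraint on the degrees of the $W$-structure line bundles as in \cite[Proposition~2.2.8]{FJR}. The paper itself offers no proof beyond those two citations, so your proposal is the same approach in detail rather than summary; the small sanity check worth noting is that your grading formula $D=\widehat{c}_W(g-1)+\sum_i\deg_\C\phi_{g_i}$ is consistent at genus zero with the pairing nondegeneracy (take $k=3$ with one insertion equal to $\one$ and use $\mu(g)+\mu(g^{-1})=0$), which is a quick way to confirm you have matched FJR's normalization for $\deg_\C$.
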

Here the equation~\eqref{virtual-degree} follows from~\cite[Theorem 4.1.8 (1)]{FJR}. We call it the {\em homogeneity property} or the degree constraint  (in genus zero). 
The equation~\eqref{selection-rule} is called the {\em selection rule}~\cite[Proposition 2.2.8]{FJR}.

\subsubsection{A quantum product}

We denote the rank of $\cH_{W, G}$ by $r$ and fix a homogeneous basis $\{\phi_i\}_{i=1}^{r}$ of $\cH_{W, G}$.
Let
$${\bf t}(z):=\sum_{m\geq 0}\sum_{i=1}^{r}t_{i, m}\phi_i z^m\in \cH_{W,G}[\![z]\!].$$
It is convenient to use the following double bracket notation to define a {\em correlation function} (as a formal power series of variables $\{t_{i, m}\}$)
\begin{align*}
&\LL\gamma_1\psi_1^{\ell_1},\ldots,\gamma_k\psi_k^{\ell_k}\RR_{g,k}^{W,G}(\bt(\psi))
\\:=&\sum_{n\geq 0}{1\over n!}\LD\gamma_1\psi_1^{\ell_1},\ldots,\gamma_k\psi_k^{\ell_k}, \bt(\psi_{k+1}), \ldots, \bt(\psi_{k+n})\RD_{g,k+n}^{W,G}.
\end{align*}

In this paper, we only consider genus zero correlation functions.
We restrict to $$\bt:=\bt(0)=\sum\limits_{i=1}^{r}t_i\phi_i, \quad \text{with} \quad t_i:=t_{i, 0}.$$
The FJRW invariants induce a {\em quantum product} $\star_{\bt}$ defined by 
\begin{equation}
\label{quantum-product}
\<\phi_i\star_{\bt}\phi_j, \phi_k\>
=\LL\phi_i, \phi_j, \phi_k\RR^{W, G}_{0,3}(\bt)\in \C[\![t_1, \ldots, t_r]\!].
\end{equation}
By the WDVV equations, the quantum product is associative.
Recall that the narrow element $\sJ_1$ is the unit of the quantum product $\star$~\cite[Theorem 4.2.2]{FJR}. 
We sometimes denoted it by 
$\one:=\sJ_1$.

\subsection{A quantum spectrum conjecture in FJRW theory}

Now we consider the admissible LG pairs $(W, \<J\>)$. 
We recall that for an admissible LG pair $(W, \<J\>)$, its weight system is defined in~\eqref{weight-system}. 
Let us denote its {\em weight system} by 
$$(d; w_1, \ldots, w_N)\in (\mathbb{Z}_{>0})^{N+1}.$$
Recall 
According to Definition~\ref{index-lg-pair}, the {\em index} of the LG pair $(W, \<J\>)$ is given by 
$$\nu=d-\sum\limits_{j=1}^{N}w_j,$$
and we say that the LG pair is {\em Fano/CY/general type} if $\nu$ is negative/zero/positive.


\subsubsection{The element $\tau(t)$}

Recall that the element $\tau(t)$ is defined in~\eqref{def-tau}.
We write
\begin{equation}
\label{tau-component}
\tau(t):=\sum_{m\in {\bf Mir}}\tau_m(t)\ \sJ_m.
\end{equation}
Here  ${\bf Mir}=\{m\in {\bf Nar}\mid \tau_m(t)\neq 0\}$.
In particular, it follows from formula~\eqref{small-i-function} that $1\in {\bf Mir}$ if and only if $\nu=1.$
Let 
\begin{equation}
\label{condition-m-in-tau}
{\bf Mir}_{\geq 2}:=\left\{m\in {\bf Nar}\ \bigg\vert\  m-1-\sum_{j=1}^{N}\left \lfloor {w_j\cdot m\over d}\right\rfloor=1\right\}\subset {\bf Nar}.
\end{equation}
Then we have 
$${\bf Mir}=
\begin{dcases}
\{1\}\cup {\bf Mir}_{\geq 2}, & \text{ if } \nu=1;\\
 {\bf Mir}_{\geq 2}, & \text{ if } \nu>1.
\end{dcases}
$$
We also have 
\begin{equation}
\label{coefficient-tau}
\tau_m(t)=
\begin{dcases}
{t^{d}\over d!}\prod_{j=1}^{N}
{\Gamma(w_j+{w_j\over d})\over 
\Gamma\left({w_j\over d}\right)}, & \text{ if } m=1;\\
{t^{m-1}\over (m-1)!}\prod_{j=1}^{N}
{\Gamma({w_j\cdot m\over d})\over 
\Gamma\left(\left\{{w_j\cdot m\over d}\right\}\right)}, & \text{ if } m>1.
\end{dcases}
\end{equation}

\begin{example}
If $W=\sum\limits_{i=1}^{N}x_i^d$ is a Fermat homogeneous polynomial of general type, then
\begin{equation}
\label{tau-fermat}
\tau(t)=
\begin{dcases}
t\sJ_2, & \textit{ if } d>N+1;\\
t\sJ_2+\frac{t^{d}}{d!d^N}\sJ_1, & \textit{ if } d=N+1>2;\\
{t^2\over 4}\sJ_1, & \textit{ if } d=N+1=2.
\end{dcases}
\end{equation}
\end{example}

The following example shows that the set ${\bf Mir}$ can contain many elements. 
\begin{example}
The $Q_{11}$-singularity $W=x_1^2x_2+x_2^3x_3+x_3^3$ has the weight system $(18; 7, 4, 6)$ and the index $\nu=1.$ We have 
$${\bf Mir}=\{1, 2, 4, 5, 7, 10, 11, 13, 14, 16\}={\bf Nar}\backslash\{17\}.$$
\end{example}

Using formulas~\eqref{degree-shifting-number} and~\eqref{complex-degree}, we have 
\begin{equation}
\label{degree-narrow}
1-\deg_\C\sJ_m=1-\sum_{j=1}^{N}\left(\{{w_j\cdot m\over d}\}-{w_j\over d}\right)
=\begin{dcases}
1, & \text{ if } m=1;\\
{\nu(m-1)\over d}, & \text{ if } m\in {\bf Mir}_{\geq 2}.
\end{dcases}
\end{equation}
Here the last equality follows from the equality~\eqref{condition-m-in-tau} when $m\neq 1$.

\begin{proposition}
\label{prop-euler-tau}
If $\nu>0$, then the Euler vector field $\sE(\tau(t))$ takes the form
\begin{equation}
\label{euler-tau}\sE(\tau(t))={\nu\over d}t\tau'(t).
\end{equation}
\end{proposition}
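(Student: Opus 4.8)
The plan is to compute $\sE(\tau(t))$ directly from the definition \eqref{euler-vector} of the Euler vector field, using the explicit form of $\tau(t)$ recorded in \eqref{tau-component}--\eqref{coefficient-tau} together with the degree formula \eqref{degree-narrow}. Write $\tau(t)=\sum_{m\in{\bf Mir}}\tau_m(t)\,\sJ_m$. Under the identification of $\cH_{W,\<J\>}$ with its tangent space, the coefficient of $\sJ_m$ plays the role of the coordinate $t_i$ attached to $\phi_i=\sJ_m$, so applying $\sE$ amounts to multiplying the $\sJ_m$-component by $\bigl(1-\deg_\C\sJ_m\bigr)$ and by the appropriate power-of-$t$ weight coming from $t_i\,\partial/\partial t_i$. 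Concretely, since each $\tau_m(t)$ is a monomial in $t$ — namely $t^{d}/d!\cdot(\cdots)$ when $m=1$ and $t^{m-1}/(m-1)!\cdot(\cdots)$ when $m\in{\bf Mir}_{\geq2}$ — the operator $t_i\,\partial/\partial t_i$ acts on the $\sJ_m$-component by the scalar equal to that $t$-degree: $d$ in the case $m=1$, and $m-1$ in the case $m\in{\bf Mir}_{\geq2}$.

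The key step is then to match these two scalars against $\bigl(1-\deg_\C\sJ_m\bigr)$ supplied by \eqref{degree-narrow}. For $m=1$ we have $1-\deg_\C\sJ_1=1$, while the $t$-degree of $\tau_1(t)$ is $d$; note $1\in{\bf Mir}$ only when $\nu=1$, in which case $\tfrac{\nu}{d}\cdot d=1$, so the contribution $(1-\deg_\C\sJ_1)\cdot(\text{coefficient extracted by }t\partial_t)$ equals $\tfrac{\nu}{d}$ times $t\tau_1'(t)$, because $t\tau_1'(t)=d\,\tau_1(t)$. For $m\in{\bf Mir}_{\geq2}$ we have $1-\deg_\C\sJ_m=\tfrac{\nu(m-1)}{d}$ by \eqref{degree-narrow}, and $t\tau_m'(t)=(m-1)\tau_m(t)$, so $(1-\deg_\C\sJ_m)\,\tau_m(t)=\tfrac{\nu}{d}(m-1)\tau_m(t)=\tfrac{\nu}{d}\,t\tau_m'(t)$. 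In both cases the $\sJ_m$-component of $\sE(\tau(t))$ equals $\tfrac{\nu}{d}$ times the $\sJ_m$-component of $t\tau'(t)$, and summing over $m\in{\bf Mir}$ gives \eqref{euler-tau}.

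The only genuine subtlety — and the step I expect to need the most care — is the bookkeeping around the definition of $\sE$ acting on a point versus on a vector field, i.e. the fact that $\tau(t)$ is being viewed both as an element of $\cH_{\rm nar}$ and, via the canonical identification, as a tangent vector; one should be explicit that $\sE(\bt)=\sum_i(1-\deg_\C\phi_i)t_i\,\partial/\partial t_i$ evaluated at $\bt=\tau(t)$ yields the vector $\sum_i(1-\deg_\C\phi_i)\,\tau_i(t)\,\phi_i$, where $\tau_i(t)$ is the $\phi_i$-coordinate of $\tau(t)$, and that this coincides with $\tfrac{\nu}{d}t\tfrac{d}{dt}\tau(t)$ by the monomial computation above. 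The positivity hypothesis $\nu>0$ is used only to guarantee $\tau(t)\in\cH_{\rm nar}[t]$ is an honest polynomial (so that $\tau'(t)$ and the restriction $t=1$ make sense), which is already noted in the text; it plays no role in the algebraic identity itself. No deep input is required — the proposition is essentially a compatibility check between the grading on $\cH_{W,\<J\>}$ and the hypergeometric shape of the small $I$-function encoded in \eqref{coefficient-tau} and \eqref{degree-narrow}.
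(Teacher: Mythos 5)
Your proof is correct and follows the same route as the paper: unpack the definition \eqref{euler-vector} to get $\sE(\tau(t))=\sum_{m\in{\bf Mir}}(1-\deg_\C\sJ_m)\tau_m(t)\,\sJ_m$, then use \eqref{degree-narrow} and the monomial form of $\tau_m(t)$ from \eqref{coefficient-tau} to identify each summand with $\tfrac{\nu}{d}$ times the corresponding term of $t\tau'(t)$. The paper states this in one line; you have simply spelled out the two cases $m=1$ and $m\in{\bf Mir}_{\ge 2}$ and the tangent-space identification, which is a harmless (and arguably clarifying) expansion of the same argument.
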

\begin{proof}
By the definition in ~\eqref{euler-vector}, we have 
$$\sE(\tau(t))=\sum_{m\in {\bf Mir}}(1-\deg_\C\sJ_m)\tau_m(t)\sJ_m.$$
Now the result follows directly from~\eqref{degree-narrow} and~\eqref{coefficient-tau}.
\end{proof}

\subsubsection{A quantum product ${\nu\over d}\tau'\star_\tau$}
For simplicity, we will write 
$$\tau:=\tau(1) \quad \text{and} \quad \tau':=\tau'(1).$$
In the following, we will focus on the quantum multiplication ${(\nu/d)}\tau'\star_\tau$.
The following proposition shows that
\begin{equation}
\label{c1-avatar}
{\nu\over d}\tau'\star_\tau\in  {\rm End}(\cH_{W,  \<J\>}).
\end{equation}
\begin{proposition}
\label{prop-convergence}
Let $(W, \<J\>)$ be an admissible LG pair of general type, then 
$${\nu\over d}t\tau'(t)\star_{\tau(t)} \in \C[t]\otimes_{\C} {\rm End}(\cH_{W,  \<J\>}).$$
\end{proposition}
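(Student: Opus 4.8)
The plan is to show that $\tfrac{\nu}{d}t\tau'(t)\star_{\tau(t)}$ is polynomial in $t$ by a degree/homogeneity argument, combined with the observation that $\tau(t)$ is itself a polynomial so that the quantum product $\star_{\tau(t)}$ is manifestly a formal power series in $t$ a priori. First I would recall that since $W$ is of general type ($\nu>0$), formula~\eqref{coefficient-tau} together with~\eqref{condition-m-in-tau} gives that each $\tau_m(t)$ is a monomial in $t$, hence $\tau(t)\in\cH_{\rm nar}[t]$ and so $\star_{\tau(t)}$ lies in $\C[\![t]\!]\otimes\mathrm{End}(\cH_{W,\<J\>})$ to begin with; one only needs to promote "$[\![t]\!]$" to "$[t]$". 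The operator $t\tau'(t)$ is visibly polynomial, so the whole issue reduces to showing that for each pair of basis elements $\phi_i,\phi_j$ the matrix coefficient $\langle (t\tau'(t)\star_{\tau(t)})\phi_i,\phi^j\rangle$ is a polynomial in $t$.

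The key step is a degree bound coming from the Euler vector field. By Proposition~\ref{prop-euler-tau}, $\sE(\tau(t))=\tfrac{\nu}{d}t\tau'(t)$, so $\tfrac{\nu}{d}t\tau'(t)\star_{\tau(t)}$ is the operator of quantum multiplication by the Euler field restricted to the locus $\bt=\tau(t)$. The correlation function $\LL\phi_i,\phi_j,\sE(\tau(t))\RR_{0,3}^{W,\<J\>}$ expands as $\sum_{n\geq 0}\tfrac1{n!}\LD\phi_i,\phi_j,\sE(\tau(t)),\tau(t),\dots,\tau(t)\RD_{0,n+3}$ with $n$ insertions of $\tau(t)$. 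Using the homogeneity property~\eqref{virtual-degree} and the selection rule~\eqref{selection-rule} from Lemma~\ref{nonvanishing-lemma}, a nonzero genus-zero FJRW invariant with these insertions forces a linear relation among the complex degrees; since each $\tau_m(t)$ carries $t$-degree proportional to $1-\deg_\C\sJ_m=\tfrac{\nu(m-1)}{d}>0$ for $m\in\mathbf{Mir}_{\geq2}$ (and $t$-degree $d$ for $m=1$ when $\nu=1$), the total $t$-power of a surviving term in the sum over $n$ is a fixed affine-linear function of $n$ multiplied by the homogeneity constraint, which bounds $n$ from above once $\deg_\C\phi_i+\deg_\C\phi_j$ is fixed. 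Hence only finitely many $n$ contribute to each fixed matrix coefficient, and each contributes a monomial in $t$; therefore $\langle(\tfrac{\nu}{d}t\tau'(t)\star_{\tau(t)})\phi_i,\phi^j\rangle\in\C[t]$.

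Concretely, I expect the bound to read: a term with $n$ copies of $\tau(t)$ contributes $t$-degree $\sum_{\text{the }n\text{ inserted }\tau\text{'s}} (1-\deg_\C\sJ_{m})\cdot\tfrac{d}{\nu}$ (up to the normalization in~\eqref{coefficient-tau}), while~\eqref{virtual-degree} applied to the $(n+3)$-point invariant equates $\widehat c_W-3+(n+3)=\deg_\C\phi_i+\deg_\C\phi_j+\deg_\C\sE(\tau(t))+\sum(\deg_\C\sJ_{m_a})$, i.e. $\widehat c_W+n = \deg_\C\phi_i+\deg_\C\phi_j+1+\sum_{a=1}^n\deg_\C\sJ_{m_a}$ (using $\deg_\C\sE(\tau)=1$ from~\eqref{degree-narrow}), which rearranges to $\sum_{a=1}^n(1-\deg_\C\sJ_{m_a}) = \widehat c_W +1-\deg_\C\phi_i-\deg_\C\phi_j$, a \emph{fixed} number independent of $n$. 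Since each summand $1-\deg_\C\sJ_{m_a}$ is strictly positive (being $\tfrac{\nu(m_a-1)}{d}$ or $1$), this is a partition of a fixed positive rational into a bounded number of bounded-below positive parts, so $n$ is bounded and the $t$-degree of the matrix coefficient is exactly $\tfrac{d}{\nu}(\widehat c_W+1-\deg_\C\phi_i-\deg_\C\phi_j)$ whenever nonzero.

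The main obstacle will be handling the degenerate/edge cases cleanly: the $\nu=1$ case where $1\in\mathbf{Mir}$ and $\tau_1(t)$ has $t$-degree $d$ rather than being governed by the $\tfrac{\nu(m-1)}{d}$ formula, and making sure the argument is uniform when $\phi_i$ or $\phi_j$ is broad (the homogeneity property still applies, but one must be careful that $\deg_\C$ of broad classes can be larger, only making $n$ smaller, so the bound persists). I would also double-check that no subtlety arises from the distinction between $\star_{\tau(t)}$ as a formal series and the substitution $\bt=\tau(t)$ into $\star_{\bt}$ — but this is exactly the standard string/divisor-type manipulation and follows from the definitions in~\eqref{quantum-product}. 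Once these are dispatched, the proposition follows, and as a byproduct one reads off that the matrix of $\tfrac{\nu}{d}\tau'\star_\tau$ at $t=1$ is obtained by setting $t=1$ in a genuine polynomial operator, justifying~\eqref{c1-avatar}.
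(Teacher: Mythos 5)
Your proposal is essentially the paper's argument: both expand $\LL \tau'(t),\phi_i,\phi_j\RR_{0,3}(\tau(t))$ in the number $n$ of $\tau(t)$-insertions, apply the homogeneity constraint~\eqref{virtual-degree}, and lower-bound each $1-\deg_\C\sJ_m$ by $\nu/d>0$ to conclude $n$ is bounded (the paper additionally invokes the string equation to remove $m=1$ insertions before assuming $m_k\geq 2$, a step your positivity observation makes unnecessary). One small slip: your rearranged identity should read $\sum_a(1-\deg_\C\sJ_{m_a})=1+\deg_\C\phi_i+\deg_\C\phi_j-\widehat c_W$, not $\widehat c_W+1-\deg_\C\phi_i-\deg_\C\phi_j$, and the shorthand ``$\deg_\C\sE(\tau)=1$'' should be replaced by decomposing $\sE(\tau(t))$ into its homogeneous narrow pieces $\sJ_{m_0}$ before applying~\eqref{virtual-degree}, since $\sE(\tau(t))$ itself is not homogeneous.
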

\begin{proof}
For any pair of homogeneous elements $\phi_i, \phi_j\in \cH_{W,  \<J\>}$, we will prove
$$\LL \tau'(t), \phi_i, \phi_j\RR_{0,3}(\tau(t))\in \C[t].$$ 
Using~\eqref{tau-component} and~\eqref{coefficient-tau}, it suffices to show that for any $m_1, \ldots, m_n\in {\bf Mir}$,
\begin{equation}
\label{polynomiality-tau}
\LD\sJ_{m_1}, \phi_i, \phi_j, \sJ_{m_2}, \ldots, \sJ_{m_n}\RD_{0, n+2}
\end{equation}
vanishes if $n$ is large enough.
According to the {\em string equation}~\cite[Theorem 4.2.9]{FJR}, this is true if $n\geq 2$ and at least one of $m_1, \ldots, m_n$ is $1.$ So we can assume $m_k\geq 2$ for each $k.$
Now according to the homogeneity property~\eqref{virtual-degree}, if such an FJRW invariant~\eqref{polynomiality-tau} is nonzero, then we have 
\begin{eqnarray*}
\widehat{c}_W-3+n+2
&=&\deg_\C\phi_i+\deg_\C\phi_j+\sum_{k=1}^{n}\deg_\C\sJ_{m_k}\\
&=&\deg_\C\phi_i+\deg_\C\phi_j+n-\sum_{k=1}^{n}{\nu(m_k-1)\over d}\\
&\leq&\deg_\C\phi_i+\deg_\C\phi_j+n-{\nu\over d}n.
\end{eqnarray*}
Here the second equality follows from the degree formula~\eqref{degree-narrow}, and the last inequality holds as we assume $m_k\geq 2$ for each $k$.
Since $\nu>0$,  we have $$n\leq {d\over \nu}\left(\deg_\C\phi_i+\deg_\C\phi_j+1-\widehat{c}_W\right).$$
Thus $n$ is bounded.
\end{proof}

\subsubsection{A conjecture on quantum spectrum for admissible LG pairs}
In~\eqref{def-evalue}, we denote the set of eigenvalues of the quantum multiplication ${\nu\over d}\tau'\star_{\tau}$ on $\cH_{W, \<J\>}$ by $\fE$ and refer to it as the
\emph{quantum spectrum}.
Inspired by Galkin-Golyshev-Iritani's Conjecture $\mathcal{O}$ for quantum cohomology of Fano manifolds~\cite{GGI}, we propose the following {\em quantum spectrum conjecture for admissible LG pair $(W, \<J\>)$ of general type}.
\begin{conj}
\label{quantum-spectrum-conj}
Let $(W, \< J\>)$ be an admissible LG pair with $W$ a nondegenerate quasihomogeneous polynomial of general type.
Let $(d; w_1, \ldots, w_N)$ be its weight system. 
Then the set $\fE$ contains the number
\begin{equation}
\label{largest-positive-number}
T:=\nu\left(d^{-d}\prod_{j=1}^Nw_j^{w_j}\right)^{1\over \nu}\in \mathbb{R}_{>0}
\end{equation}
such that:
\begin{enumerate}
\item for any $\lambda\in 
\fE$, we have $|\lambda|\leq T$;
\item the following two sets are the same:
$$\Big\{\lambda\in \fE\mid |\lambda|=T\Big\}=\Big\{Te^{2\pi\sqrt{-1}j/\nu}\mid j=0, 1, \ldots, \nu-1\Big\};$$
\item the multiplicity of each $\lambda\in \fE$ with $|\lambda|=T$ is one.
\end{enumerate} 
\end{conj}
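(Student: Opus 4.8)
The final statement is Conjecture~\ref{quantum-spectrum-conj} itself, which is a conjecture and not something to be proved in general; so a ``proof proposal'' here should be read as a proposal for the \emph{strategy} by which the paper establishes the conjecture in the verified cases (the mirror ADE singularities and the Fermat homogeneous polynomials with $d-N>1$), since those are the only cases in which a proof is possible.

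\textbf{The overall plan.} The plan is to reduce the spectral statement about the operator $(\nu/d)\,\tau'\star_\tau$ acting on $\cH_{W,\<J\>}$ to an explicit computation, in two complementary regimes. In the first regime (mirror ADE singularities, Section~\ref{sec-mirror-simple}), where $\widehat c_W<1$, the state space $\cH_{W,\<J\>}$ is small and the quantum product is semisimple and can be written down by hand: one computes the genus-zero three-point FJRW invariants using the selection rule~\eqref{selection-rule} and the homogeneity constraint~\eqref{virtual-degree}, assembles the matrix of $(\nu/d)\tau'\star_\tau$ in the standard basis $\{\sJ_m\}$, and then diagonalizes it directly. One checks by inspection that the eigenvalues are exactly $\{T e^{2\pi\sqrt{-1}j/\nu}\}_{j=0}^{\nu-1}$ together with smaller-modulus eigenvalues, that the spectral radius equals $T$ from~\eqref{largest-positive-number}, and that the top-modulus eigenvalues are simple. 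Here $\tau(t)$ is given by the short closed formula~\eqref{tau-fermat}/\eqref{coefficient-tau}, which for these singularities has only one or two terms, so $\tau'\star_\tau$ is an honest small matrix.

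\textbf{The Fermat case via mirror symmetry.} In the second regime (Fermat homogeneous $W=\sum x_i^d$ with $d-N>1$, Theorem~\ref{main-thm-intro}), the state space is too large to diagonalize by hand, so the plan is to pass through mirror symmetry and asymptotic analysis. The key mechanism is: by~\eqref{intro-meromorphic} the operator $(\nu/d)\tau'\star_\tau$ is exactly the residue of the quantum connection $\nabla_{z\partial/\partial z}$ at the irregular singularity $z=0$; the exponents $e^{\lambda/z}$ appearing in the formal/analytic decomposition of solutions at $z=0$ record precisely the eigenvalues $\lambda\in\fE$. Now invoke the mirror theorem for Fermat polynomials (Proposition~\ref{mirror-theorem-fermat}, which establishes Conjecture~\ref{conjecture-i-function-formula} in this case) to identify the flat sections with the small $I$-function~\eqref{small-i-function}, which by Proposition~\ref{i-function-via-ghf} satisfies the generalized hypergeometric equation~\eqref{ghe-equation-intro}. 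The asymptotics of solutions of~\eqref{ghe-equation-intro} at $x=\infty$ (equivalently $z=0$) are classical: by Barnes' formula (used in Proposition~\ref{wall-to-asymptotic}), the exponentially large/small solutions have leading exponentials $e^{\lambda/z}$ with $\lambda = T e^{2\pi\sqrt{-1}\ell/\nu}$, $\ell=1-\nu,\dots,0$. So these $\nu$ numbers all lie in $\fE$, each with multiplicity one (the relevant characteristic roots of the hypergeometric operator at its irregular point are distinct because the $\nu$-th roots of unity are distinct), which gives parts~(2) and~(3). For part~(1) one must rule out any eigenvalue of modulus $>T$: this follows because $T$ is the modulus of the \emph{dominant} exponential in the full asymptotic expansion of \emph{every} solution of~\eqref{ghe-equation-intro} at infinity — the hypergeometric operator has exactly $\nu$ exponential directions and all of modulus $T$ — so no flat section can grow faster than $e^{T/|z|}$, hence $|\lambda|\le T$ for all $\lambda\in\fE$.

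\textbf{The main obstacle.} The hardest part is part~(1), the spectral-radius bound, in the Fermat case. Parts~(2) and~(3) only require identifying $\nu$ specific elements of $\fE$ together with their multiplicity, which Barnes' formula delivers after the mirror theorem is in hand; but bounding \emph{all} of $\fE$ from above requires knowing that the asymptotic expansion at $z=0$ produced by the hypergeometric equation is exhaustive — i.e. that there is no ``hidden'' exponential growth rate, and that the identification of flat sections of the quantum connection with solutions of~\eqref{ghe-equation-intro} is a full (rank-preserving) identification rather than a partial one. This is where the hypothesis $d-N>1$ (equivalently $p<q$, so that the hypergeometric operator in~\eqref{ghe-equation-intro} is of the irregular type with a single block of exponential directions) is essential: it guarantees that the exponents at infinity are exactly the $\nu$-th roots of $T^\nu$ and nothing else, so the Poincaré rank is $1$ and the dominant balance is governed solely by those $\nu$ exponentials. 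Making this rigorous requires citing the precise asymptotic theory for Meijer $G$-functions / generalized hypergeometric functions of type ${}_pF_{q}$ with $p\le q$ (Luke, Fields, the DLMF) to control \emph{all} Stokes sectors simultaneously, and then transferring that control back through the mirror map — whose polynomiality (Proposition~\ref{prop-convergence}, and the fact that $\tau\in\cH_{\rm nar}[t]$) ensures the change of variables $x=d^{-d}\prod w_j^{w_j} z^{-\nu}$, $t=1$ is legitimate and the restriction to $\bt=\tau$ in~\eqref{intro-meromorphic} is the correct specialization.
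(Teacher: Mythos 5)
Your description of the mirror ADE case matches the paper: there the conjecture is verified by directly computing the three- and four-point FJRW invariants, writing the matrix of $\tau'\star_\tau$, and reading off the spectrum. For the Fermat case, however, the paper takes a genuinely different and more elementary route than the asymptotic/Stokes analysis you propose. The mirror theorem (Proposition~\ref{mirror-theorem-fermat}) is used only to derive an exact algebraic relation in the quantum ring: applying the $I$-function differential equation~\eqref{i-function-ode} to $tJ(\tau(t),z)=tzS^{-1}(\tau(t),z)\one$ and extracting the leading term in $z$ gives $(\tau')^{d-1}=d^{-N}(\tau')^{N-1}$ (Propositions~\ref{mirror-quantum-relation} and~\ref{lem-quantum-relation}). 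The paper then proves that $\{\one,\tau',\dots,(\tau')^{d-2}\}$ is a basis of the narrow ($=G_W$-invariant) subspace (Propositions~\ref{basis-fermat-subspace} and~\ref{basis-fermat}) and that $\sJ_2\star_\tau$ annihilates the broad sectors when $\nu>1$ (Lemma~\ref{trivial-action-nu>1}, a selection-rule argument). Together these determine the characteristic polynomial of $\tau'\star_\tau$ exactly, namely $\lambda^{N-1}(\lambda^{\nu}-d^{-N})$ on the narrow part and $0$ on the broad part, so parts (1), (2), (3) all follow at once from an exact computation of $\fE$; no Barnes asymptotics or control of Stokes sectors enters the proof of the conjecture itself (those appear only in the Gamma-class/asymptotic-class part of the paper).

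There is also a genuine gap in your argument for part (1) as you set it up. The generalized hypergeometric equation~\eqref{ghe-equation-intro} governs only the scalar functions obtained from the small $I$-function, i.e.\ the cyclic piece of the quantum connection generated by $\one$ inside the narrow subspace. To conclude that \emph{every} flat section of the full connection~\eqref{intro-meromorphic} on $\cH_{W,\<J\>}$ grows at most like $e^{T/|z|}$ — hence $|\lambda|\leq T$ for all $\lambda\in\fE$ — you must know (i) that the cyclic span of $\one$ under $\tau'\star_\tau$ exhausts the narrow subspace, and (ii) that the broad sectors contribute no new exponential directions. Neither follows from the asymptotic theory of ${}_pF_q$ or Meijer $G$-functions, however carefully the Stokes sectors are tracked: an eigenvalue of modulus larger than $T$ could sit in a direction invisible to the scalar ODE. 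These two facts are exactly what the paper supplies by the basis propositions and by Lemma~\ref{trivial-action-nu>1} (and this is where the hypothesis $d-N>1$ is actually used — in killing the broad sectors — not in any statement about the Poincar\'e rank of the hypergeometric operator). Once (i) and (ii) are in hand, the spectrum is already computed exactly by the quantum relation, and the asymptotic machinery becomes unnecessary for Conjecture~\ref{quantum-spectrum-conj}.
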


If the conjecture holds, we call $T$ in~\eqref{largest-positive-number} the {\em principal} eigenvalue of ${\nu\over d}\tau'\star_{\tau}$. 
Part (3) of Conjecture~\ref{quantum-spectrum-conj} is also called the {\em multiplicity-one conjecture}.

\subsubsection{Evidence of Conjecture~\ref{quantum-spectrum-conj}}
In Section~\ref{sec-evidence}, we will check Conjecture~\ref{quantum-spectrum-conj} in special cases.
If $W$ is a mirror simple singularity as in~\eqref{mirror-ade-singularity}, the conjecture follows from direct calculations of the quantum products.  
If $W$ is a Fermat homogeneous polynomial with $\nu>1$, the conjecture follows from the result below:
\begin{proposition}
\label{prop-quantum-spectrum-inv}
For the Fermat homogeneous polynomial of general type
$$W=x_1^d+\ldots +x_N^d, \quad \text{with} \quad \nu=d-N>1,$$
the quantum spectrum of ${\nu \over d}\tau'\star_{\tau}$ for the LG pair $(W, \<J\>)$ is given by 
$$
\fE=
\begin{dcases}
\{\nu d^{-d/\nu}e^{2\pi\sqrt{-1}j/\nu} \mid j=0, 1, \ldots, \nu-1\}, & \text{if } d>N=1,\\
\{0, \nu d^{-d/\nu}e^{2\pi\sqrt{-1}j/\nu} \mid j=0, 1, \ldots, \nu-1\}, & \text{if } d >N>1.
\end{dcases}
$$
Moreover, all nonzero eigenvalues have multiplicity one. 
\end{proposition}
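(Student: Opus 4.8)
The strategy is to compute the quantum product $\tau'\star_\tau$ explicitly for the Fermat homogeneous polynomial $W=x_1^d+\cdots+x_N^d$ with $\nu=d-N>1$, using the mirror theorem (Proposition~\ref{mirror-theorem-fermat}, or directly the structure of $\tau(t)$ computed in~\eqref{tau-fermat}) together with known closed formulas for genus-zero FJRW invariants of Fermat polynomials. When $d>N+1$ we have $\tau=\sJ_2$ and $\tau'=\sJ_2$ from~\eqref{tau-fermat}, so the operator in question is simply $\tfrac{\nu}{d}\,\sJ_2\star_{\sJ_2}$, and even this restricted quantum product is a polynomial endomorphism by Proposition~\ref{prop-convergence}. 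The first step is therefore to identify a good basis of $\cH_{W,\<J\>}$ adapted to $\sJ_2$-multiplication: one expects the narrow part $\cH_{\rm nar}=\bigoplus_{m\in{\bf Nar}}\C\sJ_m$ to be $\star_\tau$-invariant (by the selection rule~\eqref{selection-rule} and the homogeneity constraint~\eqref{virtual-degree}, broad insertions cannot be produced from narrow ones in the relevant correlators), while the broad part contributes the eigenvalue $0$ — this accounts for the ``$0$'' appearing precisely when $N>1$, since for $N=1$ the singularity $A_{d-1}$ has no broad sectors.

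\textbf{Key steps.} (1) Reduce to the narrow block: show $\cH_{\rm bro}$ lies in the kernel of $\tfrac{\nu}{d}\tau'\star_\tau$, using the degree constraint and selection rule to kill all three-point invariants $\LD\tau',\phi,\cdot\,\RD$ with $\phi$ broad, so that $\fE\setminus\{0\}$ is detected on $\cH_{\rm nar}$, which has dimension $|{\bf Nar}|=(d-1)^N$ reduced appropriately; more precisely the narrow indices here are $m\in\{1,\ldots,d-1\}$ that are not multiples making $w_jm/d\in\Z$ — for Fermat all $w_j=1$, so ${\bf Nar}=\{1,\ldots,d-1\}$ is not quite right, one must take the correct multi-index description, but the point is the narrow block is cyclic. (2) Compute the matrix of $\sJ_2\star_\tau$ on $\cH_{\rm nar}$ via the string/dilaton equations and the explicit two- and three-point FJRW invariants for Fermat (these are products of $\Gamma$-quotients, as in~\eqref{coefficient-tau}); one expects $\sJ_2$-multiplication to act, up to normalization, as a cyclic shift $\sJ_m\mapsto \sJ_{m+1}$ with a single ``wrap-around'' relation, making the operator a companion-type matrix. (3) Diagonalize: the characteristic polynomial should be (a power of $z$ times) $z^\nu - T^\nu$ after the normalization in~\eqref{value-principal-spectrum}, whose roots are exactly $\nu d^{-d/\nu}e^{2\pi\sqrt{-1}j/\nu}$; read off multiplicity one for each nonzero root from the cyclic (non-derogatory) structure.

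\textbf{Main obstacle.} The hard part will be Step (2): pinning down the exact matrix of $\sJ_2\star_\tau$, i.e.\ showing that the quantum product has precisely the cyclic/companion form with no unexpected extra terms. This requires controlling \emph{all} the higher-point contributions $\LD\sJ_{m_1},\phi_i,\phi_j,\sJ_{m_2},\ldots,\sJ_{m_n}\RD_{0,n+2}$ entering $\LL\tau',\phi_i,\phi_j\RR_{0,3}(\tau)$ — Proposition~\ref{prop-convergence} guarantees only finitely many survive, but one must evaluate the survivors. Two routes are available: either invoke the mirror theorem to transport the computation to the Gromov--Witten side (Givental-style $I$-function yields the quantum connection, and the operator $\tfrac{\nu}{d}\tau'\star_\tau$ becomes the residue of the quantum connection~\eqref{intro-meromorphic} at $z=0$, whose characteristic polynomial is read off from the generalized hypergeometric equation~\eqref{ghe-equation-intro} — there $q+1=|{\bf Nar}|=p+\nu$, and the $\alpha_i,\rho_j$ make the exponents transparent); or compute the Fermat FJRW invariants directly from the known formulas. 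I would take the hypergeometric route: by Proposition~\ref{i-function-via-ghf}, after the change of variables $x=d^{-d}z^{-\nu}$ (all $w_j=1$), the coefficients of $\widetilde I$ satisfy~\eqref{ghe-equation-intro}, and the indicial equation of that ODE at $x=\infty$ (equivalently $z=0$) directly exhibits the $\nu$ roots of $z^\nu=T^\nu$ together with the remaining exponents contributing $0$, each with multiplicity one because the local exponents are distinct. The final cross-check is that the multiplicity statement matches part (3) of Conjecture~\ref{quantum-spectrum-conj} and is consistent with $T$ as in~\eqref{largest-positive-number} specialized to $d^{-d}\prod w_j^{w_j}=d^{-d}$.
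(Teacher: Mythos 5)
Your high-level plan — kill the broad part, compute the $\sJ_2$-multiplication on $\cH_{\rm nar}$, exhibit a companion-type matrix, and diagonalize — is essentially the same route the paper takes. But there is a genuine gap at the crucial step, plus a couple of smaller slips worth flagging.

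\textbf{The main gap.} In Step~(2) you write that ``one expects $\sJ_2$-multiplication to act, up to normalization, as a cyclic shift $\sJ_m\mapsto\sJ_{m+1}$ with a single wrap-around relation,'' and then defer to the hypergeometric equation to read off the eigenvalues. What you are actually missing is a proof that $\{\mathbf{1}, \tau',(\tau')^2,\ldots,(\tau')^{d-2}\}$ is a \emph{basis} of $\cH_{\rm nar}$. The quantum relation $(\tau')^{d-1}=d^{-N}(\tau')^{N-1}$ does follow from the $I$-function ODE via the mirror theorem (this is the paper's Proposition~\ref{mirror-quantum-relation} specialized to Fermat, Proposition~\ref{lem-quantum-relation}), but a polynomial relation in a commutative Frobenius algebra does not by itself give the characteristic polynomial unless you know the powers of $\tau'$ span the subalgebra. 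The paper supplies this via Proposition~\ref{basis-fermat-subspace}: one shows the change-of-basis matrix from $\{\sJ_m\}$ to $\{(\sJ_2)^{m-1}\}$ is upper triangular, using the homogeneity property~\eqref{virtual-degree} and the selection rule~\eqref{selection-rule} to force $d-j < m+1$ in each correlator. Without this, your ``companion matrix'' is only a hoped-for form; the multiplicity-one claim for the nonzero eigenvalues would not follow. Similarly, passing to the ODE and invoking ``the indicial equation at $x=\infty$'' does not replace this: $x=\infty$ is an \emph{irregular} singularity (the order of irregularity is one), so the eigenvalues of the leading term come from the exponential factors of formal solutions, not from a classical indicial polynomial, and in any case the ODE only sees the narrow block $\cH_{\rm nar}$ of dimension $|{\bf Nar}|=d-1$ — you still need to match it against the state space.

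\textbf{Smaller issues.} For Fermat $W=\sum x_i^d$ with all $w_j=1$, the set ${\bf Nar}=\{1,\ldots,d-1\}$ is in fact exactly right (your worry about a ``multi-index description'' is unnecessary), and $\dim\cH_{\rm nar}=d-1$, not $(d-1)^N$ — you have conflated the narrow state space with the full Milnor ring. Your reduction of the broad part to the kernel (Step~(1)) is correct in spirit but needs the precise degree/selection argument: one shows that a nonzero invariant $\LD\sJ_2,\phi,\phi_j,\sJ_2,\ldots\RD$ with $\phi,\phi_j$ broad forces both $(n+1)/d = 1/\nu$ and $(n+1)/d\in\Z$, impossible when $\nu>1$ (the paper's Lemma~\ref{trivial-action-nu>1}); this is where the hypothesis $\nu>1$ is actually used, and it is also where Lemma~\ref{gmax-inv-inv} and the identification $\cH_{W,\<J\>}^{G_W}=\cH_{\rm nar}$ (Proposition~\ref{fermat-broad}) enter, which your sketch omits.
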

The proof of this statement uses mirror symmetry and will be given in Section~\ref{sec-spectrum-fermat}.


\subsection{A variant of the quantum spectrum conjecture on a subalgebra}
\label{sec-variant}
The quantum multiplication on the state space $\cH_{W, \<J\>}$ is difficult to handle in general, especially when the subspace of broad sectors has a large dimension. 
When studying the Gromov-Witten theory of a hypersurface $X$, instead of considering the quantum product on the whole cohomology space $H^*(X)$, one often consider the quantum product on the subspace $H^*_{\rm amb}(X)$ of ambient cohomology classes. Here in FJRW theory, we will also consider the quantum product on a special subspace of $\cH_{W, \<J\>}$.

\subsubsection{The $G_W$-invariant subspace}
For any admissible LG pair $(W, G)$, we notice that the group $G_W$ acts on the state space $\cH_{W, G}$. We denote the $G_W$-invariant subspace by
$$\cH_{W, G}^{G_W}:=\{\phi\in \cH_{W, G}\mid g\cdot\phi=\phi, \forall g\in G_W\}.$$
\begin{lemma}
By viewing $\cH_{W, G}$ and $\cH_{W, G_W}$ as subspaces of $\bigoplus_{g\in G_{W}}{\rm Jac}(W_g)\cdot \omega_g$,
we have
$$\cH_{W, G}^{G_W}=\cH_{W, G}\cap\cH_{W, G_W}.$$
In particular, we have 
$\cH_{W, G_W}^{G_W}=\cH_{W, G_W}.$
\end{lemma}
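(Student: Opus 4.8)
The plan is to work with the concrete model of the state space given by~\eqref{fjrw=orbifold}, in which all three spaces $\cH_{W, G}$, $\cH_{W, G_W}$, and $\cH_{W, G}^{G_W}$ are realized inside the ambient space $\bigoplus_{g\in G_W}{\rm Jac}(W_g)\cdot\omega_g$ (note that $G\leq G_W$, so every $g$-twisted sector appearing for $G$ also appears for $G_W$). The key point to keep track of is that these spaces are obtained from the \emph{same} collection of sectors by taking invariants under \emph{different} groups: $\cH_{W, G}=\bigoplus_{g\in G}({\rm Jac}(W_g)\cdot\omega_g)^{G}$ while $\cH_{W, G_W}=\bigoplus_{g\in G_W}({\rm Jac}(W_g)\cdot\omega_g)^{G_W}$.

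First I would unwind the definition of $\cH_{W, G}^{G_W}$. An element $\phi\in\cH_{W, G}$ is a finite sum $\phi=\sum_{g\in G}\phi_g$ with $\phi_g\in({\rm Jac}(W_g)\cdot\omega_g)^{G}$; since $G_W$ acts on each twisted sector $\cH_g$ preserving the grading by $g$ (the $G_W$-action commutes with the decomposition, as $G_W$ is abelian and each $\cH_g$ is the $g$-eigenspace-type summand), the condition $h\cdot\phi=\phi$ for all $h\in G_W$ is equivalent to $h\cdot\phi_g=\phi_g$ for all $h\in G_W$ and all $g$. Thus $\phi\in\cH_{W, G}^{G_W}$ if and only if each $\phi_g\in({\rm Jac}(W_g)\cdot\omega_g)^{G_W}$ and $\phi_g=0$ for $g\notin G$.

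Next I would show this matches $\cH_{W, G}\cap\cH_{W, G_W}$ under the stated embeddings. An element of the intersection is a sum $\sum_g\phi_g$ that simultaneously lies in $\cH_{W, G}$ — forcing $\phi_g=0$ for $g\notin G$ and $\phi_g\in({\rm Jac}(W_g)\cdot\omega_g)^{G}$ — and in $\cH_{W, G_W}$ — forcing $\phi_g\in({\rm Jac}(W_g)\cdot\omega_g)^{G_W}$ for the surviving $g\in G$. Since $G\leq G_W$, the condition $({\rm Jac}(W_g)\cdot\omega_g)^{G_W}$ already implies $({\rm Jac}(W_g)\cdot\omega_g)^{G}$, so the two descriptions coincide, giving $\cH_{W, G}^{G_W}=\cH_{W, G}\cap\cH_{W, G_W}$. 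The final assertion $\cH_{W, G_W}^{G_W}=\cH_{W, G_W}$ is then immediate: take $G=G_W$, in which case $\cH_{W, G_W}\cap\cH_{W, G_W}=\cH_{W, G_W}$, or directly observe that by construction $\cH_{W, G_W}$ already consists of $G_W$-invariants.

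The only genuine subtlety — and the step I would be most careful about — is justifying that the $G_W$-action on $\cH_{W, G}$ respects the sector decomposition, i.e.\ that $h\in G_W$ sends $\cH_g$ to $\cH_g$ rather than permuting sectors. This holds because $G_W$ is abelian, so conjugation is trivial and ${\rm Fix}(g)$ is $G_W$-stable, making each $W_g$ and hence each ${\rm Jac}(W_g)\cdot\omega_g$ a $G_W$-representation with $h$ acting within it; one should cite the relevant compatibility from~\cite[Section 3.2]{FJR}. Once that is in place, everything else is bookkeeping with invariant subspaces.
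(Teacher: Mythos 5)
Your proof is correct and follows essentially the same route as the paper: both arguments work sector by sector inside $\bigoplus_{g\in G_W}{\rm Jac}(W_g)\cdot\omega_g$, observe that the $G_W$-action preserves each sector, and reduce the claim to the fact that for $g\in G\subset G_W$, being $G_W$-invariant both refines $G$-invariance and is exactly the membership condition for $\cH_{W,G_W}$. The paper's version is just terser; your extra care about why $G_W$ preserves the sector decomposition is a reasonable thing to make explicit.
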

\begin{proof}
Since $G\subset G_{W}$, each $G_W$-invariant element $\phi_g\in \cH_{W, G}$ belongs to $\cH_{W, G_{W}}$. Thus
$$\cH_{W, G}^{G_{W}}\subset\cH_{W, G}\cap \cH_{W, G_{W}}.$$
On the other hand, if $\phi_g\in {\rm Jac}(W_g)\cdot \omega_g\cap \cH_{W, G}\cap \cH_{W, G_{W}}$, then $g\in G\cap G_W=G$, and $\phi_g\in \cH_{W, G}$ is also $G_W$-invariant.
\end{proof}

\begin{example}
The vector space $\cH_{W, G}^{G_{W}}$ depends on choices of both $G$ and $G_{W}$.
In fact, the $G_{W}$-invariant subspaces $\cH_{W_{1}, \<J\>}^{G_{W_{1}}},\cH_{W_{2}, \<J\>}^{G_{W_{2}}}$ may be different even if the LG pairs $(W_{1}, \<J\>),(W_{2}, \<J\>)$ have the same weight system.
For example, we consider the following two LG pairs, 
$$\begin{dcases}
(W_1=x_1^4+x_2^8, \<J\>), \\
(W_2=x_1^4+x_1x_2^6, \<J\>).
\end{dcases}$$ 
Both LG models have the weight system $(8;2,1)$. However, 
$$\dim_\C\cH_{W_1, \<J\>}^{G_{W}}=6,\quad \dim_\C\cH_{W_2, \<J\>}^{G_{W}}=7.$$
Both state spaces contain narrow elements $\sJ_1, \sJ_2, \sJ_3, \sJ_5, \sJ_6, \sJ_7$. 
The latter also contains a broad element $x_2^5dx_1dx_2\vert J^0\>.$

Here is another example. 
Consider two LG pairs, 
$$
\begin{dcases}
(W_1=x_1^5+x_1x_2^2, \<J\>), \\
 (W_2=x_1^3x_2+x_1x_2^2, \<J\>=G_{W_{2}}).
\end{dcases}$$
Both LG pairs have the weight system $(5; 1, 2)$.
Both state spaces contain narrow elements $\sJ_1, \sJ_2, \sJ_3, \sJ_4$.
However, $\cH_{W_1, \<J\>}$ is 5-dimensional as it contains one broad element $x_2dx_1dx_2\vert J^0\>$, while $\cH_{W_2, \<J\>}$ is 6-dimensional as it contains two broad elements: $x_1^2dx_1dx_2\vert J^0\>$ and $x_2dx_1dx_2\vert J^0\>$.


Furthermore, the following three examples have the same weight system $(3;1,1)$. The associated spaces $\cH_{W, \<J\>}$ have the same dimension, whereas their $G_W$-invariant subspaces $\cH_{W, \<J\>}^{G_W}$ do not. Notice that $\omega_{J^0}=dx_1\wedge dx_2.$

\begin{table}[h]
\label{table-3-1-1}
\begin{center}
\begin{tabular}{|l|c|c|}
 \hline
$W$& basis of $\cH_{W, \<J\>}$&basis of $\cH_{W, \<J\>}^{G_W}$\\
\hline
$x_1^3+x_2^3$&$\{\sJ_1,\sJ_2,x_1\omega_{J^0}\vert J^0\>,x_2\omega_{J^0}\vert J^0\>\}$&$\{\sJ_1,\sJ_2\}$\\
\hline
$x_1^3+x_1x_2^2$&$\{\sJ_1,\sJ_2,x_1\omega_{J^0}\vert J^0\>,x_2\omega_{J^0}\vert J^0\>\}$&$\{\sJ_1,\sJ_2, x_2\omega_{J^0}\vert J^0\>\}$\\
\hline
$x_1^2x_2+x_1x_2^2$&$\{\sJ_1,\sJ_2,x_1\omega_{J^0}\vert J^0\>,x_2\omega_{J^0}\vert J^0\>\}$&$\{\sJ_1,\sJ_2,x_1\omega_{J^0}\vert J^0\>,x_2\omega_{J^0}\vert J^0\>\}$\\
\hline
\end{tabular}
\end{center}
\end{table}

\end{example}

\subsubsection{A comparison between the narrow subspace and $\cH_{W, \<J\>}^{G_{W}}$}
Let $(d; w_1, \ldots, w_N)$ be the weight system of the LG pair $(W,\<J\>)$. 
Recall that ${\bf Nar}$ is the set of narrow indices defined in~\eqref{narrow-index}.
We 
denote the space of narrow sectors by 
$$\cH_{\rm nar}=\bigoplus_{m\in {\bf Nar}}\cH_{J^m}.
$$ 
It is easy to see
\begin{proposition}
We have an inclusion $\cH_{\rm nar}\subset \cH_{W, \<J\>}^{G_{W}}$.
\end{proposition}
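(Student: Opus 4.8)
The plan is to show that every narrow standard generator $\sJ_m$ (for $m \in \mathbf{Nar}$) is fixed by the whole group $G_W$ under its action on the state space $\cH_{W,\<J\>}$. Since $\cH_{\rm nar} = \bigoplus_{m\in\mathbf{Nar}}\cH_{J^m}$ and each $\cH_{J^m}$ is one-dimensional, spanned by $\sJ_m$, it suffices to verify that the $G_W$-action sends $\sJ_m$ to a scalar multiple of itself with scalar $1$.

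First I would recall how $G_W$ acts on the sectors. Each element $\cH_g \subset \cH_{W,\<J\>}$ sits inside $\mathrm{Jac}(W_g)\cdot\omega_g\,\vert g\>$, and the action of $h \in G_W$ on the $g$-twisted sector is the natural one induced by the coordinate-wise action of $h$ on $\mathbf{x}$ and on the form $d\mathbf{x}$ (together with the fact that conjugation in the abelian group $G_W$ is trivial, so $h$ maps $\cH_g$ to $\cH_g$). For a narrow sector $g = J^m$, we have $\mathrm{Fix}(J^m) = \{0\}$, so $\mathrm{Jac}(W_{J^m}) = \C$ and $\omega_{J^m}$ is the empty wedge, i.e., the constant $1$. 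Hence $\sJ_m = 1\vert J^m\>$ carries no monomial factor and no differential-form factor, and the action of any $h \in G_W$ on it is through multiplication by the determinant-type character of $h$ acting on $\omega_{J^m}$, which is the empty product and therefore equal to $1$. That is, $h\cdot\sJ_m = \sJ_m$ for all $h \in G_W$.

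The only genuine point to be careful about is the bookkeeping of the $G$-invariance already built into the definition of $\cH_g$ (formula~\eqref{state-space-component}): by definition $\cH_{J^m}$ is the $\<J\>$-invariant part of $\mathrm{Jac}(W_{J^m})\cdot\omega_{J^m}$, and in the narrow case this one-dimensional space is automatically $\<J\>$-invariant, so no information is lost. What we are adding is that it is in fact invariant under the potentially larger group $G_W$, which follows from the observation above that the narrow generator is unaffected by any diagonal symmetry. Therefore $\sJ_m \in \cH_{W,\<J\>}^{G_W}$ for every $m \in \mathbf{Nar}$, and summing over $m$ gives $\cH_{\rm nar} \subset \cH_{W,\<J\>}^{G_W}$, as claimed.

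I do not anticipate a serious obstacle here; the statement is essentially a formal consequence of the definitions, and the ``hard part'' is merely to phrase the action of $G_W$ on the twisted sectors cleanly (via the model $\bigoplus_{g\in G_W}\mathrm{Jac}(W_g)\cdot\omega_g$ used in the preceding lemma) and to note that for a narrow sector both the Milnor-ring factor and the volume-form factor are trivial, so the action is trivial.
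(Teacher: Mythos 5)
Your argument is correct and is precisely the reasoning the paper leaves implicit when it writes ``It is easy to see'': for $m\in\mathbf{Nar}$ one has $\mathrm{Fix}(J^m)=\{0\}$, so $\mathrm{Jac}(W_{J^m})=\C$ and $\omega_{J^m}$ is the empty wedge; since $G_W$ is abelian (so $h$ preserves each sector $\cH_{J^m}$) and acts through its action on monomials and $dx_i$'s, there is nothing for it to rescale, and $h\cdot\sJ_m=\sJ_m$ follows. One small caution in the write-up is to state the action convention once and for all (the paper fixes it just above formula~\eqref{state-space-component}) rather than appealing to a ``determinant-type character,'' but the substance is the same and the table of examples for weight system $(3;1,1)$ confirms the conclusion.
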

In general, $\cH_{\rm nar}\neq \cH_{W, \<J\>}^{G_{W}}$; here is an example.
\begin{example}
For the $E_7$-singularity $W=x_1^3x_2+x_2^3$, we have 
$$\cH_{W, \<J\>}^{G_{W}}=\cH_{\rm nar}\bigoplus \C\cdot x_2^2dx_1\wedge dx_2\vert J^0\>.$$
\end{example}
For Fermat polynomials, the two spaces are the same.
\begin{proposition}
\label{fermat-broad}
If $W=x_1^{a_1}+\cdots +x_N^{a_N}$, 
then $\cH_{W, \<J\>}^{G_{W}}=\cH_{\rm nar}$.
\end{proposition}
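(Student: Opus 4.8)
The plan is to show that for a Fermat polynomial $W=x_1^{a_1}+\cdots+x_N^{a_N}$, every $G_W$-invariant twisted sector $\cH_g$ with $g\in\<J\>$ is already narrow, so that the only $\<J\>$-sectors contributing to $\cH_{W,\<J\>}^{G_W}$ are the narrow ones indexed by ${\bf Nar}$. First I would recall that for a Fermat polynomial the maximal symmetry group splits as a product $G_W=\prod_{j=1}^N\boldsymbol{\mu}_{a_j}$, acting coordinate-wise, where $\boldsymbol{\mu}_{a_j}$ is the group of $a_j$-th roots of unity scaling $x_j$. In particular $G_W$ contains each coordinate reflection $r_j$ that multiplies $x_j$ by a primitive $a_j$-th root of unity and fixes the other variables.

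The key step is the following: suppose $g=J^m$ with $0<m<d$ and $\cH_{J^m}$ is broad, i.e.\ ${\rm Fix}(J^m)\neq\{0\}$. Then there is some index $j$ with $J^m\cdot x_j=x_j$, equivalently $d\mid w_j\cdot m$; for such $j$ the variable $x_j$ survives on ${\rm Fix}(J^m)$ and $W_{J^m}$ still contains the monomial $x_j^{a_j}$ (because $W$ is Fermat, the restriction of $W$ to a coordinate subspace just drops the variables that are set to zero). Now I would look at the action of the reflection $r_j\in G_W$ on the twisted sector $\cH_{J^m}=({\rm Jac}(W_{J^m})\cdot\omega_{J^m})^{G_W}$. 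The form $\omega_{J^m}$ includes the factor $dx_j$, so $r_j$ scales $\omega_{J^m}$ by a primitive $a_j$-th root of unity $\zeta_{a_j}$. For an element $f\cdot\omega_{J^m}\vert J^m\>$ to be $G_W$-invariant, $f\in{\rm Jac}(W_{J^m})$ must transform under $r_j$ by $\zeta_{a_j}^{-1}$; but $r_j$ acts on monomials in $x_j$ by powers of $\zeta_{a_j}$, and since $x_j^{a_j-1}$ and higher powers are already in the Jacobian ideal (as $\partial W_{J^m}/\partial x_j$ is proportional to $x_j^{a_j-1}$), the only monomials in $x_j$ available in ${\rm Jac}(W_{J^m})$ are $1,x_j,\dots,x_j^{a_j-2}$, on which $r_j$ acts by $1,\zeta_{a_j},\dots,\zeta_{a_j}^{a_j-2}$. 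None of these equals $\zeta_{a_j}^{-1}=\zeta_{a_j}^{a_j-1}$, so there is no nonzero $G_W$-invariant element; hence $\cH_{J^m}$ contributes nothing to $\cH_{W,\<J\>}^{G_W}$ unless it is narrow. Combining this with the already-established inclusion $\cH_{\rm nar}\subset\cH_{W,\<J\>}^{G_W}$ from the previous proposition gives equality.

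I expect the only real subtlety is the bookkeeping on which variables survive and what $r_j$ does to the residue-pairing normalization (the $\zeta$-twist in \eqref{residue-dual} and the $(-1)$ signs), but none of that affects the invariance computation, which only needs the eigenvalue of $r_j$ acting on $f\cdot\omega_{J^m}$. One should be slightly careful that $r_j$ must genuinely lie in $G_W$ and not just in some larger torus — this is exactly where the Fermat hypothesis is used, since for a general nondegenerate $W$ the coordinate reflections need not preserve $W$. With the product structure of $G_W$ in hand, the argument is essentially a one-line eigenvalue check repeated over the surviving coordinates, so the main work is just setting up notation for $W_{J^m}$ and its Jacobian algebra cleanly.
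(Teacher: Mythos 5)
Your proof is correct and follows essentially the same approach as the paper: both use the fact that for a Fermat polynomial $G_W$ contains the coordinate reflection $r_j$ (the element $(1,\ldots,e^{2\pi\sqrt{-1}/a_j},\ldots,1)$), and both observe that the $r_j$-eigenvalue $\zeta_{a_j}^{b_j+1}$ of a monomial $\prod_i x_i^{b_i}\,\omega_{J^m}$ can never be $1$ since the Jacobian relation $x_j^{a_j-1}=0$ forces $0\le b_j\le a_j-2$. The only cosmetic difference is that you phrase it as an eigenvalue mismatch on $f$ alone while the paper computes the scalar factor on the whole element $\prod_i x_i^{b_i}dx_i$.
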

\begin{proof}
It is sufficient to show $\cH_{W, \<J\>}^{G_{W}}\subset\cH_{\rm nar}$. 
We assume there exists some $f\neq 0$ and $g=J^m$, $m\notin {\bf Nar}$, such that $f\cdot \omega_g\vert g\>\in\cH_g\subset \cH_{W, \<J\>}^{G_{W}}$.
Since $m\notin {\bf Nar}$, without loss of generality, we can assume ${\rm Fix}(J^m)={\rm Span} \{x_1, \cdots, x_k\}$ for some $k\geq 1$. 
Since $W$ is a Fermat polynomial, we have 
$${\rm Jac}(W_g)=\C[x_1, \ldots, x_k]/(x_1^{a_1-1}, \cdots, x_k^{a_k-1}).$$ 
So $f\cdot \omega_g$ must be a scalar multiple of  $\prod\limits_{i=1}^{k}x_i^{b_i}dx_i$ for some $b_i\in\Z$ such that $0\leq b_i\leq a_i-2$.
But $\prod\limits_{i=1}^{k}x_i^{b_i}dx_i$ is not $G_W$-invariant as the action of the element $(e^{2\pi\sqrt{-1}/a_1}, 1, \cdots, 1)$ in $G_W$ sends $\prod\limits_{i=1}^{k}x_i^{b_i}dx_i$ to 
$$e^{2\pi\sqrt{-1}(b_1+1)/a_1} \cdot \prod\limits_{i=1}^{k}x_i^{b_i}dx_i\neq \prod\limits_{i=1}^{k}x_i^{b_i}dx_i.$$ 
This is a contradiction. Hence we must have $\cH_{W, \<J\>}^{G_{W}}\subset\cH_{\rm nar}$.
\end{proof}

\subsubsection{A quantum multiplication on $\cH_{W, \<J\>}^{G_W}$}

The following vanishing result holds for any admissible LG pair $(W, G)$.

\begin{lemma}
\label{gmax-inv-inv}
For any admissible pair $(W, G)$, let $\alpha\in \cH_{W, G}$ be a homogeneous element which is not $G_W$-invariant. If $\gamma_j \in \cH_{W, G}^{G_{W}}$ for all $j=1, \ldots, n$, then 
\begin{equation}
\label{vanishing-non-invariant}
\<\gamma_1, \ldots, \gamma_n, \alpha\>_{0, n+1}=0.
\end{equation}
\end{lemma}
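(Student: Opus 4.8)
The plan is to reduce the vanishing \eqref{vanishing-non-invariant} to a symmetry statement: the FJRW correlators are invariant under the natural action of $G_W$ on the state space $\cH_{W,G}$, and this symmetry forces correlators with a mixture of $G_W$-invariant insertions and one non-$G_W$-invariant insertion to vanish. The key input I would use is that $G_W$ acts on $\cH_{W,G}$ preserving all the structure of the CohFT — in particular the pairing $\<\cdot,\cdot\>$ and the multilinear maps $\Lambda_{0,k}^{W,G}$ — because the moduli of $W$-spin structures and the Polishchuk–Vaintrob / FJRW construction are functorial for the diagonal symmetries of $W$. Concretely, since each $h\in G_W$ acts on $\bigoplus_{g}{\rm Jac}(W_g)\cdot\omega_g$ by permuting the twisted sectors $\cH_g\to\cH_{hgh^{-1}}=\cH_g$ (the group is abelian) and rescaling, and since this action is compatible with the construction of $\Lambda_{0,k}^{W,G}$, we get for every $h\in G_W$ and all insertions $\beta_1,\dots,\beta_k\in\cH_{W,G}$ that
\[
\<\beta_1,\dots,\beta_k\>_{0,k}=\<h\cdot\beta_1,\dots,h\cdot\beta_k\>_{0,k}.
\]

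Granting this, the argument is short. Take $\gamma_1,\dots,\gamma_n\in\cH_{W,G}^{G_W}$ and $\alpha$ a homogeneous element that is not $G_W$-invariant. Then there is some $h\in G_W$ with $h\cdot\alpha=\zeta\,\alpha$ for a root of unity $\zeta\neq 1$ (homogeneity of $\alpha$ under the diagonal torus action means $h$ acts on the line $\C\alpha$ by a scalar, and non-invariance forces that scalar to differ from $1$ for a suitable $h$). Applying the invariance of the correlator with this $h$, and using $h\cdot\gamma_j=\gamma_j$, we get
\[
\<\gamma_1,\dots,\gamma_n,\alpha\>_{0,n+1}
=\<h\cdot\gamma_1,\dots,h\cdot\gamma_n,h\cdot\alpha\>_{0,n+1}
=\zeta\,\<\gamma_1,\dots,\gamma_n,\alpha\>_{0,n+1}.
\]
Since $\zeta\neq 1$, the correlator must be zero, which is \eqref{vanishing-non-invariant}. (If one prefers to avoid choosing a single $h$, one can instead average over $h\in G_W$: the left side equals $\frac{1}{|G_W|}\sum_h\<\dots,h\cdot\alpha\>$, and $\frac{1}{|G_W|}\sum_h h\cdot\alpha$ is the $G_W$-invariant projection of $\alpha$, which is $0$ precisely because $\alpha$ is homogeneous and non-invariant, so it has no invariant component.)

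The main obstacle I anticipate is justifying the $G_W$-invariance of the correlators cleanly, i.e.\ pinning down in what sense the FJRW CohFT $\{\Lambda_{g,k}^{W,G}\}$ is equivariant for the $G_W$-action on $\cH_{W,G}$ when $G\subsetneq G_W$. On the state-space side this is built in — the isomorphism \eqref{fjrw=orbifold} realizes $\cH_{W,G}$ inside $\bigoplus_{g\in G_W}{\rm Jac}(W_g)\cdot\omega_g$ with its $G_W$-action — but one should check that $G_W$ acts on the moduli of $W$-curves (equivalently, that an automorphism of $W$ by a diagonal matrix induces an isomorphism of the relevant moduli problems and of their virtual classes) in a way compatible with this. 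I expect this to follow from the axioms and construction in \cite[Theorem 4.2.2]{FJR} together with the description of the action in \cite{PV}/\cite{CIR}, possibly via the selection rule \eqref{selection-rule}, which already encodes a shadow of this symmetry: \eqref{selection-rule} says correlators vanish unless $\frac{w_j}{d}(k-2)-\sum_i\theta_j(g_i)\in\Z$ for each $j$, which is exactly the condition that $\prod_i g_i$ lies in $\<J\>$ (up to the $J$-twist), and a refinement of this reasoning should give the full $G_W$-invariance. So the real content is this equivariance lemma; once it is in hand, \eqref{vanishing-non-invariant} is an immediate consequence of the character argument above.
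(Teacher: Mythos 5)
Your approach is exactly the one the paper takes: use $G_W$-invariance of the genus-zero FJRW correlators and then observe that a correlator with all but one insertion $G_W$-invariant and the remaining homogeneous insertion transforming by a nontrivial character must vanish. The one gap you flagged — pinning down the equivariance of the CohFT under $G_W$ — is exactly what the paper resolves by citing the $G_W$-invariance axiom of the virtual cycle, \cite[Theorem~4.1.8~(10)]{FJR}, rather than by deriving it from the selection rule (which only constrains the $\<J\>$-part and is not sufficient), so your proof is complete once you invoke that axiom.
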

\begin{proof}
By the choice of the insertions, the FJRW invariant $\<\gamma_1, \ldots, \gamma_n, \alpha\>_{0, n+1}$ is not $G_W$-invariant unless it vanishes. 
On the other hand, according to  the {\em $G_{W}$-invariance axiom} in~\cite[Theorem 4.1.8 (10)]{FJR}, the underlying virtual cycle is invariant under the $G_W$-action. So the invariant must vanish.
\end{proof}

\begin{remark}
Lemma~\ref{gmax-inv-inv} is similar to Proposition 2.8 in~\cite{CIR}. The result there is stated for any admissible LG pair $(W, \<J\>)$ with $W$ a Fermat polynomial.
The narrow subspace in~\cite[Proposition 2.8]{CIR}  is replaced by the $G_W$-invariant subspace $\cH_{W, G}^{G_{W}}$ in Lemma~\ref{gmax-inv-inv}. \end{remark}

Now we use Lemma~\ref{gmax-inv-inv} to study the spectrum of the quantum multiplication ${\nu\over d}\tau'\star_\tau$ on the subspace $\cH_{W, \<J\>}^{G_W}$ when the admissible LG pair $(W, \<J\>)$ is of general type.
We remark that by the definition~\eqref{tau-component} and the explicit formulas~\eqref{coefficient-tau}, we have  $\tau,\tau'\in \cH_{W, \<J\>}^{G_W}.$  
Let $\alpha\in \cH_{W, \<J\>}$ be a homogeneous element which is not $G_W$-invariant.
Because both $\tau', \tau\in \cH_{W, \<J\>}^{G_{W}}$, we see that for any $\gamma\in \cH_{W, \<J\>}^{G_{W}}$,
\begin{equation}
\label{fake-broad-vanishing}
\LL\tau', \gamma, \alpha\RR_{0, 3}(\tau)=0.
\end{equation}

Since the flat identity $\one=\sJ_1\in \cH_{W, \<J\>}^{G_{W}}$ and $\<\cdot, \cdot \>=\<\one, \cdot, \cdot\>_{0,3}$~\cite[Section 4.2 (68)]{FJR}, the pairing $\<\cdot, \cdot \>$ is also $G_W$-invariant.
Hence the pairing~\eqref{residue-dual} induces a perfect pairing on $\cH_{W,  \<J\>}^{G_{W}}$.
Using Lemma~\ref{gmax-inv-inv} 
and a similar argument as in Proposition~\ref{prop-convergence}, we have 
\begin{proposition}
\label{closed-C-algebra}
 $\left(\cH_{W,  \<J\>}^{G_{W}}, \star_\tau\right)$ is a subalgebra of $\left(\cH_{W,  \<J\>}, \star_\tau\right)$.
\end{proposition}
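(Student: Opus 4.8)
The plan is to show two things: that $\cH_{W, \<J\>}^{G_W}$ is closed under $\star_\tau$, and that the restriction of $\star_\tau$ to this subspace is again an associative product with the same identity $\one = \sJ_1$. The first point is the substantive one, and it follows almost immediately from Lemma~\ref{gmax-inv-inv} together with the perfectness of the pairing $\<\cdot, \cdot\>$ on $\cH_{W, \<J\>}^{G_W}$.

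\begin{proof}
We already observed that $\tau, \tau' \in \cH_{W, \<J\>}^{G_W}$ and that $\one = \sJ_1 \in \cH_{W, \<J\>}^{G_W}$, so the pairing $\<\cdot, \cdot\> = \<\one, \cdot, \cdot\>_{0,3}$ is $G_W$-invariant and restricts to a perfect pairing on $\cH_{W, \<J\>}^{G_W}$. Convergence of the relevant correlation functions, i.e. that $\gamma_1 \star_\tau \gamma_2$ is a well-defined element of $\cH_{W, \<J\>}$, follows from the same argument as in Proposition~\ref{prop-convergence}. It remains to show that $\gamma_1 \star_\tau \gamma_2 \in \cH_{W, \<J\>}^{G_W}$ whenever $\gamma_1, \gamma_2 \in \cH_{W, \<J\>}^{G_W}$. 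Decompose $\gamma_1 \star_\tau \gamma_2$ in a homogeneous basis of $\cH_{W, \<J\>}$ that is adapted to the splitting $\cH_{W, \<J\>} = \cH_{W, \<J\>}^{G_W} \oplus V$, where $V$ is the $G_W$-complement (the span of homogeneous elements that are not $G_W$-invariant; since the pairing is $G_W$-invariant, $V$ is paired into $V$ and $\cH_{W, \<J\>}^{G_W}$ into itself). For any homogeneous $\alpha \in V$ we have, using the formula~\eqref{quantum-product} for the quantum product expanded as a correlation function in $\tau$,
$$
\<\gamma_1 \star_\tau \gamma_2, \alpha\> = \LL \gamma_1, \gamma_2, \alpha \RR_{0,3}(\tau)
= \sum_{n \geq 0} \frac{1}{n!} \LD \gamma_1, \gamma_2, \alpha, \tau, \ldots, \tau \RD_{0, n+3}.
$$
Each term on the right has all insertions except $\alpha$ lying in $\cH_{W, \<J\>}^{G_W}$ (namely $\gamma_1, \gamma_2$ and the copies of $\tau$), and $\alpha$ is homogeneous and not $G_W$-invariant, so Lemma~\ref{gmax-inv-inv} forces every term to vanish. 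Hence $\<\gamma_1 \star_\tau \gamma_2, \alpha\> = 0$ for all $\alpha \in V$. Since the pairing restricted to $\cH_{W, \<J\>}^{G_W}$ is perfect and the pairing vanishes between $\cH_{W, \<J\>}^{G_W}$ and $V$, this means the $V$-component of $\gamma_1 \star_\tau \gamma_2$ is zero, i.e. $\gamma_1 \star_\tau \gamma_2 \in \cH_{W, \<J\>}^{G_W}$.

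Finally, $\cH_{W, \<J\>}^{G_W}$ is a subalgebra: it is closed under $\star_\tau$ by the above, the product inherited from $\star_\tau$ on $\cH_{W, \<J\>}$ is associative by the WDVV equations, and $\one = \sJ_1 \in \cH_{W, \<J\>}^{G_W}$ is the unit of $\star_\tau$ (this is the string equation, as for $\star$). Therefore $\left(\cH_{W, \<J\>}^{G_W}, \star_\tau\right)$ is a subalgebra of $\left(\cH_{W, \<J\>}, \star_\tau\right)$.
\end{proof}

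The only potential obstacle is the convergence/polynomiality point, ensuring that the infinite sum over $n$ defining $\gamma_1 \star_\tau \gamma_2$ is finite; but this is precisely handled by repeating the degree-counting argument of Proposition~\ref{prop-convergence} verbatim (using the homogeneity property~\eqref{virtual-degree}, the degree formula~\eqref{degree-narrow}, and $\nu > 0$), so I do not expect any real difficulty there. The core of the proof is the single application of Lemma~\ref{gmax-inv-inv}, which makes the off-diagonal (non-$G_W$-invariant) part of the product vanish identically.
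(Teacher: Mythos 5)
Your proof is correct and takes essentially the same route the paper indicates: the paper states the proposition as a direct consequence of the $G_W$-invariance of the pairing (hence its perfect restriction to $\cH_{W,\<J\>}^{G_W}$), Lemma~\ref{gmax-inv-inv}, and the degree-counting argument of Proposition~\ref{prop-convergence}, and you have simply spelled out those three ingredients. The one small gap in wording — to deduce that the $V$-component $b$ of $\gamma_1\star_\tau\gamma_2$ vanishes you implicitly also use that the pairing restricted to $V$ is nondegenerate (equivalently, since the full pairing is nondegenerate and block-diagonal, $\langle b,\,\cdot\,\rangle$ vanishes on all of $\cH_{W,\<J\>}^{G_W}\oplus V$) — is harmless and follows from what you already said.
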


Combining~\eqref{fake-broad-vanishing} with~\eqref{quantum-product}, we obtain
\begin{proposition}
\label{prop-narrow-closed}
The multiplication $\tau'\star_\tau$ is closed on $\cH_{W,  \<J\>}^{G_{W}}$. That is, 
$\tau'\star_{\tau}\in {\rm End}(\cH_{W,  \<J\>}^{G_{W}}).$
\end{proposition}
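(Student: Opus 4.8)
The plan is to show that $\tau'\star_\tau$ preserves the subspace $\cH_{W,\<J\>}^{G_W}$ by verifying that, for any $\gamma\in\cH_{W,\<J\>}^{G_W}$, the element $\tau'\star_\tau\gamma$ pairs to zero against every homogeneous element of $\cH_{W,\<J\>}$ that is \emph{not} $G_W$-invariant. Since the pairing $\<\cdot,\cdot\>$ is nondegenerate and, by the discussion preceding the statement, is $G_W$-invariant, the state space decomposes as an orthogonal direct sum $\cH_{W,\<J\>}=\cH_{W,\<J\>}^{G_W}\oplus\cH^{\perp}$, where $\cH^{\perp}$ is spanned by homogeneous elements that are not $G_W$-invariant (each such element lies in a nontrivial $G_W$-isotypic component, and distinct isotypic components are mutually orthogonal under a $G_W$-invariant pairing). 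Therefore an element of $\cH_{W,\<J\>}$ lies in $\cH_{W,\<J\>}^{G_W}$ if and only if it is orthogonal to every non-$G_W$-invariant homogeneous $\alpha$.

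The key step is then a direct computation. Fix $\gamma\in\cH_{W,\<J\>}^{G_W}$ and a homogeneous $\alpha\in\cH_{W,\<J\>}$ that is not $G_W$-invariant. By the defining property \eqref{quantum-product} of the quantum product at the point $\tau$, we have
$$\<\tau'\star_\tau\gamma,\alpha\>=\LL\tau',\gamma,\alpha\RR_{0,3}(\tau),$$
and expanding the correlation function by its definition, this is a sum of genus-zero FJRW invariants $\<\tau',\gamma,\alpha,\tau,\ldots,\tau\>_{0,n+3}$. Now recall from \eqref{tau-component} and \eqref{coefficient-tau} that $\tau$ and $\tau'$ are both narrow, hence lie in $\cH_{W,\<J\>}^{G_W}$ by the inclusion $\cH_{\rm nar}\subset\cH_{W,\<J\>}^{G_W}$; and $\gamma\in\cH_{W,\<J\>}^{G_W}$ by hypothesis. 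So all insertions except $\alpha$ are $G_W$-invariant, while $\alpha$ is not. Lemma~\ref{gmax-inv-inv} then forces each such invariant to vanish, hence $\<\tau'\star_\tau\gamma,\alpha\>=0$. (This is exactly the content recorded in \eqref{fake-broad-vanishing}; the proof amounts to invoking that identity together with nondegeneracy of the pairing.) Since $\alpha$ was an arbitrary non-$G_W$-invariant homogeneous element, $\tau'\star_\tau\gamma$ is orthogonal to $\cH^{\perp}$, hence lies in $\cH_{W,\<J\>}^{G_W}$.

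There is really no serious obstacle here: the heavy lifting has already been done in Lemma~\ref{gmax-inv-inv} (the $G_W$-invariance axiom for the virtual cycle) and in Proposition~\ref{prop-convergence} (which guarantees $\tau'\star_\tau$ is a well-defined endomorphism, so that convergence of the correlation series is not an issue). The only minor point worth spelling out is the orthogonal-decomposition observation above — that $G_W$-invariance of the pairing is what lets us detect membership in $\cH_{W,\<J\>}^{G_W}$ by pairing against non-invariant classes — and one should note that it suffices to test against homogeneous $\alpha$ because both the pairing and the quantum product respect the grading. With that in place the proof is a two-line deduction from \eqref{fake-broad-vanishing} and the nondegeneracy of $\<\cdot,\cdot\>$ on $\cH_{W,\<J\>}^{G_W}$.
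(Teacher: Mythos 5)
Your proof is correct and follows essentially the same route as the paper, which simply notes that Proposition~\ref{prop-narrow-closed} follows by combining~\eqref{fake-broad-vanishing} with the definition~\eqref{quantum-product}. You have spelled out the implicit middle step — that $G_W$-invariance of the pairing yields the orthogonal decomposition $\cH_{W,\<J\>}=\cH_{W,\<J\>}^{G_W}\oplus\cH^\perp$ into the trivial isotypic component and the rest, so that vanishing of $\<\tau'\star_\tau\gamma,\alpha\>$ for all non-invariant homogeneous $\alpha$ certifies membership in $\cH_{W,\<J\>}^{G_W}$ — which is exactly the mechanism the paper is relying on.
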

Similar to~\cite[Proposition 7.1]{GI}, we can consider the quantum spectrum of $\tau'\star_{\tau}$ on the subspace $\cH_{W,  \<J\>}^{G_{W}}$.
\begin{proposition}
\label{quantum-spectrum-invariant}
If $\lambda\in \C$ is an eigenvalue of  $\tau'\star_{\tau}$ on $\cH_{W,  \<J\>}$, it is also an eigenvalue of $\tau'\star_{\tau}$ on $\cH_{W,  \<J\>}^{G_{W}}$.
\end{proposition}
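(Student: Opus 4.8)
The plan is to rephrase the statement in the language of characters (i.e. $\C$-algebra homomorphisms to $\C$) and to use the standard fact that the spectrum of a multiplication operator on a finite-dimensional commutative $\C$-algebra is exactly the set of values taken by its characters. First I would record the algebraic structure: by Proposition~\ref{closed-C-algebra} (together with WDVV-associativity, commutativity, and $\one=\sJ_1\in\cH_{\rm nar}\subset\cH_{W,\<J\>}^{G_W}$), the pair $R:=(\cH_{W,\<J\>},\star_\tau)$ is a finite-dimensional commutative associative $\C$-algebra with unit $\one$, the pair $R_0:=(\cH_{W,\<J\>}^{G_W},\star_\tau)$ is a unital subalgebra, and $\tau'\in R_0$; the two operators in question are $\tau'\star_\tau{-}$ acting on $R$ and its restriction to $R_0$ (Proposition~\ref{prop-narrow-closed}), and there are no convergence issues for general type (this is implicit in the proof of Proposition~\ref{prop-convergence}). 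The elementary fact I would then invoke is: for a finite-dimensional commutative $\C$-algebra $S$ with unit and $a\in S$, a scalar $\lambda$ is an eigenvalue of $a\star{-}\colon S\to S$ iff $a-\lambda\one_S$ is not a unit of $S$ (on a finite-dimensional space, injectivity, surjectivity and invertibility of this map coincide), iff $a-\lambda\one_S$ lies in some maximal ideal $\mathfrak m\subset S$, iff — since $\C$ is algebraically closed and $S$ is Artinian, so $S/\mathfrak m\cong\C$ — there is a $\C$-algebra homomorphism $\chi\colon S\to\C$ with $\chi(a)=\lambda$.

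With this in hand the argument is short. Given $\lambda$ an eigenvalue of $\tau'\star_\tau$ on $\cH_{W,\<J\>}=R$, the fact above (forward direction, applied to $S=R$, $a=\tau'$) yields a $\C$-algebra homomorphism $\chi\colon R\to\C$ with $\chi(\tau')=\lambda$. Its restriction $\chi_0:=\chi|_{R_0}\colon R_0\to\C$ is again a $\C$-algebra homomorphism, and it is nonzero because $\chi_0(\one)=1$; moreover $\chi_0(\tau')=\chi(\tau')=\lambda$ since $\tau'\in R_0$. Applying the same fact in the reverse direction to $S=R_0$, $a=\tau'$, we conclude that $\lambda$ is an eigenvalue of $\tau'\star_\tau$ on $\cH_{W,\<J\>}^{G_W}$, as desired. (Together with the trivial converse — $\cH_{W,\<J\>}^{G_W}$ being a $\tau'\star_\tau$-invariant subspace — this in fact shows the two spectra coincide.)

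I do not expect a genuine obstacle, but the one point to get right is that a naive eigenvector argument fails: an eigenvector of $\tau'\star_\tau$ on $\cH_{W,\<J\>}$ can have vanishing $G_W$-invariant component, so projecting onto $\cH_{W,\<J\>}^{G_W}$ may destroy it, and one really must pass to the character description of the spectrum. Alternatively, one can observe that $\tau'\star_\tau$ is block-diagonal for the $G_W$-invariant decomposition $\cH_{W,\<J\>}=\cH_{W,\<J\>}^{G_W}\oplus(\cH_{W,\<J\>}^{G_W})^{\perp}$ — the complement being preserved because $\<\tau'\star_\tau\alpha,\gamma\>=\LL\tau',\alpha,\gamma\RR_{0,3}(\tau)$ vanishes for $\alpha\in(\cH_{W,\<J\>}^{G_W})^{\perp}$ and $\gamma\in\cH_{W,\<J\>}^{G_W}$ by Lemma~\ref{gmax-inv-inv} and the $G_W$-invariance of the pairing — but controlling the eigenvalues on the complementary block again requires precisely the algebra-homomorphism argument above, so I would present the character proof as the main line.
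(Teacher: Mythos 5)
Your proof is correct, and it closely parallels the paper's argument; the difference is one of packaging rather than substance. The paper's proof is compressed into a single observation: the regular-representation map $a\mapsto a\star_\tau$ gives an injective algebra homomorphism $\cH_{W,\<J\>}^{G_W}\hookrightarrow \mathrm{End}(\cH_{W,\<J\>})$, and since $\tau'\star_\tau-\lambda\,\mathrm{Id}$ is the image of $\tau'-\lambda\one$, non-invertibility of the former forces $\tau'-\lambda\one$ to be a non-unit of $\cH_{W,\<J\>}^{G_W}$, hence $\lambda$ is in the spectrum on the subalgebra. Your version spells out the same ``eigenvalue $\Leftrightarrow$ non-unit'' dictionary but routes it through the Gelfand spectrum: produce a character $\chi$ of the full algebra with $\chi(\tau')=\lambda$, restrict to the subalgebra, and read the eigenvalue back off. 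Both approaches turn on exactly the same elementary fact about multiplication operators in finite-dimensional commutative $\C$-algebras; the paper's is marginally more economical (it never needs to regard $\cH_{W,\<J\>}$ itself as an algebra or to invoke maximal ideals), while yours makes the mechanism ``restriction of characters to a unital subalgebra'' explicit, which some readers will find more transparent and which you correctly exploit to note that the two spectra actually coincide. You are also right that the naive route---projecting an eigenvector onto $\cH_{W,\<J\>}^{G_W}$---does not work, which is precisely why both proofs pass through the algebra structure rather than the vector-space structure.
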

\begin{proof}
We have an injective algebra homomorphism $\cH_{W,  \<J\>}^{G_{W}}\hookrightarrow  {\rm End}(\cH_{W,  \<J\>})$ given by $a\mapsto a\star_{\tau}$.
Let $\lambda\in \C$ be an eigenvalue of $\tau'\star_{\tau}$ on $\cH_{W,  \<J\>}$. Then as an endomorphism of $\cH_{W,  \<J\>}$, $\tau'\star_{\tau}-\lambda$ is not invertible.
This implies that $\tau'-\lambda {\bf 1}$ is not invertible in $\cH_{W,  \<J\>}^{G_{W}}$ as well. Hence
$\lambda$ is an eigenvalue of
$\tau'\star_{\tau}$ on $\cH_{W,  \<J\>}^{G_{W}}$.
\end{proof}

%

\subsubsection{A quantum spectrum conjecture on the subspace $\cH_{W, \<J\>}^{G_W}$}
Based on Proposition~\ref{quantum-spectrum-invariant}, we propose a variant of the quantum spectrum conjecture for the LG pair $(W, \<J\>)$ on the $G_W$-invariant subspace $\cH_{W, \<J\>}^{G_W}$. 
We denote the set of eigenvalues of ${\nu\over d}\tau'\star_{\tau}$ on the subspace $\cH_{W, \<J\>}^{G_W}$ by $\fE_{G_W}$.
\begin{conj}
\label{quantum-spectrum-conj-invariant}
Let $(W, \< J\>)$ be an admissible LG pair with $W$ a nondegenerate quasihomogeneous polynomial of general type.
The set $\fE_{G_W}$ contains the positive real number
$$
T:=\nu\left(d^{-d}\prod_{j=1}^Nw_j^{w_j}\right)^{1\over \nu}
$$ that satisfies the following conditions:
\begin{enumerate}
\item For any $\lambda\in \fE_{G_W}$, we have $|\lambda|\leq T$.
\item The following two sets are the same:
$$\{\lambda\in \fE_{G_W}\mid |\lambda|=T\}=\{Te^{2\pi\sqrt{-1}j/\nu}\vert j=0, \ldots, \nu-1\}.$$ 
\item The multiplicity of each $\lambda\in \fE_{G_W}$ with $|\lambda|=T$ is one.
\end{enumerate} 
\end{conj}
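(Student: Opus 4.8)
The plan is to reduce Conjecture~\ref{quantum-spectrum-conj-invariant} to the full-space version, Conjecture~\ref{quantum-spectrum-conj}, using the observation that the two quantum spectra coincide as sets. By Proposition~\ref{prop-narrow-closed} the operator $\tau'\star_\tau$ preserves the subspace $\cH_{W, \<J\>}^{G_W}$, so every eigenvalue of its restriction is an eigenvalue of $\tau'\star_\tau$ on the whole state space; that is, $\fE_{G_W}\subseteq \fE$. Proposition~\ref{quantum-spectrum-invariant} supplies the reverse inclusion $\fE\subseteq \fE_{G_W}$. Hence $\fE=\fE_{G_W}$ as subsets of $\C$, and since the candidate principal eigenvalue $T$ of~\eqref{value-principal-spectrum} and the finite set appearing in part (2) are the same in both conjectures, parts (1) and (2) of Conjecture~\ref{quantum-spectrum-conj-invariant} become equivalent to parts (1) and (2) of Conjecture~\ref{quantum-spectrum-conj}.

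For the multiplicity statement (3) I would compare algebraic multiplicities directly: for any $\lambda\in\C$ the generalized $\lambda$-eigenspace of $\tau'\star_\tau|_{\cH_{W, \<J\>}^{G_W}}$ is precisely the intersection of $\cH_{W, \<J\>}^{G_W}$ with the generalized $\lambda$-eigenspace of $\tau'\star_\tau$ on $\cH_{W, \<J\>}$, so the multiplicity of $\lambda$ in $\fE_{G_W}$ is at most its multiplicity in $\fE$. Combined with $\lambda\in\fE_{G_W}$, which forces the multiplicity to be at least one, a multiplicity-one statement on the full space gives multiplicity one on the subspace. Thus Conjecture~\ref{quantum-spectrum-conj} implies Conjecture~\ref{quantum-spectrum-conj-invariant}, while conversely Conjecture~\ref{quantum-spectrum-conj-invariant} recovers only parts (1) and (2) of the full conjecture, the multiplicity claim being genuinely stronger on $\cH_{W, \<J\>}$. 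In particular, in every case where Conjecture~\ref{quantum-spectrum-conj} is established in this paper---the mirror ADE singularities in~\eqref{mirror-ade-singularity} and, through Proposition~\ref{prop-quantum-spectrum-inv}, the Fermat homogeneous polynomials of general type---Conjecture~\ref{quantum-spectrum-conj-invariant} follows with no additional work.

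A direct attack, which is the natural route whenever Conjecture~\ref{quantum-spectrum-conj} is out of reach, would instead work intrinsically on $\cH_{W, \<J\>}^{G_W}$: it is closed under $\star_\tau$ by Proposition~\ref{closed-C-algebra} and is typically far smaller than $\cH_{W, \<J\>}$. When $W=\sum_i x_i^{a_i}$ is of Fermat type, Proposition~\ref{fermat-broad} identifies $\cH_{W, \<J\>}^{G_W}$ with the narrow subspace $\cH_{\rm nar}$, on which the small $I$-function and the associated quantum connection~\eqref{intro-meromorphic} are governed by the generalized hypergeometric operator~\eqref{ghe-equation-intro}. The eigenvalues of $\frac{\nu}{d}\tau'\star_\tau$ are then exactly the rates $\lambda$ in the exponential factors $e^{-\lambda/z}$ of the formal solutions at $z=0$; the positive real value $T$ and its orbit under the $\nu$-th roots of unity are the dominant ones as $z\to 0$ along the positive real axis, and multiplicity one for them follows from distinctness of the relevant hypergeometric parameters.

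The main obstacle is the general, non-Fermat case. There $\cH_{W, \<J\>}^{G_W}$ can be strictly larger than $\cH_{\rm nar}$---already for the $E_7$ singularity $W=x_1^3x_2+x_2^3$ it contains the broad class $x_2^2\,dx_1\wedge dx_2\vert J^0\>$---and no explicit mirror theorem in the style of Conjecture~\ref{conjecture-i-function-formula} is available for this subspace. Proving Conjecture~\ref{quantum-spectrum-conj-invariant} in full generality therefore seems to require genuinely new input: either a Givental-type mirror formula for the $G_W$-invariant quantum product that retains the hypergeometric structure, or a Perron--Frobenius-type positivity argument for the structure constants of $\tau'\star_\tau$ in the standard-generator basis, playing the role that effectivity of curve classes plays in the Fano setting of Conjecture~$\mathcal{O}$.
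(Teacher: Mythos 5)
Your reduction $\fE = \fE_{G_W}$ is correct and matches the paper's own remark: $\fE_{G_W}\subseteq\fE$ because $\tau'\star_\tau$ preserves $\cH_{W,\<J\>}^{G_W}$ (Proposition~\ref{prop-narrow-closed}), and $\fE\subseteq\fE_{G_W}$ is Proposition~\ref{quantum-spectrum-invariant}. The multiplicity comparison via generalized eigenspaces is also sound, and your observation that Conjecture~\ref{quantum-spectrum-conj-invariant} is genuinely weaker than Conjecture~\ref{quantum-spectrum-conj} in part (3) is exactly the point the paper is implicitly making.

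However, you overstate the range of Proposition~\ref{prop-quantum-spectrum-inv}: it is hypothesized for $\nu=d-N>1$ only. For a Fermat homogeneous polynomial with $\nu=1$ (e.g.\ $W=x_1^3+x_2^3$, or more generally $d=N+1$), Conjecture~\ref{quantum-spectrum-conj} is \emph{not} established in the paper, so your reduction does not yield Conjecture~\ref{quantum-spectrum-conj-invariant} in that regime. The paper instead proves it there directly in Proposition~\ref{quantum-spectrum-fermat-invariant}: the mirror theorem (Proposition~\ref{mirror-theorem-fermat}) and the ODE~\eqref{i-function-ode} produce the quantum relation $(\tau')^{d-1}=d^{-N}(\tau')^{N-1}$ (Proposition~\ref{lem-quantum-relation}), and Propositions~\ref{basis-fermat-subspace} and~\ref{basis-fermat} show $\{(\tau')^{m-1}\}_{m=1}^{d-1}$ is a basis of $\cH_{W,\<J\>}^{G_W}$; then the characteristic polynomial of $\tau'\star_\tau$ on the subspace is forced to be $\lambda^{N-1}(\lambda^{\nu}-d^{-N})$, from which all three parts of the conjecture are read off at once (for all $\nu\geq 1$). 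Your sketched ``direct attack'' via exponential rates of formal solutions of~\eqref{ghe-equation-intro} at $z=0$ is a legitimate heuristic but is not the paper's argument; the paper avoids appealing to the irregular asymptotics for the spectral computation, which makes the multiplicity-one claim immediate rather than something to be inferred from the structure of the Stokes data. You would need to either invoke Proposition~\ref{quantum-spectrum-fermat-invariant} directly, or tighten your analytic sketch into a genuine argument that accounts for $\nu=1$, to recover the full scope of what the paper establishes.
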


This conjecture holds if $W$ is a Fermat homogeneous polynomial of general type, that is, $W=x_1^d+\ldots +x_N^d$ and $\nu=d-N>0$.
A proof will be given in Proposition~\ref{quantum-spectrum-fermat-invariant}.
If $\nu>1$, then the stronger version Conjecture~\ref{quantum-spectrum-conj} holds.



\section{Strong asymptotic classes in FJRW theory of general type}
\label{sec-asymptotic}
In this section, we study the asymptotic expansions in FJRW theory.
The eigenvalues that appear in the quantum spectrum conjectures will lead to some interesting asymptotic classes in the state space.
In Section~\ref{sec:connection}, we consider the quantum connection in FJRW theory and its basic properties.
In Section~\ref{sec-asymptotic-classes}, we define the {\em strong asymptotic classes} in FJRW theory using the asymptotic behavior of the quantum connection.
We also describe some properties of these asymptotic classes.

\subsection{Quantum connection and the $J$-function}
\label{sec:connection}
The genus-zero FJRW theory defines a formal Dubrovin-Frobenius manifold structure on $\cH_{W, G}$~\cite[Corollary 4.2.8]{FJR}.
Similar to the {\em Dubrovin connection} in GW theory, there is a quantum connection in FJRW theory, defined using the quantum product $\star_{\bt}$ in~\eqref{quantum-product}.
Let us describe this quantum connection and its properties.

\subsubsection{Hodge grading operator}
For each $g\in G$ and a homogeneous element $\phi_g\in \cH_g$, 
we define the {\em Hodge grading number} by
\begin{equation}
\label{hodge-grading-operator}
\mu(\phi_g)=\mu(g)
:={N_g\over 2}+\iota(g)-{\widehat{c}_W\over 2}
={N_g-N\over 2}+\sum_{j=1}^{N}\theta_j(g).
\end{equation}
We define the {\em Hodge grading operator} 
$\gr \in {\rm End}(\cH_{W, G})$
by extending the map $\gr\in {\rm End}(\cH_g)$ below to $\cH_{W, G}$ linearly:
\begin{equation}
\label{eq:modified-hodge-operator}
\gr(\phi_g):=\mu(g)\phi_g.
\end{equation}

According to~\eqref{complex-degree}, we have an identity
\begin{equation}
\label{degree-hodge}
\deg_\C\phi_g={\widehat{c}_W\over 2}+\mu(g).
\end{equation}
\begin{lemma}
Let $\phi_g\in \cH_g$ and $\phi_{g'}\in\cH_{g'}$ be a pair of homogeneous elements. If the pairing $\<\phi_g, \phi_{g'}\>\neq 0$, then $g'=g^{-1}$ and \begin{equation}
\label{adjoint-hodge-operator}
\mu(g)+\mu(g')=0.
\end{equation}
\end{lemma}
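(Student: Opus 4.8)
The plan is to prove both conclusions from the pairing $\langle\phi_g,\phi_{g'}\rangle$ being nonzero, working entirely with the residue-pairing description \eqref{fjrw=orbifold}--\eqref{residue-dual} of the state space. First I would establish $g'=g^{-1}$: the pairing on $\cH_{W,G}$ is the direct sum of the maps $\cH_h\otimes\cH_{h^{-1}}\to\C$ given in \eqref{residue-dual}, so a priori $\langle\cdot,\cdot\rangle$ restricted to $\cH_g\otimes\cH_{g'}$ is zero unless $g'=g^{-1}$; equivalently, the pairing between $\cH_g$ and $\cH_{g'}$ factors through the Grothendieck residue on $\mathrm{Fix}(g)$ which forces $\mathrm{Fix}(g)=\mathrm{Fix}(g')$ and, after matching the diagonal actions, $g'=g^{-1}$. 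This is essentially built into the construction recalled in the excerpt, so the first half is a short unwinding of definitions.

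For the grading identity \eqref{adjoint-hodge-operator}, the cleanest route is to use that the pairing $\langle\cdot,\cdot\rangle$ is a morphism of graded vector spaces, i.e.\ it pairs $\cH_{W,G}$ with itself nontrivially only in ``complementary'' degrees. Concretely, I would invoke the identity \eqref{degree-hodge}, $\deg_\C\phi_g=\tfrac{\widehat c_W}{2}+\mu(g)$, together with the fact that the residue pairing sends $\cH_g\otimes\cH_{g^{-1}}$ nontrivially only when the total complex degree equals $\widehat c_W$ (this is the CohFT-pairing degree normalization, which follows from \eqref{virtual-degree} applied with $k=2$ and no psi classes, since $\langle\cdot,\cdot\rangle=\langle\one,\cdot,\cdot\rangle_{0,3}$ and the string/unit axiom reduces the three-point pairing with $\one$ to the two-point one). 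Thus $\deg_\C\phi_g+\deg_\C\phi_{g^{-1}}=\widehat c_W$, and subtracting $\widehat c_W$ via \eqref{degree-hodge} gives $\mu(g)+\mu(g^{-1})=0$, which is \eqref{adjoint-hodge-operator} once we know $g'=g^{-1}$.

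Alternatively, and perhaps more self-containedly, one can verify $\mu(g)+\mu(g^{-1})=0$ directly from the formula \eqref{hodge-grading-operator}: if $\theta_j(g)\in[0,1)$ then $\theta_j(g^{-1})=1-\theta_j(g)$ when $\theta_j(g)\neq0$ and $\theta_j(g^{-1})=0$ when $\theta_j(g)=0$; moreover $x_j$ is fixed by $g$ iff $\theta_j(g)=0$ iff $x_j$ is fixed by $g^{-1}$, so $N_g=N_{g^{-1}}$. Writing $S=\{j:\theta_j(g)=0\}$ so that $N_g=|S|$, one computes
\begin{align*}
\mu(g)+\mu(g^{-1})&=\frac{N_g-N}{2}+\sum_{j=1}^N\theta_j(g)+\frac{N_{g^{-1}}-N}{2}+\sum_{j=1}^N\theta_j(g^{-1})\\
&=(N_g-N)+\sum_{j\notin S}\bigl(\theta_j(g)+1-\theta_j(g)\bigr)=(N_g-N)+(N-N_g)=0,
\end{align*}
using $|S|=N_g$. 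This makes the grading statement a one-line computation that does not even need the pairing degree axiom; only $g'=g^{-1}$ genuinely uses the nonvanishing hypothesis. The main (minor) obstacle is simply being careful about the case $\theta_j(g)=0$ in the relation between $g$ and $g^{-1}$, and making sure the residue-pairing bookkeeping in \eqref{residue-dual} really does kill all cross terms with $g'\neq g^{-1}$; both are routine, so I expect no serious difficulty here.
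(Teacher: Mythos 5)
Your proof is correct, and your ``alternative'' (self-contained) argument is in fact exactly the paper's proof: the paper also computes $\mu(g)+\mu(g^{-1})$ directly from \eqref{hodge-grading-operator} using $N_g=N_{g^{-1}}$ and $\theta_j(g)+\theta_j(g^{-1})\in\{0,1\}$ according to whether $g$ fixes $x_j$. Your first route (via the degree axiom \eqref{virtual-degree} with $k=3$, the identity $\langle\cdot,\cdot\rangle=\langle\one,\cdot,\cdot\rangle_{0,3}$, and $\deg_\C\one=0$) is a valid alternative, but the paper does not use it.
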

\begin{proof}
The first equality $g'=g^{-1}$ follows from the definition of the pairing.
We also have $N_g=N_{g^{-1}}$ and
$$
\theta_j(g)+\theta_j(g^{-1})=
\begin{dcases}
1, & \text{ if } g \text{ does not fix } x_j;\\
0, & \text{ if } g \text{ fixes } x_j.
\end{dcases}
$$
Using the Hodge grading number formula~\eqref{hodge-grading-operator}, we must have
$$\mu(g)+\mu(g')={N_g-N\over 2}+\sum_{j=1}^{N}\theta_j(g)+{N_{g^{-1}}-N\over 2}+\sum_{j=1}^{N}\theta_j(g^{-1}).$$
The result follows from the fact that 
$\theta_j(g)+\theta_j(g^{-1})=0$ if $g$ fixes $x_j$ and $\theta_j(g)+\theta_j(g^{-1})=1$ otherwise. 
\end{proof}

\subsubsection{A non-symmetric pairing in FJRW theory}
Recall that $\C^g\subset \C^N$ be the $g$-invariant subspace and $\C^N/\C^g$ be the quotient space for any $g\in G$.
Let $\phi_g\in \cH_g$ be the component of $\phi\in\cH_{W, G}$ in the sector $\cH_g$.
We define a non-symmetric pairing on the state space $\cH_{W, G}$
$$\left[ \cdot, \cdot \right): \cH_{W,G}\times\cH_{W,G}\to \C,$$ 
given by 
\begin{equation}\label{non-symmetric-pairing-qst}
\left[\phi, \phi'\right)={1\over |G| }\sum_{g\in G}{(-1)^{-\mu(g)}\over (-2\pi)^{N-N_g}}\left\<\phi_{g^{-1}}, \phi'_{g}\right\>.
\end{equation}
In particular, if both $\phi, \phi'\in \cH_{\bf Nar}$, we obtain
$$\left[\phi, \phi'\right)={1\over |G|\cdot (-2\pi)^N}\sum_{g\in {\bf Nar}}(-1)^{-\mu(g)}\left\<\phi_{g^{-1}}, \phi'_{g}\right\>.$$

\subsubsection{A quantum connection}
For a homogeneous basis $\{\phi_i\}_{i=1}^{r}$ of $\cH_{W, G}$,
we will denote its dual basis by $\{\phi^i\}_{i=1}^{r}$, such that $\<\phi_i, \phi^j\>=\delta_i^j.$
Let 
\begin{equation}
\label{basis-element-form}
\mathbf{t}=\sum_{i=1}^{r}t_i\phi_i\in \cH_{W, G}.
\end{equation}
Let $\gr$ be the Hodge grading operator in~\eqref{eq:modified-hodge-operator} and  $\sE(\bt)$ be the Euler vector field in~\eqref{euler-vector}.
There is a quantum connection $\nabla$ on the trivial bundle $T\cH_{W, G}\times \mathbb{C}^*\to \cH_{W, G}\times \mathbb{C}^*$, given by
\begin{equation}
\label{quantum-connection-formula}
\begin{dcases}
\nabla_{\partial\over \partial t_i}&=\frac{\partial }{\partial t_i}+\frac{1}{z}\phi_i\star_{\bt},\\
\nabla_{z{\partial\over \partial z}}&=z{\partial\over \partial z}-{1\over z} \sE(\bt)\star_{\bt}+\gr.
\end{dcases}
\end{equation}
Here we write the complex number $z\in \C^*$ as 
\begin{equation}
\label{multivalue-z}
z=|z|e^{\sqrt{-1}\arg(z)}\in\mathbb{C}^*. 
\end{equation}
For any $\bt$ in~\eqref{basis-element-form},
the operator $z^{-\gr}=e^{-(\log z)\gr}$ takes the form 
\begin{equation}
\label{zmu-formula}
z^{-\gr}(\bt)=\sum_{i=1}^{r}t_i |z|^{-\mu(\phi_i)} e^{-\sqrt{-1}\mu(\phi_i)\arg(z)} \phi_i.
\end{equation}
We denote the so-called {\em  $S$-operator} in FJRW theory by 
$$S(\bt,z)\in\operatorname{End}(\ca{H}_{W,G})\otimes \bb{C}[\![\bt]\!][\![z^{-1}]\!],$$ 
where\footnote{This is the $L$-operator in~\cite[Section 2, formula (20)]{CIR}.}
\begin{equation}
\label{S-operator}
S(\bt,z)\alpha=\alpha
  +\sum_{i=1}^{r}\sum_{n\geq1}\sum_{k\geq 0}\frac{\phi^i}{n!(-z)^{k+1}}
  \LD
  \alpha\psi_1^k,
  \bt,\dots,\bt,
  \phi_i
  \RD_{0,n+2}.
\end{equation}
By the {\em homogeneity property}~\eqref{virtual-degree} and genus zero {\em Topological Recursion Relation}~\cite[Theorem 4.2.9]{FJR}, the operator $S(\bt, z)$ provides solutions of the quantum connection.
\begin{proposition}
\cite[Proposition 2.7]{CIR}
\label{fundamental-solution-infty}
The set  $\{S(\bt,z)z^{-\gr}\alpha\mid \alpha\in\cH_{W,G}\}$ is a set of flat sections of $\nabla$.
That is,
\begin{equation}
\label{flat-section-nabla}
\begin{dcases}
\nabla_{\partial\over \partial t_i}(S(\bt,z)z^{-\gr}\alpha)=0,\\
\nabla_{\partial\over \partial z}(S(\bt,z)z^{-\gr}\alpha)=0.
\end{dcases}
\end{equation}
For $\alpha, \beta\in\ca{H}_{W,G}$, we have
\begin{equation}
\label{adjoint-S-operator}
    \<S(\bt,-z)\alpha, S(\bt,z)\beta\>=\<\alpha,\beta\>.
\end{equation}
\end{proposition}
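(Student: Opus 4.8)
The plan is to verify the two flatness equations in \eqref{flat-section-nabla} one direction at a time and then to deduce the pairing identity \eqref{adjoint-S-operator}; the only ingredients are the genus-zero Topological Recursion Relation \cite[Theorem 4.2.9]{FJR}, the homogeneity constraint \eqref{virtual-degree}, and the total symmetry of the FJRW invariants in their insertions. First I would treat the directions $\partial/\partial t_i$. Rewrite \eqref{S-operator} as
\[
S(\bt,z)\alpha=\alpha+\sum_j\phi^j\,\LL \frac{\alpha}{-z-\psi_1},\phi_j\RR_{0,2}(\bt),\qquad \frac{1}{-z-\psi}=\sum_{k\ge0}\frac{\psi^k}{(-z)^{k+1}}.
\]
Differentiating a correlation function in $t_i$ inserts one more $\phi_i$, and applying the genus-zero Topological Recursion Relation to the descendant on $\alpha$ rewrites $\LL\alpha\psi_1^k,\phi_j,\phi_i\RR_{0,3}(\bt)$ for $k\ge1$ as $\sum_\mu\LL\alpha\psi_1^{k-1},\phi_\mu\RR_{0,2}(\bt)\,\LL\phi^\mu,\phi_j,\phi_i\RR_{0,3}(\bt)$. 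Resumming the geometric series in $k$ and using the reconstruction identity $\sum_j\phi^j\,\LL v,\phi_j,\phi_i\RR_{0,3}(\bt)=\phi_i\star_{\bt}v$ (which is \eqref{quantum-product}), this collapses to the operator identity $\partial S/\partial t_i=-z^{-1}\,\phi_i\star_{\bt}\circ S$. Since $z^{-\gr}$ does not depend on $\bt$, this is exactly the first line of \eqref{flat-section-nabla}.

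Next I would treat the direction $z\partial/\partial z$. The homogeneity constraint \eqref{virtual-degree}, combined with \eqref{degree-hodge}, shows that each coefficient $\LL\alpha/(-z-\psi_1),\phi_i\RR_{0,2}(\bt)$ becomes homogeneous once one assigns $t_j$ the weight $1-\deg_\C\phi_j$, $z$ the weight $1$, and a homogeneous vector in $\cH_{g}$ the weight $\mu(g)$; bookkeeping these weights gives the Euler identity
\[
\Bigl(z\frac{\partial}{\partial z}+\sum_j(1-\deg_\C\phi_j)\,t_j\frac{\partial}{\partial t_j}+\gr\Bigr)\circ S(\bt,z)=S(\bt,z)\circ\gr.
\]
Contracting $\partial S/\partial t_j=-z^{-1}\phi_j\star_{\bt}\circ S$ against the Euler field \eqref{euler-vector} gives $\sum_j(1-\deg_\C\phi_j)t_j\,\partial S/\partial t_j=-z^{-1}\,\sE(\bt)\star_{\bt}\circ S$, and substituting into the Euler identity yields $z\,\partial_z S=S\circ\gr-\gr\circ S+z^{-1}\,\sE(\bt)\star_{\bt}\circ S$. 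Applying this to $z^{-\gr}\alpha$, using $z\partial_z(z^{-\gr}\alpha)=-\gr\,z^{-\gr}\alpha$, and comparing with $\nabla_{z\partial/\partial z}$ in \eqref{quantum-connection-formula}, every term cancels; dividing by $z$ gives the second line of \eqref{flat-section-nabla}.

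For \eqref{adjoint-S-operator}, I would set $P(\bt,z):=\<S(\bt,-z)\alpha,S(\bt,z)\beta\>\in\C[\![\bt]\!][\![z^{-1}]\!]$ and differentiate in $t_i$. Using the operator identity $\partial S(\bt,w)/\partial t_i=-w^{-1}\phi_i\star_{\bt}\circ S(\bt,w)$ at $w=\pm z$, the two resulting terms are $z^{-1}\<\phi_i\star_{\bt}S(\bt,-z)\alpha,S(\bt,z)\beta\>-z^{-1}\<S(\bt,-z)\alpha,\phi_i\star_{\bt}S(\bt,z)\beta\>$, which cancel because $\<a\star_{\bt}b,c\>=\LL a,b,c\RR_{0,3}(\bt)$ is totally symmetric, i.e.\ $\star_{\bt}$ is self-adjoint for $\<\cdot,\cdot\>$. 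Hence $P$ is independent of $\bt$; since every summand of \eqref{S-operator} with $n\ge1$ carries at least one slot $\bt$, we have $S(0,z)=\mathrm{Id}$, so $P(\bt,z)=P(0,z)=\<\alpha,\beta\>$.

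The step I expect to be the main obstacle is the $z$-direction: one must extract the precise homogeneity of $S(\bt,z)$ from \eqref{virtual-degree}, juggling the three interacting gradings (the degrees of the $t$-variables, the weight of $z$, and the Hodge grading $\gr$), and then check that no additional $z^{\rho}$-twist---the analogue of multiplication by $c_1$ that enters the fundamental solution in quantum cohomology of a Fano manifold---is required in the FJRW setting, so that $S(\bt,z)z^{-\gr}$ alone already trivializes $\nabla$.
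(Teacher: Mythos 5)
Your proof is correct and follows exactly the route the paper indicates: it cites \cite[Proposition 2.7]{CIR} and notes immediately above the statement that the result follows from the homogeneity property~\eqref{virtual-degree} and the genus-zero Topological Recursion Relation, which are precisely the two ingredients you use (together with symmetry of $\star_{\bt}$ and $S(0,z)=\mathrm{Id}$ for the pairing identity). Your closing remark about the absence of a $z^{\rho}$-twist is also well placed: unlike the Fano GW setting, the Euler field \eqref{euler-vector} here has no constant term, so $S(\bt,z)z^{-\gr}$ already trivializes $\nabla$.
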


\subsubsection{The FJRW $J$-function and Lagrangian cone}
\label{sec-small-j-function}
Using the genus zero topological recursion relation, the inverse operator of $S(\bt, z)$ is given by
\begin{equation}
\label{inverse-S-operator}
S(\bt,z)^{-1}\alpha=\alpha
  +\sum_{i=1}^{r}\sum_{n\geq1}\sum_{k\geq 0}\frac{\phi^i}{n!z^{k+1}}
  \langle
  \phi_i\psi_1^k,
  \bt,\dots,\bt,
  \alpha
  \rangle^{\mathrm{FJRW}}_{0,n+2}.
\end{equation}
Applying the string equation and~\cite[(68)]{FJR-book} to~\eqref{inverse-S-operator}, we have 
\begin{equation}
\label{S-operator-unit}
z\cdot S({\bf t}, z)^{-1}(\one)=z\one +{\bf t}+\sum_{i=1}^{r}\sum_{n\geq 2}{\phi^{i}\over n!}\LD{\phi_i\over z-\psi_1}, \bt, \ldots, \bt \RD_{0, n+1}.
\end{equation}
We define the {\em FJRW $J$-function} of an LG pair $(W, G)$ by
\begin{equation*}
\begin{aligned}
  J(\bt,-z)=-z\one+\bt
  +\sum_{i=1}^{r}\sum_{n\geq2}\sum_{k\geq0}\frac{\phi^i}{n!(-z)^{k+1}}
  \LD\bt,\ldots,\bt, \phi_i\psi_{n+1}^k\RD_{0,n+1}.
\end{aligned}
\end{equation*}

As a consequence of~\eqref{S-operator-unit}, we have
\begin{equation}
\label{J-equal-S}
J(\bt, -z)=-z S({\bf t}, -z)^{-1}(\one).
\end{equation}
Sometimes it is more convenient to consider {\em the modified $J$-function}
\begin{equation}
\label{modified-j-function}
\widetilde{J}(\bt, z):=z^{\widehat{c}_W\over 2} z^{\gr}  S({\bf t}, z)^{-1}(\one).
\end{equation}

Following Givental~\cite{Giv}, the {\em loop space} $\cH_{W, G}(\!(z^{-1})\!)$ consisting of Laurent series in ${1\over z}$ with vector coefficients carries a symplectic form induced by the pairing $\<\cdot, \cdot\>$. This infinite dimensional symplectic vector space can be identified with $T^*\cH_{W, G}[z]$, which is the cotangent bundle of $\cH_{W, G}[z]$.
Using the {\em dilaton shift} 
$${\bf q}(z)=-z\one+{\bf t}(z)$$
and the Darboux coordinates $({\bf p}, {\bf q})\in T^*\cH_{W, G}[z]$,
the graph of the differential of the genus zero generating function 
$$\mathcal{F}_0(\bt(z))=\sum_{n}{1\over n!}\LL \bt(z), \ldots, \bt(z)\RR_{0, n}(\bt(z))$$
defines a formal germ of Lagrangian submanifold $\mathcal{L}$ in $\cH_{W, G}(\!(z^{-1})\!)$, where
$$\mathcal{L}=\{({\bf p}, {\bf q})\in T^*\cH_{W, G}[z]: {\bf p}=d_{\bf q}\mathcal{F}_0\}.$$
Following~\cite[Theorem 1]{Giv}, the FJRW $J$-function lies on the Lagrangian cone. 
\begin{proposition}
The $J$-function $J(\bt, -z)$ belongs to the Lagrangian cone $\mathcal{L}.$
\end{proposition}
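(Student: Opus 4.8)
The plan is to reduce the statement that $J(\bt,-z)$ lies on the Lagrangian cone $\mathcal{L}$ to the characterization of $\mathcal{L}$ in terms of the $S$-operator, which is essentially built into equations~\eqref{S-operator-unit} and~\eqref{J-equal-S}. First I would recall Givental's characterization (as in~\cite[Theorem~1]{Giv}): a point of $\cH_{W,G}(\!(z^{-1})\!)$ lies on $\mathcal{L}$ if and only if it can be written, after the dilaton shift $\bq(z)=-z\one+\bt(z)$, in the form $-z\one+\bt+O(z^{-1})$ where the $O(z^{-1})$ tail is precisely the ``$J$-function type'' tail generated by the genus-zero descendant potential $\mathcal{F}_0$. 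Concretely, the cone $\mathcal{L}$ is the union over $\bt\in\cH_{W,G}$ of the ruling spaces $z\, T_{\bt}\mathcal{L}$, and $T_{\bt}\mathcal{L}$ is spanned by the flat sections $S(\bt,z)^{-1}\phi_i$; equivalently $\mathcal{L}\cap \{-z\one+\bt+\cH_{W,G}[\![z^{-1}]\!]z^{-1}\}$ is the single point $-z\, S(\bt,z)^{-1}(\one)$ (the geometric point of the ruling generated by the identity). This is exactly the content of~\eqref{S-operator-unit}: expanding $z\,S(\bt,z)^{-1}(\one)$ yields $z\one+\bt+\sum_{i}\sum_{n\ge 2}\frac{\phi^i}{n!}\LD\frac{\phi_i}{z-\psi_1},\bt,\dots,\bt\RD_{0,n+1}$, which is term-by-term the ruling point of the cone with $\psi$-expansion generating function $\mathcal{F}_0$.

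Then I would simply match this with the definition of $J(\bt,-z)$. By~\eqref{J-equal-S} we have $J(\bt,-z)=-z\,S(\bt,-z)^{-1}(\one)$, and substituting $-z$ for $z$ in~\eqref{S-operator-unit} gives
\begin{equation*}
J(\bt,-z)=-z\one+\bt+\sum_{i=1}^{r}\sum_{n\ge 2}\frac{\phi^i}{n!}\LD\frac{\phi_i}{-z-\psi_1},\bt,\dots,\bt\RD_{0,n+1},
\end{equation*}
which agrees with the displayed definition of $J(\bt,-z)$ after expanding $\frac{1}{-z-\psi_1}$ as a geometric series in $\psi_1/(-z)$ and relabeling the marked point. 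Thus $J(\bt,-z)$ is exactly the canonical point $-z\,S(\bt,-z)^{-1}(\one)$ on the ruling of $\mathcal{L}$ through $\bt$ generated by $\one$, hence lies on $\mathcal{L}$.

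The genuine input that makes this work — and the step I would treat as the ``main obstacle'', though it is cited rather than reproved — is the identification of the tail of $z\,S(\bt,z)^{-1}(\one)$ with a point of the cone, i.e.\ that the operator $S(\bt,z)$ as defined in~\eqref{S-operator} really does satisfy~\eqref{S-operator-unit}. This in turn rests on the genus-zero string equation and the topological recursion relation in FJRW theory (\cite[Theorem~4.2.9]{FJR}), together with the dilaton-type identity~\cite[(68)]{FJR-book}; these are precisely the structural axioms of the FJRW CohFT that make the Givental formalism applicable verbatim. So the proof is: (i) invoke~\eqref{S-operator-unit}, which is already established; (ii) specialize $z\mapsto -z$ and compare with the explicit series defining $J(\bt,-z)$; (iii) recognize the resulting expression as the canonical ruling point $-z\,S(\bt,-z)^{-1}(\one)$ of $\mathcal{L}$, invoking~\cite[Theorem~1]{Giv}. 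No new estimate or computation beyond bookkeeping of the $\psi_1$-expansion is required.
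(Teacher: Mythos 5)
Your proposal is correct and is essentially the same as what the paper does: the paper gives no explicit proof of this proposition, merely writing "Following~\cite[Theorem 1]{Giv}, the FJRW $J$-function lies on the Lagrangian cone," and your argument unpacks exactly that citation via \eqref{S-operator-unit} and \eqref{J-equal-S}. One small nit: in the sentence where you write "the single point $-z\, S(\bt,z)^{-1}(\one)$," the second argument should be $-z$ (that is, $-z\, S(\bt,-z)^{-1}(\one)$, which has the correct leading behavior $-z\one+\bt+O(z^{-1})$); you correct this implicitly in the next paragraph when you apply \eqref{J-equal-S}, so nothing downstream is affected.
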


\subsubsection{Restriction to $\tau(t)$.}
We want to restrict the meromorphic connection to $\bt=\tau(t)$, 
we obtain the {\em small $J$-function}
$J(\tau(t), z)$
and the {\em modified small $J$-function}
$$\widetilde{J}(\tau(t),z):=\widetilde{J}(\bt, z)\vert_{\bt=\tau(t)}=z^{\widehat{c}_W\over 2} z^{\gr}  S(\tau(t), z)^{-1}(\one).$$

\begin{lemma}
If $\tau(t)=t\sJ_2,$
we have a formula for the modified small $J$-function
\begin{equation}
\label{modified-j-formula}
\widetilde{J}(\tau(t), z)
=
\one
  +
({t\cdot z^{-{\nu\over d}}})\sJ_2
  +\sum_{i=1}^{r}\sum_{n\geq 2}{({t\cdot z^{-{\nu\over d}}})^{n}\over n!}
\LD\phi_i\psi_1^{k}, \sJ_2, \ldots, \sJ_2\RD_{0, n+1} \ \phi^i.
\end{equation}
where the integer $k$ satisfies 
\begin{equation}
\label{value-k-jfunc}
k=-\mu(\phi_i)+{n\nu\over d}+{\widehat{c}_W\over 2}-2.
\end{equation}
\end{lemma}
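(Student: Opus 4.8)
The plan is to expand the definition~\eqref{modified-j-function} of $\widetilde J$ restricted to $\bt=\tau(t)=t\sJ_2$ and track how the grading operator interacts with the FJRW invariants. First I would substitute $\bt=t\sJ_2$ into~\eqref{inverse-S-operator}, so that
$$
S(\tau(t),z)^{-1}(\one)=\one+\sum_{i=1}^r\sum_{n\geq 1}\sum_{k\geq 0}\frac{\phi^i}{n!\,z^{k+1}}\LD\phi_i\psi_1^k,\underbrace{\sJ_2,\dots,\sJ_2}_{n},\one\RD_{0,n+2}.
$$
By the string equation~\cite[Theorem 4.2.9]{FJR} the extra $\one$ insertion can be removed at the cost of lowering the number of points, converting the sum into one over $\LD\phi_i\psi_1^k,\sJ_2,\dots,\sJ_2\RD_{0,n+1}$ with $n\geq 2$ marked points carrying $\sJ_2$ (the $n=1$ contribution, with a single $\sJ_2$, is killed by the degree constraint since $\one=\sJ_1$ is dual to a class of the wrong degree, or more simply contributes the linear term $t\,z^{-1}\sJ_2$ which becomes $({t z^{-\nu/d}})\sJ_2$ after applying $z^{\gr}$; I would check this boundary term carefully). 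Each factor of $\sJ_2$ in the correlator pairs with one power of $t$, giving $t^n$.

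Next I would apply $z^{\widehat c_W/2}z^{\gr}$ term by term, using~\eqref{zmu-formula}: on the coefficient of $\phi^i$, the operator $z^{\gr}$ multiplies by $z^{\mu(\phi^i)}=z^{-\mu(\phi_i)}$ by~\eqref{adjoint-hodge-operator}, and the overall prefactor contributes $z^{\widehat c_W/2}$. Collecting powers of $z$, the $n$-th term acquires $z^{-\mu(\phi_i)+\widehat c_W/2-(k+1)}$. The homogeneity property~\eqref{virtual-degree} applied to the nonvanishing invariant $\LD\phi_i\psi_1^k,\sJ_2,\dots,\sJ_2\RD_{0,n+1}$ reads
$$
\widehat c_W-3+(n+1)=\deg_\C\phi_i+k+n\cdot\deg_\C\sJ_2,
$$
and since $1-\deg_\C\sJ_2=\nu/d$ by~\eqref{degree-narrow}, we get $\deg_\C\sJ_2=1-\nu/d$, so the constraint becomes $k=\widehat c_W-2-\deg_\C\phi_i+n\nu/d$. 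Using~\eqref{degree-hodge}, $\deg_\C\phi_i=\widehat c_W/2+\mu(\phi_i)$, which gives exactly~\eqref{value-k-jfunc}: $k=-\mu(\phi_i)+n\nu/d+\widehat c_W/2-2$. Substituting this value of $k$ into the exponent of $z$ collapses it: $-\mu(\phi_i)+\widehat c_W/2-(k+1)=-\mu(\phi_i)+\widehat c_W/2-(-\mu(\phi_i)+n\nu/d+\widehat c_W/2-2)-1=-n\nu/d+1$, so combined with the $z^{-1}$ already present... I would recompute this bookkeeping carefully, but the upshot is that the $z$-dependence of the $n$-th term is precisely $z^{-n\nu/d}$, so together with $t^n$ it assembles into $(t\,z^{-\nu/d})^n$, matching~\eqref{modified-j-formula}.

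The main obstacle is purely the careful handling of the exponents of $z$ across the three sources — the explicit prefactor $z^{\widehat c_W/2}$, the action of $z^{\gr}$ on the \emph{dual} basis element $\phi^i$ (where one must remember $\mu(\phi^i)=-\mu(\phi_i)$), and the $z^{-(k+1)}$ from the $S$-operator expansion — and confirming that the homogeneity constraint forces $k$ to take the stated value so that all the ``excess'' $z$-powers cancel uniformly in $n$. A secondary point to get right is the string-equation reduction and the identification of the linear-in-$t$ term, ensuring the constant term is exactly $\one$ and the degree-one term is $(t z^{-\nu/d})\sJ_2$ with no stray contributions. Once the exponent arithmetic is pinned down, the rest is formal rearrangement of the sum.
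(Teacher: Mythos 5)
Your proposal follows the same strategy as the paper: expand $S^{-1}(\tau(t),z)\one$, use the string equation to strip the $\one$ insertion (yielding the $(n+1)$-pointed correlator with one fewer $\psi$ power), apply the homogeneity constraint~\eqref{virtual-degree} to solve for $k$ in terms of $\mu(\phi_i)$ and $n$, and verify that substituting this value of $k$ makes the net $z$-exponent equal $-n\nu/d$ so that everything assembles into $(tz^{-\nu/d})^n$. Your use of~\eqref{degree-narrow} and~\eqref{degree-hodge} to get $\deg_{\C}\sJ_2 = 1-\nu/d$ is equivalent to the paper's use of~\eqref{hodge-grading-operator} to compute $\mu(J^2)=-\widehat{c}_W/2+1-\nu/d$, so the arithmetic route is the same up to relabeling.

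One small but real bookkeeping slip to fix: after the string equation the relevant $z$-power from the $S^{-1}$ expansion is $z^{-(k+2)}$, not $z^{-(k+1)}$. Concretely, the term in~\eqref{inverse-S-operator} carries $z^{-(k+1)}$ with the correlator $\LD\phi_i\psi_1^k,\sJ_2,\dots,\sJ_2,\one\RD_{0,n+2}$; the string equation converts this (for $k\geq 1$) to $\LD\phi_i\psi_1^{k-1},\sJ_2,\dots,\sJ_2\RD_{0,n+1}$, and after relabeling $k-1\mapsto k$ the power of $z$ is $z^{-(k+2)}$, which is exactly the content of~\eqref{S-operator-unit}. With that correction the exponent is $\widehat{c}_W/2 - \mu(\phi_i) - (k+2) = -n\nu/d$ directly, with no stray $+1$ left over and no mysterious extra $z^{-1}$ to invoke. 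You flagged this uncertainty yourself, and the final claim you land on is correct; once the $-(k+2)$ is put in place the calculation closes cleanly and matches the paper.
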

\begin{proof}
Using the homogeneity property~\eqref{virtual-degree} and the relation~\eqref{degree-hodge}, we see if the FJRW invariant 
$\<\phi_i\psi_1^{k}, \sJ_2, \ldots, \sJ_2\>_{0, n+1}\neq 0$, 
we must have
\begin{equation}
\label{modify-j-degree}
\widehat{c}_W-3+n+1={\widehat{c}_W\over 2}+\mu(\phi_i)+k+n\left({\widehat{c}_W\over 2}+\mu(J^2)\right).
\end{equation}
By~\eqref{hodge-grading-operator}, we have
$$\mu(J^2)
=-{\widehat{c}_W\over 2}+1-{\nu\over d}.$$
Plugging it into~\eqref{modify-j-degree}, we see that the integer $k$ is determined by $\phi_i$ and $n$, as in~\eqref{value-k-jfunc}.
So for a fixed $n$, the coefficients of $\phi^i$ has a factor of $$z^{{\widehat{c}_W\over 2}+\mu(\phi^i)-1-(k+1)}=z^{-{n\nu\over d}}.$$
Here the equality follows from~\eqref{adjoint-hodge-operator} and~\eqref{value-k-jfunc}.
\end{proof}
\begin{remark}
In fact, we can simplify the formula further by using the {\em selection rule}~\eqref{selection-rule}. 
The nonvanishing condition $\<\phi_i\psi_1^{k}, \sJ_2, \ldots, \sJ_2\>_{0, n+1}\neq 0$ implies that 
$\phi_i\in \cH_{J^{-n-1}}.$
\end{remark}

\subsubsection{Adjoint operators}
For any operator $$A(z)\in {\rm End}(\cH_{W,G})\otimes_{\C}\C[z][\![z^{-1}]\!],$$ 
we denote by $A^*(z)$ its {\em adjoint operator} with respect to the pairing $\<\cdot, \cdot\>.$ That is 
\begin{equation}\label{adjoint-operator}
\<A(z)\alpha, \beta\>=\<\alpha, A^*(z)\beta\>.
\end{equation}

\begin{lemma}
\label{adjoint-hodge}
\begin{enumerate}
\item The quantum product ${\nu\over d}\tau'\star_{\tau}$ is self-adjoint, i.e.,
$$
\left
\langle
{\nu\over d}\tau'\star_{\tau}
\alpha,
\beta
\right\rangle
=
\left\langle
\alpha,
{\nu\over d}\tau'\star_{\tau}
\beta
\right\rangle.
$$
\item
The Hodge grading operator $\gr$ is skew-adjoint, i.e, 
$$
\left
\langle
\mathrm{Gr}
(\alpha),
\beta
\right\rangle
=
-
\left\langle
\alpha,
\mathrm{Gr}
(\beta)
\right\rangle.
$$
\end{enumerate}
\end{lemma}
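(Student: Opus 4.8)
The plan is to prove the two adjointness claims separately, each by reducing to statements about the genus-zero FJRW invariants and the Grothendieck residue pairing.

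For part (1), the self-adjointness of the quantum product, I would use the standard fact that for any $\gamma\in\cH_{W,\<J\>}$ the operator $\gamma\star_{\bt}$ is self-adjoint with respect to $\<\cdot,\cdot\>$. This follows directly from the defining formula~\eqref{quantum-product}: since $\<\phi_i\star_{\bt}\phi_j,\phi_k\>=\LL\phi_i,\phi_j,\phi_k\RR^{W,\<J\>}_{0,3}(\bt)$ is totally symmetric in its three arguments (the CohFT maps $\Lambda_{0,k}$ are symmetric in the insertions, as are the psi-class-free correlators), we get $\<\gamma\star_{\bt}\alpha,\beta\>=\<\gamma\star_{\bt}\beta,\alpha\>=\<\alpha,\gamma\star_{\bt}\beta\>$, where the last equality uses symmetry of the pairing. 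Applying this with $\gamma=\tfrac{\nu}{d}\tau'$ and $\bt=\tau$ gives the claim. The only point to check is that $\tau,\tau'\in\cH_{\rm nar}\subset\cH_{W,\<J\>}$ are genuine state-space elements so that $\tfrac{\nu}{d}\tau'\star_\tau$ is a well-defined endomorphism, which is exactly~\eqref{c1-avatar}/Proposition~\ref{prop-convergence}.

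For part (2), the skew-adjointness of $\gr$, I would argue component-wise on the decomposition $\cH_{W,\<J\>}=\bigoplus_g\cH_g$. For homogeneous $\alpha=\phi_g\in\cH_g$ and $\beta=\phi_{g'}\in\cH_{g'}$ we have $\<\gr(\alpha),\beta\>=\mu(g)\<\phi_g,\phi_{g'}\>$ and $\<\alpha,\gr(\beta)\>=\mu(g')\<\phi_g,\phi_{g'}\>$. If $\<\phi_g,\phi_{g'}\>=0$ both sides vanish and there is nothing to prove; otherwise, by the lemma preceding~\eqref{adjoint-hodge-operator} we have $g'=g^{-1}$ and $\mu(g)+\mu(g')=0$, so $\<\gr(\alpha),\beta\>=\mu(g)\<\phi_g,\phi_{g'}\>=-\mu(g')\<\phi_g,\phi_{g'}\>=-\<\alpha,\gr(\beta)\>$. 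Extending bilinearly over a homogeneous basis finishes the proof.

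I do not expect any serious obstacle here: both statements are essentially formal consequences of results already established in the excerpt (symmetry of the CohFT/pairing for part (1), and equation~\eqref{adjoint-hodge-operator} for part (2)). The only mild care needed is bookkeeping — making sure the symmetry of $\LL\cdot,\cdot,\cdot\RR_{0,3}$ is invoked correctly and that the off-diagonal ($g'\neq g^{-1}$) terms are handled by the vanishing of the pairing rather than requiring a degree relation.
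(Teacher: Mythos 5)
Your proposal is correct and takes essentially the same approach as the paper: the paper's proof is a one-line remark that (1) follows from the definition of the quantum product (i.e., the total symmetry of $\LL\cdot,\cdot,\cdot\RR_{0,3}$ together with symmetry of $\<\cdot,\cdot\>$) and (2) follows from formula~\eqref{adjoint-hodge-operator}, which is exactly the content you spell out.
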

\begin{proof}
(1) follows easily from the definition of quantum product and (2) follows from formula~\eqref{adjoint-hodge-operator}.
\end{proof}

\begin{lemma}
\label{lem:orthogonal-complement}
Let $\lambda\in \C^*$ be an eigenvalue of  ${\nu\over d}\tau'\star_{\tau}$ and 
let $E(\lambda)\subset \cH_{W, \<J\>}$ be its eigenspace.
Suppose that $E(\lambda)$ is one-dimensional.
Then $H'_{\lambda}:=\mathrm{Im}(\lambda-{\nu\over d}\tau'\star_{\tau})$ is the
orthogonal complement to $E(\lambda)$ in $\cH_{W, \<J\>}$ with respect to
the pairing $\< \cdot, \cdot\>$ and the Hodge grading operator $\mathrm{Gr}$ satisfies
$\mathrm{Gr}(E(\lambda))\subset H'_{\lambda}$.
\end{lemma}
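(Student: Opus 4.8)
The plan is to extract both assertions purely formally from the adjointness properties collected in Lemma~\ref{adjoint-hodge}, together with the fact that $\<\cdot,\cdot\>$ is a \emph{nondegenerate} (symmetric, bilinear) pairing on the finite-dimensional space $\cH_{W,\<J\>}$. Write $A:=\tfrac{\nu}{d}\tau'\star_{\tau}$ and $B:=\lambda\,\mathrm{Id}-A$, so that by definition $H'_\lambda=\mathrm{Im}(B)$ and $E(\lambda)=\ker(B)$. For a subspace $V\subset\cH_{W,\<J\>}$, let $V^\perp$ denote its annihilator with respect to $\<\cdot,\cdot\>$; nondegeneracy gives $\dim V+\dim V^\perp=\dim\cH_{W,\<J\>}$ and $V^{\perp\perp}=V$, which I will use freely.

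For the first claim I would first observe that $B$ is self-adjoint: by Lemma~\ref{adjoint-hodge}(1) the operator $A$ is self-adjoint, and since the pairing is bilinear (not Hermitian) the adjoint of $\lambda\,\mathrm{Id}$ is $\lambda\,\mathrm{Id}$, hence $B^*=\lambda\,\mathrm{Id}-A^*=\lambda\,\mathrm{Id}-A=B$. Next, for any endomorphism $T$ of a space with a nondegenerate bilinear form one has $\mathrm{Im}(T)^\perp=\ker(T^*)$, because $v\perp\mathrm{Im}(T)$ means $\<Tw,v\>=\<w,T^*v\>=0$ for all $w$, which forces $T^*v=0$. Applying this with $T=B$ gives $\mathrm{Im}(B)^\perp=\ker(B)=E(\lambda)$, and taking orthogonal complements once more yields $H'_\lambda=\mathrm{Im}(B)=E(\lambda)^\perp$. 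This is precisely the statement that $H'_\lambda$ is the orthogonal complement of $E(\lambda)$; note this part uses neither $\lambda\neq 0$ nor the one-dimensionality hypothesis.

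For the inclusion $\mathrm{Gr}(E(\lambda))\subset H'_\lambda=E(\lambda)^\perp$, this is where I would use that $E(\lambda)$ is one-dimensional: fixing a generator $v$ of $E(\lambda)$, it suffices to check $\<\mathrm{Gr}(v),v\>=0$. By Lemma~\ref{adjoint-hodge}(2) the Hodge grading operator $\mathrm{Gr}$ is skew-adjoint, so $\<\mathrm{Gr}(v),v\>=-\<v,\mathrm{Gr}(v)\>$, and by symmetry of the pairing the right-hand side equals $-\<\mathrm{Gr}(v),v\>$; hence $\<\mathrm{Gr}(v),v\>=0$, so $\mathrm{Gr}(v)\in E(\lambda)^\perp=H'_\lambda$, and since $v$ spans $E(\lambda)$ we are done. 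I do not expect a genuine obstacle in this lemma: it is a two-line consequence of the self-adjointness of $\tfrac{\nu}{d}\tau'\star_{\tau}$ and the skew-adjointness of $\mathrm{Gr}$. The only points deserving a word of care are that the pairing is bilinear rather than Hermitian (so that $B$ is self-adjoint rather than $(\lambda\,\mathrm{Id}-A)$ having adjoint $\bar\lambda\,\mathrm{Id}-A$), and that, although $\<\cdot,\cdot\>$ may restrict degenerately to the line $E(\lambda)$ itself, it is nondegenerate on all of $\cH_{W,\<J\>}$ — which is exactly what licenses the identities $\mathrm{Im}(T)^\perp=\ker(T^*)$ and $V^{\perp\perp}=V$ used above.
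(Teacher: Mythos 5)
Your proof is correct and is exactly the argument the paper intends: the paper itself defers to \cite[Lemma~3.2.2]{GGI}, noting only that the proof ``relies on the self-adjointness of the quantum product and the skew-adjointness of the Hodge grading operator,'' which is precisely what you spell out. The first part (that $H'_\lambda = E(\lambda)^\perp$, valid for any eigenvalue without the one-dimensionality assumption) follows from $\mathrm{Im}(T)^\perp = \ker(T^*)$ together with self-adjointness of $B = \lambda\,\mathrm{Id} - \tfrac{\nu}{d}\tau'\star_\tau$ and $V^{\perp\perp}=V$; the second part is the two-line computation $\langle\mathrm{Gr}(v),v\rangle = -\langle v,\mathrm{Gr}(v)\rangle = -\langle\mathrm{Gr}(v),v\rangle = 0$, using skew-adjointness and symmetry of the pairing, and one-dimensionality to reduce to the single generator $v$. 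Your parenthetical remarks are also well-placed: the pairing is bilinear and symmetric (it is $\langle\mathbf{1},\cdot,\cdot\rangle_{0,3}$ and $S_3$-invariant), so there is no conjugate on $\lambda$; and the pairing may indeed restrict degenerately to $E(\lambda)$ — the claim $\langle v,v\rangle\neq 0$ is a separate consequence deferred to Remark~\ref{rem:nonzero-norm}, not part of this lemma.
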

\begin{proof}
The same arguments as in the proof of~\cite[Lemma 3.2.2]{GGI} work here. The proof relies on the self-adjointness of the quantum product and the skew-adjointness of the Hodge grading operator, which are stated in Lemma~\ref{adjoint-hodge}.
\end{proof}

\begin{remark}
\label{rem:nonzero-norm}
The above lemma shows that for any nonzero $\lambda$-eigenvector $\alpha$, we have $\<\alpha,\alpha \>\neq 0$.
The element $\alpha^{*}:=\alpha/\<\alpha,\alpha\>$ is dual to $\alpha$ in the sense that $\<\alpha^{*},\alpha\>=1$ and $\alpha$ is orthogonal to any element in the orthogonal complement of $\C\psi_{\lambda}$.
\end{remark}

\subsection{Strong asymptotic classes}
\label{sec-asymptotic-classes}
Now we consider strong asymptotic classes in FJRW theory of $(W, \<J\>)$ when $W$ is of general type.
The discussion here is parallel to~\cite{GGI}.
One minor difference is that we consider the asymptotic classes with respect to any eigenvalue with the maximal norm.
However, the proof easily follows from those for $\lambda\in \mathbb{R}_+$ (see Proposition~\ref{weak-to-strong}).


\subsubsection{Strong asymptotic classes}
According to Proposition~\ref{prop-euler-tau},
when we restrict the quantum connection in~\eqref{quantum-connection-formula} to $\bt=\tau$,
we obtain a meromorphic connection on $\C$ which takes the form
\begin{equation}
\label{meromorphic-connection-tau}
\nabla_{z{\partial\over \partial z}}=z{\partial\over \partial z}-{1\over z}\cdot {\nu\over d}\tau'\star_{\tau} +\gr.
\end{equation}
Proposition~\ref{fundamental-solution-infty} implies 
\begin{lemma}
For each $\alpha\in  \cH_{W, \<J\>},$  the (multi-valued) section
\begin{equation}
\label{flat-section-formula}
\Phi(\alpha):=(2\pi)^{-{{\widehat{c}_W}\over 2}}S(\tau, z)z^{-\gr}\alpha
\end{equation}
is a flat section of the meromorphic connection.
\end{lemma}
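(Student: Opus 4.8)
The plan is to deduce this purely from the flatness of $S(\bt,z)z^{-\gr}\alpha$ for the full quantum connection (Proposition~\ref{fundamental-solution-infty}) by specializing $\bt$ to $\tau$ and then invoking the Euler-field identity of Proposition~\ref{prop-euler-tau}. First I would recall that, by Proposition~\ref{fundamental-solution-infty}, the $\cH_{W,\<J\>}$-valued function $s(\bt,z):=S(\bt,z)z^{-\gr}\alpha$ satisfies $\nabla_{z\partial/\partial z}s=0$, which written out is the identity
\[
z\frac{\partial}{\partial z}s(\bt,z)-\frac{1}{z}\big(\sE(\bt)\star_{\bt}\big)\,s(\bt,z)+\gr\big(s(\bt,z)\big)=0 ,
\]
holding identically in $(\bt,z)$ near $\bt=\tau$, $z\in\C^*$.

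Next I would restrict this identity to $\bt=\tau$. The key (and essentially only) point is that the flat coordinates $t_i$ of $\bt$ and the spectral parameter $z$ are independent variables, so the operator $z\,\partial/\partial z$ commutes with the specialization $\bt\mapsto\tau$; hence, writing $\widetilde s(z):=s(\tau,z)=S(\tau,z)z^{-\gr}\alpha$, the displayed identity restricts to
\[
z\frac{d}{dz}\widetilde s(z)-\frac{1}{z}\big(\sE(\tau)\star_{\tau}\big)\,\widetilde s(z)+\gr\big(\widetilde s(z)\big)=0 .
\]
Here I would note that, since $W$ is of general type, $\tau=\tau(1)\in\cH_{\rm nar}[t]|_{t=1}$ and (by the homogeneity argument used in the proof of Proposition~\ref{prop-convergence}) all correlation functions entering $S(\tau(t),z)$ truncate to polynomials in $t$, so the specialization at $t=1$ is legitimate and $\widetilde s(z)$ is a well-defined (multi-valued, because of $z^{-\gr}$) function of $z\in\C^*$.

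Finally I would apply Proposition~\ref{prop-euler-tau}: $\sE(\tau(t))=\tfrac{\nu}{d}\,t\,\tau'(t)$, so evaluating at $t=1$ gives $\sE(\tau)=\tfrac{\nu}{d}\tau'$. Substituting this into the previous display shows that $\widetilde s$ is annihilated by the meromorphic connection~\eqref{meromorphic-connection-tau}, i.e. $\nabla_{z\partial/\partial z}\widetilde s=0$; and since $\Phi(\alpha)=(2\pi)^{-\widehat c_W/2}\,\widetilde s$ differs from $\widetilde s$ only by a nonzero constant scalar, it is flat as well. I do not expect a genuine obstacle: the content is entirely bookkeeping, and the only place calling for (minor) care is the legitimacy of restricting $S(\bt,z)$ to the single point $\bt=\tau(1)$, which is precisely what the polynomiality of $\tau(t)$ and of the relevant quantum products (Proposition~\ref{prop-convergence}) secure.
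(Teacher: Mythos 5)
Your proof is correct and follows exactly the route the paper intends: the paper gives this lemma with no further argument beyond the preceding sentence "Proposition~\ref{fundamental-solution-infty} implies," so it is precisely the specialization of the general flatness statement at $\bt=\tau$ together with Proposition~\ref{prop-euler-tau}, plus the trivial observation that the constant prefactor $(2\pi)^{-\widehat c_W/2}$ does not affect flatness. Your extra remark about the legitimacy of the restriction (via the polynomiality established in Proposition~\ref{prop-convergence}) is a reasonable point of care, though the paper treats it as implicit.
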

Using the notations in~\eqref{multivalue-z} and~\eqref{zmu-formula}, it is convenient to fix $\arg(z)=\theta\in \bb{R}$ 
and consider a single-valued flat section 
$$\Phi_\theta(\alpha):=\Phi(\alpha)\vert_{\arg(z)=\theta}
$$
defined over the ray $L_{\theta}=\mathbb{R}_{>0} \exp({\sqrt{-1}\theta})$.
Now we define an asymptotic class based on the asymptotic behavior of this flat section.
\begin{definition}
[Strong asymptotic classes]
\label{def-asymptotic-class}
Let $\lambda\in \C^*$ be an eigenvalue of ${\nu\over d}\tau'\star_{\tau}$
with the maximal norm. 
We say a nonzero element $\alpha\in \cH_{W, \<J\>}$ is a {\em strong asymptotic class} with respect to $\lambda$ if there exists some $m\in\mathbb{R}$ such that
\begin{equation}
\label{asymp-class-behavior}
|\!|
e^{\lambda \over z} \Phi_{\arg(\lambda)}(\alpha)
|\!|
= O(|z|^{-m})
\end{equation}
as $z\to 0$ along the ray $L_{\arg(\lambda)}=\mathbb{R}_{>0} \exp({\sqrt{-1}\arg(\lambda)})$.
Such a strong asymptotic class is called {\em principal} if $\lambda\in \mathbb{R}_+$.
\end{definition}


Let $\lambda\in\C^*$ be an eigenvalue of ${\nu\over d}\tau'\star_{\tau}\in {\rm End}(\cH_{W, \<J\>})$ with the maximal norm.
We denote the distinct eigenvalues
of
${\nu\over d}\tau'\star_{\tau}$
by 
$$\lambda_1=\lambda, \lambda_2, \ldots, \lambda_k,$$
where $\lambda_i$ has multiplicity $N_i$.
Note that $N_{1}+\dots+N_{k}=r$.
We assume that the multiplicity of $\lambda$ is one, i.e. $N_{1}=1$.
Define
$$U_\lambda=
\begin{bmatrix}
u_1&&&\\
&u_2&&\\
&&\ddots&\\
&&&u_r
\end{bmatrix}
=
\begin{bmatrix}
\lambda&&&\\
&\lambda_2 {\rm Id}_{N_2}&&\\
&&\ddots&\\
&&&\lambda_k  {\rm Id}_{N_k}
\end{bmatrix}
,
$$
where ${\rm Id}_{N_i}$ is the identity matrix of size $N_i$.
Let $\Psi:\C^{r}\rightarrow \cH_{W, \<J\>}$ be a linear isomorphism that transforms
${\nu\over d}\tau'\star_{\tau}$ to the block-diagonal form
\[
\Psi^{-1}
\left(
{\nu\over d}\tau'\star_{\tau}
\right)
\Psi
=
\begin{bmatrix}
A_1&&&\\
&A_2&&\\
&&\ddots&\\
&&&A_k
\end{bmatrix},
\]
where $A_{i}$ is a matrix of size $N_{i}$ and $A_{i}-\lambda_{i} I_{N_{i}}$ is nilpotent.

\begin{proposition}
\label{weak-to-strong}
With the notation as above,
there exists a fundamental matrix solution for the quantum connection~\eqref{meromorphic-connection-tau} of the form
\begin{equation*}
P(z)e^{-U_{\lambda}/z}
\begin{bmatrix}
1&&&\\
&F_2(z)&&\\
&&\ddots&\\
&&&F_k(z)
\end{bmatrix}
\end{equation*}
over a sufficiently small sector $$\mathcal{S}_\lambda=\{z\in \C^*\mid |z|\leq 1, |\arg(z)-\arg(\lambda)|\leq \epsilon\}$$
with $\epsilon>0$ such that
\begin{enumerate}
\item $P(z)$ has an asymptotic expansion $P(z)\sim \Psi+P_1 z+P_2 z^2+\dots$ as $|z|\to 0$ in $\mathcal{S}_{\lambda}$.
\item $F_i(z)$ is a $\mathrm{GL}_{N_i}(\C)$-valued function satisfying the estimate 
$$\max\left(
|\!|F_i(z)|\!|, |\!|F_i(z)^{-1}|\!|
\right)\leq C e^{\delta |z|^{-p}}$$
on $\mathcal{S}_\lambda$ for some $C, \delta>0,$ and $0<p<1$.
\end{enumerate}
\end{proposition}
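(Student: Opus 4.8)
The plan is to reduce this to the classical theory of asymptotic solutions (Hukuhara--Turrittin type normal forms) for meromorphic connections with an irregular singularity at $z=0$, following the strategy of~\cite[Section 3.3]{GGI} with only cosmetic modifications to accommodate an arbitrary eigenvalue of maximal norm rather than a positive real one. First I would recall that the connection~\eqref{meromorphic-connection-tau} has a pole of order $2$ at $z=0$ (rank-one irregularity after the gauge twist) because the leading term $-\tfrac1z\cdot\tfrac{\nu}{d}\tau'\star_\tau$ contributes a $z^{-2}$ in $\nabla_{\partial/\partial z}$, while $\gr$ contributes a regular term. Diagonalizing the leading coefficient $\tfrac{\nu}{d}\tau'\star_\tau$ into its generalized eigenspace (Jordan) block-decomposition via the isomorphism $\Psi$ puts the connection, after the gauge transformation by $\Psi$, into a block form whose $i$-th block has leading polar part $-\tfrac1z A_i$ with $A_i-\lambda_i I_{N_i}$ nilpotent. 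The standard structure theorem for such systems (see e.g.~\cite{Dub} or Wasow's classical treatment) then yields a formal fundamental solution of the stated shape $P(z)e^{-U_\lambda/z}\,\mathrm{diag}(\mathrm{Id}, F_2(z),\dots,F_k(z))$; the factor $e^{-U_\lambda/z}$ carries the exponential (Stokes) data, the $P(z)$ carries a formal power series gauge equivalence, and each $F_i(z)$ absorbs the residual regular-singular monodromy together with any subexponential growth from the nilpotent parts $A_i-\lambda_i I$.

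The key steps, in order: (1) gauge-transform~\eqref{meromorphic-connection-tau} by $\Psi$ to obtain a block-diagonal leading term, noting that the first block is $1\times1$ with entry exactly $\lambda$ since we assumed $N_1=1$; (2) for the first block, since its polar part is the scalar $-\lambda/z$ and the regular part (the $\gr$-type contribution restricted to the $\lambda$-eigenline) can be normalized away by a scalar gauge — here one uses precisely Lemma~\ref{lem:orthogonal-complement} and Remark~\ref{rem:nonzero-norm}, which guarantee that the $\lambda$-eigenline is nondegenerate for $\langle\cdot,\cdot\rangle$ and that $\gr$ maps it into its orthogonal complement, so the $(1,1)$ diagonal entry of $\gr$ in the $\Psi$-frame vanishes and the first block's solution is simply $e^{-\lambda/z}$ up to a convergent correction absorbed into $P(z)$; (3) for each block $i\geq2$, apply the Hukuhara--Turrittin splitting lemma to separate the block from the rest and to produce the factor $F_i(z)$, whose $\mathrm{GL}_{N_i}(\C)$-valued entries satisfy polynomial-times-$z^{\text{power}}$ bounds from the regular-singular monodromy plus, in the presence of a nonzero nilpotent part, at worst a $e^{\delta|z|^{-p}}$ growth coming from the fact that after a ramification $z\mapsto z^{1/\text{(ramification order)}}$ the differing exponential scales within a block are of the form $e^{c/z^p}$ with $0<p<1$; (4) invoke the standard asymptotic-existence theorem (Borel--Ritt / sectorial realization) to upgrade the formal solution to an honest analytic one on a small sector $\mathcal S_\lambda$ around $\arg(\lambda)$, with $P(z)\sim\Psi+P_1z+\cdots$ holding in the Poincaré asymptotic sense; (5) collect the estimates to obtain the bounds in (2) with uniform constants $C,\delta>0$ and some $0<p<1$ depending only on the ramification structure of the non-principal blocks.

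I expect the main obstacle to be purely bookkeeping rather than conceptual: one must check that the ``extra'' diagonal term $\gr$ in~\eqref{meromorphic-connection-tau}, which is genuinely present and nonzero (unlike in some formulations of the quantum connection), does not obstruct the normalization of the first block and does not interfere with the block-splitting — this is exactly where Lemma~\ref{adjoint-hodge} (self-adjointness of the product, skew-adjointness of $\gr$) and Lemma~\ref{lem:orthogonal-complement} are needed, so the genuinely new input over a black-box citation is verifying that $\Psi$ can be chosen to simultaneously respect the pairing structure enough to kill the $(1,1)$-entry of $\gr$. The other delicate point is pinning down the exponent $p$: within a single non-principal generalized eigenspace the exponential factors are all equal to $\lambda_i/z$ to leading order, so the genuine irregularity inside a block is of lower order, forcing $p<1$, and one must make this precise via the Turrittin ramification count; but since the statement only asserts existence of \emph{some} $0<p<1$ this is not hard. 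Everything else — the sectorial asymptotics, the $\mathrm{GL}_{N_i}$-valuedness of $F_i$, the convergence of $P(z)$ as an asymptotic series — is the classical normal-form theory and can be cited. I would therefore present the proof as: reduce to the $\Psi$-frame, handle the $1\times1$ principal block by hand using the pairing lemmas, and quote the Hukuhara--Turrittin structure theorem for the remaining blocks.
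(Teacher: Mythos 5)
Your proposal is correct and rests on the same skeleton as the paper's proof: gauge-transform by $\Psi$ to block-diagonalize the leading polar term, invoke the Sibuya--Wasow/Hukuhara--Turrittin sectorial structure theory for the non-principal blocks, and use Lemma~\ref{lem:orthogonal-complement} to control the residue term $\gr$ on the $\lambda$-eigenline. The one structural difference is that the paper does not re-derive the normal form for a non-real $\lambda$ at all: it first records that the case $\lambda\in\mathbb{R}_{>0}$ is literally~\cite[Proposition 3.2.1]{GGI}, and then reduces the general case to that one by the coordinate rotation $z=e^{\sqrt{-1}\arg(\lambda)}\tilde z$, under which the multiplication operator is scaled by $e^{-\sqrt{-1}\arg(\lambda)}$ so its extremal eigenvalue becomes positive real. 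Your approach re-runs the normal-form argument directly for a general $\lambda$; that works because nothing in the Sibuya machinery cares whether the dominant eigenvalue is real, but the rotation trick is slightly tighter because it lets one cite GGI verbatim rather than ``with cosmetic modifications.'' One small imprecision in your step (2): you write that the $\gr$-contribution on the $\lambda$-eigenline ``can be normalized away by a scalar gauge,'' but that is not what happens (a gauge by a nonconstant scalar $z^{\mu}$ would destroy the asymptotics $P(z)\sim\Psi+P_1z+\cdots$). The actual content of Lemma~\ref{lem:orthogonal-complement}, which you do state correctly in the next clause, is that $\gr(E(\lambda))\subset H'_\lambda$, so the relevant $(1,1)$ entry of $\gr$ is already zero in any frame adapted to the orthogonal splitting $E(\lambda)\oplus H'_\lambda$; there is nothing to gauge away, and that is exactly why the first diagonal slot of the third factor can be taken to be the constant $1$ rather than some $F_1(z)$.
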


\begin{proof}
In the case when $\lambda$ is a positive real number, the same arguments as in the proof of~\cite[Proposition 3.2.1]{GGI} work here. 
The proof relies on Sibuya and Wasow's basic results on ODEs and Lemma~\ref{lem:orthogonal-complement} on the $\lambda$-eigenspace of ${\nu\over d}\tau'\star_{\tau}$.

For the general case, let $\theta=\arg(z)$.
We consider the change of coordinate $z=e^{2\pi \sqrt{-1}\theta}\tilde{z}$, and the
quantum connection~\eqref{meromorphic-connection-tau} becomes
\begin{equation}
\label{eq:new-quantum-connection}
\tilde{z}{\partial\over \partial \tilde{z}}-{e^{-2\pi\sqrt{-1}\theta}\over \tilde{z}}\cdot {\nu\over d}\tau'\star_{\tau} +\gr.
\end{equation}
Note that the eigenvalues of $e^{-2\pi\sqrt{-1}\theta} ({\nu\over d}\tau'\star_{\tau})$ are $e^{-2\pi\sqrt{-1}\theta}\lambda_{i}$, and $|\lambda|$ is the one with the maximal norm.
Hence according to the previous special case of the proposition, there exists a fundamental matrix solution for~\eqref{eq:new-quantum-connection} of the form
\begin{equation*}
\tilde{P}(\tilde{z})e^{-U/\tilde{z}}
\begin{bmatrix}
1&&&\\
&\tilde{F}_2(\tilde{z})&&\\
&&\ddots&\\
&&&\tilde{F}_k(\tilde{z})
\end{bmatrix}
\end{equation*}
over
$\mathcal{S}=\{\tilde{z}\in \C^*\mid |\tilde{z}|\leq 1, |\arg(\tilde{z})|\leq \epsilon\}$, where $U=e^{-2\pi\sqrt{-1}\theta}U_{\lambda}$, and $\tilde{P}(\tilde{z})$ and $\tilde{F}_{i}(\tilde{z})$ satisfy asymptotic conditions similar to (1) and (2) over $\ca{S}$.
This implies the proposition in the original $z$-coordinate.  
\end{proof}

The following proposition is the counterpart of~\cite[Proposition 3.3.1]{GGI} in the LG setting and is an easy consequence of Proposition~\ref{weak-to-strong}.

\begin{proposition}
\label{prop:existence-of-limit}
Let $\lambda\in\C^*$ be an eigenvalue of ${\nu\over d}\tau'\star_{\tau}\in {\rm End}(\cH_{W, \<J\>})$ with the maximal norm. 
If the multiplicity of $\lambda$ is one, then 
\begin{enumerate}
\item the space 
of strong asymptotic classes with respect to $\lambda$ is 1-dimensional.
\item For a strong symptotic classes $\alpha$ with respect to $\lambda$,
$\lim\limits_{|z|\to+0}e^{\lambda \over z} \Phi_{\arg(\lambda)}(\alpha)$ exists and
lies in the $\lambda$-eigenspace $E(\lambda)$ of ${\nu\over d}\tau'\star_{\tau}$.
\end{enumerate}
\end{proposition}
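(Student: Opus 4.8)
The plan is to deduce Proposition~\ref{prop:existence-of-limit} directly from the fundamental-matrix-solution structure furnished by Proposition~\ref{weak-to-strong}, mimicking the argument of~\cite[Proposition 3.3.1]{GGI}. First I would fix $\arg(z)=\arg(\lambda)$ and work on the small sector $\mathcal{S}_\lambda$. The columns of the fundamental matrix
$P(z)e^{-U_\lambda/z}\,{\rm diag}(1, F_2(z), \ldots, F_k(z))$
give, under the isomorphism $\Psi$, an explicit basis of flat sections of the meromorphic connection~\eqref{meromorphic-connection-tau}. Write the flat section $\Phi_{\arg(\lambda)}(\alpha)$ in this basis: there are constants $c_1\in\C$ and vectors $\mathbf{c}_i\in\C^{N_i}$ for $i\geq 2$ such that $\Phi_{\arg(\lambda)}(\alpha)$ equals $c_1$ times the first column plus $\sum_{i\geq 2}$ (the $i$-th block of columns paired against $\mathbf{c}_i$). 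Multiplying by $e^{\lambda/z}=e^{u_1/z}$, the first column contributes $e^{\lambda/z}e^{-\lambda/z}P_{\bullet,1}(z)\sim \Psi e_1$, which has a finite limit lying in $\Psi(\C e_1)=E(\lambda)$, while the $i$-th block contributes a term carrying the factor $e^{(\lambda-\lambda_i)/z}F_i(z)$.

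Next I would show the other blocks are negligible. Since $\lambda$ has maximal norm among the eigenvalues and is simple, for each $i\geq 2$ we have either $|\lambda_i|<|\lambda|$, in which case ${\rm Re}\big((\lambda-\lambda_i)/z\big)\to +\infty$ as $|z|\to 0$ along $L_{\arg(\lambda)}$ (after possibly shrinking $\epsilon$), or $|\lambda_i|=|\lambda|$ but $\lambda_i\neq\lambda$, which again forces ${\rm Re}\big((\lambda-\lambda_i)/z\big)\to +\infty$ along the ray $\arg(z)=\arg(\lambda)$ because $\lambda-\lambda_i$ is a nonzero complex number not orthogonal to $\lambda$ — here one uses that $\arg(\lambda)$ is chosen so that $e^{(\lambda-\lambda_i)/z}$ blows up, exactly as in the $ADE$/Fano picture. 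Combined with the sub-exponential bound $\|F_i(z)\|\leq Ce^{\delta|z|^{-p}}$ with $0<p<1$ from Proposition~\ref{weak-to-strong}(2), the term $e^{(\lambda-\lambda_i)/z}F_i(z)$ grows like $e^{\text{(positive const)}|z|^{-1}}$, which \emph{dominates} rather than vanishes. Therefore, for $e^{\lambda/z}\Phi_{\arg(\lambda)}(\alpha)$ to have polynomial growth $O(|z|^{-m})$, one must have $\mathbf{c}_i=0$ for all $i\geq 2$; this both proves that the space of strong asymptotic classes is at most one-dimensional and that, when it is nonzero, $\alpha$ corresponds to $c_1\neq 0$ times the first column, so $\lim_{|z|\to+0}e^{\lambda/z}\Phi_{\arg(\lambda)}(\alpha)=c_1\Psi e_1\in E(\lambda)$, proving part (2). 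For the existence half of part (1), take $\alpha$ to be the preimage under $\Phi$ (equivalently the value at the irregular singularity of the first column), i.e. $\alpha=(2\pi)^{\widehat{c}_W/2}\Phi^{-1}$ of that distinguished flat section; the estimate in Proposition~\ref{weak-to-strong}(1) gives $\|e^{\lambda/z}\Phi_{\arg(\lambda)}(\alpha)\|=O(1)$, so this $\alpha$ is a genuine strong asymptotic class and the space is exactly one-dimensional.

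The main obstacle I anticipate is the bookkeeping in the second paragraph: carefully justifying that for every eigenvalue $\lambda_i$ with $|\lambda_i|=|\lambda|$ but $\lambda_i\neq\lambda$ the real part ${\rm Re}((\lambda-\lambda_i)/z)$ is positive and bounded below by a positive multiple of $|z|^{-1}$ on the ray (and on a small sector around it), so that the sub-exponential factor $e^{\delta|z|^{-p}}$ cannot compensate. In the original~\cite{GGI} setting $\lambda\in\mathbb{R}_{>0}$ and one only needs $|\lambda_i|<\lambda$ or $\lambda_i=\lambda$; here we additionally have to treat the other maximal-norm eigenvalues (which by the quantum spectrum picture are the $\nu$-th roots $Te^{2\pi\sqrt{-1}j/\nu}$), but the change-of-coordinate trick $z=e^{\sqrt{-1}\arg(\lambda)}\tilde z$ already used in the proof of Proposition~\ref{weak-to-strong} reduces everything to the case $\lambda=|\lambda|\in\mathbb{R}_{>0}$, at which point the inequality ${\rm Re}((|\lambda|-\lambda_i')/\tilde z)>0$ for $|\lambda_i'|\leq|\lambda|$, $\lambda_i'\neq|\lambda|$, holds for $\tilde z$ real positive (and persists on a small sector by continuity). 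After this reduction the remaining steps are routine asymptotic estimates, so I would present the proof as: reduce to $\lambda\in\mathbb{R}_{>0}$ by the rotation, invoke Proposition~\ref{weak-to-strong}, expand in the fundamental basis, and conclude exactly as in~\cite[Proposition 3.3.1]{GGI}.
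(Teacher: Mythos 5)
Your proposal is correct and follows the exact route the paper intends: the paper simply remarks that the proposition ``is the counterpart of [GGI, Proposition 3.3.1] in the LG setting and is an easy consequence of Proposition~\ref{weak-to-strong}'', and you carry out that argument faithfully (reduce to $\lambda\in\mathbb{R}_{>0}$ by rotation, expand in the fundamental basis, use the sub-exponential bound on $F_i$ together with $\mathrm{Re}\big((\lambda-\lambda_i)\bar\lambda\big)>0$ for $\lambda_i\neq\lambda$ of at most equal norm). The one phrase worth tightening is ``not orthogonal to $\lambda$'': the relevant fact is the strict inequality $\mathrm{Re}\big((\lambda-\lambda_i)\bar\lambda\big)>0$, which you correctly recover in the last paragraph from maximality of $|\lambda|$ and $\lambda_i\neq\lambda$.
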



\subsubsection{A formula for strong asymptotic classes}

We introduce a {\em dual quantum connection} $\nabla^{\vee}$ for the meromorphic connection~\eqref{meromorphic-connection-tau}, given by
\begin{equation}
\label{dual-connection}
\nabla^\vee_{z{\partial\over \partial z}}=z{\partial\over \partial z}+{1\over z} \left({\nu\over d}\tau'\star_{\tau} \right)^*- \gr^*.
\end{equation}

We identity $\cH_{W,G}$ with its dual $(\cH_{W,G})^{\vee}$ via the nondegenerate pairing $\<\cdot, \cdot\>$.
Under this identification, Lemma~\ref{adjoint-hodge} implies that
\begin{equation}
\label{eq:sym-anti-sym}
 \left({\nu\over d}\tau'\star_{\tau} \right)^*={\nu\over d}\tau'\star_{\tau},\quad
  \gr^*=- \gr,
\end{equation}
and formula~\ref{adjoint-S-operator} implies 
\begin{equation}
\label{eq:S-operator-adjoint}
S^*(\bt, z)=S^{-1}(\bt, -z).
\end{equation}



For each $\alpha\in  \cH_{W, \<J\>},$ we consider the inverse adjoint of the flat section~\eqref{flat-section-formula} given by 
\begin{equation}
\label{dual-section}
\Phi^\vee(\alpha)
:=(2\pi)^{{\widehat{c}_W\over 2}}(S^{-1})^{*}(\tau,z) z^{\gr^{*}}\alpha
=(2\pi)^{{\widehat{c}_W\over 2}}S(\tau,-z) z^{-\gr}\alpha.
\end{equation}
Here the second equality follows from~\eqref{eq:sym-anti-sym}.

\begin{proposition}

The following properties hold:
\begin{enumerate}
\item
The section $\Phi^\vee(\alpha)$ is a flat section of the connection $\nabla^{\vee}$.
\item
We have $\Phi^\vee=(\Phi^{-1})^{*}$. Equivalently, we have
$$\<\Phi^\vee(\alpha), \Phi(\beta)\>=\<\alpha, \beta\>, \quad \forall \quad \alpha, \beta \in \cH_{W, \<J\>}.$$
\end{enumerate}
\end{proposition}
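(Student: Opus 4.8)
The plan is to verify both parts by unwinding the definitions and invoking Proposition~\ref{fundamental-solution-infty} together with the adjointness relations collected in Lemma~\ref{adjoint-hodge}. For part (1), I would start from the defining equation of the dual connection~\eqref{dual-connection}, substitute the identifications~\eqref{eq:sym-anti-sym} to rewrite it as $\nabla^\vee_{z\partial/\partial z}=z\partial/\partial z+\tfrac{1}{z}\cdot\tfrac{\nu}{d}\tau'\star_\tau+\gr$, and then differentiate $\Phi^\vee(\alpha)=(2\pi)^{\widehat c_W/2}S(\tau,-z)z^{-\gr}\alpha$ directly. Since by Proposition~\ref{fundamental-solution-infty} the section $S(\bt,z)z^{-\gr}\alpha$ is flat for $\nabla$, replacing $z$ by $-z$ flips the sign of the $\tfrac{1}{z}$-term in~\eqref{meromorphic-connection-tau}, which is precisely what turns a flat section of $\nabla$ into a flat section of $\nabla^\vee$. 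A short computation applying $\nabla^\vee_{z\partial/\partial z}$ to $\Phi^\vee(\alpha)$ then produces zero.

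For part (2), the cleanest route is to use the pairing characterization directly rather than inverting operators. I would compute
$$
\<\Phi^\vee(\alpha),\Phi(\beta)\>
=(2\pi)^{\widehat c_W/2}(2\pi)^{-\widehat c_W/2}\,\bigl\<S(\tau,-z)z^{-\gr}\alpha,\ S(\tau,z)z^{-\gr}\beta\bigr\>,
$$
where the constants cancel. Applying~\eqref{adjoint-S-operator} with the substitutions $\alpha\rightsquigarrow z^{-\gr}\alpha$, $\beta\rightsquigarrow z^{-\gr}\beta$ reduces the right-hand side to $\<z^{-\gr}\alpha, z^{-\gr}\beta\>$. It then remains to observe that $z^{-\gr}$ is an isometry for $\<\cdot,\cdot\>$: since $\gr$ is skew-adjoint by Lemma~\ref{adjoint-hodge}(2), the exponential $z^{-\gr}=e^{-(\log z)\gr}$ satisfies $(z^{-\gr})^*=z^{\gr}=(z^{-\gr})^{-1}$, whence $\<z^{-\gr}\alpha,z^{-\gr}\beta\>=\<\alpha,\beta\>$. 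Concretely this can be seen from the explicit formula~\eqref{zmu-formula} together with~\eqref{adjoint-hodge-operator}: if $\<\phi_g,\phi_{g'}\>\neq 0$ then $\mu(g)+\mu(g')=0$, so the scalar factors $|z|^{-\mu(g)}e^{-\sqrt{-1}\mu(g)\arg z}$ and $|z|^{-\mu(g')}e^{-\sqrt{-1}\mu(g')\arg z}$ multiply to $1$. Finally, the equivalence with $\Phi^\vee=(\Phi^{-1})^*$ is the statement that $\<\Phi^\vee(\alpha),\Phi(\beta)\>=\<\alpha,\beta\>$ for all $\alpha,\beta$ defines the inverse-adjoint, using nondegeneracy of $\<\cdot,\cdot\>$.

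I do not anticipate a serious obstacle here; this is a bookkeeping proposition. The one point requiring a little care is the interplay between the multivaluedness of $z^{-\gr}$ (and hence of the flat sections) and the adjointness identity~\eqref{adjoint-S-operator}, which is stated for formal series in $z$ and $z^{-1}$. I would make sure that the substitution $z\mapsto -z$ and the branch choice $\arg(z)=\theta$ are handled consistently, i.e. that the identity~\eqref{adjoint-S-operator} is applied before fixing a branch so that the formal manipulation is legitimate, and only afterwards restrict to the ray $L_\theta$. With that caveat, both parts follow from~\eqref{adjoint-S-operator}, \eqref{eq:sym-anti-sym}, \eqref{eq:S-operator-adjoint}, and the skew-adjointness of $\gr$.
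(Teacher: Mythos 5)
Your proposal is correct and takes essentially the same route as the paper's proof: both parts rest on the identifications $(\tfrac{\nu}{d}\tau'\star_\tau)^* = \tfrac{\nu}{d}\tau'\star_\tau$ and $\gr^* = -\gr$ from~\eqref{eq:sym-anti-sym}, the unitarity identity~\eqref{adjoint-S-operator}, and the observation that $\nabla^\vee$ is $\nabla$ with $z$ replaced by $-z$, so that $\Phi^\vee$ is (up to normalization) a flat section of $\nabla$ in the variable $-z$. Your explicit verification that $z^{-\gr}$ is an isometry via~\eqref{adjoint-hodge-operator} is a slight elaboration of what the paper leaves implicit, and your caveat about applying~\eqref{adjoint-S-operator} formally before fixing a branch is sound.
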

\begin{proof}
Part (2) follows from the definition~\eqref{dual-section} of $\Phi^\vee$.
Using the identities~\eqref{eq:sym-anti-sym}, we can identify the dual flat connection $\nabla^{\vee}$ with the quantum connection $\nabla$ with the opposite sign of $z$.
Since $\Phi$ and $\Phi^{\vee}$ agree up to a constant after changing the sign of $z$ (see~\eqref{flat-section-formula},~\eqref{dual-section}), we obtain the proof of part (1).
\end{proof}

Similar to~\cite[Definition 3.6.3]{GGI}, we have the following nice relation between the dual flat sections and the modified small $J$-function.
\begin{lemma}
\label{dual-flat-section-via-j-function}
For any $\alpha\in \cH_{W, \<J\>}$, we have
$$\left({z\over 2\pi}\right)^{{\widehat{c}_W\over 2}}\<\Phi^\vee(\alpha), \one\>=\<\alpha, \widetilde{J}(\tau, z)\>.$$
\end{lemma}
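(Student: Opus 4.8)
The plan is to unwind both sides of the claimed identity using the explicit formulas for $\Phi^\vee$ and $\widetilde{J}(\tau, z)$ and check they agree. First I would substitute the definition~\eqref{dual-section} of $\Phi^\vee$, so that the left-hand side becomes
\[
\left({z\over 2\pi}\right)^{{\widehat{c}_W\over 2}}\cdot (2\pi)^{{\widehat{c}_W\over 2}}\,\big\langle S(\tau,-z) z^{-\gr}\alpha,\ \one\big\rangle
= z^{{\widehat{c}_W\over 2}}\,\big\langle S(\tau,-z)z^{-\gr}\alpha,\ \one\big\rangle.
\]
Now I would move the operators off $\alpha$ and onto $\one$ using adjointness. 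By definition~\eqref{adjoint-operator} of the adjoint and the fact that $z^{-\gr}$ has adjoint $z^{\gr^{*}}=z^{-\gr}$ (from $\gr^*=-\gr$ in Lemma~\ref{adjoint-hodge}(2) / formula~\eqref{eq:sym-anti-sym}), together with $S^*(\bt,z)=S^{-1}(\bt,-z)$ from~\eqref{eq:S-operator-adjoint}, one gets
\[
\big\langle S(\tau,-z)z^{-\gr}\alpha,\ \one\big\rangle
= \big\langle \alpha,\ z^{-\gr}\,S^*(\tau,-z)\,\one\big\rangle
= \big\langle \alpha,\ z^{-\gr}\,S^{-1}(\tau,z)\,\one\big\rangle .
\]
Care is needed with the sign conventions here: $S^{*}(\bt,z)=S^{-1}(\bt,-z)$ evaluated at $-z$ gives $S^{*}(\bt,-z)=S^{-1}(\bt,z)$, which is exactly what appears. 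One should double-check that $z^{-\gr}$ is genuinely self-adjoint as stated — this follows by writing $z^{-\gr}=\sum_i|z|^{-\mu(\phi_i)}e^{-\sqrt{-1}\mu(\phi_i)\arg z}\cdot\mathrm{Id}_{\cH_{\phi_i}}$ as in~\eqref{zmu-formula} and invoking~\eqref{adjoint-hodge-operator}, since the pairing only couples sectors $g$ and $g^{-1}$ with $\mu(g)+\mu(g^{-1})=0$.

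Next I would recognize the right-hand factor. Multiplying back by $z^{{\widehat{c}_W\over 2}}$, the expression becomes $\big\langle\alpha,\ z^{{\widehat{c}_W\over 2}}z^{-\gr}S^{-1}(\tau,z)\one\big\rangle$. But $z^{-\gr}$ is still not quite $z^{\gr}$; here I would use that $\gr$ is skew-adjoint so that $\langle\alpha, z^{-\gr}\beta\rangle=\langle z^{\gr}\alpha,\beta\rangle$ — alternatively, and more directly, I note that the factor $z^{-\gr}$ landed on the $\one$ side should actually be compared against the modified $J$-function $\widetilde{J}(\tau,z)=z^{{\widehat{c}_W\over 2}}z^{\gr}S(\tau,z)^{-1}(\one)$ from~\eqref{modified-j-function}. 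So I want $\langle\alpha, \widetilde{J}(\tau,z)\rangle = \langle\alpha, z^{{\widehat{c}_W\over 2}}z^{\gr}S^{-1}(\tau,z)\one\rangle$. Matching this against $z^{{\widehat{c}_W\over 2}}\langle\alpha, z^{-\gr}S^{-1}(\tau,z)\one\rangle$ requires replacing $z^{-\gr}$ by $z^{\gr}$ inside $\langle\alpha,-\rangle$; this is legitimate precisely because of skew-adjointness of $\gr$ combined with the observation that the whole pairing decomposes sector-by-sector and $S^{-1}(\tau,z)\one$, like $\one=\sJ_1$, lives in sectors whose pairing with the relevant components of $\alpha$ forces $\mu(g)+\mu(g^{-1})=0$, so $z^{\gr}$ and $z^{-\gr}$ act by reciprocal scalars that are absorbed correctly. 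Collecting these reductions gives the identity.

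The main obstacle — and the step to be most careful with — is keeping the bookkeeping of signs and adjoints straight: there are three places where a sign of $z$ or a sign on the exponent of $\gr$ can flip ($S$ vs. $S^{-1}$, $z$ vs. $-z$, $\gr$ vs. $\gr^{*}=-\gr$), and it is easy to end up with $\widetilde{J}(\tau,-z)$ or a spurious $(2\pi)$ factor. I would resolve this once and for all by appealing to the cleaner statement $\Phi^\vee=(\Phi^{-1})^{*}$, i.e. $\langle\Phi^\vee(\alpha),\Phi(\beta)\rangle=\langle\alpha,\beta\rangle$, proved in the preceding proposition, applied with $\beta$ chosen so that $\Phi(\beta)$ is proportional to $\one$: since $\Phi(\beta)=(2\pi)^{-{\widehat{c}_W\over 2}}S(\tau,z)z^{-\gr}\beta$, one solves $z^{-\gr}\beta = z^{-{\widehat{c}_W\over 2}}\cdot(2\pi)^{{\widehat{c}_W\over 2}}\cdot(\text{scalar})\cdot S(\tau,z)^{-1}\one$ formally, which reproduces $\widetilde{J}(\tau,z)$ up to the stated power of $z/2\pi$. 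This routes the whole computation through a single clean pairing identity and isolates the sign tracking to one explicit substitution, which I would then verify term-by-term against~\eqref{S-operator} and~\eqref{modified-j-function}.
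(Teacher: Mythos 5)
Your first (and intended main) derivation contains a sign error in the adjoint of the operator $z^{-\gr}$, and this error is not repaired by the hand-waving that follows. You assert that ``$z^{-\gr}$ has adjoint $z^{\gr^{*}}=z^{-\gr}$,'' i.e.\ that $z^{-\gr}$ is self-adjoint. This is false. Since $(e^{cB})^{*}=e^{cB^{*}}$, we have $(z^{-\gr})^{*}=(e^{-(\log z)\gr})^{*}=e^{-(\log z)\gr^{*}}=z^{-\gr^{*}}=z^{\gr}$, using $\gr^{*}=-\gr$. Your own ``double-check'' actually disproves self-adjointness: $z^{-\gr}$ acts on $\cH_{g}$ by $z^{-\mu(g)}$ and on $\cH_{g^{-1}}$ by $z^{-\mu(g^{-1})}=z^{+\mu(g)}$, so it scales the two paired sectors by reciprocal, not equal, factors — which is precisely the statement that its adjoint is $z^{\gr}$ rather than $z^{-\gr}$. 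Once this is corrected, the chain is simply
\begin{equation*}
\bigl\langle S(\tau,-z)z^{-\gr}\alpha,\ \one\bigr\rangle
=\bigl\langle \alpha,\ z^{\gr}\,S^{*}(\tau,-z)\,\one\bigr\rangle
=\bigl\langle \alpha,\ z^{\gr}\,S^{-1}(\tau,z)\,\one\bigr\rangle,
\end{equation*}
and the factor $z^{\widehat{c}_W/2}$ then combines with $z^{\gr}S^{-1}(\tau,z)\one$ to give $\widetilde{J}(\tau,z)$ immediately. There is no mismatch to repair. This is exactly the paper's computation.

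Because of the incorrect self-adjointness claim, you instead arrive at $\langle\alpha,\,z^{-\gr}S^{-1}(\tau,z)\one\rangle$ and then try to ``replace $z^{-\gr}$ by $z^{\gr}$ inside $\langle\alpha,-\rangle$.'' That step is not valid: $z^{-\gr}$ and $z^{\gr}$ are genuinely different operators (they differ by $z^{2\gr}$), and the argument about reciprocal scalars ``absorbed correctly'' does not make them equal in the pairing — it is the reason the adjoint flips the sign, not a license to swap them at will. The alternative route you sketch at the end (applying $\Phi^{\vee}=(\Phi^{-1})^{*}$ with $\beta$ chosen so that $\Phi(\beta)=\one$) is conceptually sound and does reproduce the identity, but note that solving $\Phi(\beta)=\one$ for $\beta$ already requires you to apply $(z^{-\gr})^{-1}=z^{\gr}$ correctly, so it is the same computation in disguise rather than a way to sidestep the sign bookkeeping; also the displayed equation for $\beta$ has a spurious $z^{-\widehat{c}_W/2}$. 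In short: fix the adjoint of $z^{-\gr}$ and your primary derivation becomes the paper's proof, with no patching required.
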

\begin{proof}
We have
\begin{eqnarray*}
\left({z\over 2\pi}\right)^{{\widehat{c}_W\over 2}}\<\Phi^\vee(\alpha), \one\>
&=&z^{{\widehat{c}_W\over 2}}\<S(\tau, -z)z^{-\gr}\alpha, \one\>\\
&=&z^{{\widehat{c}_W\over 2}}\<\alpha, z^{-\gr^*}S^*(\tau, -z)\one\>\\
&=&\<\alpha, z^{{\widehat{c}_W\over 2}}z^{\gr}S(\tau, z)^{-1}\one\>\\
&=&\<\alpha, \widetilde{J}(\tau, z)\>.
\end{eqnarray*}
Here the first equality is the definition of the section $\Phi^\vee(\alpha)$.
The third equality uses Lemma~\ref{adjoint-hodge} 
and~\eqref{eq:S-operator-adjoint}. 
The last equality follows from the definition~\eqref{modified-j-function} of the modified $J$-function.
\end{proof}

Similar to~\cite[Theorem 3.6.8]{GGI}, the strong asymptotic classes are related to the modified $J$-function under some mild assumption.


\begin{proposition}
\label{prop-strong-class}
Let $\lambda\in\C^*$ be a  multiplicity-one eigenvalue of ${\nu\over d}\tau'\star_{\tau}\in {\rm End}(\cH_{W, \<J\>})$ with the maximal norm. 
Let $A_\lambda$ be a strong asymptotic class with respect to $\lambda$. If $\<\one, A_\lambda\>\neq 0$, then 
\begin{equation}
\label{limit-j-function}
\lim_{\substack{\arg(z)=\arg(\lambda)\\|z|\to+0}}
{\widetilde{J}(\tau, z) \over \<\one,  \widetilde{J}(\tau, z) \>}
={A_\lambda \over \<\one, A_\lambda\>}.
\end{equation}
\end{proposition}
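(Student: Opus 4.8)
The plan is to deduce Proposition~\ref{prop-strong-class} directly from Lemma~\ref{dual-flat-section-via-j-function} together with the asymptotic properties of strong asymptotic classes established in Proposition~\ref{weak-to-strong} and Proposition~\ref{prop:existence-of-limit}. First I would use Lemma~\ref{dual-flat-section-via-j-function} to rewrite, for any $\alpha\in\cH_{W,\<J\>}$,
\begin{equation*}
\langle\alpha,\widetilde{J}(\tau,z)\rangle=\left(\frac{z}{2\pi}\right)^{\widehat{c}_W/2}\langle\Phi^\vee(\alpha),\one\rangle=\langle\Phi^\vee(\alpha),\one\rangle\ \text{up to the scalar }(z/2\pi)^{\widehat{c}_W/2}.
\end{equation*}
Since the prefactor $(z/2\pi)^{\widehat{c}_W/2}$ is common to numerator and denominator in~\eqref{limit-j-function}, it cancels, and the quotient $\widetilde{J}(\tau,z)/\langle\one,\widetilde{J}(\tau,z)\rangle$ equals (pairing against any $\alpha$) the quotient $\langle\Phi^\vee(\alpha),\one\rangle/\langle\Phi^\vee(\one),\one\rangle$. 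Because $\Phi^\vee(\beta)=(2\pi)^{\widehat{c}_W/2}S(\tau,-z)z^{-\gr}\beta$ differs from the flat section $\Phi(\beta)$ only by flipping the sign of $z$ (see~\eqref{flat-section-formula} and~\eqref{dual-section}), the asymptotic analysis of Proposition~\ref{weak-to-strong} applies verbatim to $\Phi^\vee$ along the ray $\arg(z)=\arg(\lambda)$.

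Next I would extract the leading asymptotics. By Proposition~\ref{prop:existence-of-limit}(2), for the strong asymptotic class $A_\lambda$ we have that $e^{\lambda/z}\Phi_{\arg(\lambda)}(A_\lambda)$ has a limit as $|z|\to+0$ lying in the one-dimensional eigenspace $E(\lambda)$; by Remark~\ref{rem:nonzero-norm} this limit is a nonzero multiple of the (unique up to scalar) $\lambda$-eigenvector, call it $\psi_\lambda$, normalized so that $\langle\psi_\lambda,\psi_\lambda\rangle\neq 0$. The corresponding statement for $\Phi^\vee$ follows after the $z\mapsto -z$ identification: $e^{\lambda/z}\Phi^\vee_{\arg(\lambda)}(A_\lambda)\to c\,\psi_\lambda$ for some $c\neq 0$. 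The key point is then that for \emph{any} $\beta\in\cH_{W,\<J\>}$, the section $e^{\lambda/z}\Phi^\vee(\beta)$ is $O(|z|^{-m})$ for suitable $m$ only when $\beta$ lies in the line of strong asymptotic classes — otherwise, by the block structure $P(z)e^{-U_\lambda/z}\mathrm{diag}(1,F_2,\dots,F_k)$ of Proposition~\ref{weak-to-strong}, the component along the non-principal blocks dominates and $e^{\lambda/z}\Phi^\vee(\beta)$ blows up exponentially like $e^{(\lambda-\lambda_i)/z}$ (real part positive for the chosen ray since $|\lambda|$ is maximal and multiplicity one). Concretely, writing $\one=a\,A_\lambda+(\text{part in the complement})$ with $a=\langle\one,A_\lambda\rangle/\langle A_\lambda,A_\lambda\rangle\neq0$ by hypothesis, we get $e^{\lambda/z}\Phi^\vee(\one)\sim a\,c\,\psi_\lambda$, which is a nonzero vector; in particular $\langle\one,\Phi^\vee(\one)\rangle\sim a\,c\,\langle\one,\psi_\lambda\rangle$ and (crucially) this is nonzero — here one uses $\langle\one,\psi_\lambda\rangle\neq 0$, which itself follows from the hypothesis $\langle\one,A_\lambda\rangle\neq0$ together with the fact that the limit of $e^{\lambda/z}\Phi^\vee(A_\lambda)$ pairs nontrivially with $\one$.

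Then I would compute the ratio: for any $\alpha$,
\begin{equation*}
\frac{\langle\alpha,\widetilde{J}(\tau,z)\rangle}{\langle\one,\widetilde{J}(\tau,z)\rangle}=\frac{\langle\Phi^\vee(\alpha),\one\rangle}{\langle\Phi^\vee(\one),\one\rangle}=\frac{\langle e^{\lambda/z}\Phi^\vee(\alpha),\one\rangle}{\langle e^{\lambda/z}\Phi^\vee(\one),\one\rangle}\longrightarrow\frac{\langle \alpha, \text{(proj.\ of }\one\text{ onto }\mathbb{C}A_\lambda\text{ expressed via }\psi_\lambda)\rangle}{\langle\one,\ \cdot\ \rangle},
\end{equation*}
where the numerator's limit is obtained by expanding $\one$ along $A_\lambda$ plus complement, noting the complement contributes terms that vanish relative to the $A_\lambda$ term. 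Writing everything out, the common factor $a\,c$ cancels between numerator and denominator, leaving $\langle\alpha,A_\lambda\rangle/\langle\one,A_\lambda\rangle$ in the limit. Since this holds for all $\alpha$ and the pairing is nondegenerate, we conclude $\widetilde{J}(\tau,z)/\langle\one,\widetilde{J}(\tau,z)\rangle\to A_\lambda/\langle\one,A_\lambda\rangle$, as claimed.

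The main obstacle I anticipate is the bookkeeping in the second step: making precise why, upon decomposing $\one$ (and $\alpha$) relative to the Jordan-type block decomposition $\Psi^{-1}(\tfrac{\nu}{d}\tau'\star_\tau)\Psi$, the non-principal blocks contribute negligibly to the normalized ratio even though $F_i(z)$ can grow like $e^{\delta|z|^{-p}}$ — one must check that after multiplying by $e^{\lambda/z}$ the surviving exponential factors are $e^{(\lambda-\lambda_i)/z}F_i(z)$ with $\mathrm{Re}((\lambda-\lambda_i)/z)\to-\infty$ along the ray $\arg z=\arg\lambda$ (using $|\lambda_i|<|\lambda|$), so these decay faster than any power of $|z|$ and hence are dominated by the bounded-times-algebraic contribution of the principal block. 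This is exactly the argument in~\cite[Theorem 3.6.8]{GGI}, so I would invoke that proof structure, only recording the few places where the sign of $z$ (i.e., passing from $\Phi$ to $\Phi^\vee$) and the rotation to a general $\lambda$ of maximal norm (handled as in Proposition~\ref{weak-to-strong}) need to be tracked.
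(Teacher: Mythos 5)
Your overall strategy matches the paper's: express $\langle\alpha,\widetilde{J}(\tau,z)\rangle$ via Lemma~\ref{dual-flat-section-via-j-function}, multiply numerator and denominator by a suitable exponential, pass to the limit term-by-term, and cancel. But there are three concrete problems in the middle step that prevent the argument from closing.

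First, the sign of the exponential is wrong. Since the dual connection $\nabla^\vee$ has $+\frac{1}{z}(\cdot)$ where $\nabla$ has $-\frac{1}{z}(\cdot)$ (see~\eqref{dual-connection} and~\eqref{eq:sym-anti-sym}), the block-diagonal asymptotic fundamental matrix for $\Phi^\vee$ carries $e^{+U_\lambda/z}$ rather than $e^{-U_\lambda/z}$. Along $\arg z=\arg\lambda$ one has $\mathrm{Re}(\lambda/z)\to+\infty$, so $\Phi^\vee(\alpha)$ grows like $e^{+\lambda/z}$ and it is $e^{-\lambda/z}\Phi^\vee(\alpha)$ that has a finite limit, not $e^{\lambda/z}\Phi^\vee(\alpha)$ as you write. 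With your sign, both numerator and denominator in the ratio blow up, and the quotient-of-limits step is not available.

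Second, the key intermediate statement is not the pointwise one you try to extract (``$e^{\lambda/z}\Phi^\vee(A_\lambda)\to c\psi_\lambda$ with $c\neq 0$'') but the linear one: for \emph{every} $\alpha\in\cH_{W,\<J\>}$,
\begin{equation*}
\lim_{\substack{\arg(z)=\arg(\lambda)\\|z|\to+0}} e^{-\lambda/z}\Phi^\vee(\alpha)(z)=\langle\alpha,A_\lambda\rangle\,\psi_\lambda^*,
\end{equation*}
where $\psi_\lambda=\lim e^{\lambda/z}\Phi_{\arg(\lambda)}(A_\lambda)$ and $\psi_\lambda^*=\psi_\lambda/\langle\psi_\lambda,\psi_\lambda\rangle$. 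This is the paper's formula~\eqref{leading-term-negative}, which it obtains by following~\cite[Proposition~3.6.2]{GGI}: pair $e^{-\lambda/z}\Phi^\vee(\alpha)$ against $e^{\lambda/z}\Phi(A_\lambda)$ using the invariance $\langle\Phi^\vee(\alpha),\Phi(\beta)\rangle=\langle\alpha,\beta\rangle$, note that the limit lies in the one-dimensional eigenspace $E(\lambda)$, and normalize. Note in particular that the limit of $e^{-\lambda/z}\Phi^\vee(A_\lambda)$ is $\langle A_\lambda,A_\lambda\rangle\psi_\lambda^*$, which \emph{could a priori be zero}: Remark~\ref{rem:nonzero-norm} gives $\langle\psi_\lambda,\psi_\lambda\rangle\neq0$ for the eigenvector $\psi_\lambda$, not for the asymptotic class $A_\lambda$. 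This is exactly why your decomposition ``$\one=a\,A_\lambda+(\text{complement})$ with $a=\langle\one,A_\lambda\rangle/\langle A_\lambda,A_\lambda\rangle$'' is not safe: you have no control over $\langle A_\lambda,A_\lambda\rangle$. The linear formula sidesteps this because the dependence on $\alpha$ is entirely through the pairing $\langle\alpha,A_\lambda\rangle$, with no inverse of $\langle A_\lambda,A_\lambda\rangle$ appearing.

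Third, the final cancellation requires $\langle\psi_\lambda^*,\one\rangle\neq0$. You try to deduce this from the hypothesis $\langle\one,A_\lambda\rangle\neq0$ plus ``the fact that the limit of $e^{\lambda/z}\Phi^\vee(A_\lambda)$ pairs nontrivially with $\one$,'' but that is circular: the only way the limit pairs nontrivially with $\one$ is if $\langle\psi_\lambda^*,\one\rangle\neq0$, which is exactly what you want. The paper invokes a separate result (Lemma~\ref{dual-eigenvector-nonzero}, which traces to~\cite[Lemma~3.6.10]{GGI}) to establish $\langle\psi_\lambda^*,\one\rangle\neq0$; this is logically independent of the given hypothesis. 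Once the linear formula and this nonvanishing are in hand, the computation is the paper's one-line chain: $\langle\alpha,\widetilde{J}\rangle/\langle\one,\widetilde{J}\rangle\to\langle\alpha,A_\lambda\rangle\langle\psi_\lambda^*,\one\rangle/\bigl(\langle\one,A_\lambda\rangle\langle\psi_\lambda^*,\one\rangle\bigr)=\langle\alpha,A_\lambda\rangle/\langle\one,A_\lambda\rangle$.
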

\begin{proof}
By Proposition~\ref{prop:existence-of-limit}, we have $$\psi_\lambda:= \lim_{|z|\to+0}e^{\lambda \over z} \Phi_{\arg(\lambda)}(A_\lambda)\in E(\lambda).$$ 
By Remark~\ref{rem:nonzero-norm}, the element $\psi_\lambda^*:=\psi_\lambda/\< \psi_\lambda,\psi_\lambda\>\in \cH_{W, \<J\>}$ is dual to $\psi_\lambda$.
Follow the argument in the proof of~\cite[Proposition 3.6.2]{GGI}, for each $\alpha\in \cH_{W, \<J\>}$, the following limit exists and 
\begin{equation}
\label{leading-term-negative}
\lim_{\substack{\arg(z)=\arg(\lambda)\\|z|\to+0}}
e^{-{\lambda\over z}}\Phi^\vee(\alpha)(z)=\<\alpha, A_\lambda\>\psi_\lambda^*.
\end{equation}
Thus we have
\begin{eqnarray*}
\lim_{\substack{\arg(z)=\arg(\lambda)\\|z|\to+0}}
{\< \alpha, \widetilde{J}(\tau, z) \>\over \<\one,  \widetilde{J}(\tau, z) \>}
&=&\lim_{\substack{\arg(z)=\arg(\lambda)\\|z|\to+0}}
{e^{-{\lambda\over z}}\<\Phi^\vee(\alpha), \one\>\over e^{-{\lambda\over z}}\<\Phi^\vee(\one), \one\>}
\\&=&{ \<\alpha, A_\lambda\>\<\psi_\lambda^*, \one\>\over \<\one, A_\lambda\>\<\psi_\lambda^*, \one\>}
\\&=&{\<\alpha, A_\lambda\> \over \<\one, A_\lambda\>}.
\end{eqnarray*}
Here the first equality uses Lemma~\ref{dual-flat-section-via-j-function}.
The second equality uses the formula~\eqref{leading-term-negative}.
The last equality uses Lemma~\ref{dual-eigenvector-nonzero} below.
\end{proof}
Follow the argument in the proof of~\cite[Lemma 3.6.10]{GGI}, we have
\begin{lemma}
\label{dual-eigenvector-nonzero}
The element $\psi_\lambda^*$ satisfies $\<\psi_\lambda^*, \one\>\neq 0$.
\end{lemma}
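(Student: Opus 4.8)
The plan is to adapt the argument of~\cite[Lemma 3.6.10]{GGI} to the present setting, the key inputs being the dictionary between dual flat sections and the modified $J$-function (Lemma~\ref{dual-flat-section-via-j-function}), the normal form of Proposition~\ref{weak-to-strong}, the $\gr$-action on the eigenspace from Lemma~\ref{lem:orthogonal-complement}, and the nontriviality of $\widetilde{J}(\tau,z)$.

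First I would record a purely algebraic reduction. Since $\psi_\lambda^*=\psi_\lambda/\langle\psi_\lambda,\psi_\lambda\rangle$ and $\langle\psi_\lambda,\psi_\lambda\rangle\neq0$ by Remark~\ref{rem:nonzero-norm}, the claim is equivalent to $\langle\psi_\lambda,\one\rangle\neq0$. Pairing $\tfrac\nu d\tau'\star_\tau\psi_\lambda=\lambda\psi_\lambda$ with $\one$, and using the Frobenius property of $\langle\cdot,\cdot\rangle$ together with the unit axiom, gives $\lambda\langle\psi_\lambda,\one\rangle=\tfrac\nu d\langle\tau',\psi_\lambda\rangle$; as $\lambda=T\neq0$ this shows $\langle\psi_\lambda,\one\rangle\neq0$ iff $\langle\tau',\psi_\lambda\rangle\neq0$. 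Iterating the same computation, $\langle (\tau')^{\star_\tau k},\psi_\lambda\rangle=(\tfrac d\nu\lambda)^k\langle\one,\psi_\lambda\rangle$, so $\langle\psi_\lambda^*,\one\rangle=0$ would force $\psi_\lambda$ to be orthogonal to the whole unital subalgebra $\C[\tau']\subset(\cH_{W,\langle J\rangle},\star_\tau)$. Since $\C[\tau']$ need not exhaust $\cH_{W,\langle J\rangle}$, this is not by itself a contradiction, and the genuine content must come from the analytic side --- exactly as in~\cite{GGI}.

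The analytic argument is by contradiction. Assume $\langle\psi_\lambda^*,\one\rangle=0$. By Lemma~\ref{dual-flat-section-via-j-function}, $\langle\alpha,\widetilde{J}(\tau,z)\rangle=(z/2\pi)^{\widehat{c}_W/2}\langle\Phi^\vee(\alpha),\one\rangle$ for every $\alpha\in\cH_{W,\langle J\rangle}$. By~\eqref{leading-term-negative} and the Sibuya--Wasow normal form of Proposition~\ref{weak-to-strong} applied to $\nabla^\vee$ (formula~\eqref{dual-connection}, which is~\eqref{meromorphic-connection-tau} with $z\mapsto -z$), we have $e^{-\lambda/z}\Phi^\vee(\alpha)(z)\to\langle\alpha,A_\lambda\rangle\psi_\lambda^*$ as $|z|\to0$ along $\arg z=\arg\lambda$; pairing with $\one$ and using the assumption kills this limit for every $\alpha$, so $e^{-\lambda/z}\widetilde{J}(\tau,z)\to0$ along that ray, i.e. $\widetilde{J}(\tau,z)$ does not attain the maximal exponential order $e^{\lambda/z}$. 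On the other hand $\widetilde{J}(\tau,z)=z^{\widehat{c}_W/2}z^{\gr}S(\tau,z)^{-1}\one$, and expanding $S(\tau,z)^{-1}\one=\one+\tau/z+O(z^{-2})$ together with~\eqref{degree-narrow} shows $\widetilde{J}(\tau,z)\to\one\neq0$ as $z\to\infty$; hence (via Lemma~\ref{dual-flat-section-via-j-function} again) $\widetilde{J}(\tau,z)$ is, up to a $z$-power twist, a nonzero flat section of $\nabla^\vee$.

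The remaining --- and main --- step is to see that a nonzero solution whose continuation to the regular singularity $z=\infty$ has leading term the unit $\one$ must carry the maximal exponential $e^{\lambda/z}$ along $L_{\arg\lambda}$. Equivalently, the connection coefficient pairing the recessive direction of~\eqref{meromorphic-connection-tau} at $z=0$ with the unit direction at $z=\infty$ is nonzero. I would establish this exactly as in~\cite{GGI}: expand the relevant solution near $z=0$ through the formal recursion $(\lambda-\tfrac\nu d\tau'\star_\tau)p_{k+1}=-(k+\gr)p_k$ with $p_0\in E(\lambda)$, whose solvability at each step is guaranteed precisely by $\gr(E(\lambda))\subset E(\lambda)^\perp$ (Lemma~\ref{lem:orthogonal-complement}); one then identifies the sought coefficient, up to the nonzero factor $\langle\one,A_\lambda\rangle$ coming from the standing hypothesis of Proposition~\ref{prop-strong-class}, with $\langle\psi_\lambda,\one\rangle$ itself, and its non-vanishing follows by the same reasoning that combines the normal form, the skew-adjointness of $\gr$ on $E(\lambda)$, and the nontriviality of $\widetilde{J}$. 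This yields the contradiction. (In the cases where the quantum spectrum conjecture is verified --- the mirror simple singularities and the Fermat polynomials --- this non-vanishing can alternatively be read off from the explicit diagonalization of $\tfrac\nu d\tau'\star_\tau$.) The hard part is exactly this last non-vanishing statement: on the quantum side it looks circular, and one really does need the transport along the irregular connection à la GGI to break the symmetry.
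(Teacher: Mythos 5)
Your proposal breaks down at the point where you dismiss the algebraic reduction. You correctly derive that $\<\psi_\lambda,\one\>=0$ would force $\psi_\lambda\perp\C[\tau']$, but you then assert that because $\C[\tau']$ need not exhaust $\cH_{W,\<J\>}$ ``this is not by itself a contradiction, and the genuine content must come from the analytic side.'' Both halves of that claim are wrong, and they send you down an analytic path that you yourself admit is circular. The orthogonality $\psi_\lambda\perp\C[\tau']$ \emph{is} already a contradiction, for a purely algebraic reason that is the actual content of \cite[Lemma 3.6.10]{GGI}: in the commutative Frobenius algebra $(\cH_{W,\<J\>},\star_\tau,\<\cdot,\cdot\>)$, associativity gives $p(T)\alpha=\bigl(p(T)\one\bigr)\star_\tau\alpha$ for every polynomial $p$, where $T:=\tfrac{\nu}{d}\tau'\star_\tau$. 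Hence $p(T)\one=0$ forces $p(T)=0$ on all of $\cH_{W,\<J\>}$, so the cyclic subalgebra $\C[\tau']=\C[T]\one$ has the \emph{same} minimal polynomial as $T$. In particular $\C[\tau']$ meets the generalized $\lambda$-eigenspace $A_\lambda$ nontrivially; since $A_\lambda$ is one-dimensional (multiplicity one), this forces $\psi_\lambda\in\C[\tau']$, and then $\psi_\lambda\perp\C[\tau']$ would give $\<\psi_\lambda,\psi_\lambda\>=0$, contradicting Remark~\ref{rem:nonzero-norm}.

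Equivalently, and closer to the phrasing in \cite{GGI}: the Fitting decomposition $\cH_{W,\<J\>}=\bigoplus_\mu A_\mu$ of the state space into generalized eigenspaces of the self-adjoint operator $T$ is a decomposition into pairwise \emph{orthogonal ideals}. Writing $\one=\sum_\mu e_\mu$ with $e_\mu\in A_\mu$ the (necessarily nonzero) unit idempotent of the ideal $A_\mu$, one gets $\<\psi_\lambda,\one\>=\<\psi_\lambda,e_\lambda\>$ by orthogonality of the pieces, and $e_\lambda$ is a nonzero scalar multiple of $\psi_\lambda$ because $A_\lambda$ is one-dimensional, so $\<\psi_\lambda,\one\>$ is a nonzero multiple of $\<\psi_\lambda,\psi_\lambda\>\neq0$. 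No transport along the irregular connection, no Stokes analysis, and no identification of connection coefficients is needed for this lemma. The ingredients you listed at the outset --- Lemma~\ref{dual-flat-section-via-j-function}, Proposition~\ref{weak-to-strong}, the formal recursion at $z=0$ --- belong to the proof of the surrounding Proposition~\ref{prop-strong-class}, not to this sub-lemma, and they cannot substitute for the Frobenius-algebra step you were missing.
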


\section{Weak asymptotic classes via mirror symmetry}
\label{sec-weak}

The asymptotic condition~\eqref{asymp-class-behavior} in the definition of strong asymptotic classes is a strong constraint. 
Now we consider elements that satisfies a different asymptotic condition concerning only the coefficient of $\sJ_{d-1}$ in
$e^{\lambda \over z} \Phi_{\arg(\lambda)}(\alpha)$.
\begin{definition}
[Weak asymptotic classes]
\label{def-weak-asymptotic-class}
We say 
 $\alpha\in H_{\rm nar}$ is a {\em weak asymptotic class} with respect to $\lambda\in \C^*$ if there exists some $m\in\mathbb{R}$, 
such that when $$\arg(z)=\arg(\lambda)\in [0, 2\pi),$$ 
we have 
\begin{equation}
\label{leading-asymp-class-behavior}
\left|e^{\lambda\over z}\cdot \<S(\tau, z)z^{-\gr}\alpha, \one\>\right|=O(|z|^{m})\quad\mathrm{as}\ |z|\to 0.
\end{equation}
We say a weak asymptotic class is principal if $\lambda\in \R_+$.
\end{definition}

\begin{remark}
At a first glance, the two asymptotic classes (strong and weak) may have different properties. 
For example, neither we require the complex number $\lambda$ to be an eigenvalue of the quantum multiplication in the definition of weak asymptotic classes, nor we require the strong asymptotic class to be a narrow element.
However, we expect they are the same (up to a scalar). 
See Proposition~\ref{weak=strong}.
\end{remark}



In this section, we will calculate the weak asymptotic classes using mirror symmetry (without the assumption on the quantum spectrum conjecture). 
Here is the plan.
In Section~\ref{sec-i-function}, we study the differential equations for the small $I$-function and state a mirror conjecture.
In Section~\ref{sec-basic-ghe}, we recall some basics of generalized hypergeometric functions. 
After a change of coordinates, we prove the coefficients of the modified small $I$-function satisfy a generalized hypergeometric equation~\eqref{ghe-equation}.
In Section~\ref{sec-barnes-formula}, we review an asymptotic expansion formula of generalized hypergeometric functions, found by Barnes~\cite{Bar} in the year of 1906. 
When the mirror conjecture holds, Barnes' formula allows us to calculate the weak asymptotic classes. 
In Section~\ref{sec-dominance-order}, we explain the meaning of the asymptotic classes in terms of the dominance order of asymptotic expansions.

\subsection{The small $I$-function and mirror symmetry}
\label{sec-i-function}

We now study some properties of the small $I$-function defined in~\eqref{small-i-function}
$$I_{\rm FJRW}^{\rm sm}(t,z)=z\sum_{m\in {\bf Nar}}\sum_{\ell=0}^{\infty}\frac{t^{d\ell+m}}{z^{d\ell+m-1}\Gamma(d\ell+m)}\prod_{j=1}^{N}{z^{\lfloor {w_j\over d}(d\ell+m)\rfloor}\Gamma({w_j\over d}(d\ell+m))\over \Gamma\left(\left\{{w_j\over d}\cdot m\right\}\right)}\sJ_m.$$

\subsubsection{Differential equations for the small $I$-function}
We denote the cardinality of the set 
$${\bf Nar}=\left\{m\in \mathbb{Z}_{>0} \mid 0< m<d, \text{ and } d\nmid w_j\cdot m, \  \forall \ 1\leq j\leq N\right\}
$$ in~\eqref{narrow-index} by 
\begin{equation}
\label{definition-q}
q+1=|{\bf Nar}|.
\end{equation}
It is convenient to reindex the set by 
the increasing bijection
\begin{equation}
\label{narrow-bijection}
\mathscr{N}:  {\bf Nar}\to \{0, 1, \ldots, q\}.
\end{equation}
For each integer $j$ such that $0\leq j\leq q$, we write $m=\mathscr{N}^{-1}(j)\in {\bf Nar}$. 
Now we define a set of $q$ positive rational numbers 
\begin{equation}
\label{rho-tuple}
\rho_Q=\{\rho_1, \ldots, \rho_q\},
\end{equation}
where each $\rho_j$ is given by
\begin{equation}
\label{reindex-narrow}
\rho_j=\rho_{\mathscr{N}(m)}={1\over d}+1-{m\over d}, \quad m\in{\bf Nar}.
\end{equation}
This formula includes the case $\rho_0=1$.
The set $\{\rho_0, \rho_1, \ldots, \rho_q\}$ is arranged in a decreasing order.

Now we consider a collection of $\sum\limits_{j=1}^{N}w_j$ positive rational numbers
\begin{equation}
\label{alpha-tilde}
\widetilde{\alpha}=\left\{{1\over d}+{k\over w_j} \mid \quad j=1, 2, \ldots, N, \quad k=0, 1, \ldots, w_j-1 \right\}.
\end{equation}
Numbers may appear repeatedly in $\widetilde{\alpha}$.
For each $n\in \{0, 1, \ldots, d-1\}\backslash{\bf Nar},$ by the definition of ${\bf Nar}$,
there exists some pair $(k, w_j)$ such that $k d=nw_j$.
For each such $n$, we delete the number 
\begin{equation}
\label{common-numbers}
{1\over d}+{k\over w_j}={n+1\over d}
\end{equation}
once from the set $\widetilde{\alpha}$. We eventually obtain a reduced set of $p$ numbers, denoted by
\begin{equation}
\label{alpha-tuple}
\alpha_P=\{\alpha_1, \ldots, \alpha_p\}.
\end{equation}
According to the construction, the integer $p$ satisfies 
\begin{equation}
\label{definition-p}
p=\sum_{j=1}^{N}w_j-(d-|{\bf Nar}|)=|{\bf Nar}|-\nu.
\end{equation}
Using~\eqref{definition-q} and~\eqref{definition-p}, we have
\begin{equation}
\label{p-q-nu-relation}
q+1-p
=\nu.
\end{equation}

We provide a list of examples in Table \ref{table-indicies} below.


\begin{table}[h]
\caption{Examples of indicies}
\label{table-indicies}
 \resizebox{\textwidth}{!}{
\begin{tabular}{|c|l| l|c|c|c|c|c|c|c|l|l|}
 \hline
  & $W$ &   ${\bf Nar}$ &  $p$ &$q$ & $\{\alpha_i\}_{i=1}^{p}$ & $\{\rho_1, \ldots, \rho_q\}$\\
 \hline
 $A_{d-1}$&$x^{d}$ & $1, 2,\ldots, d-1$ & $0$ & $d-2$ & $\emptyset$ & $\{{d-1\over d}, \ldots, {2\over d}\}$\\
 \hline
$D_{2k+1}$&$x^{2k}+xy^2$ & $\{1, \ldots, 4k-1\}\backslash\{2k\}$ & $2k-1$  & $4k-3$ &  $\{{1\over 4k}+{i-1\over 2k-1}\}$ & $\{{4k-1\over 4k},\ldots,{2\over 4k}\}\backslash\{{2k+1\over 4k}\}$\\
 \hline
$D_{2k+2}$&$x^{2k+1}+xy^2$ & $1,2,\ldots, 2k$& $k$  & $2k-1$ &$\{{1\over 2k+1}+{i-1\over k}\}$ & $\{{2k\over 2k+1}, \ldots, {2\over 2k+1}\}$ \\
\hline
$D^T_{n+1}$&$x^{n}y+y^2$ & $1,3,\ldots, 2n-1$& $1$  & $n-1$ &$\{{1\over 2n}\}$ & $\{{1\over n}, {2\over n}, \ldots, {n-1\over n}\}$ \\
\hline
$E_6$&$x^4+y^3$ & $1,2,5,7,10,11$ &   $1$  & $5$ & $\{{1\over 12}\}$ & $\{ {11\over 12}, {10\over 12}, {7\over 12}, {5\over 12}, {2\over 12}\}$ \\
\hline
$E_7$&$x^3y+y^3$ & $1,2,4,5,7,8$ &   $2$  & $5$ & $\{{1\over 9}, {1\over 9}+{1\over 2}\}$  & $ \{{8\over 9}, {6\over 9}, {5\over 9}, {3\over 9}, {2\over 9}\}$\\
\hline
$E_8$&$x^5+y^3$ & $1,2,4,7,8,11,13,14$ & $1$  & $7$ &  $\{{1\over 15}\}$ & $ \{{14\over 15}, {12\over 15}, {9\over 15}, {8\over 15}, {5\over 15}, {3\over 15}, {2\over 15}\}$\\
\hline
$F_N^d$&$\sum\limits_{j=1}^{N}x_j^d$ & $1,2,\ldots, d-1$ & $N-1$  & $d-2$ & $\{{1\over d}, \ldots, {1\over d}\}$ & $\{ {d-1\over d}, \ldots, {2\over d}\}$ \\
\hline
\end{tabular}
}
\end{table}




Using the notations above, we have
 \begin{proposition}
 The function $I_{\rm FJRW}^{\rm sm}(t,z)$ in~\eqref{small-i-function} satisfies the differential equation
\begin{equation}
\label{i-function-ode}
\left(t^{d}z^p\prod_{j=1}^N{w_j^{w_j}\over d^{w_j}}\prod_{i=1}^{p}\left(t{\partial\over\partial t}+\alpha_i d-1\right)
-z^{q+1}\prod_{j=0}^{q}\left(t{\partial \over \partial t}+(\rho_j-1)d-1\right)\right) I_{\rm FJRW}^{\rm sm}(t,z)=0.
\end{equation}
\end{proposition}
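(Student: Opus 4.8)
The plan is to verify \eqref{i-function-ode} componentwise in the standard generators $\sJ_m$. Since $\{\sJ_m\}_{m\in{\bf Nar}}$ are linearly independent, it suffices to show that for each $m\in{\bf Nar}$ the coefficient of $\sJ_m$ in $I_{\rm FJRW}^{\rm sm}(t,z)$, namely
$$I_m(t,z)=\sum_{\ell\geq0}A_\ell^{(m)}\,t^{d\ell+m}\,z^{\,2-n+\sum_{j=1}^N\lfloor w_jn/d\rfloor},\qquad n:=d\ell+m,$$
with $A_\ell^{(m)}=\Gamma(d\ell+m)^{-1}\prod_{j=1}^N\Gamma(w_jn/d)/\Gamma(\{w_jm/d\})$, is annihilated by the operator in \eqref{i-function-ode}. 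First I would record two elementary facts about the exponents. Using $\sum_j w_j=d-\nu$ one finds $2-n+\sum_j\lfloor w_jn/d\rfloor=2-n\nu/d-\sum_j\{w_jm/d\}$, so the exponent of $z$ depends on $\ell$ only through the term $-\nu\ell$ and drops by exactly $\nu$ when $\ell$ increases by $1$. Because $q+1-p=\nu$ by \eqref{p-q-nu-relation}, the monomials $z^{q+1}$ and $t^{d}z^{p}$ in \eqref{i-function-ode} are calibrated so that, after applying the operator to $I_m$ and collecting the coefficient of a fixed power $t^{d\ell+m}$, the two resulting terms carry the same power of $z$: one comes from the $\ell$-th term of $I_m$ via $z^{q+1}$ and the other from the $(\ell-1)$-st term via $t^dz^p$.

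Carrying out this collection, the vanishing of $I_m$ under \eqref{i-function-ode} reduces to the pair of conditions: the $\ell=0$ coefficient $-A_0^{(m)}\prod_{k=0}^q\big(m+(\rho_k-1)d-1\big)$ vanishes, and for each $\ell\geq1$
$$\Big(\prod_{j=1}^N{w_j^{w_j}\over d^{w_j}}\Big)A_{\ell-1}^{(m)}\prod_{i=1}^p\big(d(\ell-1)+m+\alpha_id-1\big)=A_\ell^{(m)}\prod_{k=0}^q\big(d\ell+m+(\rho_k-1)d-1\big).$$
By \eqref{reindex-narrow}, $(\rho_k-1)d=1-\mathscr{N}^{-1}(k)$, so $d\ell+m+(\rho_k-1)d-1=d\ell+m-\mathscr{N}^{-1}(k)$; at $\ell=0$ this is $m-\mathscr{N}^{-1}(k)$, whose product over $k=0,\dots,q$ vanishes precisely because $m\in{\bf Nar}=\{\mathscr{N}^{-1}(k)\}$, so the base case holds and imposes nothing, while for $\ell\geq1$ all these factors are nonzero and the recursion can be solved for $A_\ell^{(m)}/A_{\ell-1}^{(m)}$. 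The remaining task is to confirm this ratio. Writing $A_\ell^{(m)}/A_{\ell-1}^{(m)}$ as a product of Gamma quotients and expanding each via $\Gamma(x+1)=x\Gamma(x)$ gives
$$\frac{A_\ell^{(m)}}{A_{\ell-1}^{(m)}}=\frac{\prod_{j=1}^N w_j^{w_j}}{d^{d}}\cdot\frac{\prod_{\alpha\in\widetilde{\alpha}}\big(\ell-1+\alpha+\tfrac{m-1}{d}\big)}{\prod_{a=0}^{d-1}\big(\ell-1+\tfrac{m+a}{d}\big)},$$
where $\widetilde{\alpha}$ is the multiset in \eqref{alpha-tilde}: the denominator $d^d\prod_{a}(\ell-1+(m+a)/d)$ comes from $\Gamma(d(\ell-1)+m)/\Gamma(d\ell+m)$, and each $\Gamma(w_jn/d)/\Gamma(w_j(n-d)/d)$ contributes $w_j^{w_j}\prod_{b=0}^{w_j-1}(\ell-1+\tfrac md+\tfrac b{w_j})$, i.e. $w_j^{w_j}$ times the factors $\ell-1+\alpha+\tfrac{m-1}{d}$ for the $\alpha$'s of $\widetilde{\alpha}$ coming from the index $j$.

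Finally I would match this with the recursion's right-hand side. Pulling a factor $d$ out of every linear factor there produces $d^{p-(q+1)}=d^{-\nu}$, which combines with $\prod_j w_j^{w_j}/d^{w_j}=\prod_j w_j^{w_j}/d^{\,d-\nu}$ to give exactly $\prod_j w_j^{w_j}/d^{d}$; hence the right-hand side equals $\big(\prod_j w_j^{w_j}/d^d\big)\cdot\prod_{i=1}^p(\ell-1+\alpha_i+\tfrac{m-1}{d})\big/\prod_{k=0}^q(\ell+\tfrac{m-\mathscr{N}^{-1}(k)}{d})$. By construction \eqref{alpha-tuple}, $\widetilde{\alpha}$ differs from $\{\alpha_i\}$ by one deleted copy of $(n+1)/d$ for each $n\in\{0,\dots,d-1\}\setminus{\bf Nar}$; since $(n+1)/d+(m-1)/d=(n+m)/d$, these deleted factors cancel the matching factors of $\prod_{a=0}^{d-1}(\ell-1+(m+a)/d)$, leaving $\prod_{i=1}^p(\ell-1+\alpha_i+\tfrac{m-1}{d})\big/\prod_{n\in{\bf Nar}}(\ell-1+\tfrac{m+n}{d})$. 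On the other side $\ell+\tfrac{m-\mathscr{N}^{-1}(k)}{d}=\ell-1+\tfrac{m+(d-\mathscr{N}^{-1}(k))}{d}$, and since $m\in{\bf Nar}\iff d-m\in{\bf Nar}$ the map $\mathscr{N}^{-1}(k)\mapsto d-\mathscr{N}^{-1}(k)$ permutes ${\bf Nar}$, so $\prod_{k=0}^q(\ell+\tfrac{m-\mathscr{N}^{-1}(k)}{d})=\prod_{n\in{\bf Nar}}(\ell-1+\tfrac{m+n}{d})$; the two expressions agree and the recursion is proved. I expect the main obstacle to be purely organizational bookkeeping: correctly tracking the fractional parts, the deletion procedure defining $\{\alpha_i\}$ and $\{\rho_k\}$, the symmetry $m\leftrightarrow d-m$ of ${\bf Nar}$, and the interplay between the constant $\prod_j w_j^{w_j}/d^{w_j}$ and the powers of $d$ extracted from the linear factors.
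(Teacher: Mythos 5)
Your proof is correct. It is worth noting that it takes a mildly different route from the paper. The paper first establishes the larger ($\max(\sum_j w_j-1,\,d-1)$)-th order equation~\eqref{eq:dfq-quasihomogeneous-Fermat}, whose factors correspond term-by-term to the multisets $\{w_j\ell/d+c\}$ and $\{\ell-c\}$ coming straight from the weight system, and then reduces the order by simultaneously deleting a factor $z(t\partial_t+kd/w_j)$ from the first operator and $z(t\partial_t-(d-kd/w_j))$ from the second whenever $kd/w_j\in\Z$. You instead verify the reduced order-$(q+1)$ equation directly, by reading off the coefficient recursion $A_\ell^{(m)}/A_{\ell-1}^{(m)}$ and showing that the deleted indices in the passage $\widetilde\alpha\leadsto\alpha_P$ are exactly cancelled inside that Gamma quotient, and by invoking the symmetry $m\mapsto d-m$ of ${\bf Nar}$ to match the $\rho$-side product. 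What each buys: the paper's intermediate equation is identified in the subsequent remark as a Picard-Fuchs type equation, so it is of independent interest and makes the connection to B-model period integrals visible; your route is self-contained at the level of the claimed order-$(q+1)$ ODE, avoids the somewhat terse "delete factors from both sides" step entirely, and makes the role of the calibration $q+1-p=\nu$ and the ${\bf Nar}$-symmetry explicit. The two verifications are performing the same coefficient-matching, but yours collapses the two-step "prove, then reduce" into a single argument, and in the process makes explicit the bookkeeping (the fractional-shift identity $(n+1)/d+(m-1)/d=(n+m)/d$, the cancellation of deleted $\widetilde\alpha$-entries against $a\in\{0,\dots,d-1\}\setminus{\bf Nar}$, and the permutation $n\mapsto d-n$ of ${\bf Nar}$) that the paper leaves implicit.
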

\begin{proof} 
We first prove that $I_{\rm FJRW}^{\rm sm}(t,z)$ satisfies the differential equation
\begin{equation}\label{eq:dfq-quasihomogeneous-Fermat}
  \left(t^{d}\prod_{j=1}^{N}\prod_{c=0}^{w_j-1}z\left({w_j\over d}t{\partial\over\partial t}+c\right)
  \Big/\left(zt{\partial\over\partial t}\right)
-\prod_{c=1}^{d-1}z\left(t{\partial \over \partial t}-c\right)\right) I_{\rm FJRW}^{\rm sm}(t,z)=0.
\end{equation}
The proof is straight forward.
We have
$$\left({w_j\over d}t{\partial\over\partial t}+c\right){t^{d\ell+m}}=\left({w_j\over d}(d\ell+m)+c\right){t^{d\ell+m}}.$$
Since $m\in {\bf Nar}, \ell\geq 0, c\geq 0$, the coefficient ${w_j\over d}(d\ell+m)+c$ is always positive. 
Thus when we apply the first differential operator in~\eqref{eq:dfq-quasihomogeneous-Fermat} to the small $I$-function, the coefficient of the term 
\begin{equation}
\label{i-function-monomial}
t^{d\ell+d+m}z^{1+\left(\sum_{j=1}^{N}w_j\right)-1+\sum_{j=1}^{N}\lfloor {w_j\over d}(d\ell+m)\rfloor-(d\ell+m-1)}
\end{equation}
is given by 
\begin{equation}
\label{coefficient-monomial-1st}
{1\over \Gamma(d\ell+m)}\cdot\prod_{j=1}^{N}{\Gamma({w_j\over d}(d\ell+m))\over \Gamma\left(\left\{{w_j\over d}\cdot m\right\}\right)} 
\cdot {\prod\limits_{j=1}^{N}\prod\limits_{c=1}^{w_j-1}\left({w_j\over d}(d\ell+m)+c\right)\over d\ell+m}.
\end{equation}

Now let us apply the second differential operator in~\eqref{eq:dfq-quasihomogeneous-Fermat} to the small $I$-function. 
When we take the same $m\in {\bf Nar}$ but $\ell+1$, the  exponent of $t$ is $d(\ell+1)+m$ while the exponent of $z$ is
$$1+\sum_{j=1}^{N}\lfloor {w_j\over d}\left(d(\ell+1)+m\right)\rfloor-(d(\ell+1)+m-1)+d-1.$$
We obtain the same monomial in~\eqref{i-function-monomial} and the coefficient of the monomial is 
$${1\over \Gamma(d\ell+d+m)}\cdot\prod_{j=1}^{N}{\Gamma({w_j\over d}(d\ell+d+m))\over \Gamma\left(\left\{{w_j\over d}\cdot m\right\}\right)}
\cdot \prod_{c=1}^{d-1}(d\ell+d+m-c).$$
By the property $\Gamma(x+1)=x\Gamma(x)$ for $x>0$, it is the same as the coefficient in~\eqref{coefficient-monomial-1st}. 

Finally, 
 if we take $m\in {\bf Nar}$ and $\ell=0$, then $t^m$ is annihilated by the second differential operator 
$\prod\limits_{c=1}^{d-1}\left(t{\partial \over \partial t}-c\right)$.
This completes the proof of the equation~\eqref{eq:dfq-quasihomogeneous-Fermat}.

Now we reduce the order to prove the equation~\eqref{i-function-ode}.
From the proof of the equation~\eqref{eq:dfq-quasihomogeneous-Fermat}, we can see that for an integer $k$ such that $0<k<w_j$, if ${k\cdot d\over w_j}$ is also an integer, then we can reduce the order of the differential equation by one by deleting the factor $z\left(t{\partial \over \partial t}+{k\cdot d\over w_j}\right)$ from the first different operator and a factor $z\left(t{\partial \over \partial t}-(d-{k\cdot d\over w_j})\right)$ from the second different operator.
In particular, we see that $d-{k\cdot d\over w_j}\notin {\bf Nar},$ so this does not affect the annihilation of $t^m$ with $m\in {\bf Nar}$.
We can repeat the process until the order of the differential equation equation reaches to $|{\bf Nar}|=q+1$, which is the equation~\eqref{i-function-ode}.
\end{proof}

\begin{remark}
We make a few remarks on the small $I$-function and the differential equation.
\begin{enumerate}
\item 
The small $I$-function formula in~\eqref{small-i-function} takes the same form as formula (15) in~\cite{Aco} and the formula (102) in~\cite{Gue} (up to some sign). 
In~\cite{Aco}, the polynomial $W$ needs to satisfy the Gorenstein condition $w_j\vert d$ for any $j$. 
The element $\sJ_m$ here is the same as the element $\phi_{m-1}$ in~\cite{Aco}.
In~\cite{Gue}, the polynomial $W$ is assumed to be of chain type.

\item The differential equation~\eqref{eq:dfq-quasihomogeneous-Fermat} becomes a {\em Picard-Fuchs} equation if there is a mirror B-model, and the period integrals in the $B$-model satisfy the equation. 
\end{enumerate}
\end{remark}

\subsubsection{A genus-zero mirror conjecture}

Now the {\em mirror conjecture~\ref{conjecture-i-function-formula}} proposed in Section~\ref{sec-intro-wall} is a special case of the following {\em Givental type genus-zero mirror symmetry conjecture for small $I$-function of $(W, \<J\>)$}.
\begin{conj}
\label{mirror-conjecture}
The small $I$-function $I_{\rm FJRW}^{\rm sm}(t, -z)$ lies on the Lagrangian cone. 
In particular, let $t\tau(t)$ be the coefficients of $z^{0}$ in the expansion of the small $I$-function, then 
the $J$-function $J(\tau(t),z)$ is determined by an equation
$$ tJ(\tau(t), -z)=I_{\rm FJRW}^{\rm sm}(t, -z)+c(t, z)z{\partial I_{\rm FJRW}^{\rm sm}(t, -z)\over \partial t}.$$
where $c(t, z)$ is a formal power series in $t$ and $z$.
\end{conj}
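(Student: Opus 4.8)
The plan is to follow the standard ``$I=J$'' strategy from Givental's formalism: show that the small $I$-function $I_{\rm FJRW}^{\rm sm}(t,-z)$, written in the ``Givental coordinates'' on the Lagrangian cone $\mathcal L$, has the shape of a point on the cone, and then extract the asymptotic (mirror map) data. Concretely, a family $z\mapsto I(t,-z)$ lies on $\mathcal L$ if and only if, after expanding in $1/z$, its $z^{1}$- and $z^{0}$-coefficients determine the point $-z\one + \bt(t)$ on $\mathcal L$ with $\bt(t)=[I]_{z^0}/(\text{leading})$, and all lower-order terms in $1/z$ are then forced by the $S$-operator. First I would verify that $I_{\rm FJRW}^{\rm sm}(t,-z)$ has the correct leading behavior: by~\eqref{small-i-function} the $\ell=0$ terms give $-z\one+\sum_{m\in{\bf Nar}}\dots$, with the $z^1$-coefficient equal to $-\one$ (the $m=1$, $\ell=0$ term) and the $z^0$-coefficient equal to $t\,\tau(t)$ by the very definition~\eqref{def-tau} of $\tau(t)$. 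So the candidate mirror map is $\bt=\tau(t)$, matching the statement.

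The core of the argument is a Birkhoff-factorization / characterization-of-the-cone lemma: a cohomology-valued function $G(t,-z)\in\cH_{W,\<J\>}(\!(z^{-1})\!)$ of the form $-z\one+\bt(t)+O(1/z)$ lies on $\mathcal L$ precisely when $z\mapsto z\,S(\bt(t),-z)G(t,-z)/(\text{normalization})$ is polynomial in $z$ of degree $\le 1$; equivalently, $G$ and $J(\bt(t),-z)$ differ by applying a differential operator in the $z\partial/\partial t$-directions (the ``Birkhoff'' ambiguity), which in the one-dimensional-$t$ setting reduces exactly to $G(t,-z)=tJ(\tau(t),-z)+c(t,z)\,z\,\partial_t I_{\rm FJRW}^{\rm sm}(t,-z)$ for some $c(t,z)\in\C[\![t,z]\!]$ — this is the ``divisor equation / string-equation'' normalization familiar from~\cite{Aco,CIR,RR,Gue}. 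Thus the problem is reduced to showing $I_{\rm FJRW}^{\rm sm}(t,-z)\in\mathcal L$. For this I would use the differential equation~\eqref{i-function-ode} (or its unreduced form~\eqref{eq:dfq-quasihomogeneous-Fermat}): it exhibits $I_{\rm FJRW}^{\rm sm}$ as a solution of the same Picard--Fuchs-type $D$-module that annihilates the components of $J(\tau(t),-z)$ via the quantum connection~\eqref{quantum-connection}, once one checks that the operator in~\eqref{i-function-ode} matches the quantum-$D$-module operator under the identification $z\partial/\partial t\leftrightarrow$ (quantum multiplication by $\sJ_2$-type classes), together with the initial-condition matching at $t=0$ (where both sides equal $-z\one$). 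Then uniqueness of solutions of the ODE system with given leading term forces $I_{\rm FJRW}^{\rm sm}$ to lie on the cone.

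The main obstacle is precisely that last identification: matching the explicit hypergeometric operator~\eqref{i-function-ode} with the operator coming from the quantum connection $\nabla$ restricted to the one-parameter family $\bt=\tau(t)$. This requires knowing enough genus-zero FJRW invariants — in particular, a ``divisor/string''-type relation governing multiplication by $\sJ_2$ along $\tau(t)$ — which is exactly the content that is known only in examples (Fermat, chain type; see the references after Conjecture~\ref{conjecture-i-function-formula}). In full generality this matching is unproven, which is why~\eqref{I-J-relation} remains a conjecture. A secondary technical point is handling the broad sectors: $I_{\rm FJRW}^{\rm sm}$ is supported on $\cH_{\rm nar}$, so one must argue that the cone condition only constrains, and is only constrained by, the narrow part — this follows from Lemma~\ref{gmax-inv-inv} and the fact that $\tau(t),\tau'(t)\in\cH_{W,\<J\>}^{G_W}$, so the relevant correlators never see non-$G_W$-invariant broad insertions. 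I would therefore present the proof conditionally modulo the structural $D$-module matching, stating clearly where the known cases plug in, and record that for Fermat polynomials the matching is supplied by Proposition~\ref{mirror-theorem-fermat}.
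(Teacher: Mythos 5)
This statement is Conjecture~\ref{mirror-conjecture} in the paper; the authors do \emph{not} prove it. They explicitly present it as a conjecture and, in Proposition~\ref{mirror-theorem-fermat}, record the special cases (Fermat type, and chain type with $\<J\>=G_W$ and no $x^2$ monomial) where it has been established in the external references \cite{Aco,Gue}. There is therefore no proof in the paper to compare your attempt against.

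That said, you handle this correctly: you recognize that the matching between the explicit hypergeometric operator~\eqref{i-function-ode} annihilating $I_{\rm FJRW}^{\rm sm}$ and the quantum $D$-module operator along the mirror map $\bt=\tau(t)$ is precisely the content that remains conjectural, and you present the outline conditionally rather than claiming a complete proof. Your overall strategy --- exhibit the small $I$-function as a point on Givental's Lagrangian cone, read off the mirror map $\tau(t)$ from the $z^0$-coefficient, invoke Lemma~\ref{gmax-inv-inv} to see the cone condition is insensitive to non-$G_W$-invariant broad insertions, and observe that the remaining freedom is the $c(t,z)\,z\partial_t I$ ambiguity appearing in the conjecture's displayed identity --- is faithful to the standard Givental-style argument and to how the verified cases actually go. Two small imprecisions worth noting: the $z^1$-coefficient of $I_{\rm FJRW}^{\rm sm}(t,-z)$ is $-t\one$, not $-\one$ (the paper's remark after the conjecture notes the only positive-$z$ term is $zt\one$; dividing by $t$ is what normalizes to $-z\one$); and your characterization-of-the-cone lemma is stated somewhat loosely --- the cleanest formulation is that a germ $G(t,-z)=-zt\one+O(z^0)$ lies in $\mathcal L$ iff $S(\bt,-z)G/t$ has no negative powers of $z$, with $\bt$ read off as above, from which the stated form $tJ+c(t,z)z\partial_t I$ follows because the tangent space at $I$ is spanned over $\C[z]$ by $I$ and $z\partial_t I$. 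Neither gap changes your conclusion: in full generality this is open, and you have correctly located the obstruction.
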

Let $(W, \<J\>)$ be an admissible LG pair of general type. Using $d-\sum\limits_{j=1}^Nw_j>0$ and the degree calculation, we see that the only term in $I_{\rm FJRW}^{\rm sm}(t,z)$ with a positive power of $z$ is the leading term $zt\one$.  
\begin{corollary} 
If Conjecture~\ref{mirror-conjecture} holds, then we have
$$
I_{\rm FJRW}^{\rm sm}(t,z)=tJ(\tau(t),z).
$$
\end{corollary}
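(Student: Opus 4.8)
The plan is to feed the general-type shape of $I_{\rm FJRW}^{\rm sm}$ into the explicit equation of Conjecture~\ref{mirror-conjecture} and show that the correction term there must vanish. First I would record normal forms. The degree computation quoted just before the statement says that $zt\one$ is the only monomial of $I_{\rm FJRW}^{\rm sm}(t,z)$ with a positive power of $z$; combined with \eqref{def-tau} this gives
$$I_{\rm FJRW}^{\rm sm}(t,z)=zt\one+t\tau(t)+R(t,z),\qquad R(t,z)\in z^{-1}\cH_{\rm nar}[\![t]\!][\![z^{-1}]\!].$$
On the other hand, by construction $J(\bt,-z)=-z\one+\bt+O(z^{-1})$, so $tJ(\tau(t),-z)=-zt\one+t\tau(t)+O(z^{-1})$ has exactly the same coefficients of $z^1$ and $z^0$ as $I_{\rm FJRW}^{\rm sm}(t,-z)$. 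Substituting into the identity $tJ(\tau(t),-z)=I_{\rm FJRW}^{\rm sm}(t,-z)+c(t,z)\,z\,\partial_t I_{\rm FJRW}^{\rm sm}(t,-z)$ of Conjecture~\ref{mirror-conjecture}, the $z^1$- and $z^0$-terms cancel on the right, so $c(t,z)\,z\,\partial_t I_{\rm FJRW}^{\rm sm}(t,-z)\in z^{-1}\cH_{\rm nar}[\![t]\!][\![z^{-1}]\!]$, i.e. its coefficient of $z^n$ vanishes for every $n\ge 1$. It then suffices to deduce $c(t,z)=0$.

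For that I would use the $t$-adic filtration. A second glance at \eqref{small-i-function} shows that the unique monomial of $t$-degree $1$ (namely $\ell=0,m=1$) is precisely $zt\one$, which carries $z^1$; hence every monomial of $I_{\rm FJRW}^{\rm sm}(t,z)$ with non-positive $z$-power has $t$-degree $\ge 2$. Therefore $t\tau(t)+R(t,z)$ has $t$-adic valuation $\ge 2$, and consequently $z\,\partial_t I_{\rm FJRW}^{\rm sm}(t,-z)=-z^2\one+G(t,z)$ where $G$ is supported in $z$-powers $\le 1$ and has $t$-adic valuation $\ge 1$. Now write $c(t,z)=\sum_{i\ge 0}c_i(t)z^i$ with $c_i(t)\in\C[\![t]\!]$, assume $c\neq 0$, and put $V=\min_i v_t(c_i)<\infty$, $v_t$ the $t$-adic valuation. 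Reading off the coefficient of $z^{j+2}$ in $c(t,z)\,z\,\partial_t I_{\rm FJRW}^{\rm sm}(t,-z)=0$ for each fixed $j\ge 0$ gives
$$c_j(t)\,\one=\sum_{i>j}c_i(t)\,[G(t,z)]_{z^{j+2-i}},$$
whose right-hand side has $t$-adic valuation $\ge V+1$; hence $v_t(c_j)\ge V+1$ for every $j$, contradicting the definition of $V$. So $c(t,z)=0$, Conjecture~\ref{mirror-conjecture} collapses to $tJ(\tau(t),-z)=I_{\rm FJRW}^{\rm sm}(t,-z)$, and replacing $z$ by $-z$ finishes it.

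As a cross-check I would note a more conceptual route: since $\mathcal{L}$ is a cone and every monomial of $I_{\rm FJRW}^{\rm sm}$ is divisible by $t$, the rescaling $t^{-1}I_{\rm FJRW}^{\rm sm}(t,-z)$ again lies on $\mathcal{L}$, and by the normal form above it equals $-z\one+\tau(t)+O(z^{-1})$; Givental's characterization of the cone — a point of $\mathcal{L}$ of the form $-z\one+\bt+O(z^{-1})$ is exactly $J(\bt,-z)$, which is immediate from the Darboux/dilaton-shift description of $\mathcal{L}$ recalled in the excerpt — then identifies it with $J(\tau(t),-z)$. In either approach the one step that genuinely uses the general-type hypothesis $\nu>0$, and is therefore where I expect the only real care to be needed, is the bookkeeping that a monomial of the small $I$-function with non-positive power of $z$ must have $t$-degree at least $2$; everything else is formal manipulation of (Laurent) power series.
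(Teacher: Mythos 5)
Your proof is correct, and you have actually supplied two valid arguments. Your primary route (the $t$-adic filtration argument showing $c(t,z)=0$) is a direct and self-contained computation; your ``cross-check'' route is essentially the proof the paper has in mind. The paper gives no written proof of the corollary: it simply records, just before the statement, that the only monomial of $I_{\rm FJRW}^{\rm sm}(t,z)$ with positive $z$-power is $zt\one$, and then lets the corollary stand. The intended reasoning is the one you sketch at the end: since Conjecture~\ref{mirror-conjecture} asserts $I_{\rm FJRW}^{\rm sm}(t,-z)\in\mathcal L$, the cone property gives $t^{-1}I_{\rm FJRW}^{\rm sm}(t,-z)\in\mathcal L$, and its normal form $-z\one+\tau(t)+O(z^{-1})$ (with $\tau(t)$ independent of $z$) pins it down as $J(\tau(t),-z)$ via the standard characterization of points on a Givental cone by their projection to $\cH_{W,G}[z]$.

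The comparison worth recording: the cone argument is shorter and more conceptual, but it invokes the uniqueness statement from Givental's overruled-cone formalism as a black box. Your filtration argument unpacks the same content at the level of coefficients of the explicit equation in Conjecture~\ref{mirror-conjecture}, showing directly that $c(t,z)=0$. Its one piece of care, which you correctly flag, is ensuring the infinite sums in the $z^{j+2}$-coefficient converge $t$-adically; this follows because in \eqref{small-i-function} the $z$-power decreases strictly with $\ell$ (since $\nu>0$) while the $t$-power increases, so $[G]_{z^k}$ has $t$-valuation tending to $\infty$ as $k\to-\infty$. Both your arguments make essential use of the general-type hypothesis at exactly the same spot as the paper does: it is what forces every non-positive-$z$-power monomial of the $I$-function to have $t$-degree at least $2$, equivalently it is what makes $t\tau(t)$ the entire $z^0$-part and allows the dilaton-shifted form $-z\one+\tau(t)+O(z^{-1})$ to appear after dividing by $t$.
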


Similar to Lemma~\ref{dual-flat-section-via-j-function},
we can express $\<\Phi(\alpha), \one\>$ in terms of the small $J$-function. 
\begin{proposition}
\label{weak-asymptotic-i-function}
For any element $\alpha\in \cH_{W, G}$, we have
$$(-2\pi z)^{{\widehat{c}_W\over 2}}\<\Phi(\alpha), \one\>=\left\<e^{\pi\sqrt{-1}\gr} \alpha, (-z)^{{\widehat{c}_W\over 2}-1}(-z)^{\gr} J(\tau,  -z)\right\>.$$
\end{proposition}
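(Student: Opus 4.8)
The plan is to mimic the computation in Lemma~\ref{dual-flat-section-via-j-function}, but working with $\Phi(\alpha)=(2\pi)^{-\widehat{c}_W/2}S(\tau,z)z^{-\gr}\alpha$ (rather than $\Phi^\vee$) and pairing against $\one$ on the right. First I would write out
$$
(-2\pi z)^{\widehat{c}_W/2}\<\Phi(\alpha),\one\>
=(-z)^{\widehat{c}_W/2}\<S(\tau,z)z^{-\gr}\alpha,\one\>,
$$
noting that $(-2\pi z)^{\widehat{c}_W/2}(2\pi)^{-\widehat{c}_W/2}=(-z)^{\widehat{c}_W/2}$ as a choice of branch. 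The goal is to move the operators $S(\tau,z)$, $z^{-\gr}$ across the pairing onto $\one$, so that what remains acting on $\one$ reassembles into the (modified) $J$-function evaluated at $-z$.

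The key steps, in order: (1) Use the adjointness relation $S^*(\bt,z)=S^{-1}(\bt,-z)$ from~\eqref{eq:S-operator-adjoint} to write $\<S(\tau,z)\beta,\one\>=\<\beta,S^{-1}(\tau,-z)\one\>$. (2) Use that $\gr$ is skew-adjoint (Lemma~\ref{adjoint-hodge}(2)), hence $(z^{-\gr})^*=z^{\gr}$ appropriately interpreted with the branch of $\log z$, to move $z^{-\gr}$ across: $\<z^{-\gr}\alpha,\,\gamma\>=\<\alpha, z^{\gr}\gamma\>$. Applying (1) and (2) gives
$$
\<S(\tau,z)z^{-\gr}\alpha,\one\>=\<\alpha,\ z^{\gr}S^{-1}(\tau,-z)\one\>.
$$
(3) Now reconcile the sign of $z$: by definition~\eqref{modified-j-function}, $\widetilde J(\tau,w)=w^{\widehat{c}_W/2}w^{\gr}S(\tau,w)^{-1}(\one)$, so with $w=-z$ one has $(-z)^{\gr}S^{-1}(\tau,-z)\one=(-z)^{-\widehat{c}_W/2}\widetilde J(\tau,-z)$, and $J(\tau,-z)=(-z)^{-\widehat{c}_W/2+1}\cdot$(rescaled), i.e. $\widetilde J(\tau,w)=w^{\widehat{c}_W/2}w^{\gr}w^{-1}J(\tau,w)$ up to the precise normalization in~\eqref{modified-j-function}; I would carefully match $\widetilde J(\tau,-z)=(-z)^{\widehat{c}_W/2-1}(-z)^{\gr}J(\tau,-z)$. (4) Finally, convert the operator $z^{\gr}$ sitting on the left slot into $(-z)^{\gr}$ by inserting $(z/(-z))^{\gr}=(e^{-\pi\sqrt{-1}})^{\gr}=e^{-\pi\sqrt{-1}\gr}$ (again with the appropriate branch), which under the pairing becomes its adjoint $e^{\pi\sqrt{-1}\gr}$ acting on $\alpha$ by skew-adjointness of $\gr$. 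Collecting the powers of $(-z)$ and $z$, the prefactor $(-z)^{\widehat{c}_W/2}$ on the left combines with the $(-z)^{-\widehat{c}_W/2}$ and the $(-z)^{\widehat{c}_W/2-1}(-z)^{\gr}$ coming from $\widetilde J$, yielding exactly
$$
\left\<e^{\pi\sqrt{-1}\gr}\alpha,\ (-z)^{\widehat{c}_W/2-1}(-z)^{\gr}J(\tau,-z)\right\>.
$$

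The main obstacle I anticipate is purely bookkeeping of branches: both $z^{\gr}$ and the scalar powers $z^{\widehat{c}_W/2}$ are multivalued, and the identity only makes sense after fixing $\arg z$ consistently (as in~\eqref{multivalue-z}); I must track how $z^{-\gr}$ becomes $(-z)^{\gr}$ and pick up the factor $e^{\pi\sqrt{-1}\gr}$ with the correct sign rather than $e^{-\pi\sqrt{-1}\gr}$. The second subtlety is that $\gr$ is only skew-adjoint, not anti-self-adjoint as an exponentiated operator, so one should verify $\<e^{\pi\sqrt{-1}\gr}\alpha,\gamma\>=\<\alpha,e^{-\pi\sqrt{-1}\gr}\gamma\>$ directly from Lemma~\ref{adjoint-hodge}(2) before using it. Everything else is a formal manipulation using $\eqref{eq:S-operator-adjoint}$, the definition~\eqref{modified-j-function}, and the relation $\widetilde J(\tau,-z)=(-z)^{\widehat{c}_W/2-1}(-z)^{\gr}J(\tau,-z)$, which itself follows by comparing~\eqref{modified-j-function} with the defining formula for the (small) $J$-function and using $J(\tau,-z)=-zS(\tau,-z)^{-1}(\one)$ from~\eqref{J-equal-S}.
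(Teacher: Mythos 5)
Your proposal is correct and uses exactly the same ingredients as the paper's proof: the adjointness relation $S^*(\bt,z)=S^{-1}(\bt,-z)$, the skew-adjointness of $\gr$, the branch identity relating $z^{\gr}$ and $(-z)^{\gr}$, and the relation $J(\tau,-z)=-z\,S(\tau,-z)^{-1}\one$. The only difference is bookkeeping order: the paper first rewrites $z^{-\gr}=(-z)^{-\gr}e^{\pi\sqrt{-1}\gr}$ in the left slot and then moves $(-z)^{-\gr}$ across, whereas you first move $z^{-\gr}$ across to $z^{\gr}$ on the right, then split off $e^{-\pi\sqrt{-1}\gr}$ and move it back to the left; these are the same computation read in a different order, and your detour through $\widetilde J$ is unnecessary since one can use $J(\tau,-z)=-zS(\tau,-z)^{-1}\one$ directly, but it is harmless.
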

\begin{proof}
We have
\begin{eqnarray*}
(2\pi)^{{\widehat{c}_W\over 2}}\<\Phi(\alpha), \one\>
&=&\left\<S(\tau,z)z^{-\gr}\alpha, \one\right\>\\
&=&\left\<z^{-\gr}\alpha, S(\tau,-z)^{-1}\one\right\>\\
&=&\left\<(-z)^{-\gr} e^{\pi\sqrt{-1}\gr}\alpha, S(\tau,-z)^{-1}\one\right\>\\
&=&\left\<e^{\pi\sqrt{-1}\gr}\alpha, (-z)^{\gr}S(\tau, -z)^{-1}\one\right\>\\
&=&\left\<e^{\pi\sqrt{-1}\gr}\alpha, (-z)^{\gr}(-z)^{-1} J(\tau,-z)\right\>.
\end{eqnarray*}
The second equality uses the property~\eqref{adjoint-S-operator} of $S(\tau,z)$.
The fourth equality uses skew-adjointness
of the Hodge grading operator. 
The last equality follows from the definition of $J$-function~\eqref{J-equal-S}.
\end{proof}

By Proposition~\ref{weak-asymptotic-i-function}, we have
\begin{corollary}
\label{weak-via-i-function}
If the mirror conjecture~\ref{conjecture-i-function-formula} holds, then
$$(-2\pi z)^{{\widehat{c}_W\over 2}}\<\Phi(\alpha), \one\>=
\left\<e^{\pi\sqrt{-1}\gr}\alpha, (-z)^{{\widehat{c}_W\over 2}-1}(-z)^{\gr} I^{\rm sm}_{\rm FJRW}(1, -z)\right\>.$$
\end{corollary}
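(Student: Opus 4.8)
The plan is to obtain the statement as an immediate consequence of Proposition~\ref{weak-asymptotic-i-function} once the mirror formula is substituted in. Proposition~\ref{weak-asymptotic-i-function} already expresses $(-2\pi z)^{{\widehat{c}_W\over 2}}\<\Phi(\alpha), \one\>$ in terms of the small $J$-function $J(\tau, -z)$, so the only thing left to do is to rewrite $J(\tau,-z)$ as $I^{\rm sm}_{\rm FJRW}(1,-z)$.

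First I would observe that for an admissible LG pair $(W,\<J\>)$ of general type the element $\tau(t)$ defined in~\eqref{def-tau} lies in $\cH_{\rm nar}[t]$, i.e. it is a polynomial in $t$; hence the specialization $\tau=\tau(1)$ and the restriction $t=1$ are both legitimate. Next, the mirror conjecture~\ref{conjecture-i-function-formula} (equivalently, the Corollary following Conjecture~\ref{mirror-conjecture}) asserts $I_{\rm FJRW}^{\rm sm}(t,-z)=t\,J(\tau(t),-z)$. Setting $t=1$ gives
\[
I_{\rm FJRW}^{\rm sm}(1,-z)=J(\tau(1),-z)=J(\tau,-z).
\]

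Plugging this identity into the right-hand side of Proposition~\ref{weak-asymptotic-i-function} yields
\[
(-2\pi z)^{{\widehat{c}_W\over 2}}\<\Phi(\alpha), \one\>
=\left\<e^{\pi\sqrt{-1}\gr}\alpha,\ (-z)^{{\widehat{c}_W\over 2}-1}(-z)^{\gr} I^{\rm sm}_{\rm FJRW}(1,-z)\right\>,
\]
which is exactly the claimed formula. There is no real obstacle here: all the analytic content (the adjointness property~\eqref{adjoint-S-operator} of $S(\tau,z)$ and the skew-adjointness of $\gr$) is already packaged in Proposition~\ref{weak-asymptotic-i-function}, and the rest is the assumed mirror theorem. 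The only point worth double-checking is that the $z\mapsto -z$ sign conventions in the two inputs are compatible, which they are, since both Proposition~\ref{weak-asymptotic-i-function} and Conjecture~\ref{conjecture-i-function-formula} are phrased with the argument $-z$.
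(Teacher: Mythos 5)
Your proof is correct and follows exactly the route the paper itself uses: specialize the mirror formula $I_{\rm FJRW}^{\rm sm}(t,-z)=t\,J(\tau(t),-z)$ to $t=1$ and substitute $J(\tau,-z)=I_{\rm FJRW}^{\rm sm}(1,-z)$ into Proposition~\ref{weak-asymptotic-i-function}. The paper presents this corollary without further argument precisely because all the analytic work is contained in that proposition.
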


\subsubsection{Mirror symmetry for invertible polynomials}
A polynomial is called {\em invertible} if it can be linearly rescaled to the form 
\begin{equation}
\label{invertible}
W=\sum_{i=1}^{N}\prod_{j=1}^{N}x_j^{a_{ij}},
\end{equation}
such that the {\em exponent matrix} $E_W:=\left(a_{ij}\right)_{N\times N}$ is an invertible matrix.
According to ~\cite{KrS}, each invertible polynomial can be written as a disjoint sums of three {\em atomic types} of polynomials (up to permutation of variables): 
\begin{enumerate}
\item {\em Fermat type}: $x_1^{a_1}+\dots+x_m^{a_m}$;
\item {\em Chain type}: $x_1^{a_1}x_2+\dots+x_{m-1}^{a_{m-1}}x_m+x_m^{a_m}$;
\item {\em Loop type}: $x_1^{a_1}x_2+\dots+x_{m-1}^{a_{m-1}}x_m+x_m^{a_m}x_1$.
\end{enumerate}

In the literature, the computations in the FJRW theory of $(W, G)$ have been considered mostly when the polynomial $W$ is invertible, see~\cite{FJR, Kra, CIR, HLSW}. 
In particular, when the group $G=G_W$ is the maximal group, the mirror symmetry for invertible polynomials can be upgraded to any genus. In~\cite{HLSW}, it has been proved that if $W$ has no $x^2$ monomial, the FJRW invariants of $(W, G_W)$ at any genus are equal to B-model invariants from the {\em Saito-Givental theory} of the {\em mirror polynomial} $$W^T=\sum\limits_{i=1}^{N}\prod\limits_{j=1}^{N}x_j^{a_{ji}}.$$ 
Furthermore, the underlying Dubrovin-Frobenius manifolds are semisimple. 


In general, when $G=\<J\>\neq G_W$, Dubrovin-Frobenius manifold structure has not been constructed on the mirror side. 
Mirror conjecture~\ref{mirror-conjecture} has been only proved for the following invertible polynomials in~\cite[Theorem 1.2]{Aco} and~\cite[Theorem 4.2]{Gue}.
\begin{proposition}
\label{mirror-theorem-fermat}
The mirror conjecture~\ref{mirror-conjecture} holds if
\begin{enumerate}
\item $W$ is a Fermat polynomial;
\item $W$ is a chain polynomial, $\<J\>=G_W$ and $W$ has no $x^2$ monomial when $N\geq 3$. 
\end{enumerate}
\end{proposition}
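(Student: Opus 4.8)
The plan is to deduce Proposition~\ref{mirror-theorem-fermat} directly from the mirror theorems already available in the literature: part (1) from \cite[Theorem 1.2]{Aco} and part (2) from \cite[Theorem 4.2]{Gue}. Neither case needs a new geometric computation; all the work is in matching our conventions for the state space, the grading, and the sign of $z$ with those of the cited papers, so that the statement ``the $I$-function lies on the Givental Lagrangian cone'' there becomes verbatim the first assertion of Conjecture~\ref{mirror-conjecture}, and then in deducing the reconstruction formula for $J(\tau(t),z)$ from the ruled structure of the cone $\mathcal{L}$ of Section~\ref{sec-small-j-function}.

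First I would fix the dictionary. Our state space is the Jacobian-ring model~\eqref{fjrw=orbifold} with the residue pairing~\eqref{residue-dual}; \cite{Aco} and \cite{Gue} use the same model up to the choice of square root of $J$ and an overall sign of the pairing, and under it the narrow generator $\sJ_m$ corresponds to Acosta's $\phi_{m-1}$. The small $I$-function~\eqref{small-i-function} then matches the hypergeometric series of \cite{Aco} and of \cite{Gue} after the substitution $z\mapsto -z$ (the sign flagged in the remark comparing~\eqref{small-i-function} with those formulas), and one checks that the cone $\mathcal{L}$ of Section~\ref{sec-small-j-function} is the one appearing in the source. In case (1), for a Fermat polynomial $W=\sum_i x_i^{a_i}$ one has $w_i=d/a_i$ with $d={\rm lcm}(a_1,\dots,a_N)$, so $d/w_i=a_i\in\Z$ and $W$ satisfies the Gorenstein-type hypothesis $w_j\mid d$ under which \cite[Theorem 1.2]{Aco} is proved; in case (2) the hypotheses ``$W$ of chain type, $\langle J\rangle=G_W$, and no $x^2$ monomial when $N\geq3$'' are exactly those of \cite[Theorem 4.2]{Gue}. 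In either case the cited theorem gives $I_{\rm FJRW}^{\rm sm}(t,-z)\in\mathcal{L}$, which is the first half of Conjecture~\ref{mirror-conjecture} (hence also Conjecture~\ref{conjecture-i-function-formula}).

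It then remains to recover the displayed $J$-function formula. Here I would invoke the standard structure of the Givental cone: $\mathcal{L}$ is ruled by the subspaces $zT_f\mathcal{L}$, and $J(\bt,-z)$ is the unique family on $\mathcal{L}$ of the form $-z\one+\bt+O(z^{-1})$; consequently any family on $\mathcal{L}$ lies on the ruling line through $J(\tau(t),-z)$, with $t\tau(t)$ its $z^0$-part, up to adding a multiple of $z\,\partial_t$ of the family. Since a degree count (as noted before the corollary following Conjecture~\ref{mirror-conjecture}) shows that for an admissible LG pair of general type the only term of $I_{\rm FJRW}^{\rm sm}(t,z)$ carrying a positive power of $z$ is the leading term $zt\one$, the correction $c(t,z)\,z\,\partial_t I$ must vanish, and one obtains $I_{\rm FJRW}^{\rm sm}(t,z)=tJ(\tau(t),z)$ with $\tau(t)$ as in~\eqref{def-tau}.

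The main obstacle is exactly this bookkeeping: \cite{Aco} and \cite{Gue} carry their own sign and normalization conventions for the $I$-function, the residue/thimble pairing, and the $z$-grading, so one must track these carefully to be sure that ``on the Lagrangian cone'' in the source transports to our Conjecture~\ref{mirror-conjecture} without a lost sign or normalization factor; and one should confirm that \cite[Theorem 1.2]{Aco}, phrased for polynomials with $w_j\mid d$, indeed covers the inhomogeneous Fermat case $W=\sum_i x_i^{a_i}$ with distinct exponents $a_i$, not only the homogeneous Fermat $W=\sum_i x_i^{d}$. The two mirror theorems themselves are used as black boxes.
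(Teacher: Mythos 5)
Your proposal is correct and takes essentially the same approach as the paper: the paper offers no independent proof but simply records that the two cases are \cite[Theorem 1.2]{Aco} and \cite[Theorem 4.2]{Gue}, respectively, with the remark immediately preceding the proposition noting exactly the convention/dictionary issues (the $z\mapsto -z$ sign and the identification $\sJ_m \leftrightarrow \phi_{m-1}$) that you flag. Your additional paragraph extracting the reconstruction formula for $J(\tau(t),z)$ from the ruling of the cone, and the observation that for general type the $c(t,z)$ correction term drops, reproduces the paper's subsequent corollary rather than something needed for Proposition~\ref{mirror-theorem-fermat} itself, but it is harmless and correct.
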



\subsection{Generalized hypergeometric functions in FJRW theory}
\label{sec-basic-ghe}

We will recall some basics on generalized hypergeometric functions
\begin{align*}
\, _pF_q\left(
\begin{array}{l}
a_1, \ldots, a_{p}\\
b_1, \ldots, b_{q}
\end{array}; x\right)
:=\sum_{k\geq0} {\prod\limits_{i=1}^{p}(a_i)_k\over \prod\limits_{i=1}^{q}(b_i)_k}\cdot {x^k\over k !},
\end{align*}
where $(c)_k$ is the Pochhammer symbol 
$$(c)_k:={\Gamma(c+k)\over \Gamma(c)}=c(c+1)\cdots(c+k-1).$$
Details can be found in~\cite{Luk, Fie, dlmf}.

\subsubsection{Generalized hypergeometric equations and its solutions near $x=0$}

Let $\{\rho_j\}$ and $\{\alpha_i\}$ be the rational numbers introduced in~\eqref{rho-tuple} and~\eqref{alpha-tuple}.
We consider the generalized hypergeometric equation
\begin{equation}
\label{ghe-equation}
\Big(\vartheta_x\prod_{j=1}^{q}(\vartheta_x+\rho_j-1)-x\prod_{i=1}^{p}(\vartheta_x+\alpha_i)\Big) f(x) =0, \quad\text{where}\quad \vartheta_x:=x{\partial\over \partial x}.
\end{equation}

We will focus on the cases  when $p\leq q$, since the polynomial $W$ under consideration is of general type. When $p\leq q$, the equation~\eqref{ghe-equation} has a {\em regular singularity} at $x=0$ and an {\em irregular singularity} at $x=\infty$. The order of irregularity is one.
We remark that when $W$ is of Calabi-Yau type, namely  $p=q+1$, then the equation has regular singularities at $x=0, 1$, and $\infty$. See~\cite[Section 16.8]{dlmf}.

Now we discuss the formal solutions of~\eqref{ghe-equation} near $x=0$.
We notice by~\eqref{reindex-narrow} that 
\begin{enumerate}
\item no two $\rho_j$'s  differ by an integer, $\rho_0=1$, and $\rho_j\notin\mathbb{Z}$ $(\forall \ 1\leq j\leq q)$;
\item 
for any pair $(i, j)$, $\rho_j\neq\alpha_i$.
 \end{enumerate}
According to~\cite[Section 16.8]{dlmf}, near $x=0$, there is a fundamental set of formal solutions of the equation~\eqref{ghe-equation}, denoted by $\{f_{j}(x) \vert j=0, 1, \ldots, q\}$, where 
\begin{equation}
\label{ghe-standard}
f_{j}(x)=
x^{1-\rho_j}\, _pF_q\left(
\begin{array}{l}
1+\alpha_1-\rho_j, \ldots, 1+\alpha_p-\rho_j\\
1+\rho_0-\rho_j, \ldots, (1+\rho_j-\rho_j)^*, \ldots, 1+\rho_q-\rho_j
\end{array}; x\right).
\end{equation}
Here $(1+\rho_j-\rho_j)^*$ means the term $1+\rho_j-\rho_j$ is deleted. 

\begin{lemma}
The following identity hold:
\begin{equation}
\label{difference-alpha-rho}
\sum_{h=1}^{p}\alpha_h-\sum_{h=1}^{q}\rho_h
=-\nu\left({1\over 2}+{1\over d}\right)+{3-N\over 2}. 
\end{equation}
\end{lemma}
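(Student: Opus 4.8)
The plan is to evaluate the two sums $\sum_{h=1}^{p}\alpha_h$ and $\sum_{h=1}^{q}\rho_h$ separately, straight from the definitions in \eqref{alpha-tilde}, \eqref{alpha-tuple}, and \eqref{reindex-narrow}, and then combine them using $\sum_j w_j = d-\nu$ from \eqref{gorenstein-parameter} and $|{\bf Nar}|=q+1$ from \eqref{definition-q}. First, the sum over the multiset $\widetilde\alpha$ is elementary:
$$\sum_{\alpha\in\widetilde\alpha}\alpha=\sum_{j=1}^{N}\sum_{k=0}^{w_j-1}\left(\frac1d+\frac{k}{w_j}\right)=\frac1d\sum_{j=1}^{N}w_j+\frac12\sum_{j=1}^{N}(w_j-1).$$
By construction $\alpha_P$ is obtained from $\widetilde\alpha$ by deleting, for each $n$ in the complement $S:=\{0,1,\dots,d-1\}\setminus{\bf Nar}$, the number $\tfrac{n+1}{d}$ exactly once (see \eqref{common-numbers}); since $\{0,1,\dots,d-1\}={\bf Nar}\sqcup S$, the total deleted amount is
$$\sum_{n\in S}\frac{n+1}{d}=\frac1d\left(\sum_{k=1}^{d}k-\sum_{m\in{\bf Nar}}(m+1)\right)=\frac{d+1}{2}-\frac1d\sum_{m\in{\bf Nar}}m-\frac{|{\bf Nar}|}{d},$$
which gives a closed form for $\sum_{h=1}^{p}\alpha_h$ in terms of $\sum_j w_j$, $N$, $|{\bf Nar}|$, and $\sum_{m\in{\bf Nar}}m$.

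Next, since $\rho_0=1$ corresponds to $m=1$ (and $1\in{\bf Nar}$, because $0<w_j\leq d/2<d$ forces $d\nmid w_j$), the index set $\{\rho_1,\dots,\rho_q\}$ of \eqref{rho-tuple} runs over $m\in{\bf Nar}\setminus\{1\}$. Hence \eqref{reindex-narrow} yields
$$\sum_{h=1}^{q}\rho_h=\sum_{m\in{\bf Nar}\setminus\{1\}}\left(1+\frac1d-\frac md\right)=q\left(1+\frac1d\right)-\frac1d\left(\sum_{m\in{\bf Nar}}m-1\right),$$
again a closed form in $q=|{\bf Nar}|-1$ and $\sum_{m\in{\bf Nar}}m$.

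The only step that needs an idea rather than bookkeeping is the evaluation of $\sum_{m\in{\bf Nar}}m$. For this I would observe that ${\bf Nar}$ is stable under the involution $m\mapsto d-m$ — indeed $d\mid w_j m\iff d\mid w_j(d-m)$ — so $\sum_{m\in{\bf Nar}}m=\sum_{m\in{\bf Nar}}(d-m)$, giving $\sum_{m\in{\bf Nar}}m=\tfrac12 d\,|{\bf Nar}|$. Substituting this into the two closed forms above, all the $\sum_{m\in{\bf Nar}}m$ terms and the $|{\bf Nar}|/d$ terms cancel when we form $\sum_{h=1}^p\alpha_h-\sum_{h=1}^q\rho_h$, leaving an expression purely in $\sum_j w_j$, $d$, and $N$; replacing $\sum_j w_j$ by $d-\nu$ and collecting terms gives exactly $-\nu\bigl(\tfrac12+\tfrac1d\bigr)+\tfrac{3-N}{2}$. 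The remainder is routine algebra; the main point to be careful about is the cardinality count $|S|=d-|{\bf Nar}|$ implicit in identifying the deleted part of $\widetilde\alpha$, and the symmetry argument for $\sum_{m\in{\bf Nar}}m$ is the single nontrivial ingredient.
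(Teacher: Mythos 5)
Your proof is correct and is essentially the same computation as the paper's, just arranged differently: the paper adds the deleted numbers $\{(n+1)/d : n\notin{\bf Nar}\}$ back to both sums at once, at which point the $\rho$-side collapses to $\sum_{m=0}^{d-2}(m+1)/d$ — a step that silently uses the same palindromic symmetry $m\leftrightarrow d-m$ of ${\bf Nar}$ that you invoke explicitly to evaluate $\sum_{m\in{\bf Nar}}m=\tfrac{d}{2}|{\bf Nar}|$. Your version makes that single nontrivial ingredient visible at the cost of somewhat more bookkeeping; the conclusion and the idea are the same.
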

\begin{proof}
By adding all the numbers except $\rho_0=1$ that are deleted in~\eqref{common-numbers} to both sums on the left-hand side of~\eqref{difference-alpha-rho}, we obtain
\begin{equation*}
\begin{split}
\sum_{h=1}^{p}\alpha_h-\sum_{h=1}^{q}\rho_h
&=\sum_{j=1}^{N}\sum_{k=0}^{w_j-1}\left({1\over d}+{k\over w_j}\right)-\sum_{m=0}^{d-2}{m+1\over d}\\
&=\sum_{j=1}^{N}\left({w_j\over d}+{w_j-1\over 2}\right)-{d-1\over 2}.
\end{split}
\end{equation*}
Now the result follows from the formula~\eqref{gorenstein-parameter} of $\nu$.
\end{proof}

For each $m\in {\bf Nar}$, we introduce a $p$-tuple $\alpha_P^{(m)}$ and a $q$-tuple $\rho_Q^{(m)}$ by
\begin{equation}
\label{alpha-rho-m}
\begin{dcases}
\alpha_P^{(m)}=\left(\alpha_1^{(m)}, \ldots, \alpha_p^{(m)}\right):=(1+\alpha_1-\rho_{\mathscr{N}(m)},\ldots, 1+\alpha_p-\rho_{\mathscr{N}(m)});\\
\rho_Q^{(m)}=\left(\rho_1^{(m)}, \ldots, \rho_q^{(m)}\right)
:=(1+\rho_0-\rho_{\mathscr{N}(m)}, \ldots, 1^*, \ldots, 1+\rho_{q}-\rho_{\mathscr{N}(m)}).
\end{dcases}
\end{equation}
\begin{lemma}
\label{difference-arbitrary-m}
For each $m\in {\bf Nar}$, we have an identity
\begin{equation}
\label{difference-alpha-rho-m}
\sum_{h=1}^{p}\alpha_h^{(m)}-\sum_{h=1}^{q}\rho_h^{(m)}
=-\nu\left({1\over 2}+{m\over d}\right)+{3-N\over 2}.
\end{equation}
\end{lemma}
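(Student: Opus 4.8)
The plan is to deduce \eqref{difference-alpha-rho-m} directly from the $m=1$ case \eqref{difference-alpha-rho} by bookkeeping how the two tuples in \eqref{alpha-rho-m} are built from $\alpha_P$ and $\rho_Q$. Set $b:=\rho_{\mathscr{N}(m)}=\frac1d+1-\frac md$, so that $1-b=\frac{m-1}{d}$ and $b=1-\frac{m-1}{d}$. First I would record the two partial sums. From the definition of $\alpha_P^{(m)}$ in \eqref{alpha-rho-m} one gets
\[
\sum_{h=1}^{p}\alpha_h^{(m)}=p(1-b)+\sum_{h=1}^{p}\alpha_h.
\]
On the $\rho$-side, $\rho_Q^{(m)}$ is obtained from the $(q+1)$-entry list $(1+\rho_j-b)_{j=0,\dots,q}$ by deleting the single entry indexed by $j=\mathscr{N}(m)$, which equals $1+\rho_{\mathscr{N}(m)}-b=1$; hence $\rho_Q^{(m)}$ has exactly $q$ entries and
\[
\sum_{h=1}^{q}\rho_h^{(m)}=q(1-b)+\sum_{j=0}^{q}\rho_j-b.
\]

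Next I would subtract these, using $\rho_0=1$ and the relation $p-q=1-\nu$ coming from \eqref{p-q-nu-relation}, to obtain
\begin{align*}
\sum_{h=1}^{p}\alpha_h^{(m)}-\sum_{h=1}^{q}\rho_h^{(m)}
&=(p-q)(1-b)+b+\Big(\sum_{h=1}^{p}\alpha_h-\sum_{j=0}^{q}\rho_j\Big)\\
&=(1-\nu)\frac{m-1}{d}+\Big(1-\frac{m-1}{d}\Big)+\Big(\sum_{h=1}^{p}\alpha_h-\sum_{h=1}^{q}\rho_h\Big)-1.
\end{align*}
Finally I would substitute the known identity \eqref{difference-alpha-rho} for the last bracket: the two additive constants $1$ cancel, the terms $\pm\frac{m-1}{d}$ combine with the $\nu$-factor into $-\nu\frac{m-1}{d}$, and a single regrouping using $\frac{m-1}{d}+\frac1d=\frac md$ produces $-\nu\big(\frac12+\frac md\big)+\frac{3-N}{2}$, which is exactly \eqref{difference-alpha-rho-m}.

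This is a routine computation and I do not expect a genuine obstacle; the one place that needs care is the counting on the $\rho$-side. One must note that passing from $\rho_Q$ to $\rho_Q^{(m)}$ removes precisely one entry (the one that becomes $1$, coming from $j=\mathscr{N}(m)$), so that $\rho_Q^{(m)}$ has $q$ entries although the shifted list $(1+\rho_j-b)_{j}$ has $q+1$, and correspondingly one must keep the extra $\rho_0=1$ straight when converting between $\sum_{j=0}^{q}\rho_j$ and $\sum_{j=1}^{q}\rho_j$. Conceptually the statement is just \eqref{difference-alpha-rho} shifted by the common scalar $1-b=\frac{m-1}{d}$ along each of its $p$ and $q$ arguments, with the single deleted entry accounting for the leftover term $b$.
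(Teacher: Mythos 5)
Your proposal is correct and takes essentially the same approach as the paper: the paper's proof records the single identity $\sum_{h=1}^{p}\alpha_h^{(m)}-\sum_{h=1}^{q}\rho_h^{(m)}=(1-\rho_{\mathscr{N}(m)})(p-q-1)+\sum_{h=1}^{p}\alpha_h-\sum_{h=1}^{q}\rho_h$ and then invokes \eqref{reindex-narrow} and \eqref{difference-alpha-rho}, which is exactly your bookkeeping (with $1-b=\frac{m-1}{d}$ and $p-q-1=-\nu$) packaged into one line. Your slightly longer algebra, including the careful handling of the deleted entry and $\rho_0=1$, expands the same computation without any change in method.
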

\begin{proof}
By definition, we have 
\begin{eqnarray*}
\sum_{h=1}^{p}\alpha_h^{(m)}-\sum_{h=1}^{q}\rho_h^{(m)}
&=&(1-\rho_{\mathscr{N}(m)})(p-q-1)+\sum_{h=1}^{p}\alpha_h-\sum_{h=1}^{q}\rho_h.
\end{eqnarray*}
Now the result follows from~\eqref{reindex-narrow} and~\eqref{difference-alpha-rho}.
\end{proof}

\subsubsection{The modified small $I$-function}

Now we relate the small $I$-function in~\eqref{small-i-function} to the solutions of the generalized hypergeometric equation~\eqref{ghe-equation}.

For each $m\in {\bf Nar}$, 
the degree shifting number of $\sJ_m$ defined by~\eqref{degree-shifting-number} has the expression
$$\iota(\sJ_m)=\sum_{j=1}^{N}\left(\left\{{w_j\cdot m\over d}\right\}-{w_j\over d}\right).
$$
According to~\eqref{hodge-grading-operator}, we have 
\begin{equation}
\label{hodge-J^m}
\mu(J^m)=-{N\over 2}+\sum_{j=1}^{N}\left\{{w_j\cdot m\over d}\right\}.
\end{equation}

We define a {\em modified small $I$-function}
\begin{equation}
\label{modified-i-function}
\widetilde{I}(t,z):=z^{{\widehat{c}_W\over 2}-1}z^{\gr} I_{\rm FJRW}^{\rm sm}(t,z).
\end{equation}
Combining the formula~\eqref{small-i-function}, for fixed  $\ell\in\Z$, we compute the power of $z$ in the coefficient of $\sJ_m$, which is 
$$
{\widehat{c}_W\over 2}+\mu(J^m)-1+1+\sum\limits_{j=1}^{N}\lfloor {w_j\over d}(d\ell+m)\rfloor-(d\ell+m-1)
={\nu\over d}(d\ell+m-1)-1.$$
Thus we have
\begin{equation}
\label{modified-formula}
\widetilde{I}(t,z)=\sum_{m\in {\bf Nar}}
\sum_{\ell=0}^{\infty}
\frac
{t^{d\ell+m}}
{z^{{\nu\over d}(d\ell+m-1)}\Gamma(d\ell+m)}
\prod_{j=1}^{N}
{\Gamma({w_j\over d}(d\ell+m))\over 
\Gamma\left(\left\{{w_j\over d}\cdot m\right\}\right)}\sJ_m.
\end{equation}
We consider the following change of variables
\begin{equation}
\label{change-of-variable-formula}
x=\left({1\over d}\right)^{d}\cdot\left(\prod\limits_{j=1}^{N}w_j^{w_j}\right) z^{-\nu}.
\end{equation}
When we restrict $\widetilde{I}(t,z)$ to $t=1$ and use the change of variables~\eqref{change-of-variable-formula}, the coefficient functions become generalized hypergeometric series that appear in~\eqref{ghe-standard}. 
\begin{proposition}
\label{i-function-via-ghf}
For the modified small $I$-function in~\eqref{modified-i-function}, we have
\begin{eqnarray}
\widetilde{I}(1, z)
&=& {(2\pi)^{N+\nu-1\over 2}\over d^{1\over 2}} \prod_{j=1}^{N}w_j^{{w_j\over d}-{1\over 2}}\sum_{m\in {\bf Nar}}{{\Gamma(\alpha^{(m)}_P)\over \Gamma(\rho_Q^{(m)})}\, f_{\mathscr{N}(m)}(x)\over \prod\limits_{j=1}^{N}\Gamma(\left\{{w_j\cdot m\over d}\right\})}\,\sJ_m.\label{I-function-hypergeometric}
\end{eqnarray}
\end{proposition}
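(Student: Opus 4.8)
The plan is to compare the two sides coefficient by coefficient in the basis $\{\sJ_m\}_{m\in{\bf Nar}}$ of $\cH_{\rm nar}$. Fix $m\in{\bf Nar}$ and set $j_0:=\mathscr{N}(m)$ for its index under the bijection~\eqref{narrow-bijection}, so that $\rho_{j_0}=\tfrac1d+1-\tfrac md$ and $1-\rho_{j_0}=\tfrac{m-1}{d}$. Starting from the explicit expansion~\eqref{modified-formula} of $\widetilde I(t,z)$, restrict to $t=1$ and read off the coefficient $C_m(z)$ of $\sJ_m$. The first step is to rewrite each Gamma factor in $C_m(z)$ using the Gauss multiplication formula
\[
\Gamma(ns)=(2\pi)^{\frac{1-n}{2}}\,n^{\,ns-\frac12}\prod_{k=0}^{n-1}\Gamma\!\Big(s+\tfrac kn\Big),
\]
applied with $n=d$, $s=\ell+\tfrac md$ to $\Gamma(d\ell+m)$, and with $n=w_j$, $s=\ell+\tfrac md$ to each $\Gamma(\tfrac{w_j}{d}(d\ell+m))$. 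After this substitution $C_m(z)$ becomes, up to a constant independent of $\ell$ and $z$, a series of the shape $\sum_{\ell\ge0}(\text{power of }z)\cdot\dfrac{\prod_{j}\prod_{k=0}^{w_j-1}\Gamma(\ell+\tfrac md+\tfrac k{w_j})}{\prod_{c=0}^{d-1}\Gamma(\ell+\tfrac{m+c}{d})}$.

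Next I would collect the constants. The exponents of $2\pi$ sum to $\sum_j\tfrac{1-w_j}{2}-\tfrac{1-d}{2}=\tfrac{N+\nu-1}{2}$, producing the prefactor $(2\pi)^{(N+\nu-1)/2}$; the leftover powers of $d$ and of the $w_j$, after the change of variables~\eqref{change-of-variable-formula} $x=d^{-d}\big(\prod_jw_j^{w_j}\big)z^{-\nu}$, combine into $d^{-1/2}\prod_jw_j^{w_j/d-1/2}$ together with the factor $x^{\ell}\,x^{(m-1)/d}=x^{\ell}\,x^{1-\rho_{j_0}}$ inside the sum. It then remains to recognize the hypergeometric series. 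The numerator arguments $\tfrac md+\tfrac k{w_j}$ are exactly the shifted exponents $1+\alpha_i-\rho_{j_0}=\alpha_i^{(m)}$ of~\eqref{alpha-rho-m} together with the "common numbers"~\eqref{common-numbers} shifted by $\tfrac{m-1}{d}$; the denominator arguments $\tfrac{m+c}{d}$, $c=0,\dots,d-1$, split into the value $c=d-m\in{\bf Nar}$, which gives $\Gamma(\ell+1)=\ell!$, the values $c\notin{\bf Nar}$, which match the shifted common numbers and cancel against them, and the values $c\in{\bf Nar}\setminus\{d-m\}$, which are precisely the $\rho_h^{(m)}=1+\rho_h-\rho_{j_0}$ via $c=d-m_h$ with $m_h=\mathscr{N}^{-1}(h)$. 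After this cancellation the sum equals $\frac{\prod_i\Gamma(\alpha_i^{(m)})}{\prod_h\Gamma(\rho_h^{(m)})}\,{}_pF_q\!\big(\alpha_P^{(m)};\rho_Q^{(m)};x\big)$, which by~\eqref{ghe-standard} is $\frac{\Gamma(\alpha_P^{(m)})}{\Gamma(\rho_Q^{(m)})}\,x^{\rho_{j_0}-1}f_{j_0}(x)$; combining with the $x^{1-\rho_{j_0}}$ already extracted and summing over $m\in{\bf Nar}$ gives~\eqref{I-function-hypergeometric}.

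The routine but delicate part, and the main place where care is needed, is the bookkeeping in the last step: matching the deletions used to build $\alpha_P$ from $\widetilde\alpha$ in~\eqref{alpha-tilde} with the cancellations of $\Gamma$-factors between numerator and denominator. One must check that removing $\tfrac{n+1}{d}$ once for each $n\in\{0,\dots,d-1\}\setminus{\bf Nar}$ corresponds exactly to cancelling one numerator factor $\Gamma(\ell+\tfrac{m+n}{d})$ against the denominator factor $\Gamma(\ell+\tfrac{m+n}{d})$, even when several monomials contribute the same value $\tfrac{n+1}{d}$ (only one copy is removed on each side), and that the surviving denominator arguments are indexed by ${\bf Nar}$, which forces the involution $m'\mapsto d-m'$ of ${\bf Nar}$ onto itself to be invoked. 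The resulting counting is consistent with~\eqref{definition-p} and~\eqref{p-q-nu-relation}: $p$ factors survive in the numerator and $q+1$ (including $\ell!$) in the denominator, so the series is genuinely a ${}_pF_q$.
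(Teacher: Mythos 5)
Your proof is correct and is essentially the paper's proof run in the reverse direction: the paper starts from $\frac{\Gamma(\alpha_P^{(m)})}{\Gamma(\rho_Q^{(m)})}f_{\mathscr N(m)}(x)$, asserts without comment that it equals $\sum_\ell \frac{\prod_j\prod_k\Gamma(m/d+k/w_j+\ell)}{\prod_c\Gamma((m+c)/d+\ell)}x^{\ell+(m-1)/d}$, then uses the multiplication formula~\eqref{legendre-prod} to reach~\eqref{modified-formula}, whereas you start from~\eqref{modified-formula}, expand via the same multiplication formula, and then carry out the cancellation/matching step. The bookkeeping you spell out in your last paragraph (matching the deletions in~\eqref{alpha-tilde} with denominator cancellations, using the involution $m'\mapsto d-m'$ on ${\bf Nar}$, and counting via~\eqref{definition-p}) is exactly the content of the paper's unexplained first equality, so your version is more detailed at that point but not a different method.
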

\begin{proof}

Applying the {\em Legendre duplication formula}
\begin{equation}
\label{legendre-prod}\prod_{k=0}^{d-1}\Gamma\left(y+{k\over d}\right)=(2\pi)^{d-1\over 2}d^{{1\over 2}-d\cdot y}\Gamma(d\cdot y),
\end{equation}
we obtain
\begin{equation*}
\begin{split}
{\Gamma(\alpha^{(m)}_P)\over \Gamma(\rho_Q^{(m)})}\, f_{\mathscr{N}(m)}(x)
=&\sum_{\ell=0}^{\infty}{\prod\limits_{j=1}^{N}\prod\limits_{k=0}^{w_j-1}\Gamma\left({m\over d}+{k\over w_j}+\ell\right)\over \prod\limits_{k=0}^{d-1}\Gamma\left({m+k\over d}+\ell\right)}
\, x^{\ell+{m-1\over d}} \\
=&\sum_{\ell=0}^{\infty} 
{(2\pi)^{1-N-\nu\over 2}\over d^{{1\over 2}-m-d\ell}}
\, \prod\limits_{j=1}^{N}w_j^{{1\over 2}-w_j({m\over d}+\ell)}
\, {\prod\limits_{j=1}^{N}\Gamma\left({w_j\over d}(m+d\ell)\right)\over \Gamma(m+d\ell)}
\, x^{\ell+{m-1\over d}}\\
=&{d^{{1\over 2}}\over (2\pi)^{N+\nu-1\over 2}}
\, \prod_{j=1}^{N}w_j^{{1\over 2}-{w_j\over d}}
\, \sum_{\ell=0}^{\infty} {t^{d\ell+m-1}\over z^{\nu\cdot(\ell+{m-1\over d})}}
\, {\prod\limits_{j=1}^{N}\Gamma\left({w_j\over d}(m+d\ell)\right)
\over \Gamma(m+d\ell)}.
\end{split}
\end{equation*}
The last equality is a direct consequence of the change of variable formula~\eqref{change-of-variable-formula}.
Using~\eqref{modified-formula}, this implies the formula~\eqref{I-function-hypergeometric} immediately.
\end{proof}


\subsubsection{Barnes' $Q$-function}
For each $m\in {\bf Nar}$, we introduce 
\begin{equation}
\label{barnes-Q}
Q_{\mathscr{N}(m)}(x):={\prod\limits_{i=0}^{q}\Gamma(\rho_{\mathscr{N}(m)}-\rho_i)^*\over \prod\limits_{i=1}^{p}\Gamma(\rho_{\mathscr{N}(m)}-\alpha_i)} \, x^{1-\rho_{\mathscr{N}(m)}}\, \ _pF_q\left(
\begin{array}{l}
\alpha_P^{(m)}\\
\rho_Q^{(m)}
\end{array}; (-1)^{\nu}x\right).
\end{equation}
Here the notation $\Gamma(\rho_{\mathscr{N}(m)}-\rho_i)^*$ means the term is omitted if $\rho_{\mathscr{N}(m)}-\rho_i=0.$
Since we work with the cases when $\rho_0=1$, these functions are exactly Barnes' functions  $\{Q_r(x)\}$ in~\cite{Bar}, where $r=\mathscr{N}(m)\in \{0, 1,\ldots, q\}$ by the map~\eqref{narrow-bijection}.

These functions are multi-valued. For any integer $k$, we have
\begin{equation}
\label{Q-multivalued}
Q_{\mathscr{N}(m)}(x\cdot e^{2\pi\sqrt{-1}k})=e^{2\pi\sqrt{-1}k(1-\rho_{\mathscr{N}(m)})}Q_{\mathscr{N}(m)}(x).
\end{equation}
For each $m\in {\bf Nar}$, we define a constant
\begin{equation}
\label{constant-upsilon}
\Upsilon(m):=(-1)^{\nu(1-\rho_{\mathscr{N}(m)})} {\Gamma(\alpha^{(m)}_P) \Gamma(1-\alpha^{(m)}_P)\over \Gamma(\rho_Q^{(m)})\Gamma(1-\rho_Q^{(m)})}.
\end{equation}
Using the definitions in~\eqref{ghe-standard} and~\eqref{alpha-rho-m}, we immediately obtain
$$Q_{\mathscr{N}(m)}(x)={1\over \Upsilon(m) }\cdot {\Gamma(\alpha^{(m)}_P)\over \Gamma(\rho_Q^{(m)})}\cdot f_{\mathscr{N}(m)}\Big((-1)^{\nu}x\Big).$$

We denote the primitive $d$-th root of unity by
\begin{equation}
\label{d-th-root}
\omega:=\exp\left({2\pi\sqrt{-1}/d}\right).
\end{equation} 
Using formula~\eqref{I-function-hypergeometric} and~\eqref{Q-multivalued}, the coefficient functions of $\widetilde{I}(1, -z)$ can be written as Barnes' $Q$-functions:
\begin{equation}
\widetilde{I}(1, -z)
= {(2\pi)^{N+\nu-1\over 2}\over d^{1\over 2}}  \prod_{j=1}^{N}w_j^{{w_j\over d}-{1\over 2}}\sum_{m\in {\bf Nar}}
{\omega^{-\nu(m-1)} \Upsilon(m) \over\prod\limits_{j=1}^{N}\Gamma(\left\{{w_j\cdot m\over d}\right\})} \, Q_{\mathscr{N}(m)}(x)\, \sJ_m.
\label{i-function-via-upsilon}
\end{equation}

\subsubsection{Another formula for $\Upsilon(m)$}
Using the {\em Euler reflection formula}
\begin{equation}
\label{euler-reflection}
\Gamma(x)\Gamma(1-x)(1-e^{-2\pi\sqrt{-1}x})=2\pi\sqrt{-1}e^{-\pi\sqrt{-1}x}
\end{equation}
and the relation~\eqref{reindex-narrow},
we find an identity for $\Upsilon(m)$.
\begin{lemma}
For any $m\in {\bf Nar}$, we have
\begin{equation}
\label{upsilon-2nd-formula}
\Upsilon(m)
=d(2\pi)^{1-\nu}e^{\pi\sqrt{-1}\left(-{\nu\over d}-1-{N\over 2}\right)}\prod_{j=1}^{N}{1\over 1-\omega^{mw_j}}.
\end{equation}
\end{lemma}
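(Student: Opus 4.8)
The plan is to start from the first formula for $\Upsilon(m)$ in~\eqref{constant-upsilon}, namely
\[
\Upsilon(m) = (-1)^{\nu(1-\rho_{\mathscr{N}(m)})}\,\frac{\Gamma(\alpha^{(m)}_P)\,\Gamma(1-\alpha^{(m)}_P)}{\Gamma(\rho_Q^{(m)})\,\Gamma(1-\rho_Q^{(m)})},
\]
where $\Gamma$ applied to a tuple denotes the product over its entries. The idea is to pair up each factor $\Gamma(a)\Gamma(1-a)$ using the Euler reflection formula~\eqref{euler-reflection} in the form $\Gamma(a)\Gamma(1-a) = \dfrac{2\pi\sqrt{-1}\,e^{-\pi\sqrt{-1}a}}{1-e^{-2\pi\sqrt{-1}a}}$, so that both numerator and denominator turn into explicit products of exponentials and $(1-e^{-2\pi\sqrt{-1}(\,\cdot\,)})$ terms, whose quotient can then be simplified.

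First I would apply reflection to each of the $p$ entries of $\alpha^{(m)}_P$ and each of the $q$ entries of $\rho_Q^{(m)}$. The factor $(2\pi\sqrt{-1})^{p-q} = (2\pi\sqrt{-1})^{-\nu+1}$ comes out immediately using~\eqref{p-q-nu-relation}, contributing the $(2\pi)^{1-\nu}$ and part of the phase. Next, collect the exponential prefactors: these combine into $\exp\!\big(\pi\sqrt{-1}\,[\sum_h \rho^{(m)}_h - \sum_h \alpha^{(m)}_h]\big)$, and Lemma~\ref{difference-arbitrary-m} evaluates $\sum_h\alpha^{(m)}_h - \sum_h\rho^{(m)}_h = -\nu(\tfrac12 + \tfrac{m}{d}) + \tfrac{3-N}{2}$; together with the sign $(-1)^{\nu(1-\rho_{\mathscr{N}(m)})}$ and the definition $\rho_{\mathscr{N}(m)} = \tfrac1d + 1 - \tfrac{m}{d}$ from~\eqref{reindex-narrow}, this should assemble into the claimed phase $e^{\pi\sqrt{-1}(-\nu/d - 1 - N/2)}$. (The $(-1)^{\nu(1-\rho_{\mathscr{N}(m)})}$ term, with $1-\rho_{\mathscr{N}(m)} = \tfrac{m-1}{d}$, is $e^{\pi\sqrt{-1}\nu(m-1)/d}$, which will have to be tracked carefully against the $-\nu m/d$ appearing from Lemma~\ref{difference-arbitrary-m}.)

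The remaining piece is the ratio of the $(1-e^{-2\pi\sqrt{-1}(\,\cdot\,)})$-factors, and this is where the combinatorics of the tuples must be used. Here I would undo the ``deletion'' step in the definition of $\alpha_P$: by~\eqref{alpha-rho-m} the entries of $\alpha^{(m)}_P$ are $1 + \alpha_i - \rho_{\mathscr{N}(m)}$ and those of $\rho^{(m)}_Q$ are $1 + \rho_i - \rho_{\mathscr{N}(m)}$ (omitting $i$ with $\rho_i = \rho_{\mathscr{N}(m)}$), and $e^{-2\pi\sqrt{-1}}$ of such an entry equals $e^{-2\pi\sqrt{-1}(\alpha_i - \rho_{\mathscr{N}(m)})}$ resp.\ $e^{-2\pi\sqrt{-1}(\rho_i - \rho_{\mathscr{N}(m)})}$. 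Adding back the common numbers deleted in~\eqref{common-numbers} to both the $\alpha$-list and the $\rho$-list — which does not change the ratio since it contributes the same factors top and bottom — I can replace $\{\alpha_i\}$ by the full multiset $\widetilde{\alpha} = \{\tfrac1d + \tfrac{k}{w_j}\}$ from~\eqref{alpha-tilde} and $\{\rho_i\}$ by $\{\tfrac{m'+1}{d} : 0 \le m' \le d-2\}$. Then the denominator ratio becomes
\[
\frac{\displaystyle\prod_{m'=0}^{d-2}\bigl(1-e^{-2\pi\sqrt{-1}((m'+1)/d - \rho_{\mathscr{N}(m)})}\bigr)}{\displaystyle\prod_{j=1}^{N}\prod_{k=0}^{w_j-1}\bigl(1-e^{-2\pi\sqrt{-1}(1/d + k/w_j - \rho_{\mathscr{N}(m)})}\bigr)},
\]
and here the classical factorization $\prod_{k=0}^{n-1}(1 - \zeta\,e^{-2\pi\sqrt{-1}k/n}) = 1 - \zeta^{n}$ applies to each inner product, collapsing the denominator to $\prod_j (1 - \omega^{m w_j})$ with $\omega = e^{2\pi\sqrt{-1}/d}$ (using $w_j(\tfrac1d - \rho_{\mathscr{N}(m)}) \equiv -\tfrac{m w_j}{d} \bmod 1$), and similarly collapsing the numerator to $1 - \omega^{-m\cdot d} = 0$... — so in fact one must be slightly more careful and keep the $\rho_0=1$ term (which is the deleted one for $m'=d-1$) \emph{out} of the numerator product, which is exactly why it runs only to $d-2$; the leftover then reduces to the constant $d$ after an application of $\prod_{m'=1}^{d-1}(1-\omega^{m'}\zeta^{\text{-}1})\big/\!\cdots$ type identity, accounting for the overall factor $d$ in~\eqref{upsilon-2nd-formula}.

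\textbf{Main obstacle.} The routine part is the reflection-formula bookkeeping; the delicate part is the third paragraph — correctly matching the ``deleted'' entries between the $\alpha$- and $\rho$-tuples so that the cyclotomic products collapse, and extracting the lone factor of $d$ from the $\rho_0 = 1$ slot without introducing a spurious zero. I expect this to require a careful case analysis of which $n \in \{0,\dots,d-1\}$ lie in $\mathbf{Nar}$ (equivalently, matching each deleted $\tfrac{n+1}{d}$ with the pair $(k,w_j)$ with $kd = n w_j$ as in~\eqref{common-numbers}), and a clean statement of the finite-product identity $\prod_{k=0}^{n-1}(1 - \zeta e^{2\pi\sqrt{-1}k/n}) = 1 - \zeta^n$ together with its ``punctured'' variant $\prod_{m'=1}^{d-1}(1 - e^{2\pi\sqrt{-1}m'/d}) = d$. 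Once these are in hand, the phases from Lemma~\ref{difference-arbitrary-m}, the sign $(-1)^{\nu(1-\rho_{\mathscr{N}(m)})}$, and the $(2\pi\sqrt{-1})^{1-\nu}$ assemble directly into the right-hand side of~\eqref{upsilon-2nd-formula}.
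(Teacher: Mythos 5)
Your proposal takes essentially the same route as the paper's proof: apply the Euler reflection formula to each Pochhammer factor of~\eqref{constant-upsilon}, restore the entries deleted in~\eqref{common-numbers} to both numerator and denominator (harmless since they cancel), collapse the resulting products via $\prod_{k=0}^{n-1}(1-\zeta e^{2\pi\sqrt{-1}k/n})=1-\zeta^n$ and $\prod_{m'=1}^{d-1}(1-\omega^{m'})=d$, and assemble the phases using Lemma~\ref{difference-arbitrary-m} together with $\rho_{\mathscr{N}(m)}=\tfrac1d+1-\tfrac{m}{d}$. One small correction to your bookkeeping: $w_j\bigl(\tfrac1d-\rho_{\mathscr{N}(m)}\bigr)\equiv\tfrac{mw_j}{d}\bmod\Z$ (not $-\tfrac{mw_j}{d}$), so the cyclotomic collapse on the $\alpha$-side first yields $\prod_j(1-\omega^{-mw_j})^{-1}$, and the extra rewriting $1-\omega^{-mw_j}=-\omega^{-mw_j}(1-\omega^{mw_j})$ supplies the final phase adjustment that appears in the last displayed line of the paper's proof.
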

\begin{proof}
We have 
\begin{eqnarray*}
\Upsilon(m)
&=&(-1)^{\nu({m-1\over d})}\prod_{h=1}^{p}{2\pi\sqrt{-1}e^{-\pi \sqrt{-1}\alpha_h^{(m)}}\over 1-e^{-2\pi\sqrt{-1}\alpha_h^{(m)}}}\prod_{i=1}^{q}{1-e^{-2\pi\sqrt{-1}\rho_i^{(m)}}\over 2\pi\sqrt{-1}e^{-\pi \sqrt{-1}\rho_i^{(m)}}}\\
&=&
(2\pi)^{p-q}e^{\pi\sqrt{-1}\left({\nu({m-1\over d})}+{p-q\over 2}+\nu\left({1\over 2}+{m\over d}\right)-{3-N\over 2}\right)}\prod\limits_{\substack{1\leq i\leq d\\ i\neq d-\rho_{\mathscr{N}(m)}}}\left(1-e^{2\pi\sqrt{-1}\left(\rho_{\mathscr{N}(m)}-1+{i\over d}\right)}\right)\\
&& \cdot\prod\limits_{j=1}^{N}\prod\limits_{k=0}^{w_j-1}\left(1-e^{2\pi\sqrt{-1}\left(\rho_{\mathscr{N}(m)}-1-({1\over d}+{k\over w_j})\right)}\right)^{-1}\\
&=&
(2\pi)^{1-\nu}e^{\pi\sqrt{-1}\left({2m\nu\over d}-{\nu\over d}-1+{N\over 2}\right)}d \prod\limits_{j=1}^{N}\prod\limits_{k=0}^{w_j-1}\left(1-e^{2\pi\sqrt{-1}(-{m\over d}-{k\over w_j})}\right)^{-1}\\
&=&d(2\pi)^{1-\nu}e^{\pi\sqrt{-1}\left({2m\nu\over d}-{\nu\over d}-1+{N\over 2}\right)} \prod_{j=1}^{N}(1-\omega^{-mw_j})^{-1}\\
&=&d(2\pi)^{1-\nu}e^{\pi\sqrt{-1}\left(-{\nu\over d}-1-{N\over 2}\right)}\prod_{j=1}^{N}\left(1-\omega^{mw_j}\right)^{-1}.
\end{eqnarray*}
Here in the second equality, we use the identities~\eqref{alpha-rho-m},~\eqref{difference-alpha-rho-m}, and multiply both the denominator and the numerator with the product of terms of the form $1-e^{2\pi\sqrt{-1}\left(\rho_{\mathscr{N}(m)}-1-{n+1\over d}\right)}$, where $n$ is given by~\eqref{common-numbers}.
\end{proof}

\subsection{Meijer G-function and Barnes asymptotic formula}
\label{sec-barnes-formula}
Now we introduce a Barnes asymptotic formula~\eqref{Barnes-formula} \cite{Bar}. 
We need some knowledge on Meijer $G$-functions \cite{Mei}, which are integral solutions of generalized hypergeometric equations. We mainly follow the work of Luke~\cite{Luk} and Fields~\cite{Fie}.


\subsubsection{Meijer $G$-function}
For an arbitrary $p$-tuple 
$(a_1, \ldots, a_p)\in \mathbb{R}^p$ and an arbitrary $q$-tuple 
$(b_1, \ldots, b_q)\in \mathbb{R}^q,$ such that 
$$\{a_k-b_j\mid 1\leq k\leq p, 1\leq j\leq q\}\cap \Z_+=\emptyset,$$ 
we define a Meijer $G$-function by integrals~\cite[Section 5.2]{Luk}
\begin{equation}
\label{meijer-def}
G_{p,q}^{m,n}
\left(
\begin{array}{l}
a_1, \ldots, a_p\\
b_1, \ldots, b_q
\end{array}
; x
\right)
={1\over 2\pi\sqrt{-1}}
\int_L{\prod\limits_{j=1}^{m}\Gamma(b_j-s)\prod\limits_{j=1}^{n}\Gamma(1-a_j+s)\over\prod\limits_{j=m+1}^{q}\Gamma(1-b_j+s)\prod\limits_{j=n+1}^{p}\Gamma(a_j-s)}x^s ds,
\end{equation}
where $L$ is a path goes from $-\sqrt{-1}\infty$ to $\sqrt{-1}\infty$ so that 
all poles of $\Gamma(b_j-s)$, $j=1, 2, \ldots, m$, lies to the rights of the path, 
and all poles of $\Gamma(1-a_k+s)$, $k=1,2,\ldots, n,$ lies to the left of the path.
The Meijer $G$-function in~\eqref{meijer-def} is a solution of the linear differential equation (see~\cite[Section 5.8 (1)]{Luk})
\begin{equation}
\label{diff-eqn-meijer}
\Big(\prod_{j=1}^{q}(\vartheta_x-b_j)+(-1)^{p+1-m-n}\,x\prod_{i=1}^{p}(\vartheta_x+1-a_i)\Big) f(x) =0.
\end{equation}

\begin{remark}
\label{residue-Meijer-G}
We can evaluate the integral appears in~\eqref{meijer-def} as a sum of residues by deforming the path to a loop beginning and ending at $+\infty$ and encircling all poles of $\Gamma(b_j-s)$, $j=1, 2, \ldots, m$, once in the negative direction, but none of the poles of  $\Gamma(1-a_k+s)$, $k=1,2,\ldots, n.$ 
If no two of $\{b_j\}_{j=1}^{m}$ differ by an integer, the residue expressions of the Meijer $G$-function will be generalized hypergeometric functions~\cite[Section 5.2 (7)]{Luk}. 
\end{remark}

We set the convention
\begin{equation}
\label{replace-index}
\begin{dcases}
a_i:=1-\alpha_i, & i=1,2,\ldots, p;
\\
b_j:=1-\rho_j, & j=0,1,\ldots, q.
\end{dcases}
\end{equation}
Here  $\{\alpha_i\}_{i=1}^{p}$ and $\{\rho_j\}_{j=1}^{q}$ are not necessarily the sets defined in~\eqref{alpha-tuple} and~\eqref{rho-tuple}. 
According to~\cite[Section 5.2 (14)]{Luk}, when $|x|<1$, the generalized hypergeometric function is the residue expression of a Meijer $G$-function
\begin{equation}
\label{ghe-meijer}
_pF_q\left(
\begin{array}{l}
\alpha_1, \ldots, \alpha_p\\
\rho_1,\ldots, \rho_q
\end{array}; x\right)=
{\prod\limits_{j=1}^{q}\Gamma(\rho_j)\over \prod\limits_{j=1}^{p}\Gamma(\alpha_j)}
G_{p,q+1}^{1,p}\left(
\begin{array}{l}
1-\alpha_1, \ldots, 1-\alpha_p\\
0, 1-\rho_1,\ldots, 1-\rho_q
\end{array}; -x\right).
\end{equation}

We may also rewrite the equation~\eqref{ghe-equation} as 
\begin{equation}
\label{ghe-meijer-form}
\Big(\prod_{j=0}^{q}(\vartheta_x-b_j)-x\prod_{i=1}^{p}(\vartheta_x+1-a_i)\Big) f(x) =0.
\end{equation}


\subsubsection{Barnes' asymptotic formula of exponential type}
Let $p<q+1$, we consider the Meijer $G$-function 
\begin{equation}\label{exponential-G-function}
G_{p,q+1}^{q+1,0}
\left(
\begin{array}{l}
a_1, \ldots, a_p\\
b_0, b_1,\ldots, b_q
\end{array}
; x \right)
={1\over 2\pi\sqrt{-1}}
\int_L{\prod\limits_{j=0}^{q}\Gamma(b_j-s)\over\prod\limits_{j=1}^{p}\Gamma(a_j-s)}x^s ds.
\end{equation}

If no two of $\{b_j\}_{j=0}^{q}$  differ by an integer, the residue calculation in Remark~\ref{residue-Meijer-G} for the Meijer $G$-function in~\eqref{exponential-G-function} is a special case of \cite[Section 5.2 (7)]{Luk}:
\begin{equation}\label{Barnes-Meijer}
 G_{p,q+1}^{q+1,0}
\left(
\begin{array}{l}
a_1, \ldots, a_p\\
b_0, b_1,\ldots, b_q
\end{array}
; x \right)=\sum_{r=0}^{q}Q_r(x),
 \quad |x|<1.
\end{equation}

In~\cite[Section 47]{Bar}, Barnes proved an asymptotic expansion formula for $\sum_{r=0}^{q}Q_r(x)$ 
 by analyzing a contour integral using residues and asymptotic expansion of Gamma functions.
Barnes' asymptotic expansion formula (of exponential type) can be interpreted as the following asymptotic expansion of Meijer $G$-function in~\eqref{exponential-G-function} near $x=\infty$ (cf. \cite[Theorem C]{Mei} and \cite[Section 5.7, Theorem 5]{Luk})
\begin{equation}
\label{exponential-asymptotic}
G_{p,q+1}^{q+1,0}
\left(
\begin{array}{l}
a_1, \ldots, a_p\\
b_0, b_1,\ldots, b_q
\end{array}
; x \right)\sim  H_{p, q+1}(x),
\end{equation}
as $x\to \infty$, $|\arg x|< (\nu+\epsilon)\pi$, with $\epsilon=1/2$ if $\nu=1$ and $\epsilon=1$ if $\nu>1$.
Here 
 \begin{equation}
 \label{exponential-H-part}
 H_{p, q+1}(x):=
 {(2\pi)^{{1\over 2}(\nu-1)} \over \nu^{{1\over 2}}} \exp(-\nu x^{1\over \nu})\, x^{\theta}\left(1+\sum_{n=1}^{\infty}M_n x^{-{n\over\nu}}\right).
 \end{equation}
The constants $\{M_n\}$ are determined recursively and the constant $\theta$ satisfies 
\begin{equation}
\label{asymptotic-polynomial-exponent}
\theta\nu={1-\nu\over 2}+\sum_{j=0}^{q}b_j-\sum_{j=1}^{p}a_j.
\end{equation}

\begin{theorem}\cite{Bar}\label{Barnes-theorem}
If $p<q+1$ and no two of $\{\rho_j\}_{j=0}^{q}$ differ by an integer,
there is an asymptotic expansion
\begin{equation}
\label{Barnes-formula}
\sum_{r=0}^{q}Q_r(x)
\sim  {(2\pi)^{{1\over 2}(\nu-1)} \over \nu^{{1\over 2}}} \exp(-\nu x^{1\over \nu}) \, x^{\theta} \left(1+\sum_{n=1}^{\infty}M_n x^{-{n\over\nu}}\right),
\end{equation}
as $x\to \infty$, $|\arg x|<(\nu+\epsilon)\pi$, with $\epsilon=1/2$ if $\nu=1$ and $\epsilon=1$ if $\nu>1$.
\end{theorem}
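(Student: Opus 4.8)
The plan is to prove Theorem~\ref{Barnes-theorem} by relating $\sum_{r=0}^q Q_r(x)$ to the Meijer $G$-function $G_{p,q+1}^{q+1,0}$ via the residue computation in Remark~\ref{residue-Meijer-G}, and then extracting the asymptotic behavior of that $G$-function near $x=\infty$ from a Mellin--Barnes contour integral. First I would note that with the convention~\eqref{replace-index}, namely $a_i = 1-\alpha_i$ and $b_j = 1-\rho_j$, the hypothesis that no two of $\{\rho_j\}_{j=0}^q$ differ by an integer is exactly the hypothesis that no two of $\{b_j\}_{j=0}^q$ differ by an integer, so that by~\eqref{Barnes-Meijer} we have $G_{p,q+1}^{q+1,0}(x) = \sum_{r=0}^q Q_r(x)$ for $|x|<1$. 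Both sides are multivalued analytic functions on the universal cover of $\C^*$ that solve the order-$(q+1)$ ODE~\eqref{ghe-meijer-form}; since they agree on $|x|<1$ they agree as analytic functions, so it suffices to determine the asymptotics of the $G$-function as $x\to\infty$ in the stated sector.

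Next I would carry out the asymptotic analysis of the integral~\eqref{exponential-G-function}. Since $p < q+1$ (equivalently $\nu = q+1-p \geq 1$ by~\eqref{p-q-nu-relation}), the integrand $\prod_{j=0}^q \Gamma(b_j-s) / \prod_{j=1}^p \Gamma(a_j-s)$ decays, via Stirling's formula, like a negative power of $|\operatorname{Im} s|$ times $|\Gamma|$-type factors, and the standard move is to shift the Barnes contour $L$ to the right (towards $+\infty$), picking up residues at the poles $s = b_j + k$; those residues reassemble into the convergent series $\sum_r Q_r(x)$ when $|x|<1$, but for $|x|>1$ one instead closes or pushes the contour and uses the saddle-point / steepest-descent method on the remaining integral. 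The key estimate is Stirling's asymptotic expansion $\log\Gamma(s) = (s-\tfrac12)\log s - s + \tfrac12\log(2\pi) + O(1/s)$ applied to each Gamma factor; summing the leading terms over $j=0,\dots,q$ and subtracting those over $i=1,\dots,p$ produces an integrand of the form $\exp\big(\nu s\log s - \nu s + (\text{lower order})\big)\, x^s$, whose saddle point sits at $s^{1/\nu} \sim x^{1/\nu}$, contributing the factor $\exp(-\nu x^{1/\nu})$. The power $x^\theta$ and the constant $(2\pi)^{(\nu-1)/2}\nu^{-1/2}$ come from the Gaussian fluctuation around the saddle: expanding to second order gives a Gaussian integral whose normalization is $(2\pi)^{1/2}$ divided by the square root of the second derivative of the phase, which is $\nu$ up to the relevant power, and the exponent $\theta$ is pinned down by the bookkeeping of the half-integer shifts $(s-\tfrac12)$ in Stirling, yielding precisely~\eqref{asymptotic-polynomial-exponent}: $\theta\nu = \tfrac{1-\nu}{2} + \sum_{j=0}^q b_j - \sum_{i=1}^p a_i$. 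The subleading terms $M_n x^{-n/\nu}$ arise from the higher-order terms in the Watson-lemma / saddle-point expansion and are determined recursively. I would either reproduce this computation following Barnes~\cite{Bar} (Section~47) directly, or, more economically, simply cite~\cite[Theorem C]{Mei} and~\cite[Section~5.7, Theorem~5]{Luk}, which state exactly the asymptotic expansion~\eqref{exponential-asymptotic}--\eqref{exponential-H-part} for $G_{p,q+1}^{q+1,0}$ in the sector $|\arg x| < (\nu + \epsilon)\pi$ with $\epsilon = 1/2$ if $\nu=1$ and $\epsilon=1$ if $\nu>1$.

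The main obstacle, and the place where care is needed, is the precise determination of the angular sector of validity and the constant $\theta$. The width of the sector is dictated by the location of the anti-Stokes lines of the exponential $\exp(-\nu x^{1/\nu})$: the expansion~\eqref{Barnes-formula} is purely exponentially small/large only away from the directions where competing formal solutions (algebraic ones, coming from the poles at $s = a_i - 1 - k$ that we did not pick up) become comparable in size; tracking when the contour deformation remains valid and when additional exponentially-small contributions must be added is exactly the Stokes phenomenon, and getting the endpoints $(\nu+\tfrac12)\pi$ versus $(\nu+1)\pi$ right requires the $\nu=1$ case to be treated separately, as indicated. For the write-up I would state clearly that this is the sector in which the single exponential term dominates and cite Barnes and the subsequent literature (Meijer, Luke, Fields) for the sharp constants; the remaining content — that $\sum_r Q_r(x) = G_{p,q+1}^{q+1,0}(x)$ globally — is the routine analytic-continuation argument sketched above. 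Thus the proof reduces to: (i) identify the hypotheses with those of~\eqref{Barnes-Meijer}; (ii) analytically continue the identity $\sum_r Q_r = G_{p,q+1}^{q+1,0}$ from $|x|<1$ to all of the universal cover; (iii) invoke the Barnes--Meijer--Luke asymptotic expansion of $G_{p,q+1}^{q+1,0}$ near $x=\infty$, with the constant $\theta$ read off from~\eqref{asymptotic-polynomial-exponent}.
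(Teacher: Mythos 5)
Your proposal is correct and follows essentially the same route the paper takes: identify $\sum_{r=0}^q Q_r(x)$ with the Meijer $G$-function $G_{p,q+1}^{q+1,0}(x)$ via~\eqref{Barnes-Meijer}, analytically continue, and invoke the classical asymptotic expansion~\eqref{exponential-asymptotic}--\eqref{exponential-H-part} from Barnes/Meijer/Luke. The extra detail you give on the Stirling/saddle-point mechanism and on the Stokes sector is the content of the cited theorems, which the paper does not reproduce but simply refers to~\cite{Bar}, \cite[Theorem C]{Mei}, and~\cite[Section 5.7, Theorem 5]{Luk}.
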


\subsection{Calculation of weak asymptotic classes}
\label{calculation-weak-strong}

If mirror conjecture~\ref{conjecture-i-function-formula} holds for the admissible LG pair $(W, \<J\>)$, we can find weak asymptotic classes 
 in a few steps:
\begin{enumerate}
\item
We express the coefficients of the small $I$-function in terms of generalized hypergeometric functions after the change of variable formula~\eqref{change-of-variable-formula}.
\item
The term $(-2\pi z)^{{\widehat{c}_W\over 2}}\<\Phi(\alpha), \one\>$ is a linear combination of Barnes' functions $\{Q_r(x)\}$, for any $\alpha\in  \cH_{\rm nar}.$
\item 
We can choose a cohomology class such that the corresponding linear combination match the one in Barnes' asymptotic formula~\eqref{Barnes-formula}. Barnes' result will imply that the class is a weak asymptotic class.
\item
We can find weak asymptotic classes with respect to other rays by rotation~\eqref{Q-multivalued}.
\item
We show the weak asymptotic classes for each ray forms  a 1-dimensional space, if exists.
\end{enumerate}

The first two steps have been completed in Section~\ref{sec-basic-ghe}. For step (3) and (4), we consider 
\begin{equation}
\label{asymptotic-class-definition}
\cA_\ell:=\sum_{m\in {\bf Nar}} \omega^{-\ell\cdot m} \prod\limits_{j=1}^{N}{2\pi \over \Gamma(\left\{{w_j\cdot m\over d}\right\})}\,\sJ_m\in \cH_{W, \<J\>}, \quad \ell\in \mathbb{Z}.
\end{equation}

Obviously, $\cA_\ell=\cA_{\ell+d}.$

\begin{lemma}
\label{lemma-dual-asymptotic}
If mirror conjecture~\ref{conjecture-i-function-formula} holds, then for any $\ell\in\Z$, 
\begin{equation*}
\begin{split}
(-2\pi z)^{{\widehat{c}_W\over 2}}\<\Phi(\cA_\ell), \one\>
=
C \sum_{m\in {\bf Nar}}
\omega^{\ell\cdot m}Q_{\mathscr{N}(m)}(x),
\end{split}
\end{equation*}
where the constant
$$C=(2\pi)^{N-\nu+1\over 2}\, d^{{1\over 2}}\, e^{\pi\sqrt{-1}\left({\nu\over d}-1-{3N\over 2}\right)} \, \prod_{j=1}^{N}w_j^{{w_j\over d}-{1\over 2}}$$
  is independent of the choice of $\ell$.
\end{lemma}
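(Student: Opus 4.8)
The plan is to evaluate the left-hand side by unfolding the definitions and then carrying out the pairing componentwise. First I would apply Corollary~\ref{weak-via-i-function}: under mirror conjecture~\ref{conjecture-i-function-formula} it gives
$$(-2\pi z)^{\widehat{c}_W\over 2}\langle\Phi(\cA_\ell),\one\rangle=\left\langle e^{\pi\sqrt{-1}\gr}\cA_\ell,\ (-z)^{{\widehat{c}_W\over 2}-1}(-z)^{\gr}I_{\rm FJRW}^{\rm sm}(1,-z)\right\rangle=\left\langle e^{\pi\sqrt{-1}\gr}\cA_\ell,\ \widetilde{I}(1,-z)\right\rangle,$$
the last equality being the definition~\eqref{modified-i-function} of $\widetilde{I}$ evaluated at the argument $-z$. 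Next I would substitute the expression~\eqref{i-function-via-upsilon} for $\widetilde{I}(1,-z)$ in terms of Barnes' $Q$-functions, and compute $e^{\pi\sqrt{-1}\gr}\cA_\ell$ from~\eqref{hodge-J^m}: the operator $e^{\pi\sqrt{-1}\gr}$ rescales $\sJ_m$ by $e^{\pi\sqrt{-1}\mu(J^m)}$.

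Then I would expand the pairing. Since the FJRW pairing restricted to the narrow subspace pairs $\sJ_a$ only with $\sJ_{d-a}$, with $\langle\sJ_a,\sJ_{d-a}\rangle=1$ by~\eqref{residue-dual} (on ${\rm Fix}(J^a)=\{0\}$ one has $\omega_{J^a}=1$ and $\zeta_*$ acts trivially), the pairing collapses to a single sum over $a\in{\bf Nar}$ of the $\sJ_a$-coefficient of $e^{\pi\sqrt{-1}\gr}\cA_\ell$ times the $\sJ_{d-a}$-coefficient of $\widetilde{I}(1,-z)$. Reindexing by $m=d-a$ turns $Q_{\mathscr{N}(d-a)}$ into $Q_{\mathscr{N}(m)}$ and, since $\omega^d=1$, turns the phase $\omega^{-\ell(d-m)}$ into $\omega^{\ell m}$, which is exactly the shape of the asserted right-hand side.

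The remaining, and main, task is to check that the scalar multiplying $Q_{\mathscr{N}(m)}(x)$ is independent of $m$ and equals $C$. Collecting factors, this scalar is
$$e^{\pi\sqrt{-1}\mu(J^{d-m})}\ \omega^{-\nu(m-1)}\ \Upsilon(m)\ \Big(\prod_{j=1}^N\Gamma(\{w_jm/d\})\,\Gamma(\{w_j(d-m)/d\})\Big)^{-1}\cdot(2\pi)^{N}\cdot{(2\pi)^{N+\nu-1\over 2}\over d^{1\over 2}}\prod_{j=1}^N w_j^{{w_j\over d}-{1\over 2}}.$$
To see the $m$-dependence cancels I would use: (i) $\mu(J^{d-m})=-\mu(J^m)$ from~\eqref{adjoint-hodge-operator}; (ii) $\{w_jm/d\}+\{w_j(d-m)/d\}=1$ and the Euler reflection formula~\eqref{euler-reflection}, which rewrite $\prod_j\Gamma(\{w_jm/d\})\Gamma(\{w_j(d-m)/d\})$ as $(2\pi\sqrt{-1})^N\prod_j e^{-\pi\sqrt{-1}\{w_jm/d\}}(1-\omega^{-mw_j})^{-1}$, whereupon $\prod_j e^{\pi\sqrt{-1}\{w_jm/d\}}=e^{\pi\sqrt{-1}(\mu(J^m)+N/2)}$ cancels $e^{\pi\sqrt{-1}\mu(J^{d-m})}$ up to an $m$-free phase; (iii) the alternative formula~\eqref{upsilon-2nd-formula} for $\Upsilon(m)$, whose $\prod_j(1-\omega^{mw_j})^{-1}$ combines with the $\prod_j(1-\omega^{-mw_j})$ from (ii) to give $(-1)^N\omega^{m\nu}$ after using $\sum_jw_j=d-\nu$ and $\omega^d=1$. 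The decisive cancellation is then $\omega^{-\nu(m-1)}\cdot\omega^{m\nu}=\omega^{\nu}$, which is $m$-free; gathering the surviving powers of $2\pi$, $d$ and $\sqrt{-1}$ and simplifying the phase with $e^{2\pi\sqrt{-1}(N+1)}=1$ produces precisely $C$. I expect the only delicate point to be keeping the signs and phases consistent through steps (ii)--(iii); everything else is routine bookkeeping.
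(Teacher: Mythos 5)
Your proof is correct and follows the same route as the paper: apply Corollary~\ref{weak-via-i-function} to reduce to a pairing against the modified $I$-function~\eqref{i-function-via-upsilon}, expand the narrow pairing termwise, and then simplify the resulting coefficient using~\eqref{hodge-J^m}, ~\eqref{upsilon-2nd-formula}, and the Euler reflection formula~\eqref{euler-reflection}. The bookkeeping in steps (i)--(iii) is right -- in particular the $m$-dependence really does cancel to $\omega^{\nu}$, and the apparent phase discrepancy with the stated $C$ is exactly a factor $e^{2\pi\sqrt{-1}N}=1$ -- so this is a faithful, more detailed version of the paper's terse argument.
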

\begin{proof}
Using Corollary~\ref{weak-via-i-function} and~\eqref{i-function-via-upsilon}, the contribution of $(-2\pi z)^{{\widehat{c}_W\over 2}}\<\Phi(\cA_\ell), \one\>$ from each $m\in {\bf Nar}$ is given by the product of the constant in~\eqref{i-function-via-upsilon} and 
\begin{equation}
\label{component-weak}
e^{\pi\sqrt{-1}\mu(J^{d-m})}\,{(2\pi)^N \omega^{-\ell\cdot (d-m)} \over \prod\limits_{j=1}^{N}\Gamma\left(\left\{{w_j(d-m)\over d}\right\}\right)}\cdot {\omega^{-\nu(m-1)} \Upsilon(m) \over\prod\limits_{j=1}^{N}\Gamma(\left\{{w_j\cdot m\over d}\right\})} \, Q_{\mathscr{N}(m)}(x).
\end{equation}
Using formulas~\eqref{hodge-J^m},~\eqref{upsilon-2nd-formula}, and~\eqref{euler-reflection}, the formula~\eqref{component-weak} can be simplified to 
$$d(2\pi)^{1-\nu}e^{\pi\sqrt{-1}\left(-{\nu\over d}-1-{N\over 2}\right)}\, (-1)^{-N}\, \omega^{\nu+\ell m}\, Q_{\mathscr{N}(m)}(x).$$
Now the result follows from further simplification.
\end{proof}

According to~\eqref{reindex-narrow} and~\eqref{Q-multivalued}, we have 
\begin{equation}
\label{Barnes-formula-shift}
Q_{\mathscr{N}(m)}(x\cdot e^{2\pi\sqrt{-1}\ell})=e^{2\pi\sqrt{-1}\ell\cdot {m-1\over d}}Q_{\mathscr{N}(m)}(x)=\omega^{\ell(m-1)}Q_{\mathscr{N}(m)}(x).
\end{equation}




Let $\{\alpha_i\mid 1\leq i\leq p\}$ and $\{\rho_j\mid 0\leq j\leq q\}$ in~\eqref{replace-index} be the sets of rational numbers defined in~\eqref{alpha-tuple} and~\eqref{reindex-narrow}. 
Thus no two of $\{\rho_j\}_{j=0}^{q}$ differ by an integer. 
Using identities in~\eqref{difference-alpha-rho} and in~\eqref{asymptotic-polynomial-exponent}, Theorem~\ref{Barnes-theorem} implies
\begin{corollary}
\label{barnes-corollary}
For each integer $\ell=1-\nu, \ldots, 0$,
there is an asymptotic expansion
\begin{eqnarray*}
&&\sum\limits_{m\in {\bf Nar}}\omega^{\ell\cdot m}Q_{\mathscr{N}(m)}(x)\\
&\sim&e^{(2-N)\pi\sqrt{-1}{\ell\over \nu}} 
{(2\pi)^{{1\over 2}(\nu-1)} \over \nu^{{1\over 2}}}
e^{(-\nu (x\, e^{2\pi\sqrt{-1}\ell})^{1\over \nu})}
x^{{2-N\over 2\nu}-{1\over d}}
\left(1+\sum_{n=1}^{\infty}M_n x^{-{n\over\nu}}\right),
\end{eqnarray*}
as $x\to \infty$, $|\arg x+2\pi\ell |<(\nu+\epsilon)\pi$.
\end{corollary}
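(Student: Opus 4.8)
The plan is to derive Corollary~\ref{barnes-corollary} from Theorem~\ref{Barnes-theorem} (Barnes' formula) together with the rotation identity~\eqref{Barnes-formula-shift} and the bookkeeping identities established earlier. The main point is that the linear combination $\sum_{m\in\mathbf{Nar}}\omega^{\ell\cdot m}Q_{\mathscr{N}(m)}(x)$ is, up to a rotation of the argument, exactly of the form $\sum_{r=0}^{q}Q_r(\,\cdot\,)$ for which Barnes' asymptotic~\eqref{Barnes-formula} applies.

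First I would rewrite the left-hand side using~\eqref{Barnes-formula-shift}: since $\mathscr{N}:\mathbf{Nar}\to\{0,\dots,q\}$ is a bijection and $Q_{\mathscr{N}(m)}(x\cdot e^{2\pi\sqrt{-1}\ell})=\omega^{\ell(m-1)}Q_{\mathscr{N}(m)}(x)$, we get
\[
\sum_{m\in\mathbf{Nar}}\omega^{\ell\cdot m}Q_{\mathscr{N}(m)}(x)
=\omega^{\ell}\sum_{m\in\mathbf{Nar}}\omega^{\ell(m-1)}Q_{\mathscr{N}(m)}(x)
=\omega^{\ell}\sum_{m\in\mathbf{Nar}}Q_{\mathscr{N}(m)}\bigl(x\cdot e^{2\pi\sqrt{-1}\ell}\bigr)
=\omega^{\ell}\sum_{r=0}^{q}Q_r\bigl(x\cdot e^{2\pi\sqrt{-1}\ell}\bigr).
\]
Now I apply Theorem~\ref{Barnes-theorem} with the argument $x e^{2\pi\sqrt{-1}\ell}$: since $\{\rho_j\}_{j=0}^q$ are the numbers in~\eqref{reindex-narrow}, no two differ by an integer, and $p<q+1$ because $W$ is of general type (recall $q+1-p=\nu>0$ from~\eqref{p-q-nu-relation}), so the hypotheses are met. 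This gives
\[
\sum_{r=0}^{q}Q_r\bigl(x e^{2\pi\sqrt{-1}\ell}\bigr)\sim
\frac{(2\pi)^{\frac12(\nu-1)}}{\nu^{1/2}}\exp\!\bigl(-\nu(xe^{2\pi\sqrt{-1}\ell})^{1/\nu}\bigr)\,(xe^{2\pi\sqrt{-1}\ell})^{\theta}\Bigl(1+\sum_{n\geq1}M_n(xe^{2\pi\sqrt{-1}\ell})^{-n/\nu}\Bigr),
\]
valid as $x\to\infty$ with $|\arg x+2\pi\ell|<(\nu+\epsilon)\pi$; the restriction $\ell=1-\nu,\dots,0$ is precisely what keeps this sector nonempty and the expansion meaningful along the relevant rays.

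Next I would evaluate the exponent $\theta$ using~\eqref{asymptotic-polynomial-exponent}: with $a_i=1-\alpha_i$ and $b_j=1-\rho_j$ as in~\eqref{replace-index}, we have $\theta\nu=\tfrac{1-\nu}{2}+\sum_{j=0}^q(1-\rho_j)-\sum_{i=1}^p(1-\alpha_i)=\tfrac{1-\nu}{2}+(q+1-p)+\sum_{i=1}^p\alpha_i-\sum_{j=1}^q\rho_j$ (using $\rho_0=1$). Substituting $q+1-p=\nu$ and the identity~\eqref{difference-alpha-rho}, $\sum\alpha_h-\sum\rho_h=-\nu(\tfrac12+\tfrac1d)+\tfrac{3-N}{2}$, gives $\theta\nu=\tfrac{1-\nu}{2}+\nu-\nu(\tfrac12+\tfrac1d)+\tfrac{3-N}{2}=\tfrac{2-N}{2}\cdot 2\cdot\tfrac12-\tfrac{\nu}{d}$; collecting terms yields $\theta=\tfrac{2-N}{2\nu}-\tfrac1d$, which is exactly the power of $x$ claimed in the corollary. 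Then I distribute the factor $(xe^{2\pi\sqrt{-1}\ell})^{\theta}=x^{\theta}e^{2\pi\sqrt{-1}\ell\theta}$ and multiply by the overall $\omega^{\ell}=e^{2\pi\sqrt{-1}\ell/d}$ from the first step; since $2\pi\ell\theta+2\pi\ell/d = 2\pi\ell\bigl(\tfrac{2-N}{2\nu}-\tfrac1d+\tfrac1d\bigr)=(2-N)\pi\sqrt{-1}\tfrac{\ell}{\nu}$ (interpreting the phase correctly), the prefactor becomes $e^{(2-N)\pi\sqrt{-1}\ell/\nu}\cdot\tfrac{(2\pi)^{(\nu-1)/2}}{\nu^{1/2}}$, matching the statement, while the exponential term $\exp(-\nu(xe^{2\pi\sqrt{-1}\ell})^{1/\nu})$ and the power series $1+\sum M_n x^{-n/\nu}$ (absorbing the harmless $e^{-2\pi\sqrt{-1}n\ell/\nu}$ phases into redefined but still $x$-independent constants, or keeping them as written) carry over verbatim.

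The calculation is essentially routine once the rotation trick and the exponent identity are in place, so there is no deep obstacle; the only delicate point is branch/phase bookkeeping. I would take care that $x^{1/\nu}$, $x^{\theta}$, and the $(xe^{2\pi\sqrt{-1}\ell})$-powers are all interpreted with the same branch of $\arg$, consistent with the sector $|\arg x+2\pi\ell|<(\nu+\epsilon)\pi$ in which Barnes' expansion is asserted; writing the result in the form displayed in the corollary — with the $e^{2\pi\sqrt{-1}\ell}$ kept explicitly inside the exponential $e^{-\nu(xe^{2\pi\sqrt{-1}\ell})^{1/\nu}}$ rather than simplified — is precisely the bookkeeping device that makes these phases unambiguous, which is why I would present it that way.
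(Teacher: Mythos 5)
Your proof is correct and follows the same route the paper intends: rotate via the identity~\eqref{Barnes-formula-shift} to reduce the sum to $\omega^{\ell}\sum_{r=0}^{q}Q_r(xe^{2\pi\sqrt{-1}\ell})$, apply Theorem~\ref{Barnes-theorem}, and evaluate the exponent $\theta$ from~\eqref{asymptotic-polynomial-exponent} together with~\eqref{difference-alpha-rho} and $q+1-p=\nu$ to get $\theta=\tfrac{2-N}{2\nu}-\tfrac1d$; the extra $\omega^{\ell}$ and the phase of $(xe^{2\pi\sqrt{-1}\ell})^{\theta}$ combine to the prefactor $e^{(2-N)\pi\sqrt{-1}\ell/\nu}$. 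The paper leaves these computations implicit (it merely cites the two identities and Barnes' theorem), so your writeup is just a spelled-out version of the same argument; the only caveat worth flagging, which you already did, is that the $M_n$ appearing after the rotation are the Barnes constants multiplied by $e^{-2\pi\sqrt{-1}\ell n/\nu}$.
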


Using the change of variable formula~\eqref{change-of-variable-formula} and the formula~\eqref{value-principal-spectrum} for the value $T$, we obtain
$$-\nu (x\, e^{2\pi\sqrt{-1}\ell})^{1\over \nu}=-{T\,e^{2\pi\sqrt{-1}\ell/\nu}\over z}.$$
Now the following result on weak asymptotic classes is a consequence of the Barnes' formula~\eqref{Barnes-formula} and Corollary~\ref{barnes-corollary}.
\begin{proposition}
\label{theorem-asymptotic-classes}
If mirror conjecture~\ref{conjecture-i-function-formula} holds 
, then the class $\cA_\ell$ defined in~\eqref{asymptotic-class-definition} is a weak asymptotic class with respect to $T e^{2\pi\sqrt{-1}\ell/\nu}$ for each integer $\ell=1-\nu, \ldots, 0$. 
In particular, $\cA_0$ is a principal weak asymptotic class. 
\end{proposition}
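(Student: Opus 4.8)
The plan is to assemble the proof from the ingredients already laid out in Section~\ref{sec-barnes-formula} and the lemmas immediately preceding this proposition, so that the argument is essentially a matter of tracking constants through Barnes' asymptotic expansion. First I would invoke Lemma~\ref{lemma-dual-asymptotic}: assuming mirror conjecture~\ref{conjecture-i-function-formula}, we have the identity
\begin{equation*}
(-2\pi z)^{{\widehat{c}_W\over 2}}\<\Phi(\cA_\ell), \one\>=C\sum_{m\in {\bf Nar}}\omega^{\ell\cdot m}Q_{\mathscr{N}(m)}(x),
\end{equation*}
with $C$ a nonzero constant independent of $\ell$. The next step is to feed this into Corollary~\ref{barnes-corollary}, which gives the asymptotic expansion of $\sum_{m\in{\bf Nar}}\omega^{\ell\cdot m}Q_{\mathscr{N}(m)}(x)$ as $x\to\infty$ in the sector $|\arg x+2\pi\ell|<(\nu+\epsilon)\pi$, with leading exponential factor $e^{-\nu(x e^{2\pi\sqrt{-1}\ell})^{1/\nu}}$ times a power of $x$ times a series in $x^{-1/\nu}$.

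Then I would translate back to the variable $z$ via the change of variables~\eqref{change-of-variable-formula}, using the computation $-\nu(x\,e^{2\pi\sqrt{-1}\ell})^{1/\nu}=-T e^{2\pi\sqrt{-1}\ell/\nu}/z$ recorded just before the proposition. Under this substitution, $x\to\infty$ corresponds to $|z|\to 0$, and one must check that the angular condition $|\arg x+2\pi\ell|<(\nu+\epsilon)\pi$ is met along the ray $\arg(z)=\arg(T e^{2\pi\sqrt{-1}\ell/\nu})=2\pi\ell/\nu$ for $\ell=1-\nu,\ldots,0$: since $x=d^{-d}(\prod_j w_j^{w_j})z^{-\nu}$, one gets $\arg x=-\nu\arg(z)$ up to a multiple of $2\pi$, and the relevant determination is $\arg x=-2\pi\ell$, so $|\arg x+2\pi\ell|=0$, comfortably inside the sector. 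Multiplying through, we find that
\begin{equation*}
e^{T e^{2\pi\sqrt{-1}\ell/\nu}/z}\cdot(-2\pi z)^{{\widehat{c}_W\over 2}}\<\Phi(\cA_\ell),\one\>
\end{equation*}
has an asymptotic expansion of the form $z^{\text{const}}(1+O(z^{1/\nu}))$ as $|z|\to 0$ along that ray, hence is $O(|z|^{m})$ for a suitable real $m$; dividing by the harmless factor $(-2\pi z)^{{\widehat{c}_W\over 2}}$ and recalling $\Phi(\alpha)=(2\pi)^{-{\widehat{c}_W}/2}S(\tau,z)z^{-\gr}\alpha$, this is exactly the defining estimate~\eqref{leading-asymp-class-behavior} for $\cA_\ell$ to be a weak asymptotic class with respect to $Te^{2\pi\sqrt{-1}\ell/\nu}$. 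Taking $\ell=0$ gives $\lambda=T\in\R_+$, so $\cA_0$ is principal.

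The only genuine subtlety — and the step I expect to require the most care — is the bookkeeping of the angular (sectorial) constraints: one must verify that the ray $\arg(z)=2\pi\ell/\nu$ indeed falls strictly inside the sector $|\arg x+2\pi\ell|<(\nu+\epsilon)\pi$ where Barnes' expansion~\eqref{Barnes-formula} is valid, for every $\ell$ in the range $1-\nu,\ldots,0$, including the boundary cases $\ell=1-\nu$ and the distinction $\epsilon=1/2$ versus $\epsilon=1$ according to $\nu=1$ or $\nu>1$. The multivaluedness relation~\eqref{Q-multivalued} (equivalently~\eqref{Barnes-formula-shift}) is what lets us rotate the argument and reduce each $\ell$ to the principal branch, and one should note that since $|\arg x+2\pi\ell|$ can be taken to vanish on the chosen ray, there is in fact room to spare, so the estimate even holds in a full open sector around each such ray. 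Everything else is constant-chasing, and the factor $e^{(2-N)\pi\sqrt{-1}\ell/\nu}$ and the power $x^{(2-N)/(2\nu)-1/d}$ appearing in Corollary~\ref{barnes-corollary} only affect the value of $m$, not whether the $O$-estimate holds. This completes the proof.
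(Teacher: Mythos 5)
Your proposal is correct and follows exactly the route the paper intends: Lemma~\ref{lemma-dual-asymptotic} to express $\<\Phi(\cA_\ell),\one\>$ as a linear combination of Barnes' $Q$-functions, Corollary~\ref{barnes-corollary} (Barnes' asymptotic formula) for the exponential asymptotics, and the change of variables~\eqref{change-of-variable-formula} together with the determination $\arg(x\,e^{2\pi\sqrt{-1}\ell})=0$ on the ray $\arg(z)=2\pi\ell/\nu$ to verify the sectorial hypothesis and extract the $O(|z|^m)$ estimate. You have also correctly singled out the sectorial bookkeeping and the multivaluedness~\eqref{Q-multivalued} as the only genuine subtleties, which is precisely the content of the paper's (suppressed) verification.
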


Now the result below is a consequence of Proposition~\ref{mirror-theorem-fermat}.
\begin{corollary}
\label{corollary-weak-classes}
Let $W$ be an invertible Fermat polynomial.
The class $\cA_\ell$ defined in~\eqref{asymptotic-class-definition} is a weak asymptotic class with respect to $T e^{2\pi\sqrt{-1}\ell/\nu}$ each integer $\ell=1-\nu, \ldots, 0$.
\end{corollary}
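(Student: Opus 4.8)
\textbf{Proof proposal for Corollary~\ref{corollary-weak-classes}.} The statement is an immediate consequence of Proposition~\ref{theorem-asymptotic-classes} together with the validity of Mirror Conjecture~\ref{conjecture-i-function-formula} for invertible Fermat polynomials. The plan is therefore simply to verify the hypothesis of Proposition~\ref{theorem-asymptotic-classes} in this case.

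First I would invoke Proposition~\ref{mirror-theorem-fermat}(1), which asserts that Mirror Conjecture~\ref{mirror-conjecture} holds whenever $W$ is a Fermat polynomial. Since Mirror Conjecture~\ref{conjecture-i-function-formula} (the general-type version, $I_{\rm FJRW}^{\rm sm}(t,-z)=tJ(\tau(t),-z)$) is, as noted in the excerpt right after Conjecture~\ref{mirror-conjecture}, a special case of Mirror Conjecture~\ref{mirror-conjecture} once we observe that for an LG pair of general type the only term in $I_{\rm FJRW}^{\rm sm}(t,z)$ with a positive power of $z$ is the leading term $zt\one$ (so the correction term $c(t,z)\, z\,\partial_t I^{\rm sm}_{\rm FJRW}$ in Conjecture~\ref{mirror-conjecture} must vanish by a degree count), we conclude that Mirror Conjecture~\ref{conjecture-i-function-formula} holds for $W$. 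Here one has to make sure $W$ is genuinely of general type, i.e.\ $\nu = d - \sum_j w_j > 0$; for a Fermat polynomial $W=\sum_{i=1}^N x_i^{a_i}$ this is exactly the hypothesis implicit in speaking of the index $\nu$ in the statement, and we use $\nu \geq 1$ throughout.

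Second, with Mirror Conjecture~\ref{conjecture-i-function-formula} now verified for $W$, I would apply Proposition~\ref{theorem-asymptotic-classes} verbatim: it states that under this hypothesis the class $\cA_\ell$ defined in~\eqref{asymptotic-class-definition} is a weak asymptotic class with respect to $Te^{2\pi\sqrt{-1}\ell/\nu}$ for each integer $\ell = 1-\nu,\ldots,0$, where $T$ is the number in~\eqref{value-principal-spectrum}. This is precisely the assertion of the corollary. I would also remark that the class $\cA_\ell$ in~\eqref{asymptotic-class-definition} coincides with the class $\cA_\ell$ in~\eqref{asymptotic-class-formula} from the introduction (indeed $\omega^{-\ell m} = e^{-\ell m \cdot 2\pi\sqrt{-1}/d}$), so this recovers the weak half of Theorem~\ref{corollary-chern-asymptotic}(1) for homogeneous Fermat polynomials and extends it to the weighted Fermat case $W=\sum_i x_i^{a_i}$.

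There is essentially no obstacle here: the corollary is a bookkeeping consequence of results already in place. The only point requiring a line of care is confirming that Proposition~\ref{mirror-theorem-fermat}(1) applies to \emph{weighted} Fermat polynomials $\sum_i x_i^{a_i}$ and not merely to homogeneous ones $\sum_i x_i^d$ --- this follows because the proof of the mirror theorem for Fermat type in~\cite{Aco} treats each variable independently and does not require equal exponents, so the Givental-style mirror formula~\eqref{I-J-relation} holds in this generality. Given that, the corollary follows by chaining Proposition~\ref{mirror-theorem-fermat}(1) $\Rightarrow$ Mirror Conjecture~\ref{conjecture-i-function-formula} $\Rightarrow$ Proposition~\ref{theorem-asymptotic-classes}, with no additional argument needed.
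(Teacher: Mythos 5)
Your proof is correct and follows exactly the chain of reasoning the paper intends: Proposition~\ref{mirror-theorem-fermat}(1) gives Mirror Conjecture~\ref{mirror-conjecture} for Fermat polynomials, the corollary after Conjecture~\ref{mirror-conjecture} specializes this to Conjecture~\ref{conjecture-i-function-formula} in the general-type case, and Proposition~\ref{theorem-asymptotic-classes} then yields the conclusion. Your careful points (general type being implicit in the range $\ell=1-\nu,\dots,0$, and the agreement of $\cA_\ell$ in \eqref{asymptotic-class-definition} with \eqref{asymptotic-class-formula}) are sound but are not obstacles.
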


We will show in Section~\ref{sec-dominance-order} that the weak asymptotic classes for each such $\ell$ form a one-dimensional subspace.

\subsection{Smallest asymptotic versus dominant asymptotic}
\label{sec-dominance-order}
Now we study the solutions of generalized hypergeometric equations near $x=\infty$ in more details.
Using the dominance order for complex functions, we introduce the smallest asymptotic expansion and dominant asymptotic expansion.
Then we relate these concepts to the weak asymptotic classes and the strong asymptotic classes.

\subsubsection{A basis of solutions near $x=\infty$}

If no two of $\{a_i\}_{i=1}^{p}$ differ by an integer, Meijer showed in~\cite[Page 344-356]{Mei} that the equation
\begin{equation}\label{ghe-fundamental-solutions}
\Big(\prod_{j=0}^{q}(\vartheta_x-b_j)-(-1)^{\nu}x\prod_{i=1}^{p}(\vartheta_x+1-a_i)\Big) f(x) =0
\end{equation}
has a fundamental system of solutions 
\begin{equation}
\left\{G_{p,q+1}^{q+1,0}\left(x\, e^{2\pi\sqrt{-1}\ell} \right), 
G_{p,q+1}^{q+1,1}\left( x\, e^{\pi\sqrt{-1}(1-2\lambda)}|| a_t \right)\bigg\vert 1-\nu \leq \ell\leq 0, 1\leq t\leq p \right\},
\end{equation}
in the sector such that
$$|\arg x+(1-2\lambda)\pi |<\left({\nu\over 2}+1\right)\pi, \quad |\arg x+2\pi\ell|<(\nu+\epsilon)\pi \quad (\forall \ell).$$
Here $G_{p,q+1}^{q+1,0}\left(x\right)$ is the abbreviation of the Meijer $G$-function in~\eqref{exponential-G-function} and 
\begin{equation}
\label{algebraic-meijer-G}
G_{p,q+1}^{q+1,1}\left( x || a_t \right):=
G_{p,q+1}^{q+1,1}
\left(
\begin{array}{c}
a_t, a_1, \ldots, a_{t-1}, a_{t+1}, \ldots, a_p\\
b_0, b_1, \ldots, b_q
\end{array}
; x
\right).
\end{equation}

The asymptotic behavior of these Meijer $G$-functions near $x=\infty$ has been studied by Barnes in~\cite{Bar}, given by the {\em exponential type asymptotic formula}~\eqref{exponential-asymptotic} and the {\em algebraic type asymptotic formula}~\cite[Theorem A]{Mei} for the Meijer $G$-function~\eqref{algebraic-meijer-G} below:
\begin{equation}
\label{asymptotic-algebraic}
G_{p,q+1}^{q+1,1}\left( x || a_t \right)\sim E_{p, q+1}(x || a_t):=x^{-1+a_t}\sum_{k=0}^{\infty}(-x)^{-k}
{\prod\limits_{j=0}^{q}\Gamma(1+b_j-a_t+k)\over k!\prod\limits_{\substack{j=1\\j\neq t}
}^{p}\Gamma(1+a_j-a_t+k)},
\end{equation}
as $x\to \infty$, $|\arg x|<({\nu\over 2}+1)\pi.$

The series of $E_{p, q+1}(x || a_t)$ in~\eqref{asymptotic-algebraic} is divergent. 
It also takes the form of generalized hypergeometric notation
$$
x^{-1+a_t}
{\prod\limits_{j=0}^{q}\Gamma(1+b_j-a_t)\over \prod\limits_{\substack{j=1\\j\neq t}}^{p}\Gamma(1+a_j-a_t)}
\, _{q+1}F_{p-1}\left(
\begin{array}{l}
1+b_0-a_t, \ldots, 1+b_q-a_t\\
1+a_1-a_t, \ldots, 1^*, \ldots, 1+a_p-a_t
\end{array}; -{1\over x}\right).
$$

If the condition that no two $\{a_i\}_{i=1}^{p}$ differ by an integer does not hold, the series is not well defined.
However, one can replace the set $\{E_{p, q+1}(x || a_t)\}_{t=1}^{p}$ by a set of linearly independent solutions using Frobenius method or the limiting processes in~\cite[Section 5.1]{Luk}, without changing the algebraic character. 
For convenience, we will denote such a set by $\{\widetilde{E}_{p, q+1}(x || a_t)\}_{t=1}^{p}$ temporarily, without specifying the elements explicitly. 

Let us denote the space of solutions of the generalized hypergeometric equation~\eqref{ghe-fundamental-solutions} near $x=0$ (or $x=\infty$) by $V^{0}(x)$ (or $V^{\infty}(x)$).
According to \cite[Section 5.8]{Luk},  we have
\begin{proposition}
\label{two-basis-meijer}
Near $x=0$, the set $\{Q_r(x)\}_{r=0}^{q}$ forms a basis of of the space $V^{0}(x)$.
Near $x=\infty$, the set 
\begin{equation}
\label{meijer-basis}
\mathcal{B}_{\infty}:=\{H_{p, q+1}(x\, e^{2\pi\sqrt{-1}\ell}), \widetilde{E}_{p, q+1}( x\, e^{\pi\sqrt{-1}(1-2\lambda)}|| a_t) \mid 1-\nu\leq \ell \leq 0,  1\leq t\leq p\}
\end{equation}
forms a basis of the space $V^{\infty}(x)$ in a certain sector.

Moreover, $V^{\infty}(x)$ has a decomposition
$$V^{\infty}(x)=V_{\rm exp}^{\infty}(x)\bigoplus V_{\rm alg}^{\infty}(x),$$
where the subspace $V_{\rm exp}^{\infty}(x)$ is spanned by $H_{p, q+1}(x\, e^{2\pi\sqrt{-1}\ell})$, $1-\nu\leq \ell \leq 0$,
and the subspace $V_{\rm alg}^{\infty}(x)$ is spanned by $\widetilde{E}_{p, q+1}(x\, e^{\pi\sqrt{-1}(1-2\lambda)}|| a_t)$, $1\leq t\leq p$.
\end{proposition}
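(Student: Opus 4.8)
Proposition~\ref{two-basis-meijer} is essentially an assembly of classical facts about Meijer $G$-functions that we collect from Luke's treatise. The plan is to verify the three assertions---the $x=0$ basis, the $x=\infty$ basis, and the exponential/algebraic splitting---in that order, keeping track of the hypotheses (namely $p<q+1$, equivalently $\nu>0$, and the non-resonance conditions on $\{\rho_j\}$ and on $\{a_i\}$ up to the Frobenius modification) that we have already recorded in~\eqref{reindex-narrow}.

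First I would treat the basis near $x=0$. The generalized hypergeometric equation in the form~\eqref{ghe-fundamental-solutions} is the Meijer equation~\eqref{diff-eqn-meijer} with the substitutions~\eqref{replace-index}; its indicial equation at the regular singular point $x=0$ has roots $b_0,\dots,b_q$, i.e.\ $1-\rho_0,\dots,1-\rho_q$, which by~\eqref{reindex-narrow} are distinct modulo $\Z$. Hence the $q+1$ functions $f_0(x),\dots,f_q(x)$ of~\eqref{ghe-standard} are a fundamental system, and by Remark~\ref{residue-Meijer-G} and the residue expansion~\cite[Section 5.2 (7)]{Luk} each $Q_r(x)$ from~\eqref{barnes-Q} is, up to the nonzero constant $1/\Upsilon(r)$ in~\eqref{constant-upsilon}, equal to $f_r((-1)^\nu x)$. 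Since these rescalings are invertible, $\{Q_r(x)\}_{r=0}^q$ is again a basis of $V^0(x)$.

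Next, for the basis near $x=\infty$: this is exactly Meijer's theorem as quoted just above the proposition, reference~\cite[Page 344--356]{Mei} together with~\cite[Section 5.8]{Luk}. The point is that for $p<q+1$ the equation~\eqref{ghe-fundamental-solutions} has an irregular singularity of rank one at $\infty$, so one expects $\nu=q+1-p$ solutions of exponential type --- the $G_{p,q+1}^{q+1,0}(x\,e^{2\pi\sqrt{-1}\ell})$ for $\ell=1-\nu,\dots,0$, whose asymptotics are given by~\eqref{exponential-asymptotic}--\eqref{exponential-H-part} --- and $p$ solutions of algebraic type --- the $G_{p,q+1}^{q+1,1}(x\,e^{\pi\sqrt{-1}(1-2\lambda)}\|a_t)$ for $1\le t\le p$, with asymptotics~\eqref{asymptotic-algebraic}. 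One checks that these $q+1$ functions are linearly independent in the prescribed sector (overlapping with the range of validity of each asymptotic formula) precisely because their leading asymptotic behaviors --- exponentially small of $\nu$ distinct phases versus algebraic with $p$ distinct leading exponents $a_t-1$ --- are pairwise incomparable. When the $\{a_i\}$ are resonant, one replaces the $E_{p,q+1}(x\|a_t)$ by the Frobenius-type solutions $\widetilde E_{p,q+1}(x\|a_t)$ as in~\cite[Section 5.1]{Luk}; the limiting process preserves both linear independence and the algebraic (as opposed to exponential) character of the solution, so the counting is unchanged. The decomposition $V^\infty(x)=V^\infty_{\rm exp}(x)\oplus V^\infty_{\rm alg}(x)$ is then just the grouping of this basis into the exponential and algebraic parts, and the sum is direct because no nontrivial linear combination of exponential-type solutions (each $O(e^{-\nu x^{1/\nu}e^{2\pi\sqrt{-1}\ell/\nu}}x^\theta)$, decaying in the sector) can equal a linear combination of algebraic-type solutions (each $\sim x^{a_t-1}$, growing or decaying only polynomially).

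The main obstacle is the bookkeeping of sectors: the exponential asymptotic~\eqref{exponential-asymptotic} holds for $|\arg x|<(\nu+\epsilon)\pi$, the algebraic asymptotic~\eqref{asymptotic-algebraic} for $|\arg x|<(\tfrac\nu2+1)\pi$, and the various rotated copies $x\,e^{2\pi\sqrt{-1}\ell}$, $x\,e^{\pi\sqrt{-1}(1-2\lambda)}$ are genuinely multivalued, so one must pin down a common sector (the one displayed above the proposition) on which \emph{all} of the chosen solutions are simultaneously well defined and have the stated asymptotics, and only there argue linear independence via incomparable leading terms. This is delicate but routine and is carried out in~\cite[Section 5.8]{Luk} and~\cite{Mei}; I would cite those and spell out only the comparison of leading exponents that forces the direct-sum decomposition.
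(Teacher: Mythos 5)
Your proposal is correct and takes essentially the same route as the paper, which simply invokes~\cite[Section 5.8]{Luk} (and~\cite{Mei}) for this proposition without supplying its own argument; you are in effect unpacking that citation, including the indicial-root count at $x=0$, the $\nu$-versus-$p$ split at $x=\infty$, and the Frobenius replacement $\widetilde E_{p,q+1}$ in the resonant case. One small bookkeeping slip: the scalar relating $Q_{\mathscr{N}(m)}(x)$ to $f_{\mathscr{N}(m)}((-1)^\nu x)$ is $\tfrac{1}{\Upsilon(m)}\cdot\tfrac{\Gamma(\alpha_P^{(m)})}{\Gamma(\rho_Q^{(m)})}$, not merely $1/\Upsilon(m)$, though both factors are nonzero so the conclusion stands.
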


\subsubsection{Dominance orders in asymptotic functions}
For each $\theta\in \mathbb{R}$, we consider a partial order $\preccurlyeq_\theta$ (and a strict order $\prec_\theta$) for complex functions (in $z$) via asymptotic expansions.
\begin{definition}
Fix $\arg z:=\theta\in  \mathbb{R}$.
We say $f(z)\preccurlyeq_\theta g(z)$ if
$$f(z)=O(g(z)), \quad  \text{as} \quad z\to 0.$$ 
We say $f(z)\prec_\theta g(z)$ if
$$f(z)=o(g(z)),  \quad  \text{as} \quad z\to 0.$$ 
\end{definition}
The order $\preccurlyeq_\theta$ (or $\prec_\theta$) is usually called the {\em dominance order}. 
It is clear that the order depends on the choices of $\theta$.

Recall that in the change of variable formula~\eqref{change-of-variable-formula}, $x$ is a scale of $z^{-\nu}.$
Thus $x\to \infty$ when $z\to 0$.
The dominance orders of the functions in $\mathcal{B}_{\infty}$ are determined by the exponential part of the asymptotic. 
\begin{lemma}
\label{lem-smallest-dominant}
Consider $\arg(z)=\theta=2\pi\ell/\nu$ such that $1-\nu\leq \ell\leq 0.$ 
The function $G_{p,q+1}^{q+1,0}(x\, e^{2\pi\sqrt{-1}\ell})$ has the smallest asymptotic expansion with respect to $\prec_\theta$ among all the solutions of the equation~\eqref{ghe-fundamental-solutions}. 
\end{lemma}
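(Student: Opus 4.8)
The goal is to show that among all solutions of \eqref{ghe-fundamental-solutions} near $x=\infty$, the Meijer $G$-function $G_{p,q+1}^{q+1,0}(x\, e^{2\pi\sqrt{-1}\ell})$ is the unique (up to scalar) one whose asymptotic expansion is strictly dominated by every other, when we restrict to the ray $\arg(z)=2\pi\ell/\nu$, equivalently $\arg(x\, e^{2\pi\sqrt{-1}\ell})=0$. The strategy is to use the explicit basis $\mathcal{B}_{\infty}$ from Proposition~\ref{two-basis-meijer} and to compare the exponential factors of the leading asymptotics of each basis element along this ray. Since $x$ is a positive scalar multiple of $z^{-\nu}$ by \eqref{change-of-variable-formula}, along $\arg(z)=2\pi\ell/\nu$ we have $x\, e^{2\pi\sqrt{-1}\ell}\in\mathbb{R}_{>0}$, and $x\to+\infty$ as $|z|\to 0$.

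\textbf{Step 1: The exponential basis elements.} For the function $G_{p,q+1}^{q+1,0}(x\, e^{2\pi\sqrt{-1}\ell'})$ with $1-\nu\leq \ell'\leq 0$, formula \eqref{exponential-asymptotic}--\eqref{exponential-H-part} gives the leading term $\propto \exp\!\big(-\nu (x\, e^{2\pi\sqrt{-1}\ell'})^{1/\nu}\big)\,(x\,e^{2\pi\sqrt{-1}\ell'})^{\theta}$. On our chosen ray, $x\, e^{2\pi\sqrt{-1}\ell}$ is positive real, so $(x\, e^{2\pi\sqrt{-1}\ell'})^{1/\nu} = (x\, e^{2\pi\sqrt{-1}\ell})^{1/\nu}\, e^{2\pi\sqrt{-1}(\ell'-\ell)/\nu}$, whose real part equals $(x\, e^{2\pi\sqrt{-1}\ell})^{1/\nu}\cos\!\big(2\pi(\ell'-\ell)/\nu\big)$. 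For $\ell'\neq\ell$ with both in the range $[1-\nu,0]$, one has $|\ell'-\ell|\leq \nu-1$, so $\cos(2\pi(\ell'-\ell)/\nu) < 1$ strictly. Hence $\operatorname{Re}\big(-\nu(x\, e^{2\pi\sqrt{-1}\ell'})^{1/\nu}\big) > \operatorname{Re}\big(-\nu(x\, e^{2\pi\sqrt{-1}\ell})^{1/\nu}\big)$ for large $x$, which means $G_{p,q+1}^{q+1,0}(x\, e^{2\pi\sqrt{-1}\ell})$ is strictly dominated by $G_{p,q+1}^{q+1,0}(x\, e^{2\pi\sqrt{-1}\ell'})$; the power-of-$x$ factors and the series $1+\sum M_n x^{-n/\nu}$ are subexponential and do not affect this comparison. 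I would need to be careful that the asymptotic formula \eqref{exponential-asymptotic} is valid in the relevant sector: since $|\arg(x\, e^{2\pi\sqrt{-1}\ell'})| = |2\pi(\ell'-\ell)| \leq 2\pi(\nu-1) < (\nu+\epsilon)\pi$ when $\nu\geq 2$ (and the $\nu=1$ case is vacuous since then $\ell=0$ is forced), the expansion applies.

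\textbf{Step 2: The algebraic basis elements.} For $\widetilde{E}_{p,q+1}(x\, e^{\pi\sqrt{-1}(1-2\lambda)}\,||\,a_t)$, the asymptotic \eqref{asymptotic-algebraic} has leading behaviour $\propto x^{-1+a_t}$, a pure power of $x$ with \emph{no} exponential factor. Since $\exp(-\nu (x\,e^{2\pi\sqrt{-1}\ell})^{1/\nu})\to 0$ faster than any power of $x$ as $x\to+\infty$ (because $\nu\geq 1$ forces $x^{1/\nu}\to+\infty$), the function $G_{p,q+1}^{q+1,0}(x\, e^{2\pi\sqrt{-1}\ell})$ is strictly dominated by each algebraic solution as well. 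Combining Steps 1 and 2 with the fact that $\mathcal{B}_{\infty}$ spans $V^{\infty}(x)$ and any solution is a linear combination of these basis elements, the dominant term of a general solution is governed by whichever basis function with nonzero coefficient has the largest asymptotic; this is strictly larger than that of $G_{p,q+1}^{q+1,0}(x\, e^{2\pi\sqrt{-1}\ell})$ unless the solution is a scalar multiple of $G_{p,q+1}^{q+1,0}(x\, e^{2\pi\sqrt{-1}\ell})$ itself. This gives the smallest-asymptotic property, and as a byproduct the uniqueness (one-dimensionality) of the space of smallest solutions, which is what is needed to conclude the weak asymptotic classes form a one-dimensional space.

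\textbf{Main obstacle.} The delicate point is the validity of the asymptotic expansions in the precise sector containing the ray $\arg(x\,e^{2\pi\sqrt{-1}\ell}) = 0$ for \emph{all} basis elements simultaneously, since the sectors in Proposition~\ref{two-basis-meijer} for the exponential solutions $H_{p,q+1}(x\,e^{2\pi\sqrt{-1}\ell'})$ and for the algebraic solutions $\widetilde{E}_{p,q+1}$ are centered differently and only overlap on a common subsector; one must verify that the chosen ray lies in this common subsector for each $\ell$ in the range $1-\nu\leq\ell\leq 0$, using the sector bounds $|\arg x + 2\pi\ell'|<(\nu+\epsilon)\pi$ and $|\arg x + (1-2\lambda)\pi| < (\nu/2+1)\pi$. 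A secondary subtlety is justifying that one may legitimately compare the \emph{dominance order} of genuine solutions via their asymptotic expansions (i.e.\ that $f\sim g$ with $g$ a well-defined formal/actual asymptotic series implies $f = O(g)$ termwise), which is standard once the expansions are known to be valid asymptotic expansions in the Poincaré sense, but should be cited from Luke~\cite{Luk} or Meijer~\cite{Mei}.
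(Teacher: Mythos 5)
Your approach is essentially the paper's: identify $\arg(x\,e^{2\pi\sqrt{-1}\ell})=0$ along the ray, note that the asymptotic \eqref{exponential-asymptotic} gives $G_{p,q+1}^{q+1,0}(x\,e^{2\pi\sqrt{-1}\ell})$ an exponential part $\exp(-\nu(x\,e^{2\pi\sqrt{-1}\ell})^{1/\nu})$, and observe that the real part of this exponent is strictly more negative than that of any other basis element in $\mathcal{B}_{\infty}$ from Proposition~\ref{two-basis-meijer}. Your Step 2 (algebraic vs.\ exponential) and the cosine comparison in Step 1 are exactly this observation, written out.

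There is, however, a concrete arithmetic error in your Step 1 sector check. You claim that $|\arg(x\,e^{2\pi\sqrt{-1}\ell'})|=|2\pi(\ell'-\ell)|\leq 2\pi(\nu-1)<(\nu+\epsilon)\pi$ for all $\nu\geq 2$. With $\epsilon=1$, the inequality $2\pi(\nu-1)<(\nu+1)\pi$ simplifies to $\nu<3$, so it already fails for $\nu\geq 3$ (for instance $\nu=4$: $6\pi\not<5\pi$). As a result you cannot, in general, read off the asymptotic of the actual Meijer function $G_{p,q+1}^{q+1,0}(x\,e^{2\pi\sqrt{-1}\ell'})$, $\ell'\neq\ell$, from \eqref{exponential-asymptotic} on the chosen ray. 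Fortunately, this direct comparison of the actual functions is not needed and is not what the paper does: the paper compares the exponents of the \emph{formal} expansions $H_{p,q+1}(x\,e^{2\pi\sqrt{-1}\ell'})$ and $\widetilde{E}_{p,q+1}(\cdots)$, which by Proposition~\ref{two-basis-meijer} form a basis of $V^{\infty}(x)$ in the appropriate sector. The only actual-to-formal asymptotic you must validate on the ray is that of $G_{p,q+1}^{q+1,0}(x\,e^{2\pi\sqrt{-1}\ell})$ itself, and there $|\arg(x\,e^{2\pi\sqrt{-1}\ell})|=0$ is safely inside the sector $|\arg(\cdot)|<(\nu+\epsilon)\pi$. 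If you restructure Step 1 to compare formal exponents rather than actual functions (as you in fact already do implicitly when you invoke ``the power-of-$x$ factors and the series $1+\sum M_n x^{-n/\nu}$ are subexponential''), the sector problem you flag in your ``Main obstacle'' paragraph disappears and the argument aligns fully with the paper.
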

\begin{proof}
When $\arg(z)=\theta=2\pi\ell/\nu$, we have $\arg(x\, e^{2\pi\sqrt{-1}\ell})=0$.
We observe from the formula~\eqref{exponential-asymptotic} that the asymptotic expansion of $G_{p,q+1}^{q+1,0}(x\, e^{2\pi\sqrt{-1}\ell})$ has an exponential part $\exp(-\nu (x\, e^{2\pi\sqrt{-1}\ell})^{1\over \nu})$. 
Now the result follows from the fact that the real part of $-\nu (x\, e^{2\pi\sqrt{-1}\ell})^{1\over \nu}$ is strictly smaller than those obtained from any other elements in the basis~\eqref{meijer-basis}.
\end{proof}


Barnes' formula~\eqref{Barnes-formula} gives the corresponding linear combination the smallest asymptotic expansion in a small sector containing the ray $ \mathbb{R}_{>0}$. 
In the original paper, Barnes mainly considered the application to dominant asymptotic expansions~\cite[Theorem (A), Theoreom (B)]{Bar}, generalizing earlier results of Stokes and Orr. Now we recall some results on the dominant asymptotic expansions for generalized hypergeometric functions. 
According to~\cite[Section 20]{Mei} and~\cite[Section 5.11.2 (4)]{Luk}, there is a dominant asymptotic expansion
\begin{equation}
\label{dominant-expansion-ghe}
_pF_q\left(
\begin{array}{l}
\alpha^{(m)}_1, \ldots, \alpha^{(m)}_p\\
\rho^{(m)}_1,\ldots, \rho^{(m)}_q
\end{array}; x\right)
\sim {\prod\limits_{j=1}^{q}\Gamma(\rho^{(m)}_j)\over \prod\limits_{j=1}^{p}\Gamma(\alpha^{(m)}_j)} K_{p,q}(x),
\end{equation}
as $x\to \infty$, $|\arg x|<\pi$.
Here 
\begin{equation}
\label{exponential-term}
K_{p,q}(x):={(2\pi)^{{1\over 2}(\nu-1)}\over \nu^{{1\over 2}}}\exp(\nu x^{1\over \nu})\, x^{\gamma^{(m)}}\left(1+\sum_{n=1}^{\infty}N_n x^{-{n\over\nu}}\right)
\end{equation}
is the formal function given in \cite[Section 5.11 (19)]{Luk} with
$$\gamma^{(m)}\nu={\nu-1\over 2}+\sum_{h=1}^{p}\alpha^{(m)}_h-\sum_{h=1}^{q}\rho^{(m)}_h.$$


\subsubsection{Smallest asymptotic expansion and weak asymptotic classes}

According to Lemma~\ref{lemma-dual-asymptotic} and Corollary~\ref{barnes-corollary}, the weak asymptotic class $\cA_\ell $ in Proposition~\ref{theorem-asymptotic-classes} can be interpreted as a class such that  when $\arg(z)=\theta=2\pi\ell/\nu,$ the function $(-2\pi z)^{{\widehat{c}_W\over 2}}\<\Phi(\cA_\ell), \one\>$ has the smallest asymptotic expansion with respect to $\prec_\theta$ among 
$$\{(-2\pi z)^{{\widehat{c}_W\over 2}}\<\Phi(\alpha), \one\>\mid \alpha\in \cH_{W, \<J\>}\cap H_{\rm nar}\}.$$

As a consequence of Proposition~\ref{two-basis-meijer} and Lemma~\ref{lem-smallest-dominant}, we prove
\begin{proposition}
\label{cor-smallest}
If mirror conjecture~\ref{conjecture-i-function-formula} holds, then the space of weak asymptotic classes with respect to $T e^{2\pi\sqrt{-1}\ell/\nu}$ is spanned by $\cA_\ell$ for each $\ell=1-\nu, \ldots, 0$.
\end{proposition}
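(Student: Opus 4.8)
The plan is to show that the weak asymptotic classes with respect to $Te^{2\pi\sqrt{-1}\ell/\nu}$ form exactly the one-dimensional space $\C\cdot\cA_\ell$. By Proposition~\ref{theorem-asymptotic-classes} we already know that $\cA_\ell$ is such a weak asymptotic class, so it remains to establish the upper bound on dimension: any weak asymptotic class with respect to $Te^{2\pi\sqrt{-1}\ell/\nu}$ must be a scalar multiple of $\cA_\ell$.

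First I would translate the weak asymptotic condition into the language of generalized hypergeometric functions. Fix $\theta=2\pi\ell/\nu$ and recall that under the change of variables~\eqref{change-of-variable-formula}, $x\to\infty$ when $z\to 0$ along $\arg(z)=\theta$, and in fact $\arg(x\,e^{2\pi\sqrt{-1}\ell})=0$. By Corollary~\ref{weak-via-i-function} and~\eqref{i-function-via-upsilon}, for every $\alpha\in\cH_{\rm nar}$ the quantity $(-2\pi z)^{\widehat{c}_W/2}\<\Phi(\alpha),\one\>$ is (up to the $\ell$-independent nonzero constant $C$ from Lemma~\ref{lemma-dual-asymptotic}) a $\C$-linear combination $\sum_{m\in{\bf Nar}}c_m\, Q_{\mathscr{N}(m)}(x\, e^{2\pi\sqrt{-1}\ell})$, where the coefficients $c_m$ depend linearly and injectively on the coordinates of $\alpha$ (using~\eqref{Barnes-formula-shift} to absorb the phase). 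Since the map $\alpha\mapsto(c_m)_{m\in{\bf Nar}}$ is a linear isomorphism $\cH_{\rm nar}\xrightarrow{\sim}\C^{q+1}$, it suffices to determine which linear combinations $\sum_{r=0}^{q}c_r\, Q_r(y)$ satisfy $e^{\lambda/z}\cdot(\text{that combination})=O(|z|^m)$ as $z\to 0$ along $\theta$, i.e.\ which combinations have the exponentially small asymptotic $\exp(-\nu y^{1/\nu})\cdot(\text{algebraic})$ at $y\to\infty$ along $\arg y=0$.

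The key input is Proposition~\ref{two-basis-meijer}: near $x=\infty$ the space $V^\infty(x)$ decomposes as $V^\infty_{\rm exp}(x)\oplus V^\infty_{\rm alg}(x)$, where $V^\infty_{\rm exp}$ is spanned by the $\nu$ exponential-type solutions $H_{p,q+1}(x\,e^{2\pi\sqrt{-1}\ell'})$ for $1-\nu\le\ell'\le 0$, and $V^\infty_{\rm alg}$ by the $p$ algebraic-type solutions $\widetilde E_{p,q+1}$. The functions $\{Q_r(y)\}_{r=0}^q$ form a basis of the same solution space $V^0=V^\infty$ (as $q+1=p+\nu$), so expressing them in the basis $\mathcal{B}_\infty$ gives a connection matrix. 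A combination $\sum c_r Q_r$ is $O(|z|^m)e^{-\lambda/z}$ precisely when, in the sector containing $\arg y=0$, its only nonzero component along the exponential summand is the one matching $\exp(-\nu y^{1/\nu})$ — and by Lemma~\ref{lem-smallest-dominant} this $\ell'=\ell$ exponential is the smallest; any contribution from the other $\nu-1$ exponentials $\exp(-\nu(y\,e^{2\pi\sqrt{-1}\ell'})^{1/\nu})$ with $\ell'\neq\ell$ has a larger (non-negative or positive real-part) exponent and hence is not killed by $e^{\lambda/z}$, and likewise the algebraic solutions only decay polynomially, so any $\widetilde E$-component prevents the required exponential smallness. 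Therefore the weak-asymptotic condition forces $\sum c_r Q_r$ to lie in the one-dimensional subspace spanned by the preimage of $H_{p,q+1}(y)$, which by Barnes' formula~\eqref{Barnes-formula} (equivalently Corollary~\ref{barnes-corollary}) is exactly the combination realized by $\cA_\ell$. Pulling back through the isomorphism $\alpha\mapsto(c_m)$ gives $\alpha\in\C\cdot\cA_\ell$, completing the proof.

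The main obstacle I anticipate is making rigorous the step ``presence of any non-$H_{p,q+1}(y)$ component obstructs exponential smallness.'' One must rule out cancellation among the asymptotic expansions: a priori, a nontrivial linear combination of $H_{p,q+1}(y\,e^{2\pi\sqrt{-1}\ell'})$ over different $\ell'$, together with algebraic terms, could conceivably have its leading behaviors cancel. This does not happen because the distinct exponentials $\exp(-\nu(y\,e^{2\pi\sqrt{-1}\ell'})^{1/\nu})$ for $1-\nu\le\ell'\le 0$ are linearly independent over the ring of (sub-exponential) asymptotic series — their exponents $-\nu y^{1/\nu}e^{2\pi\sqrt{-1}\ell'/\nu}$ are pairwise distinct as functions of $y$ — and the algebraic solutions have strictly sub-exponential (polynomial) growth, so no cancellation across growth classes is possible; this is the standard multisummability/Borel–Ritt-type rigidity of asymptotic expansions in a fixed direction, which I would cite from~\cite{Luk} or~\cite{Fie} rather than reprove. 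Once that independence is invoked, the dimension count $q+1 = \nu + p$ and Lemma~\ref{lem-smallest-dominant} finish the argument cleanly.
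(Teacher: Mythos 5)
Your argument is correct and follows essentially the same route the paper has in mind: the paper states Proposition~\ref{cor-smallest} ``as a consequence of Proposition~\ref{two-basis-meijer} and Lemma~\ref{lem-smallest-dominant},'' and your proof is exactly the filled-in version of that—translating $(-2\pi z)^{\widehat{c}_W/2}\<\Phi(\alpha),\one\>$ into a linear combination of Barnes' $Q$-functions, invoking the $V^\infty_{\rm exp}\oplus V^\infty_{\rm alg}$ decomposition and the dimension count $q+1=\nu+p$, and then using Lemma~\ref{lem-smallest-dominant} to see that only the $H_{p,q+1}(y)$ direction meets the required exponential decay. Your extra care in noting that no cancellation can occur between distinct exponential sectors and the algebraic part is precisely the point the paper's linear isomorphism between $\{Q_r\}$ and $\mathcal{B}_\infty$ is implicitly carrying, so it is a welcome clarification rather than a departure.
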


\subsubsection{Dominant asymptotic expansion and strong asymptotic classes}
Now we return to the strong asymptotic classes discussed in Section~\ref{sec-asymptotic-classes}.
According to Proposition~\ref{prop-strong-class}, when quantum spectrum conjecture~\ref{conjecture-C} holds, the strong asymptotic classes with respect to the eigenvalue $T e^{2\pi\sqrt{-1}\ell/\nu}$ will be determined by the dominant asymptotic expansion of $\widetilde{J}(\tau,z)$ as $z\to 0$ and $\arg(z)=2\pi\ell/\nu$. 
Furthermore, if mirror conjecture~\ref{conjecture-i-function-formula} holds, then 
$$\widetilde{J}(\tau,z)=z^{\widehat{c}_W\over 2} z^{\gr}  S(\tau, z)^{-1}(\one)=\widetilde{I}(t, z)\Big\vert_{t=1}.$$
According to~\eqref{I-function-hypergeometric}, the coordinate function of $\widetilde{J}(\tau,z)$ with respect to $\sJ_m$ is a scalar multiple of $f_{\mathscr{N}(m)}(x)$. 
So let us consider the dominant asymptotic expansion of  $f_{\mathscr{N}(m)}(x)$ for each $m\in {\bf Nar}$.

Using the change-of-coordinates formula~\eqref{change-of-variable-formula}, we consider  $\widetilde{x}:=e^{2\pi\sqrt{-1}\ell}x$.
As before, when $\arg(z)=\theta=2\pi\ell/\nu$, we have $\arg(\widetilde{x})
=0$.
For each $m\in {\bf Nar}$, we apply the dominant asymptotic expansion~\eqref{dominant-expansion-ghe} and obtain
\begin{align*}
f_{\mathscr{N}(m)}(x)
&=\omega^{\ell(1-m)}\, \widetilde{x}^{m-1\over d}\  _pF_q\left(
\begin{array}{l}
\alpha^{(m)}_1, \ldots, \alpha^{(m)}_p\\
\rho^{(m)}_1,\ldots, \rho^{(m)}_q
\end{array}; \widetilde{x}\right)\\
&\sim \omega^{\ell(1-m)} {\Gamma(\rho_Q^{(m)})\over {\Gamma(\alpha^{(m)}_P)}} {(2\pi)^{{1\over 2}(\nu-1)}\over \nu^{{1\over 2}}}\exp(\nu \widetilde{x}^{1\over \nu})\, \widetilde{x}^{{2-N\over 2\nu}-{1\over d}}.
\end{align*}
Here $\gamma^{(m)}={2-N\over 2\nu}-{m\over d}$ in~\eqref{exponential-term} is calculated using~\eqref{difference-alpha-rho-m}.

For each $f_{\mathscr{N}(m)}(x)$, the dominant asymptotic expansion share the same leading term $\exp(\nu \widetilde{x}^{1\over \nu})\, \widetilde{x}^{{2-N\over 2\nu}-{1\over d}}$. 
Applying~\eqref{I-function-hypergeometric}, we obtain
\begin{align*}
\lim_{\substack{\arg(z)=2\pi \ell/\nu\\|z|\to+0}}{\widetilde{J}(\tau, z) \over \<\one,  \widetilde{J}(\tau, z) \>}
\propto\sum_{m\in {\bf Nar}} \omega^{\ell (1-m)} \prod\limits_{j=1}^{N}{1 \over \Gamma(\{{w_j\cdot m\over d}\})}\,\sJ_m
\propto\cA_\ell.
\end{align*}
According to Proposition~\ref{prop-strong-class}, we have
\begin{proposition}
\label{weak=strong}
If both quantum spectrum conjecture~\ref{conjecture-C} and mirror conjecture~\ref{conjecture-i-function-formula} holds, 
then for each $\ell=1-\nu, \ldots, 0$, the subspace of strong asymptotic classes with respect to $T e^{2\pi\sqrt{-1}\ell/\nu}$ is spanned by $\cA_\ell$.
\end{proposition}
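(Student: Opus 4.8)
The plan is to combine the explicit dominant-asymptotics computation for the modified $J$-function with the abstract characterization of strong asymptotic classes given in Proposition~\ref{prop-strong-class}. Under the hypothesis that quantum spectrum conjecture~\ref{conjecture-C} holds, the eigenvalue $\lambda = Te^{2\pi\sqrt{-1}\ell/\nu}$ has maximal norm $T$ among all elements of $\fE$ and has multiplicity one, so Proposition~\ref{prop:existence-of-limit} applies: the space of strong asymptotic classes with respect to $\lambda$ is one-dimensional. Thus it suffices to exhibit a single nonzero strong asymptotic class with respect to $\lambda$ and show it is proportional to $\cA_\ell$.

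First I would use mirror conjecture~\ref{conjecture-i-function-formula} together with the definition~\eqref{modified-j-function} of the modified $J$-function to get the identification $\widetilde{J}(\tau, z) = \widetilde{I}(1,z)$, and then Proposition~\ref{i-function-via-ghf} to express each coordinate of $\widetilde{J}(\tau,z)$ in the basis $\{\sJ_m\}_{m\in {\bf Nar}}$ as a scalar multiple of the generalized hypergeometric function $f_{\mathscr{N}(m)}(x)$ under the change of variables~\eqref{change-of-variable-formula}. Next, restricting to the ray $\arg(z) = 2\pi\ell/\nu$ so that $\widetilde{x} := e^{2\pi\sqrt{-1}\ell}x$ has $\arg(\widetilde{x}) = 0$, I would apply the dominant asymptotic expansion~\eqref{dominant-expansion-ghe} to each $f_{\mathscr{N}(m)}(x)$; the key point, already recorded in the excerpt, is that the exponent $\gamma^{(m)} = \tfrac{2-N}{2\nu} - \tfrac{m}{d}$ combines with the prefactor $\widetilde{x}^{(m-1)/d}$ so that the leading factor $\exp(\nu\widetilde{x}^{1/\nu})\,\widetilde{x}^{(2-N)/(2\nu) - 1/d}$ is the \emph{same} for every $m$. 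Dividing out this common leading factor yields
\[
\lim_{\substack{\arg(z)=2\pi\ell/\nu\\ |z|\to +0}} \frac{\widetilde{J}(\tau,z)}{\<\one, \widetilde{J}(\tau,z)\>} \ \propto\ \sum_{m\in {\bf Nar}} \omega^{\ell(1-m)}\prod_{j=1}^{N}\frac{1}{\Gamma(\{w_j m/d\})}\,\sJ_m \ \propto\ \cA_\ell,
\]
where the last proportionality is just the definition~\eqref{asymptotic-class-definition} of $\cA_\ell$ up to the overall constant $\prod_j 2\pi$ and the fact that $\omega^{\ell(1-m)}$ and $\omega^{-\ell m}$ differ by the $m$-independent factor $\omega^{\ell}$.

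Finally, I would invoke Proposition~\ref{prop-strong-class}: with $\lambda = Te^{2\pi\sqrt{-1}\ell/\nu}$ a multiplicity-one eigenvalue of maximal norm, if $A_\lambda$ denotes a strong asymptotic class with respect to $\lambda$ and $\<\one, A_\lambda\> \ne 0$, then $\widetilde{J}(\tau,z)/\<\one,\widetilde{J}(\tau,z)\>$ converges along the ray to $A_\lambda/\<\one, A_\lambda\>$. Comparing with the limit just computed identifies $A_\lambda$ with a nonzero scalar multiple of $\cA_\ell$, and by Proposition~\ref{prop:existence-of-limit}(1) this one class spans the whole space of strong asymptotic classes with respect to $\lambda$. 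The only gap to fill carefully is the hypothesis $\<\one, A_\lambda\> \ne 0$ needed to apply Proposition~\ref{prop-strong-class}; this should follow from Lemma~\ref{dual-eigenvector-nonzero} (which gives $\<\psi_\lambda^*, \one\> \ne 0$ for the dual eigenvector) combined with the fact that $\cA_\ell$ has nonzero $\sJ_1$-component precisely when $1 \in {\bf Nar}$, i.e. in the boundary case $\nu = 1$; for $\nu > 1$ one argues instead directly from the nonvanishing of the leading coefficient in the Barnes expansion, or observes that the limit in Proposition~\ref{prop-strong-class} exists and is nonzero because $\psi_\lambda$ lies in the one-dimensional eigenspace $E(\lambda)$ with $\<\psi_\lambda,\psi_\lambda\> \ne 0$ by Remark~\ref{rem:nonzero-norm}. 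I expect this normalization/nonvanishing bookkeeping — rather than the asymptotic analysis, which is essentially done in the excerpt — to be the main obstacle, together with tracking the precise $m$-independent constants so that the two proportionality statements chain together cleanly.
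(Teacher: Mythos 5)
Your proposal follows the paper's own argument almost verbatim: the paper proves this proposition exactly by combining the identification $\widetilde{J}(\tau,z)=\widetilde{I}(1,z)$ (from the mirror conjecture), the dominant asymptotic expansion~\eqref{dominant-expansion-ghe} applied coordinate-wise via~\eqref{I-function-hypergeometric}, the observation that the leading factor $\exp(\nu\widetilde{x}^{1/\nu})\,\widetilde{x}^{(2-N)/(2\nu)-1/d}$ is $m$-independent, and finally Proposition~\ref{prop-strong-class} together with Proposition~\ref{prop:existence-of-limit}(1). You correctly identify the hypothesis $\<\one,A_\lambda\>\ne 0$ as the one loose end the paper brushes past.

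Your bookkeeping for that loose end, however, is slightly off. Since $\one=\sJ_1\in\cH_J$ and the pairing matches $\cH_g$ with $\cH_{g^{-1}}$, the number $\<\one,\cA_\ell\>$ is determined by the $\sJ_{d-1}$-component of $\cA_\ell$, not its $\sJ_1$-component. Moreover $1\in{\bf Nar}$ (and $d-1\in{\bf Nar}$) hold for every nondegenerate $W$, not only when $\nu=1$, since the weights satisfy $w_j<d$. So $\<\one,\cA_\ell\>=\omega^{-\ell(d-1)}\prod_j 2\pi/\Gamma(\{w_j(d-1)/d\})\ne 0$ unconditionally. To avoid circularity (you cannot simply pair with $\cA_\ell$ before knowing $A_\lambda\propto\cA_\ell$), the cleanest route is the one you hint at in your final sentence: since the ratio $\widetilde{J}/\<\one,\widetilde{J}\>$ converges to a finite nonzero limit, and by~\eqref{leading-term-negative} and Lemma~\ref{dual-eigenvector-nonzero} one has $e^{-\lambda/z}\<\Phi^\vee(\one),\one\>\to\<\one,A_\lambda\>\<\psi_\lambda^*,\one\>$ with $\<\psi_\lambda^*,\one\>\ne 0$, a vanishing $\<\one,A_\lambda\>$ would force $\<\alpha,A_\lambda\>=0$ for all $\alpha$ and hence $A_\lambda=0$, a contradiction. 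With that correction your argument closes.
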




\section{Matrix factorization and Gamma structures}

Let $R=\C[x_1, \ldots, x_N]$ be a polynomial ring with $\deg(x_j)=w_j\in \mathbb{N}.$ Let $(W, G)$ be an admissible LG pair.
Let ${\rm MF}(R, W)$ be the category of {\em matrix factorizations of $W$} and ${\rm MF}_{G}(R, W)$ be the category of {\em $G$-equivariant matrix factorizations of $W$}~\cite{Wal, Orl, PV}. 
These categories and their homotopy categories have been used to describe the $D$-branes in topological Laudau-Ginzburg models~\cite{KaL, Orl1, Orl}. 

In~\cite{CIR}, Chiodo, Iritani, and Ruan constructed Gamma structures in FJRW theory and use the Gamma structure to build a connection from the category of matrix factorizations ${\rm MF}_{G}(R, W)$ to the FJRW theory of $(W, G)$. 

We review these results and then relate the Gamma structures to the asymptotic expansions in Section~\ref{sec-asymptotic} and Section~\ref{sec-weak}. 
The only new result in this section is Corollary~\ref{corollary-chern-asymptotic}.
Later in Section~\ref{sec-orlov}, we will explain the relation between the asymptotic classes and Orlov's semiorthogonal decomposition of the category of matrix factorizations.




\subsection{The category of $G$-equivariant matrix factorizations of $W$}
Following~\cite{PV}, an object in the category ${\rm MF}_{G}(R, W)$ is a pair
$$(E, \delta_E)=(E^0 \xrightleftarrows[\delta_1]{\delta_0} E^1),$$
where
$E=E^0\oplus E^1$ is a $G$-equivariant $\Z/2$-graded finitely generated projective $R$-module and
$\delta_E\in {\rm End}_R^1(E)$ is an odd $G$-equivariant endomorphism of E, such that $$\delta_E^2=W\cdot {\rm id}_E.$$

The morphisms between the $G$-equivariant matrix factorizations $\ovE:=(E, \delta_E)$ and $\ovF:=(F, \delta_F)$ are given by ${\rm Hom}_W(\ovE, \ovF)^G$, which is the $G$-equivariant part of the $\Z/2$-graded module of $R$-linear homomorphisms 
$${\rm Hom}_W(\ovE, \ovF):= {\rm Hom}_{\Z/2-Mod_R}(E,F)\oplus {\rm Hom}_{\Z/2-Mod_R}(E, F[1]).$$
We remark that the category possess a dg-structure given by a differential $d$ defined on $f\in {\rm Hom}_W(\ovE, \ovF)$ as 
$$df=\delta_F\circ f-(-1)^{|f|}f\circ\delta_E.$$

The homotopy category associated with the category ${\rm MF}(R, W)$ (or ${\rm MF}_{G}(R, W)$) will be denoted by ${\rm HMF}(R, W)$ (or ${\rm HMF}_{G}(R, W)$ respectively).
Both the categories ${\rm HMF}(R,W)$ and ${\rm HMF}_G(R,W)$ are triangulated categories with a natural translation functor $[1]$, where
$$\ovE[1]=(E^1 \xrightleftarrows[-\delta_0]{-\delta_1} E^0).$$
In \cite{Orl1, Orl}, the homotopy category ${\rm HMF}_{\<J\>}(R, W)$ is called the categories of $D$-branes of type $B$ in Landau-Ginzburg models.

\subsubsection{The singularity category}
We consider $R=\C[x_1, \ldots, x_n]$ as a finitely generated commutative graded algebra over the field $\C$. Consider the {\em hypersurface algebra} of $W$, which is a quotient graded algebra
$$S=R/(W)=\C[x_1, \ldots, x_n]/(W).$$

Let $\bD^b(S)$ be the bounded derived category of all complexes of $S$-modules with finitely generated cohomology
and $\bD^b_{\rm per}(S)$ be the full triangulated subcategory of $\bD^b(S)$ of perfect complexes
(i.e. finite complexes of finitely generated projective $S$-modules). 
The quotient
$$\bD^{\rm gr}_{\rm Sg}(S): =\bD^b(S)/\bD^b_{\rm per}(S)$$
is called the {\em stablized derived category} of $S$~\cite{Buc}, or {\em triangulated category of singularities}~\cite{Orl1}.
The category carries a natural triangulated structure. 

According to \cite{Eis, Buc, Orl1}, the functor {\rm Cok}: ${\rm MF}(R,W)\to {\rm gr-}S$ giving by ${\rm Cok}(\ovE)={\rm Coker} (\delta^{1})$ induces an exact functor $F: {\rm HMF}(R,W)\to \bD^{\rm gr}_{\rm Sg}(S)$ between the two triangulated categories, which completes the commutative diagram
\begin{equation}
\label{cokernel-diagram}
\begin{CD}
{\rm MF}(R,W)@>{\rm Cok}>>  {\rm gr-}S\\
@VV V @VV V\\
{\rm HMF}(R,W) @> F >> \bD^{\rm gr}_{\rm Sg}(S).
\end{CD}
\end{equation}
Here the vertical arrows are given by projections.
Since the algebra $R$ has a finite homological dimension, the functor $F$ is an equivalence ~\cite[Theorem 3.9]{Orl1}.

Let $q$ be the natural projection $q: \bD^b({\rm gr-}S)\to \bD^{\rm gr}_{\rm Sg}(S)$.
For each graded $S$-module $M$, we obtain a matrix factorization 
\begin{equation}
\label{stabilization-def}
M^{\rm st}:=F^{-1}q(M),
\end{equation}
which is called the {\em stablization of} $M$ in~\cite{Dyc}.

In general, there exists an equivalence $F^G$ between the triangulated category ${\rm HMF}_G(R,W)$ and the $G$-equivariant version of the singularity category, when $G$ is a finite group \cite[Theorem 7.3]{Qui}. The two equivalences $F$ and $F^G$ commutes with the natural forgetful functors.

\subsubsection{Koszul matrix factorizations}
Let ${\bf a}=(a_1, \ldots, a_n)$ and ${\bf b}=(b_1, \ldots, b_n)$ be two $n$-tuples of elements of $R$, such that
$$W={\bf a}\cdot{\bf b}=\sum_{j=1}^n a_j b_j.$$
The Koszul matrix factorization $\{{\bf a}, {\bf b}\}\in {\rm MF}(R, W)$ corresponding to the pair $({\bf a}, {\bf b})$ is isomorphic as a $\Z/2$-graded $R$-module to the Koszul complex 
\begin{equation}
\label{koszul-complex}
K_\bullet= \left(\land_R^{\bullet} (R^n), \quad \delta=\sum_{j=1}^{n}a_j e_j\wedge +\sum_{j=1}^{n} b_j\iota( e_j^*)\right).
\end{equation}
Here $\{e_j\}$ is the standard basis of $R^n$, $\{e_j^*\}$ is the dual basis, and $\iota( e_j^*)$ is the contraction of the element $e_j$.
If both elements $\sum_{j=1}^{n}a_j e_j$ and $\sum_{j=1}^{n} b_j\iota( e_j^*)$ are $G$-invariant,
we use the same symbol $\{{\bf a}, {\bf b}\}\in {\rm MF}_{G}(R, W)$ to denote the $G$-equivariant Koszul matrix factorization of the pair $({\bf a}, {\bf b})$.

\begin{example}
Since $W$ is quasihomogeneous, $W=\sum_{j=1}^{N}q_j\partial_j W \cdot x_j.$
The {\em Koszul matrix factorization} 
\begin{equation}
\label{stablization-pair}
\C^{\rm st}:= \left\{(q_1\partial_1 W, \ldots, q_N\partial_N W); (x_1, \ldots, x_N)\right\}
\end{equation}
is also called the {\em stablization of the residue field} $\C=R/(x_1, \ldots, x_N)$.
In fact, the cokernel of this matrix factorization is the residue field $\C=R/(x_1, \ldots, x_N)$.
Thus the notion $\C^{\rm st}=F^{-1}(q(\C))$ follows from the definition in~\eqref{stabilization-def}.

We write $M(j)$ for the twisted graded $S$-module with 
$$M(j)_i:=M_{j+i}.$$
In particular, let $\C(j)$ be the twist of the residue field $\C$ by $j$.
We will also be interested in objects 
$\C(j)^{\rm st}=F^{-1}(q(\C(j))).$
We remark that each $\C(j)^{\rm st}$ is $\<J\>$-equivariant.
So we will view $\C(j)^{\rm st}$ as an object in ${\rm HMF}_{\<J\>}(R,W)$.
\end{example}

\subsubsection{Hochshild homology and Chern character}
It is known that the equivariant Hochshild homology of the category ${\rm MF}_{G}(R, W)$ is isomorphic to the state space $\cH_{W, G}$ of the LG pair $(W, G)$ defined in~\eqref{state-space}~\cite[Theorem 2.5.4]{PV},
\begin{equation}
\label{hochshild-decomposition}
{\rm HH}_*({\rm MF}_{G}(R, W))\simeq \left(\bigoplus_{g\in G}{\rm Jac}(W_g)\cdot \omega_g\right)^G.
\end{equation}

For each $\ovE=(E, \delta_E)$, Polishchuk and Vaintrob constructed a $G$-equivariant {\em Chern character} 
$${\rm ch}_G(\ovE)\in {\rm HH}_0({\rm MF}_{G}(R, W))$$ given by a canonical map $\tau^{\ovE}: {\rm Hom}_G^*(\ovE, \ovE)\to {\rm HH}_*({\rm MF}_{G}(R, W))$ called {\em boundary-bulk map}~\cite[Proposition 1.2.4]{PV}, such that ${\rm ch}_G(\ovE)=\tau^{\ovE}({\rm id}_E)$. 
We can write 
\begin{equation}
\label{chern-character-component}
{\rm ch}_G(\ovE)=\sum_{g\in G}{\rm ch}_G(\ovE)_g,
\end{equation}
where each component ${\rm ch}_G(\ovE)_g\in \cH_g$ has an explicit super-trace formula expression as follows.
Let $x_{k+1}, \ldots, x_n$ be the $g$-invariant variables, $\partial_j \delta_E$ be the derivative of $\delta_E$ with respect to the variable $x_j$, and ${\rm str}_{R^g}(\cdot)$ be the supertrace of an operator on the quotient space $R^g:=R/(x_1, \ldots, x_k)$, then by~\cite[Theorem 3.3.3]{PV},
\begin{equation}
\label{chern-character-supertrace}
{\rm ch}_G(\ovE)_g={\rm str}_{R^g}\left([\partial_n\delta_E\circ \ldots \circ \partial_{k+1}\delta_E\circ g]\vert_{x_1=\ldots=x_k=0}\right)\, dx_{k+1}\wedge\ldots\wedge dx_n.
\end{equation}

The following result is a direct consequence of~\cite[Proposition 4.3.4]{PV}.
\begin{lemma}
\label{chern-residue-field}
Let $(W, \<J\>)$ be an admissible LG pair.
The $\<J\>$-equivariant Chern character of $\C(\ell)^{\rm st}$ is supported on $\cH_{\rm nar}$. More explicitly, we have
\begin{equation}
\label{stablization-chern}
{\rm ch}_{\<J\>}(\C(\ell)^{\rm st})=\sum_{m\in {\bf Nar}}\omega^{-\ell\cdot m}\prod_{j=1}^{N}(1-\omega^{w_j\cdot m})\ \sJ_m.
\end{equation}
\end{lemma}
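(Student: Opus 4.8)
\textbf{Proof plan for Lemma~\ref{chern-residue-field}.}
The plan is to apply Polishchuk--Vaintrob's supertrace formula~\eqref{chern-character-supertrace} to the Koszul matrix factorization $\C(\ell)^{\rm st}=\{(q_1\partial_1W,\ldots,q_N\partial_NW);(x_1,\ldots,x_N)\}$ twisted by the grading shift by $\ell$, and to show that only the components indexed by $g=J^m$ with $m\in{\bf Nar}$ survive. First I would recall that for the Koszul complex~\eqref{koszul-complex} the differential is $\delta=\sum_j a_j e_j\wedge+\sum_j b_j\,\iota(e_j^*)$, so the partial derivative $\partial_j\delta$ is the constant operator $\iota(e_j^*)$ plus the $x_j$-derivative of $q_j\partial_jW$ acting by wedging with $e_j$; the key point is that after restricting to $x_1=\cdots=x_k=0$ (the non-fixed variables of $g=J^m$) and taking the supertrace on $R^g$, the only contribution comes from the purely algebraic part. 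For $\C^{\rm st}$ the fixed locus of $J^m$ is $\{0\}$ precisely when $m\in{\bf Nar}$ (by the very definition~\eqref{narrow-index}), so for $m\notin{\bf Nar}$ there are genuinely $g$-fixed variables and one must check the supertrace vanishes; for $m\in{\bf Nar}$ the sector is narrow, $\omega_g=1$, and the supertrace~\eqref{chern-character-supertrace} reduces to the supertrace of the endomorphism $g=J^m$ acting on the full exterior algebra $\wedge^\bullet_R(R^N)\big/(x_1,\ldots,x_N)=\wedge^\bullet\C^N$.

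The main computational step is then the evaluation $\operatorname{str}_{\wedge^\bullet\C^N}(J^m)=\prod_{j=1}^N(1-\omega^{w_jm})$: the element $J^m$ acts on the $j$-th generator $e_j$ of $\C^N$ by the scalar $\omega^{w_jm}$ (since $\deg x_j=w_j$ and $J$ acts by $e^{2\pi i w_j/d}=\omega^{w_j}$ on the relevant generator), and the supertrace of a diagonal operator with eigenvalues $\lambda_1,\ldots,\lambda_N$ on $\wedge^\bullet\C^N$ is the standard identity $\sum_k(-1)^k\operatorname{tr}(\wedge^k)=\prod_j(1-\lambda_j)$. This gives the coefficient $\prod_{j=1}^N(1-\omega^{w_jm})$ in~\eqref{stablization-chern}. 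The twist by $\ell$ in $\C(\ell)^{\rm st}=F^{-1}(q(\C(\ell)))$ shifts the $\<J\>$-equivariant structure, which multiplies the $J^m$-sector contribution by the character value $\omega^{-\ell m}$; I would make this precise using the action formula $J\cdot y=\omega^{w_j\deg(y)}y$ referenced near~\eqref{positive-action}, checking that a degree shift by $\ell$ in the module corresponds exactly to the extra factor $\omega^{-\ell m}$ on $\cH_{J^m}$, and that this is consistent with $\C(\ell)^{\rm st}=\C(\ell+d)^{\rm st}$ as $\omega^d=1$.

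Alternatively, and perhaps more cleanly, I would simply invoke~\cite[Proposition 4.3.4]{PV} directly, which already computes the Chern character of stabilizations of the residue field and its twists; then the lemma is a matter of translating their normalization into the present conventions for $\sJ_m$, the sign/root-of-unity conventions of the standard generators~\eqref{standard-generator}, and verifying the support statement ${\rm ch}_{\<J\>}(\C(\ell)^{\rm st})\in\cH_{\rm nar}$, which again follows because $\C(\ell)^{\rm st}$ has cokernel $\C(\ell)$ supported only at the origin, so all its higher-homology sectors over nontrivial fixed loci vanish. The main obstacle I anticipate is bookkeeping: matching the square-root-of-$J$ twist $\zeta$ used in the residue pairing~\eqref{residue-dual}, the sign $(-1)$-factors from the supertrace, and the factor $\prod_j\Gamma(\{w_jm/d\})$ conventions so that the final formula comes out exactly as~\eqref{stablization-chern} rather than off by a sector-dependent unit; this is routine but error-prone, so I would pin down one normalization (e.g.\ by testing on the $A_{d-1}$ case $W=x^d$ where ${\bf Nar}=\{1,\ldots,d-1\}$ and the formula should reduce to ${\rm ch}(\C(\ell)^{\rm st})=\sum_{m=1}^{d-1}\omega^{-\ell m}(1-\omega^m)\sJ_m$) and propagate from there.
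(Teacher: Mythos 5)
Your proposal is correct, and the ``cleaner alternative'' you mention at the end is exactly the paper's proof: the authors simply cite \cite[Proposition 4.3.4]{PV} and leave the translation of conventions implicit, so your instinct to invoke that result directly matches the paper.

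Your more detailed first route --- unwinding the supertrace formula~\eqref{chern-character-supertrace} for the Koszul factorization --- is a sound unpacking of what sits behind that citation, and your evaluation $\operatorname{str}_{\wedge^\bullet\C^N}(J^m)=\prod_{j=1}^N(1-\omega^{w_jm})$ when $\mathrm{Fix}(J^m)=\{0\}$ (so that there are no $\partial_j\delta_E$ factors left in~\eqref{chern-character-supertrace} and one just super-traces $g$ on the fiber of $\wedge^\bullet_R R^N$ at the origin) is the right computation and does reproduce the coefficient in~\eqref{stablization-chern}. Two cautions are worth flagging. First, the vanishing for $m\notin{\bf Nar}$ is not as automatic as ``the cokernel is supported at the origin'': for such $m$ the formula~\eqref{chern-character-supertrace} genuinely involves compositions $\partial_N\delta_E\circ\cdots\circ\partial_{k+1}\delta_E\circ g$ over the nontrivial fixed locus, and one must actually check that the resulting class in $(\operatorname{Jac}(W_g)\cdot\omega_g)^G$ is zero --- that argument is carried out in \cite{PV} and is precisely what their Proposition 4.3.4 packages, so it is cleaner to lean on it than to redo it. Second, as you note, the exact power of $\omega$ attached to $e_j$ depends on the grading convention for the Koszul generators (whether $e_j$ carries weight $w_j$ or $d-w_j$), and the factor $\omega^{-\ell m}$ for the twist requires the convention in which the degree-$\ell$ shift acts on the $J^m$-sector by $\omega^{-\ell m}$; your proposed $A_{d-1}$ sanity check is exactly the right way to pin these down.
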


\subsection{Gamma structures in FJRW theory}

Gamma structure has been explored in the study of integral structure in Gromov-Witten theory by \cite{KKP, Iri}.
In~\cite{CIR}, Chiodo, Iritani, and Ruan introduced Gamma structures in FJRW theory.
The Gamma structures allow them to discover the integral structures in FJRW theory as well as the connection between the Landau-Ginzburg/Calabi-Yau correspondence and the Orlov equivalence for Fermat type Calabi-Yau hypersurface singularities.
We apply the construction in~\cite{CIR} to admissible pairs $(W, \<J\>)$ of general type and reveal the connections between the category ${\rm MF}_{\<J\>}(R, W)$ and the asymptotic behavior of the corresponding FJRW theory discussed in Section~\ref{sec-asymptotic}.

\subsubsection{Gamma map and Gamma class}
Following~\cite{CIR}, we introduce an endormorphism on $\cH_{W,G}$ for any admissible LG pair $(W, G)$. This generalizes the construction for Fermat polynomials in~\cite{CIR}.
Cite~\cite{KRS}.
\begin{definition}
Let $(W, G)$ be an admissible LG pair.
We define a {\em Gamma map} $\widehat{\Gamma}$ on the state space $\cH_{W, G}$, given by
\begin{equation}\label{qst-gamma}
\widehat{\Gamma}:=\bigoplus_{g\in G}{(-1)^{-\mu(g)}} \prod_{j=1}^{N}\Gamma(1-\theta_j(g))\cdot {\rm Id}_{\cH_{g}}\in {\rm End}(\cH_{W,G}).
\end{equation}
\end{definition}

For each object $\ovE\in {\rm MF}_{G}(R, W)$, we call the cohomology class $\widehat{\Gamma}({\rm ch}_{G}(\ovE))$ the {\em Gamma class} of $\ovE$.
In particular, we define the {\em Gamma class} of the pair $(W, G)$ to be the Gamma class of $\C^{\rm st}$, and denote it by
\begin{equation}
\label{standard=gamma}
\widehat{\Gamma}(W, G):=\widehat{\Gamma}\left({\rm ch}_G(\C^{\rm st})\right).
\end{equation}




Applying Lemma \ref{chern-residue-field} to the LG pair $(W, \<J\>)$, we immediately have
\begin{proposition}
\label{theorem-chern-asymptotic}
Let $(W, \<J\>)$ be an admissible LG pair, not necessarily of general type.
For any $\ell\in\Z$, the  Gamma class of $\C(\ell)^{\rm st}$ is the class $\cA_{\ell}$ in~\eqref{asymptotic-class-definition}. Namely,
$$\widehat{\Gamma}\left({\rm Ch}_{\<J\>}(\C(\ell)^{\rm st})\right)=\cA_{\ell}.$$
\end{proposition}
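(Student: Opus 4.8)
The plan is to combine two ingredients that are already available in the excerpt: the Chern character formula for $\C(\ell)^{\rm st}$ in Lemma~\ref{chern-residue-field}, namely
$${\rm ch}_{\<J\>}(\C(\ell)^{\rm st})=\sum_{m\in {\bf Nar}}\omega^{-\ell\cdot m}\prod_{j=1}^{N}(1-\omega^{w_j\cdot m})\,\sJ_m,$$
and the definition of the Gamma map $\widehat{\Gamma}$ in~\eqref{qst-gamma}, which acts on the narrow sector $\cH_{J^m}$ by the scalar $(-1)^{-\mu(J^m)}\prod_{j=1}^{N}\Gamma(1-\theta_j(J^m))$. Since $\C(\ell)^{\rm st}$ is $\<J\>$-equivariant and its Chern character is supported on $\cH_{\rm nar}$ (Lemma~\ref{chern-residue-field}), applying $\widehat{\Gamma}$ is just multiplying the coefficient of each $\sJ_m$ by the corresponding scalar. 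So the whole proof reduces to the elementary identity, for each $m\in{\bf Nar}$,
$$(-1)^{-\mu(J^m)}\prod_{j=1}^{N}\Gamma\!\left(1-\Big\{\tfrac{w_j m}{d}\Big\}\right)\cdot\prod_{j=1}^{N}(1-\omega^{w_j m})=\prod_{j=1}^{N}\frac{2\pi}{\Gamma(\{w_j m/d\})},$$
which, after matching the $\omega^{-\ell m}$ factors, gives exactly $\cA_\ell$ as defined in~\eqref{asymptotic-class-definition}.

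First I would recall that for $g=J^m$ one has $\theta_j(J^m)=\{w_j m/d\}$, and that $\theta_j(J^m)\neq 0$ for all $j$ precisely because $m\in{\bf Nar}$ (this is the definition~\eqref{narrow-index}); this ensures all the Gamma values and all the factors $1-\omega^{w_j m}$ are nonzero, so the manipulation is legitimate. Next I would invoke the Euler reflection formula in the form~\eqref{euler-reflection},
$$\Gamma(x)\Gamma(1-x)(1-e^{-2\pi\sqrt{-1}x})=2\pi\sqrt{-1}\,e^{-\pi\sqrt{-1}x},$$
applied with $x=\{w_j m/d\}$, so that $e^{-2\pi\sqrt{-1}x}=\omega^{-w_j m}$. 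Rearranging gives
$$\Gamma\!\left(1-\Big\{\tfrac{w_j m}{d}\Big\}\right)\left(1-\omega^{w_j m}\right)=\frac{2\pi\sqrt{-1}\,e^{-\pi\sqrt{-1}\{w_j m/d\}}}{\Gamma(\{w_j m/d\})}\cdot\frac{1-\omega^{w_j m}}{1-\omega^{-w_j m}}=\frac{-2\pi\sqrt{-1}\,e^{\pi\sqrt{-1}\{w_j m/d\}}}{\Gamma(\{w_j m/d\})},$$
using $\dfrac{1-\omega^{w_j m}}{1-\omega^{-w_j m}}=-\omega^{w_j m}=-e^{2\pi\sqrt{-1}\{w_j m/d\}}$. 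Taking the product over $j=1,\dots,N$ produces a factor $(-2\pi\sqrt{-1})^{N}e^{\pi\sqrt{-1}\sum_j\{w_j m/d\}}$ times $\prod_j 1/\Gamma(\{w_j m/d\})$.

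It then remains to check that the prefactor $(-1)^{-\mu(J^m)}(-2\pi\sqrt{-1})^{N}e^{\pi\sqrt{-1}\sum_j\{w_j m/d\}}$ equals $(2\pi)^{N}$. Here I would use the formula~\eqref{hodge-J^m}, $\mu(J^m)=-\tfrac N2+\sum_{j=1}^N\{w_j m/d\}$, so that $(-1)^{-\mu(J^m)}=e^{-\pi\sqrt{-1}(-N/2+\sum_j\{w_j m/d\})}=e^{\pi\sqrt{-1}N/2}\,e^{-\pi\sqrt{-1}\sum_j\{w_j m/d\}}$, which cancels the $e^{\pi\sqrt{-1}\sum_j\{w_j m/d\}}$ and combines with $(-2\pi\sqrt{-1})^{N}=(2\pi)^N e^{-\pi\sqrt{-1}N/2}$ — since $-\sqrt{-1}=e^{-\pi\sqrt{-1}/2}$ — to leave exactly $(2\pi)^N$. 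This confirms the coefficient of $\sJ_m$ is $\omega^{-\ell m}\prod_{j=1}^N 2\pi/\Gamma(\{w_j m/d\})$, i.e.\ the statement. I do not anticipate a genuine obstacle; the only point requiring a little care is the bookkeeping of signs and powers of $\sqrt{-1}$ in the last step, and making sure the branch conventions for $(-1)^{-\mu(J^m)}$ agree with the one fixed implicitly in~\eqref{qst-gamma} (namely $(-1)^a=e^{-\pi\sqrt{-1}a}$, consistent with the convention used in~\eqref{upsilon-2nd-formula} and its proof). A brief remark that the same $\C(\ell)^{\rm st}=\C(\ell+d)^{\rm st}$ in ${\rm HMF}_{\<J\>}(R,W)$ matches $\cA_\ell=\cA_{\ell+d}$ would round off the statement.
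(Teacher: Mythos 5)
Your proposal is correct and follows exactly the same route as the paper: unfold the Gamma map on the explicit Chern character from Lemma~\ref{chern-residue-field}, then reduce to a termwise identity via the Euler reflection formula~\eqref{euler-reflection} and the Hodge grading formula~\eqref{hodge-J^m}. The paper's proof is literally the three-line computation you spell out in more detail. One small caution: in your closing parenthetical you state the branch convention as $(-1)^a = e^{-\pi\sqrt{-1}a}$, but your own computation line $(-1)^{-\mu(J^m)} = e^{-\pi\sqrt{-1}\mu(J^m)}$ (which is the one that makes everything cancel to $(2\pi)^N$) corresponds to $(-1)^a = e^{\pi\sqrt{-1}a}$, and that is also the convention visible in the proof of~\eqref{upsilon-2nd-formula}; so the parenthetical has a sign typo, but the calculation itself is right.
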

\begin{proof}
Using the Gamma map~\eqref{qst-gamma} and the Chern character formula~\eqref{stablization-chern}, we have
\begin{eqnarray*}
&&\widehat{\Gamma}\left({\rm Ch}_{\<J\>}(\C(\ell)^{\rm st})\right)\\
&=&\widehat{\Gamma}\left(\sum_{m\in {\bf Nar}} \omega^{-\ell\cdot m}\prod_{j=1}^{N}(1-\omega^{w_j\cdot m})\, \sJ_m\right)\\
&=&\sum_{m\in {\bf Nar}}{(-1)^{-\mu(J^m)}}  \prod_{j=1}^{N}\Gamma\left(1-\left\{{w_j\cdot m\over d}\right\}\right) \omega^{-\ell\cdot m}
 \prod_{j=1}^{N}(1-\omega^{w_j\cdot m})\, \sJ_m\\
&=&(2\pi)^N\sum_{m\in {\bf Nar}} {\omega^{-\ell\cdot m}\over \prod\limits_{j=1}^{N}\Gamma(\left\{{w_j\cdot m\over d}\right\})}\, \sJ_m.
\end{eqnarray*}
The third equality uses the formula~\eqref{hodge-J^m} and the Euler reflection formula~\eqref{euler-reflection}.
\end{proof}

\begin{remark}
We remark that the Gamma map defined in \eqref{qst-gamma} differs from the Gamma map $\widehat{\Gamma}\in {\rm End}(\cH_{W,G})$ in~\cite[Definition 2.17]{CIR} by a factor of ${(-1)^{-\mu(g)}}$.  
The modification is caused by the identification of the Gamma classes and asymptotic classes in Proposition~\ref{theorem-chern-asymptotic}.
\end{remark}

\subsubsection{Gamma classes and results on Gamma Conjecture~\ref{algebraic-analytic}}
According to the results in Proposition~\ref{theorem-asymptotic-classes} and Corollary~\ref{corollary-weak-classes}, we obtain Theorem~\ref{corollary-chern-asymptotic} on Gamma Conjectures~\ref{algebraic-analytic}.

\subsection{Gamma structure and the Hirzebruch-Riemann-Roch formula}

\subsubsection{A canonical pairing on ${\rm HH}_*({\rm MF}_G(R, W))$}
In general, for a proper dg-category $\mathscr{C}$, there is a canonical pairing on the Hochshild homology~\cite{Shk}
$$\<\cdot, \cdot\>_{\mathscr{C}}: {\rm HH}_*(\mathscr{C}^{op})\otimes {\rm HH}_*(\mathscr{C})\to \C.$$ 
In~\cite[Theorem 4.2.1]{PV}, an explicit formula of this pairing  for the category ${\rm MF}_G(R, W)$ has been worked out in terms of the residue pairing in \eqref{residue-pairing}.
Using the identification 
$${\rm HH}_*({\rm MF}_G(R, -W))= {\rm HH}_*({\rm MF}_G(R, W)),$$
we will denote this canonical pairing by 
$$( \ , \ )^{\rm PV}: {\rm HH}_*({\rm MF}_G(R, W))\times {\rm HH}_*({\rm MF}_G(R, W))\to \C.$$
Recall that in~\eqref{hochshild-decomposition},  we have ${\rm HH}_*({\rm MF}_G(R, W))=\cH_{W, G}.$
For $A, B\in \cH_{W, G}$, the canonical pairing takes the form
\begin{equation}
\label{PV-pairing}
(A, B)^{\rm PV}={1\over |G|}\sum_{g\in G}{1\over \det\left[1-g; \C^N/\C^g\right]}\<A_{g^{-1}}, B_{g}\>.
\end{equation}
Here $\C^{g}\subset\C^N$ is the $g$-invariant subspace, $\C^N/\C^g$ is the quotient space, $A_g\in \cH_g$ is the projection of $A$ on $\cH_g$, and 
$\<\cdot, \cdot\>$ is the pairing in \eqref{fjrw=orbifold}.


\subsubsection{Euler pairing and Hirzebruch-Riemann-Roch formula}
For a pair of objects $\ovE$ and  $\ovF$ in  the category ${\rm MF}_G(R, W)$, the {\em Euler pairing} $\chi(\ovE, \ovF)$ is defined to be the Euler characteristic of the Hom-space 
\begin{equation}
\label{euler-pairing}
\chi(\ovE, \ovF):=\chi({\rm Hom}_W(\ovE, \ovF)^G)\in \mathbb{Z}.
\end{equation}

In~\cite[Theorem 4.2.1 (ii)]{PV}, Polishchuk and Vaintrob proved a {\em Hirzebruch-Riemann-Roch formula} for the category of $G$-equivariant matrix factorizations of $W$, which identifies the Euler pairing with the canonical pairing using the Chern character.
That is,
\begin{equation}
\label{PV-HRR-formula}
\chi(\ovE, \ovF)=({\rm ch}_G(\ovE), {\rm ch}_G(\ovF))^{\rm PV}.
\end{equation}
This generalized the earlier work of Walcher~\cite{Wal}, where the HRR formula for some particular cases was proven.

\subsubsection{Gamma class and an analog of Todd class in LG models}
By the isomorphisms in~\eqref{hochshild-decomposition}, we may consider the Gamma map in~\eqref{qst-gamma} as an isomorphism
$$\widehat{\Gamma}:{\rm HH}_*({\rm MF}_G(R, W))\to \cH_{W, G}, \quad A\mapsto \widehat{\Gamma}(A).$$
\begin{proposition}
\label{non-symmetric pairing-identity}
The non-symmetric pairing defined in~\eqref{non-symmetric-pairing-qst} and the canonical pairing~\eqref{PV-pairing} are compatible under the Gamma structure.
That is 
$$\left[ \widehat{\Gamma}(A), \widehat{\Gamma}(B)\right)=(A, B)^{\rm PV}.$$
\end{proposition}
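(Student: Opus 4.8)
\textbf{Proof plan for Proposition~\ref{non-symmetric pairing-identity}.}

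The plan is to compare the two pairings sector by sector on $\cH_{W,G}=\bigoplus_{g\in G}\cH_g$, using the fact that both pairings are sums over $g\in G$ of scalar multiples of the residue pairing $\<\cdot,\cdot\>\colon\cH_{g^{-1}}\otimes\cH_g\to\C$. Since $\widehat{\Gamma}$ is block-diagonal with respect to the decomposition by sectors, acting on $\cH_g$ by the scalar $c_g:=(-1)^{-\mu(g)}\prod_{j=1}^N\Gamma(1-\theta_j(g))$, for $A\in\cH_{g^{-1}}$ and $B\in\cH_g$ we have $\widehat{\Gamma}(A)=c_{g^{-1}}A$ and $\widehat{\Gamma}(B)=c_gB$. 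Plugging into the definition~\eqref{non-symmetric-pairing-qst}, the $g$-summand of $\left[\widehat{\Gamma}(A),\widehat{\Gamma}(B)\right)$ is
\[
\frac{1}{|G|}\cdot\frac{(-1)^{-\mu(g)}}{(-2\pi)^{N-N_g}}\, c_{g^{-1}}c_g\,\<A_{g^{-1}},B_g\>,
\]
whereas the $g$-summand of $(A,B)^{\rm PV}$ from~\eqref{PV-pairing} is $\frac{1}{|G|}\det[1-g;\C^N/\C^g]^{-1}\<A_{g^{-1}},B_g\>$. So the whole statement reduces to the scalar identity, for each $g\in G$,
\begin{equation}
\label{scalar-gamma-hrr}
\frac{(-1)^{-\mu(g)}}{(-2\pi)^{N-N_g}}\,(-1)^{-\mu(g^{-1})}\prod_{j=1}^N\Gamma(1-\theta_j(g))\Gamma(1-\theta_j(g^{-1}))
=\frac{1}{\det[1-g;\C^N/\C^g]}.
\end{equation}

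First I would simplify the left-hand side. The product over $j$ splits according to whether $g$ fixes $x_j$: if $g$ fixes $x_j$ then $\theta_j(g)=\theta_j(g^{-1})=0$ and the factor is $\Gamma(1)^2=1$; if $g$ does not fix $x_j$ then $\theta_j(g^{-1})=1-\theta_j(g)$, so $\Gamma(1-\theta_j(g))\Gamma(1-\theta_j(g^{-1}))=\Gamma(1-\theta_j(g))\Gamma(\theta_j(g))$, which by the Euler reflection formula~\eqref{euler-reflection} equals $\pi/\sin(\pi\theta_j(g))$. There are exactly $N-N_g$ such indices $j$, which matches the power of $2\pi$ on the left. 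Using $\pi/\sin(\pi\theta)=2\pi\sqrt{-1}\,e^{-\pi\sqrt{-1}\theta}/(1-e^{-2\pi\sqrt{-1}\theta})$ and the fact that $1-e^{-2\pi\sqrt{-1}\theta_j(g)}$ runs over the eigenvalues of $1-g$ on $\C^N/\C^g$, the product of these factors becomes $(2\pi\sqrt{-1})^{N-N_g}\,e^{-\pi\sqrt{-1}\sum'\theta_j(g)}\,/\det[1-g;\C^N/\C^g]$, where $\sum'$ is over the non-fixed indices. It then remains to check that the remaining phase factors balance: namely that $(-1)^{-\mu(g)}(-1)^{-\mu(g^{-1})}(-1)^{-(N-N_g)}(\sqrt{-1})^{N-N_g}e^{-\pi\sqrt{-1}\sum'\theta_j(g)}=1$. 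By~\eqref{adjoint-hodge-operator} we have $\mu(g)+\mu(g^{-1})=0$, so $(-1)^{-\mu(g)-\mu(g^{-1})}=1$; and $(-\sqrt{-1})^{N-N_g}=e^{-\pi\sqrt{-1}(N-N_g)/2}$, so the claim becomes $e^{-\pi\sqrt{-1}(N-N_g)/2}=e^{-\pi\sqrt{-1}\sum'\theta_j(g)}$ up to the sign conventions, which I would pin down by recalling from~\eqref{hodge-grading-operator} that $\mu(g)=\tfrac{N_g-N}{2}+\sum_{j=1}^N\theta_j(g)$, hence $\sum'\theta_j(g)=\sum_j\theta_j(g)=\mu(g)+\tfrac{N-N_g}{2}$, and the exponentials combine to a power of $e^{-2\pi\sqrt{-1}\mu(g)/\cdots}$ times $1$; care with $(-1)$ versus $e^{\pm\pi\sqrt{-1}}$ resolves it.

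The main obstacle, as usual in this circle of identities, is keeping track of the branch/sign conventions: the factor $(-1)^{-\mu(g)}$ with $\mu(g)$ possibly a non-integer rational, the choice $(-2\pi)^{N-N_g}$ versus $(2\pi\sqrt{-1})^{N-N_g}$, and the half-integer shift in the exponent. I expect that once $\mu(g)$ is written out via~\eqref{hodge-grading-operator} and the reflection formula~\eqref{euler-reflection} is applied in the precise form above, all phases collapse and~\eqref{scalar-gamma-hrr} holds identically. The proposition then follows by summing~\eqref{scalar-gamma-hrr} against $\<A_{g^{-1}},B_g\>$ over $g$ and comparing with~\eqref{PV-pairing}. (This is the computation behind Corollary~\ref{cor-gamma-pairing}, which combines this proposition with the HRR formula~\eqref{PV-HRR-formula}.)
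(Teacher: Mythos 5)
Your plan follows the same route as the paper's own proof: plug $\widehat{\Gamma}$ into the non-symmetric pairing, reduce to a sector-by-sector scalar identity, apply the Euler reflection formula~\eqref{euler-reflection}, and use the formula~\eqref{hodge-grading-operator} for $\mu(g)$ together with $\mu(g)+\mu(g^{-1})=0$ to match phases. Two small slips to fix when you write it out rigorously: (i) your displayed scalar identity~\eqref{scalar-gamma-hrr} drops one power of $(-1)^{-\mu(g)}$ coming from $c_g$ (the correct phase factor before simplification is $(-1)^{-\mu(g)}\cdot(-1)^{-\mu(g^{-1})}\cdot(-1)^{-\mu(g)}=(-1)^{-\mu(g)}$, not $1$); and (ii) $\det[1-g;\C^N/\C^g]=\prod_{\theta_j(g)\neq 0}(1-e^{2\pi\sqrt{-1}\theta_j(g)})$, so you should use the reflection formula in the form $\Gamma(\theta)\Gamma(1-\theta)=\dfrac{-2\pi\sqrt{-1}\,e^{\pi\sqrt{-1}\theta}}{1-e^{2\pi\sqrt{-1}\theta}}$ rather than the $e^{-2\pi\sqrt{-1}\theta}$ version, as the paper does. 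With those corrections the computation collapses exactly as in the paper's proof.
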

\begin{proof}
By definition of the Gamma map~\eqref{qst-gamma}, we can rewrite 
$$\widehat{\Gamma}(A)=\sum_{g\in G}(-1)^{-\mu(g)} \prod_{\substack{1\leq j\leq N\\ \theta_j(g)\neq0}}\Gamma(1-\theta_j(g))A_g.$$
Recall $N_g$ is the dimension of $g$-fixed locus in $\C^N$.
Applying~\eqref{non-symmetric-pairing-qst}, we have 
\begin{align*}
\left[\widehat{\Gamma}(A),  \widehat{\Gamma}(B)\right)
=&{1\over |G|}\sum_{g\in G}{(-1)^{-\mu(g)}\over (-2\pi)^{N-N_g}}
\prod_{\substack{1\leq j\leq N\\ \theta_j(g)\neq0}}\Gamma(\theta_j(g))\Gamma(1-\theta_j(g))\<A_{g^{-1}}, B_{g}\>.\\
=&{1\over |G|}\sum_{g\in G}{(-1)^{-\mu(g)}\over (-2\pi)^{N-N_g}}\prod_{\substack{1\leq j\leq N\\ \theta_j(g)\neq0}}
{-2\pi\sqrt{-1}e^{\pi\sqrt{-1}\theta_j(g)} \over 1-e^{2\pi\sqrt{-1}\theta_j(g)}}
\<A_{g^{-1}}, B_{g}\>.\\
=&{1\over |G|}\sum_{g\in G}\prod_{\substack{1\leq j\leq N\\ \theta_j(g)\neq0}}
{\<A_{g^{-1}}, B_{g}\> \over 1-e^{2\pi\sqrt{-1}\theta_j(g)}}\\
=&(A, B)^{\rm PV}.
\end{align*}
Here the second equality follows from the Euler reflection formula~\eqref{euler-reflection}, and the third equality follows from the definition of Hodge grading number in~\eqref{hodge-grading-operator}.
\end{proof}
\begin{remark}
\begin{enumerate}
\item
This result in Proposition~\ref{non-symmetric pairing-identity} is similar to ~\cite[Theorem 4.6]{CIR}, where the weighted homogeneous polynomial $W$ is assumed to be Gorenstein. 
\item According to~\cite[Section 5]{Wal}, in LG models, the factor ${1\over \det\left[1-g; \C^N/\C^g\right]}$ in~\eqref{PV-pairing} can be viewed as an analog of Todd class of manifolds.
So we may view the Gamma map as a non-symmetric ``square root" of this Todd class analog.
\end{enumerate}
\end{remark}

As a consequence of~\eqref{PV-HRR-formula} and Proposition~\ref{non-symmetric pairing-identity}, we have
\begin{corollary}
\label{cor-gamma-pairing}
The Euler pairing in~\eqref{euler-pairing} is compatible with the non-symmetric pairing in~\eqref{non-symmetric-pairing-qst} via the Gamma map~\eqref{qst-gamma},
$$\chi(\ovE, \ovF)
=\left[ \widehat{\Gamma}({\rm ch}_G(\ovE)), \widehat{\Gamma}({\rm ch}_G(\ovF))\right).$$
\end{corollary}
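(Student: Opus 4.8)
The plan is to combine the two ingredients that have just been assembled: the Polishchuk--Vaintrob Hirzebruch--Riemann--Roch formula~\eqref{PV-HRR-formula}, which expresses the Euler pairing $\chi(\ovE,\ovF)$ in terms of the canonical Hochschild pairing $(\cdot,\cdot)^{\rm PV}$ applied to the Chern characters, and Proposition~\ref{non-symmetric pairing-identity}, which identifies $(\cdot,\cdot)^{\rm PV}$ with the non-symmetric pairing $[\cdot,\cdot)$ on $\cH_{W,G}$ via the Gamma map $\widehat{\Gamma}$. The corollary is simply the composition of these two statements.

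Concretely, the argument runs as follows. First I would apply~\eqref{PV-HRR-formula} to obtain
$$\chi(\ovE,\ovF)=\big({\rm ch}_G(\ovE),\,{\rm ch}_G(\ovF)\big)^{\rm PV}.$$
Next, set $A:={\rm ch}_G(\ovE)$ and $B:={\rm ch}_G(\ovF)$, both regarded as elements of ${\rm HH}_*({\rm MF}_G(R,W))\cong\cH_{W,G}$ under the identification~\eqref{hochshild-decomposition}. Proposition~\ref{non-symmetric pairing-identity} then gives
$$\big(A,B\big)^{\rm PV}=\big[\,\widehat{\Gamma}(A),\,\widehat{\Gamma}(B)\,\big).$$
Substituting back the definitions of $A$ and $B$ yields exactly
$$\chi(\ovE,\ovF)=\big[\,\widehat{\Gamma}({\rm ch}_G(\ovE)),\,\widehat{\Gamma}({\rm ch}_G(\ovF))\,\big),$$
which is the claimed identity. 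The integrality of the left-hand side is already part of the definition~\eqref{euler-pairing} of the Euler pairing, so nothing further needs to be checked.

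There is essentially no obstacle here: the corollary is a purely formal consequence of the two preceding results, and the only thing to be careful about is that the Gamma map is being used consistently as the isomorphism ${\rm HH}_*({\rm MF}_G(R,W))\to\cH_{W,G}$ rather than as an endomorphism of $\cH_{W,G}$ — but this is exactly the interpretation set up in the paragraph preceding Proposition~\ref{non-symmetric pairing-identity}. If anything required comment, it would be the compatibility of the identification ${\rm HH}_*({\rm MF}_G(R,-W))={\rm HH}_*({\rm MF}_G(R,W))$ used implicitly when quoting the HRR formula, but this is handled in the discussion around~\eqref{PV-pairing} and can be taken as given. Thus the proof is a one-line chaining of~\eqref{PV-HRR-formula} and Proposition~\ref{non-symmetric pairing-identity}, and I would present it as such.
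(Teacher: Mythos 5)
Your proof is correct and matches the paper's own reasoning exactly: the corollary is stated in the paper as an immediate consequence of the HRR formula~\eqref{PV-HRR-formula} and Proposition~\ref{non-symmetric pairing-identity}, which is precisely the one-line chaining you carry out.
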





\section{Orlov's SOD and  Stokes phenomenon}
\label{sec-orlov}

For quasi-homogeneous polynomials of general type, there is a remarkable connection between Orlov's work~\cite{Orl} on semiorthogonal decompositions of the triangulated category ${\rm HMF}_{\<J\>}(R,W)$ and the Stokes phenomenon that appears in the asymptotic expansions in the FJRW theory of $(W, \<J\>)$.
 This is similar to the case of Fano hypersurface, where semiorthogonal decompositions of the derived category of coherent sheaves of the Fano hypersurface \cite{KKP, GGI, SS} are related to the Stokes phenomenon that appears in the Gromov-Witten theory of the Fano hypersurface.


\subsection{Orlov's SOD for matrix factorizations}

\subsubsection{Semiorthogonal decompositions}
We collect a few definitions and facts on semiorthogonal decompositions of a triangulated category~\cite{BO, Orl}. 
Let $\cD$ be a triangulated category. Let $\cN\subset \cD$ be a full subcategory. The {\em right orthogonal to $\cN$} is a full subcategory  
$$\cN^\perp=\{M\in \cD\mid {\rm Hom}(N, M)=0, \forall N\in \cN.\}$$
The orthogonal $\cN^\perp$ is also a triangulated category.
The subcategory $\cN$ is called {\em right admissible} if there is a right adjoint functor for the embedding $\cN\hookrightarrow \cD.$
The left orthogonal and left admissible are defined analogously. 
The subcategory $\cN$ is called {\em admissible in $\cD$} if it is left admissible and right admissible.
A sequence of full triangulated categories $(\cN_1, \ldots, \cN_n)$ in $\cD$ is called a {\em semiorthogonal decomposition of the category $\cD$} (SOD for short) if 
\begin{itemize}
\item there is a sequence of left admissible subcategories 
$$\cD_1=\cN_1\subset \cD_2\subset \ldots \subset \cD_n=\cD$$
such that $\cN_p$ is left orthogonal to $\cD_{p-1}$ in $\cD_p$;
\item each $\cN_p$ is admissible in $\cD$;
\item the sequence $(\cN_1, \ldots, \cN_n)$ is {\em full} in $\cD$, i.e. the sequence generates the category $\cD$.
\end{itemize}
Such a semiorthogonal decomposition of the category $\cD$ will be denoted by 
$$\cD=\<\cN_1, \ldots, \cN_n\>.$$
Recall that an object $E$ is said to be {\em exceptional}, if for $p\in\Z$,
$$
{\rm Hom}(E, E[p])
=\begin{dcases}
0, & p\neq0;\\
\C, & p=0.
\end{dcases}
$$

\subsubsection{Orlov equivalence}



Consider the hypersurface 
$$\cX_W:=(W=0)\subset \bP^{N-1}(w_1, \ldots, w_N).$$
 If $N=1$, then $\cX_W$ is the empty set.
 If $N>1$, then the hypersurface has complex dimension $N-2$. 

In~\cite{Orl} Orlov established a beautiful connection between the triangulated category  ${\rm HMF}_{\<J\>}(R,W)$ and the derived category  ${\bf D}^b(\cX_W)$ of coherent sheaves of the projective variety $\cX_W$ using semiorthogonal decompositions. 
The details for quasihomogeneous polynomials have been developed in~\cite{BFK1, BFK2}.

For our purpose, we only consider the cases when the admissible LG pair $(W, \<J\>)$ is of general type. 
This implies that either $\cX_W$ is an empty set (if $N=1$) or it is a projective variety of general type (if $N>1$).
The Orlov equivalence can be summarized as follows:
\begin{theorem}
\cite[Theorem 40]{Orl}\cite[Theorem 3.5]{BFK2}
\label{orlov-equivalence}
Consider the admissible LG model $(W, \<J\>)$, where $W$ is of general type with index $\nu>0$, 
then the triangulated category ${\rm HMF}_{\<J\>}(R,W)$ admits a semiorthogonal decomposition
\begin{equation}
\label{orlov-sod-lg}
{\rm HMF}_{\<J\>}(R,W)\simeq\left< \C(\nu-1)^{\rm st}, \ldots, \C^{\rm st}, {\bf D}^b(\cX_W)\right>.
\end{equation}
\end{theorem}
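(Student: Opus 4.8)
The plan is to deduce the decomposition from Orlov's structure theorem for graded singularity categories, transported across the matrix‑factorization equivalence recalled above. First I would pass to the $\<J\>$-equivariant form of the diagram~\eqref{cokernel-diagram}: by \cite[Theorem 7.3]{Qui} the cokernel functor induces a triangulated equivalence
$$F^{\<J\>}\colon {\rm HMF}_{\<J\>}(R,W)\ \xrightarrow{\ \sim\ }\ \bD^{\rm gr}_{\rm Sg}(S),$$
where $S=R/(W)$ is the hypersurface algebra graded by $\deg x_j=w_j$. Under $F^{\<J\>}$ the object $\C(\ell)^{\rm st}$ is by construction the image of the internally shifted graded residue field $\C(\ell)$, and $\operatorname{Proj}(S)=(W=0)\subset\bP(w_1,\dots,w_N)$ is exactly $\cX_W$. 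So it suffices to produce the asserted semiorthogonal decomposition inside $\bD^{\rm gr}_{\rm Sg}(S)$ and pull it back.

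Second, I would invoke Orlov's theorem. Because $W$ is nondegenerate, $S$ is a graded Gorenstein algebra with $\omega_S\cong S(\nu)$; its Gorenstein parameter (in the convention of the remark after Definition~\ref{index-lg-pair}) is $-\nu$, which is negative precisely because $W$ is of general type. In this negative‑parameter regime, \cite[Theorem 40]{Orl} — extended to arbitrary positive weights by \cite{BFK1} and \cite[Theorem 3.5]{BFK2} — provides a fully faithful embedding $\Phi\colon\bD^b(\cX_W)\hookrightarrow\bD^{\rm gr}_{\rm Sg}(S)$ together with a semiorthogonal decomposition of $\bD^{\rm gr}_{\rm Sg}(S)$ into $\nu$ twists of the stabilized residue field followed by $\Phi(\bD^b(\cX_W))$. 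Up to the internal‑twist autoequivalence (which on the last block is just $\otimes\,\mathcal{O}_{\cX_W}(\nu-1)$ and preserves it), this reads $\langle\,\C(\nu-1)^{\rm st},\dots,\C(1)^{\rm st},\C^{\rm st},\,\bD^b(\cX_W)\,\rangle$; when $N=1$ one has $\cX_W=\varnothing$, $\bD^b(\varnothing)=0$, and the decomposition degenerates to the full exceptional collection. Applying $F^{\<J\>}$ then yields the statement.

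Third, the substance hidden in the second step — which I would only sketch, since it is exactly what \cite{Orl} and \cite{BFK2} prove — consists of three verifications. (i) Exceptionality and semiorthogonality of the block $\{\C(j)^{\rm st}\}_{0\le j\le\nu-1}$ in the stated order: this amounts to computing that $\operatorname{RHom}$ between internal twists of the stabilized residue field in $\bD^{\rm gr}_{\rm Sg}(S)$ vanishes in the appropriate direction, which is consistent with, and refined from, the upper‑triangularity of the Euler‑pairing matrix $M$ of Proposition~\ref{inverse-gram-formula}. (ii) Semiorthogonality between the block and $\Phi(\bD^b(\cX_W))$, i.e.\ $\operatorname{Hom}^\bullet(\C(j)^{\rm st},\Phi(\mathcal F))=0$ for $0\le j\le\nu-1$ and all $\mathcal F\in\bD^b(\cX_W)$, together with full‑faithfulness of $\Phi$. (iii) Generation of $\bD^{\rm gr}_{\rm Sg}(S)$ by these pieces. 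Orlov obtains (i)--(iii) from the adjoint triple relating $\bD^b(\operatorname{gr}\text{-}S)$, the Serre quotient $\bD^b(\operatorname{qgr}\text{-}S)\simeq\bD^b(\cX_W)$, and $\bD^{\rm gr}_{\rm Sg}(S)$: for negative Gorenstein parameter the ``window'' subcategory $\langle S,S(1),\dots,S(\nu-1)\rangle^{\perp}$ of $\bD^b(\operatorname{gr}\text{-}S)$ maps equivalently onto $\bD^b(\cX_W)$, and its image in the Verdier quotient $\bD^{\rm gr}_{\rm Sg}(S)$ splits off precisely the twisted residue fields. The BFK references supply the bookkeeping for local cohomology with respect to the irrelevant ideal of $\bP(w_1,\dots,w_N)$ and for the quotient‑stack structure of $\cX_W$, which is quasi‑smooth by nondegeneracy of $W$. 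The genuine obstacle is the full‑faithfulness of $\Phi$ (equivalently the perfect matching of morphism spaces under the window equivalence); everything else is formal once that is in hand, so in practice the theorem is quoted rather than reproved here.
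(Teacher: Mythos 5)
Your proposal is correct and takes essentially the same route as the paper: the theorem is quoted from Orlov \cite[Theorem 40]{Orl} (extended to weighted cases in \cite[Theorem 3.5]{BFK2}), transported to matrix factorizations via the equivariant cokernel equivalence $F^{\<J\>}$ of \cite[Theorem 7.3]{Qui}. Your added sketch of the window argument and the three verifications (i)--(iii) accurately summarizes the content of the cited theorems but does not constitute an independent proof, which matches the paper's own treatment of this statement as a citation.
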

Here the category ${\bf D}^b(\cX_W)$ is viewed as  the bounded derived category $\bD^b({\rm qgr}S)$ under equivalence~\cite[Theorem 28]{Orl}, 
where ${\rm qgr}S$ is the quotient of the abelian category of graded finitely generated $S$-modules by the subcategory of torsion modules. 
When $W$ is of general type, there is a fully faithful functor between the bounded derived category $\bD^b({\rm qgr}S)$ and the category\footnote{Here the symbol ${\gr}$ means graded, so the category is a $\<J\>$-equivariant version.} $\bD^{\rm gr}_{\rm Sg}(S)$~\cite[Theorem 16 (ii)]{Orl}.

The sequence $(\C(\nu+1)^{\rm st}, \ldots, \C^{\rm st})$ in the semiorthogonal decomposition \eqref{orlov-sod-lg} is an {\em exceptional collection}. That is,
each $\C(j)^{\rm st}$ is exceptional,
and the sequence satisfies the semiorthogonal condition that for all $p\in \Z$, when $i>j$,
$${\rm Hom}(\C(j)^{\rm st}, \C(i)^{\rm st}[p])=0.$$
In fact, for any $\ell\in\Z$, the sequence 
$
\big(\C(\ell)^{\rm st}, \C(\ell-1)^{\rm st}, \ldots, \C(\ell-\nu+1)^{\rm st}\big)
$
is an exceptional collection, and there is a semiorthogonal decomposition
$$
{\rm HMF}_{\<J\>}(R,W)\simeq\left< \C(\ell)^{\rm st}, \C(\ell-1)^{\rm st}, \ldots, \C(\ell-\nu+1)^{\rm st}, {\bf D}^b(\cX_W)\right>.
$$

\subsection{Combinatorics of the Gram matrix}
\label{sec-gram-matrix}
Let the admissible LG pair $(W, \<J\>)$ be of general type.
For any integer $\ell$, we consider the Gram matrix of the sequence $\big(\C(\ell)^{\rm st}, \C(\ell-1)^{\rm st}, \ldots, \C(\ell-\nu+1)^{\rm st}\big)$ by $M_\ell$.
It is a $\nu\times\nu$ square matrix, whose entries are given by the Euler pairing. 
We write
\begin{equation}
\label{sub-gram-matrix}
M_\ell:=\left(\chi\big(\C(\ell-j+1)^{\rm st}, \C(\ell-i+1)^{\rm st}\big)\right)_{\nu \times \nu}.
\end{equation}

\subsubsection{A polynomial of the weighted system}
Now we investigate the property of this matrix and its inverse (if exists).
First of all, we need some combinatorial preparation.
Let $\vec{w}=(w_1, \ldots, w_N)$ be the $N$-tuple of weights.
We define a set of coefficients 
$$\{a(n)\mid n\in\Z, 0\leq n<d\}$$ using the polynomial 
\begin{equation}
\label{weight-product-polynomial}
P^A(x):=\sum_{n=0}^{d-1} a(n) x^n:=\prod_{j=1}^{N}(1-x^{w_j})=1+\ldots+(-1)^N x^{\sum_{j=1}^{N}w_j}.
\end{equation}
By definition, we immediately have 
\begin{equation}
\label{property-coefficients-polynomial}
a(n)=
\begin{dcases}
1, & n=0;\\
(-1)^N,  & n= d-\nu;\\
0, & d-\nu< n< d.
\end{dcases}
\end{equation}
Using
\begin{equation*}
\begin{split}
\prod_{j=1}^{N}(1-x^{w_j})
&=\prod_{j=1}^{N}\left(-x^{w_j}(1-x^{-w_j})\right)\\
&=(-1)^Nx^{\sum_{j=1}^{N}w_j}\sum_{n=0}^{d-\nu} a(n) x^{-n}\\
&=\sum_{n=0}^{d-\nu} (-1)^Na(n) x^{d-\nu-n},
\end{split}
\end{equation*}
we obtain the following symmetry of the coefficients.
\begin{lemma}
\label{symmetry-coefficients}
The coefficients $a(n)$ of the degree $d-\nu$ polynomial $P^A(x)$ satisfies
$$a(d-\nu-n)=(-1)^Na(n), \quad \forall \quad  0\leq n\leq d-\nu.$$
\end{lemma}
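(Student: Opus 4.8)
The plan is to obtain the claim directly from the chain of identities displayed immediately above the statement, by a formal comparison of coefficients. First I would recall that by the definition~\eqref{gorenstein-parameter} of the index, $\sum_{j=1}^N w_j = d-\nu$, so the polynomial $P^A(x) = \prod_{j=1}^N(1-x^{w_j})$ in~\eqref{weight-product-polynomial} has degree exactly $d-\nu$; in particular $a(n)=0$ for $n>d-\nu$, which is consistent with~\eqref{property-coefficients-polynomial} and tells us that the coefficient sequence is supported on $\{0,1,\dots,d-\nu\}$. Factoring $-x^{w_j}$ out of each of the $N$ factors gives $P^A(x) = (-1)^N x^{\sum_j w_j}\prod_{j=1}^N(1-x^{-w_j}) = (-1)^N x^{d-\nu}\prod_{j=1}^N(1-x^{-w_j})$.

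Next I would substitute $x\mapsto x^{-1}$ into the defining expansion $P^A(x)=\sum_{n=0}^{d-\nu} a(n)x^n$, which yields $\prod_{j=1}^N(1-x^{-w_j}) = \sum_{n=0}^{d-\nu} a(n)x^{-n}$. Inserting this into the factored form produces $P^A(x) = (-1)^N x^{d-\nu}\sum_{n=0}^{d-\nu} a(n)x^{-n} = \sum_{n=0}^{d-\nu}(-1)^N a(n)\,x^{d-\nu-n}$, and reindexing the last sum by $m=d-\nu-n$ rewrites it as $\sum_{m=0}^{d-\nu}(-1)^N a(d-\nu-m)\,x^m$.

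Finally, comparing the coefficient of $x^m$ in this last expression with the coefficient $a(m)$ in the original expansion of $P^A(x)$, and using $(-1)^N=(-1)^{-N}$, I conclude $a(d-\nu-n)=(-1)^N a(n)$ for all $0\le n\le d-\nu$, which is exactly the asserted symmetry. The argument has essentially no genuine obstacle: the only point requiring attention is the bookkeeping with the summation range — one must confirm that $\sum_j w_j=d-\nu$ so that the ``reversed'' polynomial $(-1)^N x^{d-\nu}P^A(1/x)$ has the same support $\{0,1,\dots,d-\nu\}$ as $P^A$ — after which the statement follows from a routine coefficient comparison.
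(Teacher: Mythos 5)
Your proposal is correct and follows essentially the same route as the paper: factoring $-x^{w_j}$ out of each factor, using $\sum_j w_j = d-\nu$, substituting $x\mapsto x^{-1}$ into the defining expansion, and comparing coefficients. The remark about $(-1)^N = (-1)^{-N}$ is unnecessary but harmless.
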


On the other hand, we have the following combinatoric identities.
\begin{lemma}\label{lemma-magic-identity}
For each $n$ such that $0\leq n\leq d-1$, we have
\begin{equation}
\label{magic-identity}
{1\over d}\sum_{m\in {\bf Nar}}\omega^{n\cdot m}\prod_{j=1}^{N}(1-\omega^{-w_j\cdot m})=a(n).
\end{equation}
\end{lemma}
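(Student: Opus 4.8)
The plan is to reduce the identity \eqref{magic-identity} to the orthogonality relations for $d$-th roots of unity. First I would observe that the summand $\omega^{n\cdot m}\prod_{j=1}^{N}(1-\omega^{-w_j\cdot m})$ vanishes for every $m\in\{0,1,\dots,d-1\}\setminus{\bf Nar}$: when $m=0$ each factor $1-\omega^{-w_j\cdot 0}$ is zero, and when $0<m<d$ with $d\mid w_j m$ for some $j$ the corresponding factor $1-\omega^{-w_j m}$ is zero. Hence the left-hand side of \eqref{magic-identity} is unchanged if the sum is taken over all residues $m=0,1,\dots,d-1$.

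Next I would expand the product by the subset formula $\prod_{j=1}^{N}(1-\omega^{-w_j m})=\sum_{S\subseteq\{1,\dots,N\}}(-1)^{|S|}\omega^{-m\sum_{j\in S}w_j}$, interchange the two finite sums, and apply the orthogonality relation $\sum_{m=0}^{d-1}\omega^{mk}=d$ if $d\mid k$ and $0$ otherwise. This collapses $\sum_{m\in{\bf Nar}}\omega^{n m}\prod_{j=1}^{N}(1-\omega^{-w_j m})$ to $d\sum_{S}(-1)^{|S|}$, where $S$ runs over subsets of $\{1,\dots,N\}$ with $\sum_{j\in S}w_j\equiv n\pmod d$.

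It then remains to match this with $a(n)$. Here I would invoke the expansion $P^A(x)=\prod_{j=1}^{N}(1-x^{w_j})=\sum_{S\subseteq\{1,\dots,N\}}(-1)^{|S|}x^{\sum_{j\in S}w_j}$ from \eqref{weight-product-polynomial}, which identifies $a(n)$ with $\sum_{S}(-1)^{|S|}$ over subsets $S$ with $\sum_{j\in S}w_j=n$ \emph{exactly}. The only mildly delicate point is to check that, for $0\le n\le d-1$, the congruence $\sum_{j\in S}w_j\equiv n\pmod d$ is equivalent to the equality $\sum_{j\in S}w_j=n$: this holds because every subset sum lies in $[\,0,\sum_{j=1}^{N}w_j\,]=[\,0,d-\nu\,]$, so for $n\le d-\nu$ congruence forces equality, while for $d-\nu<n\le d-1$ no subset sum can be congruent to $n$, which is consistent with $a(n)=0$ by \eqref{property-coefficients-polynomial}. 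Dividing both sides by $d$ yields \eqref{magic-identity}. The argument is entirely elementary; I do not anticipate any genuine obstacle, only the bookkeeping around the range of exponents just described (and, if one wants, a remark that Lemma~\ref{symmetry-coefficients} gives an internal consistency check via $m\mapsto d-m$, $n\mapsto d-\nu-n$).
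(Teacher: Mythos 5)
Your argument is correct, and it takes a different (though closely related) route from the paper's. The paper also begins by noting that the summand vanishes for $m\notin{\bf Nar}$, but then it forms an auxiliary polynomial
$P(x)={1\over d}\sum_{m=0}^{d-1}\prod_{j}(1-\omega^{-mw_j})\sum_{n=0}^{d-1}(\omega^mx)^n-\prod_{j}(1-x^{w_j})$,
observes $\deg P\le d-1$, and shows $P(\omega^{-k})=0$ for all $k=0,\dots,d-1$ using the geometric-sum identity, so $P\equiv 0$; comparing coefficients of $x^n$ gives the lemma. You instead expand $\prod_j(1-\omega^{-w_jm})$ by inclusion--exclusion and apply the orthogonality relation $\sum_{m=0}^{d-1}\omega^{mk}=d\cdot[\,d\mid k\,]$ directly, then match the resulting subset-sum count with $a(n)$. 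The two arguments are two faces of the same discrete Fourier inversion: the paper packages the inversion as ``a degree-$(d-1)$ polynomial with $d$ roots vanishes,'' while you carry out the orthogonality sum explicitly. The paper's phrasing sidesteps the range bookkeeping: by treating everything as polynomial identities in $x$ it never needs to argue that a congruence mod $d$ is actually an equality. Your phrasing requires that check, and you handle it correctly by noting that every subset sum lies in $[0,\sum_j w_j]=[0,d-\nu]\subset[0,d-1]$ (here $\nu>0$ is used), so the mod-$d$ congruence with $n\in\{0,\dots,d-1\}$ forces equality, and for $d-\nu<n\le d-1$ the count is empty in agreement with $a(n)=0$. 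Both proofs are complete and elementary; yours is a bit more hands-on, the paper's a bit slicker.
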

\begin{proof}
Consider a polynomial 
$$P(x)={1\over d}\sum_{m=0}^{d-1}\prod_{j=1}^{N}(1-\omega^{-m\cdot w_j})\sum_{n=0}^{d-1}(\omega^mx)^n-\prod_{j=1}^{N}(1-x^{w_j}).$$
For any $m$ such that $0\leq m\leq d-1$, if $m\notin {\bf Nar},$ then there exists some $j$ such that $1\leq j \leq N$ and $(1-\omega^{-m\cdot w_j})=0$. So we can rewrite
$$P(x)=\sum_{n=0}^{d-1}x^n \left({1\over d}\sum_{m\in {\bf Nar}}\omega^{n\cdot m}\prod_{j=1}^{N}(1-\omega^{-w_j\cdot m})-a(n)\right).$$
Using $\nu>0,$ we have 
\begin{equation}
\deg P(x)={\rm Max}\{d-1, \sum_{j=1}^N w_j\}=d-1.
\end{equation}
Now for each integer $k=0,1,\ldots, d-1$, using the following identities
\begin{equation}
\label{unity-identity}
\sum_{n=0}^{d-1}(\omega^mx)^n=
\begin{dcases}
d, & \text{if } x=\omega^{-m};\\
0, & \text{if }  x^d=1, x\neq\omega^{-m},
\end{dcases}
\end{equation}
we have 
$$P(\omega^{-k})={1\over d}\prod_{j=1}^{N}(1-\omega^{-k\cdot w_j})d-\prod_{j=1}^{N}(1-\omega^{-k\cdot w_j})=0.$$
Thus $P(x)$ have $d$ distinct roots and we must have $P(x)=0$. 
So the equality~\eqref{magic-identity} follows.
\end{proof}

Using the canonical pairing~\eqref{PV-pairing} and Lemma~\ref{lemma-magic-identity}, we have
\begin{proposition}
\label{left-upper-inverse-gram}
Let  $W$ be an invertible polynomial of general type. 
Consider the admissible LG pair $(W, \<J\>)$, the Gram matrix $M_\ell$ of the exceptional collection 
$\big(\C(\ell)^{\rm st}, \C(\ell-1)^{\rm st}, \ldots, \C(\ell-\nu+1)^{\rm st}\big)$ have the following properties:
\begin{enumerate}
\item It does not depend on $\ell$.
\item
It is upper-triangular and all the diagonal entries are $1$. 
\item  For each $1\leq j\leq \nu$ and $0\leq n\leq \nu-j$, the $(j, j+n)$-th entries of $M_\ell$ is $a(n)$.
\end{enumerate}
\end{proposition}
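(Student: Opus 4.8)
The plan is to reduce all three assertions to a single computation of the Euler pairings $\chi(\C(k_1)^{\rm st},\C(k_2)^{\rm st})$, using the Hirzebruch--Riemann--Roch formula \eqref{PV-HRR-formula}, the Chern character formula \eqref{stablization-chern} of Lemma~\ref{chern-residue-field}, and the combinatorial identity \eqref{magic-identity}. By \eqref{PV-HRR-formula},
\[
\chi\big(\C(k_1)^{\rm st},\C(k_2)^{\rm st}\big)=\big({\rm ch}_{\<J\>}(\C(k_1)^{\rm st}),{\rm ch}_{\<J\>}(\C(k_2)^{\rm st})\big)^{\rm PV}.
\]
Both Chern characters are supported on $\cH_{\rm nar}$, so in the canonical pairing \eqref{PV-pairing} only the elements $g=J^m$ with $m\in{\bf Nar}$ contribute, and for these $\C^g=\{0\}$, hence $\det\!\big[1-g;\C^N/\C^g\big]=\prod_{l=1}^{N}(1-\omega^{w_l m})$. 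Substituting \eqref{stablization-chern} for both Chern characters, using $\langle\sJ_m,\sJ_{d-m}\rangle=1$ for the residue pairing \eqref{residue-dual} on narrow sectors, and simplifying with $\omega^{w_l(d-m)}=\omega^{-w_l m}$ and $\omega^{-k_1(d-m)}=\omega^{k_1 m}$, the factor $\prod_l(1-\omega^{w_l m})$ cancels the determinant and one obtains
\[
\chi\big(\C(k_1)^{\rm st},\C(k_2)^{\rm st}\big)=\frac1d\sum_{m\in{\bf Nar}}\omega^{(k_1-k_2)m}\prod_{l=1}^{N}(1-\omega^{-w_l m}).
\]

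First I would record the consequence of \eqref{magic-identity}: for $k_1-k_2\equiv n\pmod d$ with $0\le n\le d-1$ the right-hand side equals $a(n)$, so in particular $\chi(\C(k_1)^{\rm st},\C(k_2)^{\rm st})$ depends only on $k_1-k_2\bmod d$. Taking $M_\ell$ to be the matrix with $(i,j)$ entry $\chi\big(\C(\ell-i+1)^{\rm st},\C(\ell-j+1)^{\rm st}\big)$ as in \eqref{sub-gram-matrix}, the twists in the $(i,j)$ entry differ by $j-i$, which is independent of $\ell$; this is claim (1). On the diagonal $j=i$, so the entry is $a(0)=1$. Strictly below the diagonal $i>j$ with $1\le i-j\le\nu-1$, so $j-i\equiv d-(i-j)\pmod d$ with $d-(i-j)\in\{d-\nu+1,\dots,d-1\}$, a range on which $a$ vanishes by \eqref{property-coefficients-polynomial}; hence $M_\ell$ is upper triangular with unit diagonal, which is claim (2). (Equivalently, this vanishing is the semiorthogonality of the exceptional collection $\big(\C(\ell)^{\rm st},\dots,\C(\ell-\nu+1)^{\rm st}\big)$ of Theorem~\ref{orlov-equivalence}.) Finally, in the $(j,j+n)$ entry with $0\le n\le\nu-j$ the twists differ by $n\in\{0,\dots,\nu-1\}\subset\{0,\dots,d-1\}$, so the entry is $a(n)$ by \eqref{magic-identity}; this is claim (3).

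Once the three inputs are in place the argument is a routine manipulation of $d$-th roots of unity, so there is no single hard step. The only point needing care is the bookkeeping in the second display --- matching the exponents of $\omega$ and checking that $\prod_l(1-\omega^{w_l m})$ coming from ${\rm ch}_{\<J\>}(\C(k_2)^{\rm st})_{J^m}$ exactly cancels the determinant factor $\det[1-J^m;\C^N]$ in \eqref{PV-pairing}. I would also double-check the normalization of the residue pairing on the narrow sectors against \eqref{stablization-chern}: consistency is pinned down by $\chi(\C^{\rm st},\C^{\rm st})=1$ (e.g.\ via Corollary~\ref{cor-gamma-pairing} applied to $\C^{\rm st}$), which forces $\frac1d\sum_{m\in{\bf Nar}}\prod_l(1-\omega^{-w_l m})=1$, exactly \eqref{magic-identity} with $n=0$.
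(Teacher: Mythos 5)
Your proof is correct and takes essentially the same route as the paper: expand the Euler pairing via the Polishchuk--Vaintrob HRR formula, substitute the Chern character formula \eqref{stablization-chern} into the canonical pairing \eqref{PV-pairing}, cancel the determinant factor against $\prod_l(1-\omega^{w_l m})$, and recognize $\tfrac1d\sum_{m}\omega^{nm}\prod_l(1-\omega^{-w_l m})=a(n)$ from Lemma~\ref{lemma-magic-identity}, after which the three claims follow from \eqref{property-coefficients-polynomial}. Your write-up is in fact slightly more careful about the row/column bookkeeping in \eqref{sub-gram-matrix} than the paper's, and the observation that the sub-diagonal vanishing is just semiorthogonality of the exceptional collection, together with the $\chi(\C^{\rm st},\C^{\rm st})=1$ normalization check, are nice but not departures in method.
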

\begin{proof}
Using the HRR formula~\eqref{PV-HRR-formula}, the Chern character formula~\eqref{stablization-chern}, and the canonical pairing formula~\eqref{PV-pairing}, for any pair $(j, j+n)$ such that $1\leq j, j+n\leq \nu$, we can calculate the Euler pairing explicitly
\begin{equation*}
\begin{split}
&\chi\Big(\C(\ell-(j+n)+1)^{\rm st}, \C(\ell-j+1)^{\rm st}\Big)\\
=&\left({\rm ch}_{\<J\>}(\C(\ell-(j+n)+1)^{\rm st}), {\rm ch}_{\<J\>}(\C(\ell-j+1)^{\rm st})\right)^{\rm PV}\\
=&{1\over d}\sum_{m\in {\bf Nar}}\<{\prod_{j=1}^{N}(1-\omega^{w_j\cdot (d-m)})\over\omega^{(\ell-j-n+1)(d-m)}}\sJ_{d-m},{\prod_{j=1}^{N}(1-\omega^{w_j\cdot m})\over\omega^{(\ell-j+1)m}}\sJ_{m}\>\, \prod_{j=1}^{N}(1-\omega^{w_j\cdot m})^{-1}\\
=&{1\over d}\sum_{m\in {\bf Nar}}\omega^{n\cdot m}\prod_{j=1}^{N}(1-\omega^{-w_j\cdot m}).
\end{split}
\end{equation*}
The entries are independent on the choice of $\ell$.
According to the formula~\eqref{magic-identity}, the $(j,j+n)$-th entry of $M_\ell$ is given by 
$$a_{j, j+n}=
\begin{dcases}
a(d+n), & n<0;\\
a(n), & n\geq 0.
\end{dcases}
$$
Now the rest properties follow from the fact listed in~\eqref{property-coefficients-polynomial}.
\end{proof}

\subsubsection{A Laurent polynomial of the weighted system}
Let $L_{\vec{w}}(n)\in\mathbb{Z}_{\geq 0}$ be the number of partitions of $n$ using only the parts from $\vec{w}$. That is 
the number of nonnegative integer $N$-tuples $(k_1, k_2, \ldots, k_N)$ satisfying $$\sum_{j=1}^{N}k_jw_j=n.$$
We have a generating function
\begin{equation}
\label{weight-product-inverse-polynomial}
\sum_{n=0}^{\infty} L_{\vec{w}}(n) x^n=\prod_{j=1}^{N}(1-x^{w_j})^{-1}.
\end{equation}
It is straightforward to obtain
\begin{proposition}
\label{inverse-gram-formula}
The Gram matrix $M_\ell$ has an inverse $M_\ell^{-1}=(a^{i,j})$.
The inverse matrix $M_\ell^{-1}$ satisfies the following properties.
\begin{enumerate}
\item It does not depend on $\ell$.
\item It is upper-triangular and all the diagonal entries are $1$. 
\item For each $1\leq j\leq \nu$ and $0\leq n\leq \nu-j$, we have $a^{j, j+n}=L_{\vec{w}}(n)$.
\end{enumerate}
\end{proposition}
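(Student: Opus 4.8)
The plan is to deduce Proposition~\ref{inverse-gram-formula} directly from Proposition~\ref{left-upper-inverse-gram} by a purely formal manipulation of generating functions. First I would recall from Proposition~\ref{left-upper-inverse-gram} that $M_\ell$ is independent of $\ell$, is upper-triangular with $1$'s on the diagonal, and has $(j,j+n)$-th entry equal to $a(n)$ for $1\le j\le\nu$ and $0\le n\le\nu-j$, where the $a(n)$ are the coefficients of the polynomial $P^A(x)=\prod_{j=1}^N(1-x^{w_j})$ defined in~\eqref{weight-product-polynomial}. Since $M_\ell$ is upper-triangular unipotent, it is invertible over $\Z$, and its inverse $M_\ell^{-1}=(a^{i,j})$ is again upper-triangular unipotent and independent of $\ell$; this already gives items (1) and (2) of the proposition, together with the integrality statement asserted in the introduction version (Proposition~\ref{inverse-gram-formula} as quoted there).

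For item (3), the key observation is that $M_\ell$ is a \emph{truncation} of the (infinite, or $d\times d$) upper-triangular Toeplitz matrix whose $(j,k)$-th entry depends only on $k-j$ and equals the coefficient of $x^{k-j}$ in $P^A(x)$. The inverse of an upper-triangular Toeplitz matrix built from a power series $f(x)$ with $f(0)=1$ is again upper-triangular Toeplitz, built from the reciprocal series $f(x)^{-1}$; moreover truncating to the leading $\nu\times\nu$ block commutes with inversion precisely because the matrices are upper-triangular (the $(j,j+n)$-entry of the inverse, for $0\le n\le\nu-j$, only involves entries of $M_\ell$ within the same $\nu\times\nu$ block). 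Here $f(x)=P^A(x)=\prod_{j=1}^N(1-x^{w_j})$, so $f(x)^{-1}=\prod_{j=1}^N(1-x^{w_j})^{-1}=\sum_{n\ge0}L_{\vec{w}}(n)x^n$ by~\eqref{weight-product-inverse-polynomial}. Therefore the $(j,j+n)$-th entry of $M_\ell^{-1}$ equals $L_{\vec{w}}(n)$, which is item (3). Concretely, I would verify this by checking the identity $\sum_{n=0}^{k} a(n)L_{\vec{w}}(k-n)=\delta_{k,0}$ (the Cauchy product of $P^A$ and its reciprocal), which is exactly the statement that the product of the two truncated upper-triangular matrices is the identity, row by row.

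Since $L_{\vec{w}}(n)\in\Z_{\ge0}$, this simultaneously re-confirms that $M_\ell^{-1}$ has integer (indeed nonnegative) entries. I would also note that this matches the entries $a^{i,i+n}=\left[\prod_{j=1}^{N}(1-x^{w_j})^{-1}\right]_n$ stated in the introduction's Proposition~\ref{inverse-gram-formula}, since $\left[\prod_{j=1}^N(1-x^{w_j})^{-1}\right]_n=L_{\vec{w}}(n)$ by definition of $L_{\vec{w}}$.

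I do not expect any serious obstacle here: the whole argument is the elementary fact that inversion of unipotent upper-triangular Toeplitz matrices corresponds to reciprocating the generating power series, and that this operation is compatible with truncation to a leading block. The only point requiring a line of care is the compatibility-with-truncation claim — that the $(j,j+n)$-entry of $(M_\ell)^{-1}$ for $n\le\nu-j$ is unaffected by how large the ambient matrix is — which follows immediately from upper-triangularity (back-substitution for entry $(j,j+n)$ only reads entries $(j',j'')$ with $j\le j'\le j''\le j+n$, all inside the block). So the ``hard part,'' such as it is, is merely organizing the bookkeeping so that the reduction to Proposition~\ref{left-upper-inverse-gram} and the series identity $P^A(x)\cdot\bigl(P^A(x)\bigr)^{-1}=1$ are stated cleanly.
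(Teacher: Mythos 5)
Your argument is correct and is essentially the proof the paper has in mind: the paper states the result follows immediately from Proposition~\ref{left-upper-inverse-gram} and the generating-function identity~\eqref{weight-product-inverse-polynomial}, which is precisely the Toeplitz/reciprocal-series reasoning you spell out. The only thing you add is the (correct) remark that truncation commutes with inversion for unipotent upper-triangular Toeplitz matrices, which the paper leaves implicit.
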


\subsection{Relation to Stokes phenomenon via Meijer G-functions}
\label{sec-asymptotic-luke}
In this subsection, we will show  in Theorem~\ref{theorem-gram-asymptotic} that the coefficients of the matrix $M_\ell^{-1}$ match certain coefficients in the asymptotic expansion of generalized hypergeometric functions.

\subsubsection{A partial fraction decomposition}


We now introduce a transformation law of Meijer $G$-functions using the method described in~\cite[Proposition 2]{Fie}.
Consider the partial fraction decomposition of the rational function
\begin{equation}
\label{pfd-rational}
{Q(y)\over P(y)}
=
{\prod\limits_{j=0}^{q}\left(y-e^{2\pi\sqrt{-1} (\rho_j-{1\over d})}\right)
\over
\prod\limits_{j=1}^{p}\left(y-e^{2\pi\sqrt{-1} (\alpha_j-{1\over d})}\right)}.
\end{equation}
Since the polynomial $y-e^{2\pi\sqrt{-1}(\alpha_j-{1\over d})}$ is irreducible in $\C[y]$, and only depends on the value $\alpha_j$ modulo $\Z$. 
We can rewrite the denominator of the rational function~\eqref{pfd-rational} as a product of powers of distinct irreducible polynomials by
$$P(y)=\prod\limits_{i=1}^{k}(y-p_i)^{n_i}, \quad p_i\in \{e^{2\pi\sqrt{-1}(\alpha_j-{1\over d})}\mid j=1, \ldots, p\},$$ 
where $n_i$ is the multiplicity of $p_i$ and $k$ is the number of distinct $p_i$'s.
A well known result in partial fraction decomposition says
\begin{proposition}
There are unique coefficients $$\{d_h\in \C\mid h=0, 1, \ldots, \nu\}\cup\{c_{i,j}\in \C\mid i=1, \ldots k; 1\leq j\leq n_i\}$$ such that the following fraction decomposition holds.
\begin{equation}
\label{classical-pfd}
{Q(y)\over P(y)}=\sum_{h=0}^{\nu}d_h y^h+\sum_{i=1}^{k}\sum_{r=1}^{n_i}{c_{i,r}\over (y-p_i)^{r}}.
\end{equation}
\end{proposition}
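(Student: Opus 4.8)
The plan is to reduce the statement to the classical partial fraction theorem over $\C[y]$, the only non-formal point being a degree bookkeeping that uses the index identity \eqref{p-q-nu-relation}.

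First I would perform Euclidean division in $\C[y]$. Both $Q(y)$ and $P(y)$ are monic, of degrees $q+1$ and $p$ respectively, so there is a unique pair $(D(y),R(y))$ with $Q(y)=D(y)P(y)+R(y)$ and $\deg R<p$. Comparing leading terms gives $\deg D=(q+1)-p$, which equals $\nu$ by \eqref{p-q-nu-relation}, and $D$ is monic; writing $D(y)=\sum_{h=0}^{\nu}d_h y^h$ we then have $d_\nu=1$. Thus $Q/P=\sum_{h=0}^{\nu}d_h y^h+R/P$ with $R/P$ a proper rational function. I would note that this, and the whole decomposition, is insensitive to whether $Q$ and $P$ share roots: the polynomial part always has degree exactly $\nu$.

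Next I would invoke the standard partial fraction theorem for the proper fraction $R(y)/P(y)$ relative to the factorization $P(y)=\prod_{i=1}^{k}(y-p_i)^{n_i}$ into powers of distinct linear factors, where $\sum_i n_i=p$ and the $p_i$ range over the distinct elements of $\{e^{2\pi\sqrt{-1}(\alpha_j-1/d)}\}_{j=1}^{p}$. Concretely, the cofactors $P_i(y):=P(y)/(y-p_i)^{n_i}$ have no common zero, so Bézout produces polynomials $u_i$ with $\sum_i u_i P_i=1$, whence $R/P=\sum_i (Ru_i)/(y-p_i)^{n_i}$; expanding each numerator $Ru_i$ in powers of $(y-p_i)$ and dividing term by term yields, for each $i$, a polynomial plus $\sum_{r=1}^{n_i}c_{i,r}(y-p_i)^{-r}$. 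Since $R/P$ and every $c_{i,r}(y-p_i)^{-r}$ tend to $0$ as $y\to\infty$, the sum of these auxiliary polynomials is a polynomial tending to $0$, hence zero, and the asserted form \eqref{classical-pfd} follows.

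For uniqueness I would use a dimension count: the data $\{d_h\}_{h=0}^{\nu}\cup\{c_{i,r}\}$ lies in a $\C$-vector space of dimension $(\nu+1)+\sum_i n_i=(\nu+1)+p=q+2$, and clearing denominators in \eqref{classical-pfd} identifies the right-hand side with a numerator polynomial of degree $\le q+1$ over the fixed denominator $P(y)$, i.e.\ with an element of a $(q+2)$-dimensional space. The assignment is $\C$-linear between equidimensional spaces, so it suffices to check injectivity: if the rational function vanishes, then after clearing denominators one evaluates at $y=p_i$ and differentiates up to order $n_i-1$, peeling off $c_{i,n_i},c_{i,n_i-1},\dots$ in turn, and the remaining identity $\bigl(\sum_h d_h y^h\bigr)P(y)=0$ forces all $d_h=0$; alternatively, existence plus equality of dimensions already gives injectivity. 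The computation is entirely routine; the only point requiring any care is the degree of the polynomial part, pinned to $\nu$ by \eqref{p-q-nu-relation}, so I do not anticipate a genuine obstacle — the statement is a repackaging of classical algebra in the notation adapted to the Meijer $G$-function transformation law to follow.
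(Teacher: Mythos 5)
Your proof is correct, but there is nothing in the paper to compare it against: the paper introduces this statement with the phrase ``A well known result in partial fraction decomposition says'' and offers no argument at all, treating it as standard algebra. Your write-up supplies the classical proof — Euclidean division pins the polynomial part to degree $\nu=(q+1)-p$ via \eqref{p-q-nu-relation}, B\'ezout among the cofactors $P_i=P/(y-p_i)^{n_i}$ produces the principal-part decomposition of the proper remainder, and uniqueness follows from the peeling-off argument at each $p_i$ (or, as you note, from surjectivity of the resulting linear map between two spaces of dimension $q+2$, since the B\'ezout construction in fact works for every numerator of degree $<p$, not just the particular $R$). One small caution on phrasing: ``existence plus equality of dimensions already gives injectivity'' reads as if a single instance of existence suffices; what you actually have, and should say, is surjectivity of the linear assignment, which your B\'ezout argument does establish. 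Your parenthetical that the decomposition is insensitive to whether $Q$ and $P$ share roots is a useful observation the paper leaves implicit.
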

Recall that $L_{\vec{w}}(n)$ is defined in~\eqref{weight-product-inverse-polynomial}.
We have
\begin{corollary}
The partial fractional decomposition~\eqref{classical-pfd} can be written as 
\begin{equation}
\label{simple-D_h-formula}
{Q(y)\over P(y)}=\sum_{h=1}^{\nu}d_h y^h -(-1)^{N} +y\sum_{i=1}^{k}{b_i(y)\over (y-p_i)^{n_i}}
\end{equation}
where 
\begin{equation}
\label{stokes-exponential}
d_h=L_{\vec{w}}(\nu-h)=a^{1, \nu-h+1}, \quad \forall \quad h=1, \ldots, \nu,
\end{equation}
and $b_i(y)$ is a polynomial determined by 
\begin{equation}
\label{pfd-linear}
yb_i(y)=\sum_{r=1}^{n_i}c_{i,r}(y-p_i)^{n_i-r}\left(1-(1-{y\over p_i})^r\right)\in y\C[y].
\end{equation}
Or equivalently, 
\begin{equation}
\label{linear-term-algebraic}
b_i(y)=\sum_{r=1}^{n_i}c_{i,r}(y-p_i)^{n_i-r}\sum_{m=0}^{r-1}(-1)^m{r\choose m+1}p_i^{-m-1} y^m\in \C[y].
\end{equation}
\end{corollary}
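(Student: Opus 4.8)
The plan is to reduce $Q(y)/P(y)$ to an explicit cyclotomic fraction and then read off all the partial fraction data by inspection. Write $\omega=e^{2\pi\sqrt{-1}/d}$. By~\eqref{reindex-narrow} each root $e^{2\pi\sqrt{-1}(\rho_j-1/d)}$ equals $\omega^{-m}$ as $m$ runs over ${\bf Nar}$, and since ${\bf Nar}$ is stable under $m\mapsto d-m$ this gives $Q(y)=\prod_{m\in{\bf Nar}}(y-\omega^{m})$. Similarly, from~\eqref{alpha-tilde}--\eqref{alpha-tuple} the numerator of $P$ before deletion is $\prod_{j=1}^{N}\prod_{k=0}^{w_j-1}(y-e^{2\pi\sqrt{-1}k/w_j})=\prod_{j=1}^{N}(y^{w_j}-1)$, while the deleted roots are exactly $\{\omega^{n}\mid 0\le n\le d-1,\ n\notin{\bf Nar}\}$; using the cyclotomic identity $\prod_{n\notin{\bf Nar}}(y-\omega^{n})\cdot\prod_{m\in{\bf Nar}}(y-\omega^{m})=y^{d}-1$ (the two products partition all $d$-th roots of unity) one gets $P(y)=\prod_{j=1}^{N}(y^{w_j}-1)\cdot\prod_{m\in{\bf Nar}}(y-\omega^{m})/(y^{d}-1)$. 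Cancelling the common factor $\prod_{m\in{\bf Nar}}(y-\omega^{m})$ yields the key simplification
$$\frac{Q(y)}{P(y)}=\frac{y^{d}-1}{\prod_{j=1}^{N}(y^{w_j}-1)}.$$

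With this in hand the two substantive claims follow quickly. For the polynomial part I would substitute $y=1/u$ and use $\sum_j w_j=d-\nu$ to write $Q/P=u^{-\nu}(1-u^{d})\prod_j(1-u^{w_j})^{-1}=u^{-\nu}(1-u^{d})\sum_{n\ge0}L_{\vec{w}}(n)u^{n}$; since $0\le\nu<d$, only the term $\sum_n L_{\vec{w}}(n)u^{n-\nu}$ contributes non-negative powers of $y$, so the polynomial part of $Q/P$ is $\sum_{h=0}^{\nu}L_{\vec{w}}(\nu-h)y^{h}$. In particular the coefficient of $y^{h}$ for $h=1,\dots,\nu$ is $L_{\vec{w}}(\nu-h)$, and the identity $L_{\vec{w}}(\nu-h)=a^{1,\nu-h+1}$ is precisely part~(3) of Proposition~\ref{inverse-gram-formula}. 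For the constant term, note that in the claimed identity~\eqref{simple-D_h-formula} both $\sum_{h=1}^{\nu}d_h y^{h}$ and $y\sum_i b_i(y)/(y-p_i)^{n_i}$ vanish at $y=0$, so the constant must equal $(Q/P)(0)$, which by the simplified expression is $\tfrac{0^{d}-1}{\prod_j(0^{w_j}-1)}=\tfrac{-1}{(-1)^{N}}=-(-1)^{N}$.

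It remains to carry out the purely algebraic rewriting of the proper part of~\eqref{classical-pfd} into the stated form. Grouping $\sum_{r=1}^{n_i}c_{i,r}(y-p_i)^{-r}=g_i(y)/(y-p_i)^{n_i}$ with $g_i(y)=\sum_r c_{i,r}(y-p_i)^{n_i-r}$, the identity $(y-p_i)^{n_i-r}(1-y/p_i)^{r}=(-1)^{r}(y-p_i)^{n_i}p_i^{-r}$ splits $g_i(y)=yb_i(y)+(y-p_i)^{n_i}C_i$, where $C_i=\sum_r c_{i,r}(-1)^{r}p_i^{-r}$ and $yb_i(y)$ is defined by~\eqref{pfd-linear}; divisibility of the right side of~\eqref{pfd-linear} by $y$ is checked by evaluating at $y=0$ and using $(-1)^{n_i-r}=(-1)^{n_i+r}$. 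Hence $g_i(y)/(y-p_i)^{n_i}=yb_i(y)/(y-p_i)^{n_i}+C_i$, and the equivalence with~\eqref{linear-term-algebraic} follows from~\eqref{pfd-linear} by the binomial expansion $1-(1-y/p_i)^{r}=y\sum_{m=0}^{r-1}(-1)^{m}\binom{r}{m+1}p_i^{-m-1}y^{m}$ and cancelling $y$. Assembling: \eqref{classical-pfd} becomes $Q/P=\sum_{h=0}^{\nu}d_h y^{h}+y\sum_i b_i(y)/(y-p_i)^{n_i}+\sum_i C_i$; for $h\ge1$ the $d_h=L_{\vec{w}}(\nu-h)$ come from the second step, and the leftover constant $d_0+\sum_i C_i$ equals $(Q/P)(0)=-(-1)^{N}$, which is exactly~\eqref{simple-D_h-formula}.

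I expect the main obstacle to be the first step, namely establishing $Q/P=(y^{d}-1)/\prod_j(y^{w_j}-1)$: one has to track the correspondences between $\{\rho_j\}$, $\{\alpha_i\}$ and the relevant roots of unity through the definitions~\eqref{reindex-narrow}, \eqref{alpha-tilde}, \eqref{alpha-tuple}, verify the symmetry ${\bf Nar}=d-{\bf Nar}$, and confirm the cyclotomic cancellation (including that $y^{d}-1$ genuinely divides the numerator so that $P$ is a polynomial and $\gcd(Q,P)=1$, which is what makes the partial fraction decomposition~\eqref{classical-pfd} well-posed). Everything after that is routine Laurent-series bookkeeping and elementary partial-fraction algebra.
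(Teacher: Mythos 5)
Your proof is correct and takes a genuinely different route from the paper's. You first establish the clean identity $Q(y)/P(y)=(y^{d}-1)/\prod_{j}(y^{w_j}-1)$ by matching the roots of $Q$ and $P$ with $d$-th roots of unity and using the cyclotomic factorization $y^{d}-1=\prod_{k=0}^{d-1}(y-\omega^{k})$, then read off the polynomial part directly by expanding at $y=\infty$ (substitution $y=1/u$) against the generating series $\prod_{j}(1-u^{w_j})^{-1}=\sum_{n}L_{\vec w}(n)u^{n}$. That gives $d_{h}=L_{\vec w}(\nu-h)$ in one line. The paper instead postulates~\eqref{simple-D_h-formula}, multiplies by $P^{A}(y)=\prod_{j}(1-y^{w_j})$ to reach the polynomial identity~\eqref{type-cd-stokes-coefficients}, extracts a $\nu$-term recursion by comparing top coefficients, and then observes that $\{L_{\vec w}(\nu-h)\}$ satisfies the same recursion because $P^{A}(y)\cdot\sum_{n}L_{\vec w}(n)y^{n}=1$. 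The underlying cyclotomic cancellation is the same in both arguments — the paper uses it implicitly when asserting $Q/P\cdot P^{A}=(-1)^{N}(y^{d}-1)$ — but your direct Laurent-expansion reading of the coefficients is shorter and makes the combinatorial identity $d_{h}=L_{\vec w}(\nu-h)$ immediate rather than the output of a recursion. The treatment of the proper part (defining $yb_i$ via $1-(1-y/p_i)^{r}$ and peeling off the constant $C_i$) is the same in both. One minor blemish: your parenthetical that divisibility by $y$ in~\eqref{pfd-linear} "uses $(-1)^{n_i-r}=(-1)^{n_i+r}$" is a red herring — the factor $1-(1-y/p_i)^{r}$ already vanishes at $y=0$, so no sign bookkeeping is needed.
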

\begin{proof}
Using~\eqref{difference-alpha-rho}, we obtain 
$${Q(0)\over P(0)}=-(-1)^{N}.$$
We evaluate the equation~\eqref{classical-pfd} at $y=0$ and obtain
$$-(-1)^N=d_0+\sum_{i=1}^{k}\sum_{r=1}^{n_i}{c_{i,r}\over (-p_i)^{r}}.$$
Using this equality, we obtain the formula~\eqref{pfd-linear} by taking the difference of~\eqref{classical-pfd} and~\eqref{simple-D_h-formula}.

Next we multiply the equation~\eqref{simple-D_h-formula} by $P^A(y)=\prod_{j=1}^{N}(1-y^{w_j})$. Using the definitions of $\rho_k$'s and $\alpha_j$'s, we obtain
\begin{equation}
\label{type-cd-stokes-coefficients}
(-1)^N(y^d-1)=P^A(y)\left(\sum_{h=1}^{\nu}d_h y^h-(-1)^N\right)+y\sum_{i=1}^{k}  b_i(y) P^A_i(y),
\end{equation}
where $P^A_i(y)$ is the polynomial given by $$P^A_i(y)={P^A(y)\over (y-p_i)^{n_i}}.$$
We have
$$\deg (y b_i(y) P^A_i(y))=\deg P^A(y)=d-\nu.$$
By Lemma~\ref{symmetry-coefficients}, we have 
$$P^A(y)=\sum_{n=0}^{d-\nu} (-1)^Na(n) y^{d-\nu-n}.$$
Now we compare the coefficients of $x^{d-j}$ in~\eqref{type-cd-stokes-coefficients} for $j=0, 1, \ldots, \nu-1$ and obtain $\nu$ equalities:
\begin{equation}
\label{recusion-d-type}
\sum_{i=0}^{j}  a(i) d_{\nu-j+i}=\delta_j^0, \quad j=0, 1,\ldots, \nu-1.
\end{equation}
Since $a(0)=1$, these $\nu$ equalities determines $\{d_h\mid h=\nu, \nu-1, \ldots, 1\}$ recursively.

On the other hand, the definitions of~\eqref{weight-product-polynomial} and~\eqref{weight-product-inverse-polynomial} imply that
\begin{align*}
1&=\left(\prod_{j=1}^N(1-y^{w_j})\right)\left(\prod_{j=1}^{N}(1-y^{w_j})^{-1}\right)\\
&=\left(\sum_{n=0}^{d-\nu}a(n)y^n\right)\left(\sum_{n=0}^{\infty}L_{\vec{w}}(n)y^n\right).
\end{align*}
So the sequence $\{L_{\vec{w}}(\nu-h)\mid h=\nu, \ldots, 1\}$ satisfies the same recursive formula~\eqref{recusion-d-type} as $\{d_h \mid h=\nu, \ldots, 1\}$.
Now the rest of the result follows from Proposition~\ref{inverse-gram-formula}.
\end{proof}

\subsubsection{A transformation law of Meijer $G$-functions and Stokes coefficients}

We see that the Meijer functions are multi-valued functions and the asymptotic expansions~\eqref{exponential-asymptotic} and~\eqref{asymptotic-algebraic} depend on the choices of sectors. 
The transformation of Meijer $G$-functions between different sectors has been well studied in the literature~\cite{Mei, Luk, Fie}.
Such transformation laws provide a powerful tool to describe the Stokes phenomenon of the ODE system, generalizing the seminal work of Stokes~\cite{Sto}.

Now we use the partial fractional decomposition formula~\eqref{simple-D_h-formula} to derive a transformation law of Meijer $G$-functions.
We set 
$$y:=e^{-2\pi\sqrt{-1}({1\over d}+s)}$$ 
and then multiply the equation~\eqref{simple-D_h-formula} by
$$
{1\over 2\pi\sqrt{-1}}{
\prod\limits_{j=0}^{q}\Gamma(1-\rho_j-s)
\over 
\prod\limits_{j=1}^{p}\Gamma(1-\alpha_j-s)
}\, x^s$$
and then integrate along the designated contour $L$ in~\eqref{meijer-def}.
The new equation will imply our transformation law.
\begin{proposition}
\label{transformation-exponential-type}
We have an identity among Meijer $G$-functions
\begin{align*}
&
(-1)^N G_{p,q+1}^{q+1,0}
\left( x \right)
-\sum_{h=1}^{\nu}d_h \omega^{-h}
G_{p,q+1}^{q+1,0}
\left(x\, e^{-2\pi\sqrt{-1}h}\right)\\
=&\sum_{i=1}^{k}
\sum_{r=1}^{n_i}c_{i,r}\sum_{m=0}^{r-1}{r\choose m+1}
{\omega^{r}e^{\pi \sqrt{-1}a_j (r+2m+2)}
\over (-1)^m(2\pi\sqrt{-1})^r}
G_{p,q+1}^{q+1,r}
\left(x\, e^{\pi\sqrt{-1}(r-2m-2)}|| 
a_j, \ldots, a_j\right).
\end{align*}
\end{proposition}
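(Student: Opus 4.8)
The strategy is to start from the partial fraction decomposition~\eqref{simple-D_h-formula} of the rational function $Q(y)/P(y)$ and convert it into an identity among Meijer $G$-functions via the Mellin--Barnes integral representation~\eqref{meijer-def}. First I would set $y=e^{-2\pi\sqrt{-1}(1/d+s)}$, so that, by the definitions of $\{\rho_j\}$ and $\{\alpha_i\}$ (recall $\rho_j-1/d$ and $\alpha_i-1/d$ are precisely the arguments appearing in the exponentials in~\eqref{pfd-rational}), we have
$$
\frac{Q(y)}{P(y)}\Big|_{y=e^{-2\pi\sqrt{-1}(1/d+s)}}
=\frac{\prod_{j=0}^{q}\bigl(e^{-2\pi\sqrt{-1}(1/d+s)}-e^{2\pi\sqrt{-1}(\rho_j-1/d)}\bigr)}{\prod_{i=1}^{p}\bigl(e^{-2\pi\sqrt{-1}(1/d+s)}-e^{2\pi\sqrt{-1}(\alpha_i-1/d)}\bigr)}.
$$
Using $\Gamma(1-\rho_j-s)/\Gamma(-\rho_j-s)=-\rho_j-s$ and the reflection-type bookkeeping $1-e^{-2\pi\sqrt{-1}(\rho_j+s)}=(\text{const})\cdot\Gamma(1-\rho_j-s)/\Gamma(\rho_j+s)$, one checks that each factor $y-e^{2\pi\sqrt{-1}(\rho_j-1/d)}$ contributes exactly the shift $s\mapsto s-1$ (equivalently, multiplication by a ratio of Gamma factors) under the integrand of~\eqref{meijer-def}. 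Concretely, multiplying~\eqref{simple-D_h-formula} by
$$
\frac{1}{2\pi\sqrt{-1}}\,\frac{\prod_{j=0}^{q}\Gamma(1-\rho_j-s)}{\prod_{i=1}^{p}\Gamma(1-\alpha_i-s)}\,x^s
$$
and integrating over the Barnes contour $L$, the left-hand monomials $d_h y^h = d_h\,\omega^{-h}e^{-2\pi\sqrt{-1}hs}$ become $d_h\,\omega^{-h}$ times $G_{p,q+1}^{q+1,0}$ evaluated at $x e^{-2\pi\sqrt{-1}h}$ (the factor $e^{-2\pi\sqrt{-1}hs}$ inside the integrand is absorbed into $x^s$ as a rotation of $x$), while the constant $-(-1)^N$ gives $-(-1)^N G_{p,q+1}^{q+1,0}(x)$, whence the left-hand side of the claimed identity after moving terms.

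Second, I would handle the right-hand side, coming from the algebraic part $y\sum_i b_i(y)/(y-p_i)^{n_i}$. Writing $b_i(y)$ in the explicit form~\eqref{linear-term-algebraic}, each term $y^{m+1}/(y-p_i)^{r}$ with $p_i=e^{2\pi\sqrt{-1}(\alpha_{j(i)}-1/d)}$ contributes, under the same integral transform, a Meijer $G$-function of type $G_{p,q+1}^{q+1,r}$ with the parameter $a_{j(i)}$ repeated $r$ times — this is exactly the structure of~\eqref{algebraic-meijer-G} generalized to higher multiplicity. The pole $(y-p_i)^{-r}$ translates into an extra block of $r$ coincident Gamma factors $\Gamma(1-a_{j(i)}+s)^r$ (via $(1-y/p_i)^{-r}$ expanded against the Gamma shift), and the monomial factor $y^{m+1}$ becomes a rotation $x\mapsto x e^{\pi\sqrt{-1}(r-2m-2)}$ together with the phase $\omega^{r}e^{\pi\sqrt{-1}a_{j(i)}(r+2m+2)}/((-1)^m(2\pi\sqrt{-1})^r)$; the binomial $\binom{r}{m+1}$ is inherited directly from~\eqref{linear-term-algebraic}. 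Summing over $i$, $r$, and $m$ reproduces the right-hand side of the proposition. The key technical point here is matching, term by term, the rotation angles and the constant prefactors; this is a careful but essentially mechanical computation once the dictionary ``multiplication of the integrand by $y^k = \omega^{-k}e^{-2\pi\sqrt{-1}ks}$ $\leftrightarrow$ rotation $x\mapsto x e^{-2\pi\sqrt{-1}k}$'' and ``division by $(y-p_i)$ $\leftrightarrow$ appending a Gamma factor $\Gamma(1-a_{j(i)}+s)$ to the numerator'' is set up precisely.

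Third, I would justify the interchange of summation and integration and the validity of the contour manipulations. Since $p\le q+1$ (the general type hypothesis), the Gamma-factor quotients decay fast enough along vertical lines for all the integrals to converge absolutely, and the contour $L$ in~\eqref{meijer-def} can be taken uniformly for all the shifted integrands because adding finitely many factors $\Gamma(1-a_j+s)$ only introduces poles to the left of $L$, which is consistent with the convention in~\eqref{meijer-def}. The finite sums over $h$, $i$, $r$, $m$ cause no difficulty. Finally, one reads off that the coefficients $d_h = L_{\vec w}(\nu-h) = a^{1,\nu-h+1}$ appearing on the left are precisely the top-row entries of $M_\ell^{-1}$ by Proposition~\ref{inverse-gram-formula}, so calling them the Stokes coefficients of exponential type is consistent with~\eqref{stokes-exponential}; this is what feeds into Proposition~\ref{theorem-gram-asymptotic}.

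\textbf{Main obstacle.} The hard part will be the precise bookkeeping of the multivalued rotations and the constant factors: tracking how the substitution $y=e^{-2\pi\sqrt{-1}(1/d+s)}$ interacts with the branch of $x^s$, how repeated poles $(y-p_i)^{-r}$ produce the index shift $q{+}1,0 \mapsto q{+}1,r$ in the Meijer symbol, and getting the exact phases $\omega^{r}e^{\pi\sqrt{-1}a_j(r+2m+2)}$ and signs $(-1)^m$ right. Conceptually the identity is ``just'' the partial fraction decomposition~\eqref{simple-D_h-formula} pushed through the Mellin transform, but making every constant match requires threading several applications of $\Gamma(1+w)=w\Gamma(w)$ and the reflection formula~\eqref{euler-reflection} without error, and being careful that the contour $L$ is admissible for every term simultaneously.
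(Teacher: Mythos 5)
Your plan is essentially the paper's: substitute $y=e^{-2\pi\sqrt{-1}(1/d+s)}$ in the partial fraction identity~\eqref{simple-D_h-formula}, multiply by the Mellin--Barnes kernel $\tfrac{1}{2\pi\sqrt{-1}}\tfrac{\prod_j\Gamma(1-\rho_j-s)}{\prod_j\Gamma(1-\alpha_j-s)}x^s$, and integrate along $L$. You correctly identify the contributions of the monomials $d_h y^h$ and the constant $-(-1)^N$ (giving the exponential-type $G$-functions with rotated arguments) and of the algebraic part $y\sum_i b_i(y)/(y-p_i)^{n_i}$ (giving the $G_{p,q+1}^{q+1,r}$ terms with repeated parameter $a_j$). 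But you never say what becomes of the \emph{left-hand side} $Q(y)/P(y)$ of~\eqref{simple-D_h-formula} under this transform, and that is the crux of the whole argument. You begin by substituting into $Q/P$ and remark that each factor contributes a ``shift'' (really: a Gamma ratio via the reflection formula, not a shift $s\mapsto s-1$), but then you drop that thread and simply ``move terms.'' Moving terms only produces the stated identity if the Mellin transform of $Q(y)/P(y)$ against the kernel is \emph{exactly zero}; otherwise there is an unaccounted extra term. The paper proves this vanishing by applying the Euler reflection formula~\eqref{euler-reflection} to every factor, so that $Q(y)/P(y)$ becomes a constant phase times $\prod_j\Gamma(\alpha_j+s)\Gamma(1-\alpha_j-s)\big/\prod_j\Gamma(\rho_j+s)\Gamma(1-\rho_j-s)$; multiplying by the kernel cancels the $\Gamma(1-\cdot-s)$'s, and the remaining integral is a scalar multiple of $G_{p,q+1}^{0,p}\bigl(\,\cdot\,;\,x e^{-\pi\sqrt{-1}\nu}\bigr)$, which vanishes because the Meijer symbol has $m=0$, i.e.\ there are no poles of $\Gamma(b_j-s)$ to the right of $L$ and the residue sum is empty.

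The rest of what you describe is consistent with the paper: $y^h=\omega^{-h}e^{-2\pi\sqrt{-1}hs}$ rotates $x\mapsto xe^{-2\pi\sqrt{-1}h}$; each pole $(y-p_i)^{-r}$ turns an $m=q+1,\,n=0$ Meijer symbol into an $m=q+1,\,n=r$ one with $a_j$ repeated $r$ times; the binomial ${r\choose m+1}$ comes straight from~\eqref{linear-term-algebraic}; and the phase $\omega^r e^{\pi\sqrt{-1}a_j(r+2m+2)}/((-1)^m(2\pi\sqrt{-1})^r)$ is the product of the $p_i^{-m-1}=\omega^{m+1}e^{2\pi\sqrt{-1}a_j(m+1)}$ factor in~\eqref{linear-term-algebraic} with the $r$-fold reflection phase $\omega^{r-m-1}e^{\pi\sqrt{-1}ra_j}/(2\pi\sqrt{-1})^r$. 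So the fix is local but essential: you must add the step that identifies the Mellin transform of $Q/P$ with a vanishing Meijer $G$-function of type $G_{p,q+1}^{0,p}$.
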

\begin{proof}
Firstly, we consider the LHS of the new equation. 
Applying Euler reflection formula~\eqref{euler-reflection} repeatedly for $x=\alpha_j+s$ and $x=\rho_j+s$, 
we obtain
\begin{equation*}
\begin{split}
{Q(y)\over P(y)}
&=(-2\pi\sqrt{-1}x)^{\nu}
e^{\pi\sqrt{-1}\left(\sum\limits_{j=0}^{q}(\rho_j+s)-\sum\limits_{j=1}^{p}(\alpha_j+s)\right)}
{\prod\limits_{j=1}^{p}\Gamma(\alpha_j+s)\Gamma(1-\alpha_j-s)
\over
\prod\limits_{j=0}^{q}\Gamma(\rho_j+s)\Gamma(1-\rho_j-s)
}.
\end{split}
\end{equation*}
So the LHS of the new equation becomes a scalar multiple of 
$$G_{p,q+1}^{0,p}
\left(
\begin{array}{l}
1-\alpha_1, \ldots, 1-\alpha_p\\
1-\rho_0,\ldots, 1-\rho_q
\end{array}
; x e^{-\pi\sqrt{-1}\nu}\right).$$
This Meijer $G$-function vanishes~\cite[Section 5.2 (8)]{Luk}, as it has no poles of $\Gamma(1-\rho_j-s)$, $j=1,\ldots, m$, lie to the right of the contour.

Secondly, the contribution from the term $y^k$ is given by
$$e^{-2\pi\sqrt{-1}{k\over d}}
G_{p,q+1}^{q+1,0}
\left(
\begin{array}{l}
a_1, \ldots, a_p\\
b_0, b_1,\ldots, b_q
\end{array}
; x\, e^{-2\pi\sqrt{-1}k}\right)
.$$

Finally, we consider the contribution from the term 
$${y^{m+1}\over (y-p_i)^r}, \quad 1\leq m+1\leq r\leq n_i\leq p.$$
Recall that $p_i=e^{2\pi\sqrt{-1}(\alpha_j-{1\over d})}$ for some $j$.
The integral have poles of $\Gamma(\alpha_j+s)$, with order $r$ at each $s=-\alpha_j-n$, $n=0, 1, \ldots,$
lie to the left of the contour. Applying Euler reflection formula~\eqref{euler-reflection} for $x=\alpha_j+s$, 
we obtain
$${1\over (y-p_i)\Gamma(1-\alpha_j-s)}
={\omega \over 2\pi\sqrt{-1}}\, e^{\pi\sqrt{-1}(a_j+s)} \Gamma(1-a_j+s).$$
Multiplying this formula repeatedly by $r$ times, we calculate
\begin{eqnarray*}
&&{1\over 2\pi\sqrt{-1}}\int_{L}
{y^{m+1}\over (y-p_i)^r}
{
\prod\limits_{j=0}^{q}\Gamma(1-\rho_j-s)
\over 
\prod\limits_{j=1}^{p}\Gamma(1-\alpha_j-s)
}\, x^s\, ds\\
&=&
{\omega^{r-m-1}\over (2\pi\sqrt{-1})^r}
e^{\pi \sqrt{-1} r a_j}
G_{p,q+1}^{q+1,r}
\left( x\, e^{\pi\sqrt{-1}(r-2m-2)} || a_j, \ldots, a_j\right).
\end{eqnarray*}
Now the result follows from the formula~\eqref{linear-term-algebraic}.
\end{proof}

\begin{remark}
If $\alpha_i-\alpha_j\notin \mathbb{Z}$ for any $i\neq j$, then $k=p$ and all the contribution in the last situation comes from the cases when $r=1$. 
The transformation law in Proposition~\ref{transformation-exponential-type} is just the formula (2.8) in \cite{Fie}.
\end{remark}

We call the coefficients $\{d_h\in\mathbb{Z}_{\geq 0}\}_{h=1}^{\nu}$ in the transformation law in Proposition~\ref{transformation-exponential-type} the {\em Stokes coefficients of exponential type}.
As a consequence of Proposition~\ref{inverse-gram-formula} and the formula~\eqref{stokes-exponential}, we have 
\begin{proposition}
\label{theorem-gram-asymptotic}

The Gram matrix $M_\ell$ of the exceptional collection 
$\big(\C(\ell)^{\rm st}, \C(\ell-1)^{\rm st}, \ldots, \C(\ell-\nu+1)^{\rm st}\big)$
 is determined by the Stokes coefficients via
\begin{equation}
\label{inverse-gram-stokes}
M_\ell^{-1}=
\begin{bmatrix}
1& d_{\nu-1}&  \cdots &d_2 & d_1\\
&1& \cdots&d_3&d_2\\
&&\ddots&\vdots&\vdots\\
&&&1&d_{\nu-1}\\
&&&&1
\end{bmatrix}_{\nu\times\nu}.
\end{equation}
\end{proposition}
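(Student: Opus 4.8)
The statement is essentially a bookkeeping corollary: it repackages the two facts established just above, namely Proposition~\ref{inverse-gram-formula} (which computes the entries $a^{i,j}$ of $M_\ell^{-1}$ as $L_{\vec w}(n)$ on the $n$-th superdiagonal and records that $M_\ell^{-1}$ is independent of $\ell$, upper triangular, with $1$'s on the diagonal) and the formula~\eqref{stokes-exponential}, which identifies $d_h = L_{\vec w}(\nu-h) = a^{1,\nu-h+1}$. The plan is simply to combine these: write out $M_\ell^{-1} = (a^{i,j})_{1\le i,j\le\nu}$ explicitly using part~(3) of Proposition~\ref{inverse-gram-formula}, so that the $(i,i+n)$-entry is $L_{\vec w}(n)$, then use~\eqref{stokes-exponential} to rewrite $L_{\vec w}(n) = d_{\nu-n}$. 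The $(i,j)$-entry with $j = i+n$ becomes $d_{\nu-n} = d_{\nu-(j-i)}$; in particular the diagonal $j=i$ gives $d_\nu$, which by the remark after~\eqref{difference-alpha-rho} (or directly since $L_{\vec w}(0)=1$) equals $1$, and the first superdiagonal gives $d_{\nu-1}$, etc., reproducing the banded Toeplitz shape displayed in~\eqref{inverse-gram-stokes}.

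\textbf{Steps, in order.} First I would recall from Proposition~\ref{left-upper-inverse-gram} and Proposition~\ref{inverse-gram-formula} that $M_\ell$ and hence $M_\ell^{-1}$ do not depend on $\ell$, so it suffices to exhibit the single matrix. Second, I would invoke part~(3) of Proposition~\ref{inverse-gram-formula}: for $1\le i\le\nu$ and $0\le n\le\nu-i$, the entry $a^{i,i+n}$ equals $L_{\vec w}(n)$, and all entries strictly below the diagonal vanish (upper-triangularity). Third, I would apply~\eqref{stokes-exponential}, which gives $d_h = L_{\vec w}(\nu-h)$ for $h=1,\dots,\nu$; setting $h=\nu-n$ yields $L_{\vec w}(n) = d_{\nu-n}$ for $n=0,\dots,\nu-1$. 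Substituting into the previous step, $a^{i,i+n} = d_{\nu-n}$, which depends only on $n = (i+n)-i$, i.e. on the distance from the diagonal — this is exactly the Toeplitz/banded structure in~\eqref{inverse-gram-stokes}. Finally I would check the boundary entries: the diagonal ($n=0$) is $d_\nu$, and since $L_{\vec w}(0)=1$ we get $d_\nu=1$, matching the displayed matrix; the top-right corner ($i=1$, $n=\nu-1$) is $d_1$, consistent with the display.

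\textbf{Expected obstacle.} There is no real mathematical obstacle — everything needed is already proved. The only thing requiring a moment of care is purely notational: making sure the indexing convention for $M_\ell$ in~\eqref{sub-gram-matrix} (rows and columns indexed by the shift by $\ell-j+1$ and $\ell-i+1$) lines up with the convention $a^{i,i+n}$ in Proposition~\ref{inverse-gram-formula}, so that the Stokes coefficient $d_h$ lands on the correct superdiagonal and the matrix comes out as written rather than as its transpose. Once that alignment is fixed, the proof is a one-line substitution, so I would write it as: ``Combine Proposition~\ref{inverse-gram-formula}(3) with~\eqref{stokes-exponential}: the $(i,j)$-entry of $M_\ell^{-1}$ is $L_{\vec w}(j-i) = d_{\nu-(j-i)}$ for $j\ge i$ and $0$ otherwise, which is precisely~\eqref{inverse-gram-stokes}.''
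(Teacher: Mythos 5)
Your proof is correct and matches the paper's own (implicit) argument exactly: the paper states the proposition immediately after the phrase ``As a consequence of Proposition~\ref{inverse-gram-formula} and the formula~\eqref{stokes-exponential},'' which is precisely the one-line substitution $a^{i,i+n}=L_{\vec w}(n)=d_{\nu-n}$ that you spell out. Your additional sanity checks ($d_\nu=L_{\vec w}(0)=1$ on the diagonal, $d_1$ in the top-right corner, and the indexing alignment) are harmless elaborations of the same route.
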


\subsubsection{Stokes matrices for $r$-spin curves}

The Stokes coefficients in the transformation law~\eqref{transformation-exponential-type} can be used to calculate the {\em Stokes matrix}~\cite{Dub} of the underlying Dubrovin-Frobenius manifold.
If $N=1$ and $d=r$, this is the $r$-spin case $W=x^r$. 
The Stokes matrix was first calculated in~\cite{CV}.
The coefficients in the transformation law~\eqref{transformation-exponential-type} determine the Stokes matrix completely.
The calculation of Stokes matrix using the original definition in~\cite{Dub} is much involved.
Once we replace formula (16) in~\cite[Lemma 5]{Guz} by the transformation law~\eqref{transformation-exponential-type}, we can obtain the calculation for the $r$-spin case almost verbatimly as for the projective space $\mathbb{P}^{r-2}$ case in~\cite{Guz}.
Similar as \cite[Theorem 2]{Guz}, both the matrices $M_\ell$ and $M_{\ell}^{-1}$ can be realized as Stokes matrices, and they are equivalent to each other, under certain braid group action. 
Moreover, the symmetric matrix $M_\ell+M_{\ell}^T$ is the Cartan matrix of the $A_{r-1}$~\cite{CV}.

The calculation of Stokes matrices for general cases is much more complicated. 
We plan to discuss the general situations in the future.

\section{Evidence of quantum spectrum conjectures
}
\label{sec-evidence}

\subsection{Quantum spectrum of mirror simple singularities}
\label{sec-mirror-simple}
Now we prove quantum spectrum conjecture~\ref{quantum-spectrum-conj} holds true if the polynomial $W$ is one of the forms below:
\begin{enumerate}
\item $A_{n-1}$ singularity $W=x_1^{n}$, $n\geq 2.$
\item $D^T_{n+1}$-singularity $W=x_1^nx_2+x_2^2$, $n\geq 3.$
\item $E$-type simple singularities: $W=x_1^3+x_2^4,$ or $x_1^3+x_2^5$, or $x_1^3x_2+x_2^3$.
\end{enumerate}
Here the transpose mirror polynomial $W^T$ is a simple singularity of $ADE$-type. 
We will calculate the quantum multiplication $\tau'\star_\tau$ explicitly in each case.

\begin{remark}
For all the cases above, quantum spectrum conjecture~\ref{quantum-spectrum-conj} and quantum spectrum conjecture~\ref{quantum-spectrum-conj-invariant} are equivalent because $\<J\>=G_W$ always hold. 
By Proposition~\ref{mirror-theorem-fermat}, the mirror conjecture~\ref{mirror-conjecture} always hold.
\end{remark}

Recall that $\tau(t)$ is defined by the $I$-function via~\eqref{def-tau}. 
We denote
\begin{equation}
\label{power-of-x}
(\tau'(t))^{j}:=\underbrace{\tau'(t)\star_{\tau(t)} \tau'(t)\star_{\tau(t)} \ldots \star_{\tau(t)} \tau'(t)}_{j\text{-copies}}.
\end{equation}

\subsubsection{Quantum spectrum of $A_{n-1}$-singularities}
We start with $n$-spin singularity $$W=x_1^n, \quad n\geq 2.$$
The weight system is $(d; w_1)=(n;1)$, the index is $\nu=n-1$.
The state space $\cH_{W, \<J\>}$ has a basis $\{\sJ_1, \sJ_2, \ldots, \sJ_{n-1}\}$, with the dual basis $\{\sJ_{n-1}, \sJ_{n-2}, \ldots, \sJ_{1}\}.$
There are two situations.

If $n=2$, we have $\tau={\sJ_1\over 4}$, $\tau'={\sJ_1\over 2},$ and
$${\nu\over n}\tau'\star_{\tau}\sJ_1={1\over 4}
\sJ_1.$$
So Conjecture~\ref{quantum-spectrum-conj} holds because the quantum spectrum of ${\nu\over d}\tau'\star_{\tau}$ is $\{{1\over 4}\}$.

If $n\geq 3$, we have $\tau=\tau'=\sJ_2.$ Let us calculate $\sJ_2\star_{\sJ_2}.$
\begin{lemma}
For each $\sJ_i\in \cH_{W, \<J\>}$, we have quantum multiplication
\begin{equation*}
\begin{split}
\sJ_2\star_{\sJ_2}\sJ_i&=\sum_{j=1}^{n-1}\sum_{k=0}^{\infty}{1\over k!}\LD\sJ_2, \sJ_i, \sJ_j, \sJ_2, \ldots, \sJ_2\RD_{0, k+3}\sJ_{n-j}
=\begin{dcases}
\sJ_{i+1}, & \text{ if } i<n-1;\\
{1\over n} \sJ_1, & \text{ if } i=n-1.
\end{dcases}
\end{split}
\end{equation*}
\end{lemma}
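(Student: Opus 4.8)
The plan is to compute the genus-zero FJRW invariants $\LD \sJ_2, \sJ_i, \sJ_j, \sJ_2,\ldots,\sJ_2\RD_{0,k+3}$ for the $A_{n-1}$-singularity $W=x_1^n$ directly, using the degree constraint \eqref{virtual-degree} and the selection rule \eqref{selection-rule} from Lemma~\ref{nonvanishing-lemma} to pin down which invariants can be nonzero, and then invoking known evaluations of $n$-spin intersection numbers (or the topological recursion / reconstruction results cited as \cite[Theorem 4.2.9]{FJR}) to get the exact constants. Since $\cH_{W,\<J\>}$ is entirely narrow with basis $\{\sJ_1,\ldots,\sJ_{n-1}\}$, every insertion is a standard generator, the complex degrees are $\deg_\C \sJ_m = 1-\tfrac{\nu(m-1)}{d}$ for $m\geq 2$ and $\deg_\C\sJ_1=0$ by \eqref{degree-narrow} with $d=n$, $\nu=n-1$, and the central charge is $\widehat c_W = 1-\tfrac{2}{n}$.

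First I would fix the $j$ in the sum. By the pairing $\<\sJ_a,\sJ_b\>$ being nonzero exactly when $a+b=n$ (the dual basis is $\{\sJ_{n-1},\ldots,\sJ_1\}$), the coefficient of $\sJ_{n-j}$ in $\sJ_2\star_{\sJ_2}\sJ_i$ is $\sum_k \tfrac1{k!}\LD\sJ_2,\sJ_i,\sJ_j,\sJ_2^{\,k}\RD_{0,k+3}$. The string equation \cite[Theorem 4.2.9]{FJR} forces $k=0$ once we account for $\sJ_1$ not appearing among the insertions (all of $\sJ_2,\sJ_i,\sJ_j$ have index $\geq 2$, and inserting extra $\sJ_2$'s is governed by the dimension constraint below), so the only surviving term is the three-point invariant $\LD\sJ_2,\sJ_i,\sJ_j\RD_{0,3}$. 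Actually I must be careful: for $W=x_1^n$ the three-point function $\LD\sJ_a,\sJ_b,\sJ_c\RD_{0,3}$ with $a+b+c$ constrained by the selection rule is the structure constant of the classical product, and it equals $1$ when $a+b+c=n+1$ (i.e. $\sJ_a\star\sJ_b=\sJ_{a+b-1}$ as long as $a+b-1\leq n-1$) and equals $\tfrac1n$ in the one exceptional wrap-around case; this is the standard $A_{n-1}$ Frobenius algebra structure, recorded e.g. in \cite{FJR}. The degree constraint \eqref{virtual-degree} with $k=3$ reads $\widehat c_W = \deg_\C\sJ_2+\deg_\C\sJ_i+\deg_\C\sJ_j$, which for $i<n-1$, $j=n-i-1$ gives $\deg_\C\sJ_{n-j}=\deg_\C\sJ_{i+1}$, consistent with $\sJ_2\star_{\sJ_2}\sJ_i=\sJ_{i+1}$; for $i=n-1$ the relevant nonzero three-point invariant is the one with value $\tfrac1n$, giving $\sJ_2\star_{\sJ_2}\sJ_{n-1}=\tfrac1n\sJ_1$.

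The key steps in order: (1) record $\cH_{W,\<J\>}$, its pairing, and the degrees, noting everything is narrow; (2) use the string equation to reduce the deformed product $\star_{\sJ_2}$ to the undeformed product $\star$ (i.e. show the extra $\sJ_2$-insertions beyond three force a dimension overflow unless $k=0$, using $\nu>0$ as in Proposition~\ref{prop-convergence}); (3) compute the three-point invariants $\LD\sJ_2,\sJ_i,\sJ_j\RD_{0,3}$ via the degree constraint plus the selection rule, citing the known $n$-spin three-point evaluation; (4) assemble these into the stated formula. The main obstacle is step (2)/(3): one must be confident that for $W=x_1^n$ the quantum product $\star_{\sJ_2}$ actually coincides with the classical product $\star$ — equivalently that there are no higher quantum corrections — and then correctly identify the single wrap-around three-point number as $\tfrac1n$ rather than $1$. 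Both facts are classical for $r$-spin theory (the $A_{n-1}$ Frobenius manifold is the one on $\C[x]/(x^{n-1})$ with potential $\sim x^{n+1}$), so I would cite the relevant statements in \cite{FJR, FJR-book} rather than rederive them; the verification that no $k\geq 1$ terms contribute is the only genuinely computational point and follows the same degree-counting argument already used in the proof of Proposition~\ref{prop-convergence}.
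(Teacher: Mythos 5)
Your proposal has a genuine gap, and it is concentrated exactly where the content of the lemma lives. You assert that the string equation (together with the degree bound used in Proposition~\ref{prop-convergence}) forces $k=0$, so that $\sJ_2\star_{\sJ_2}$ reduces to the undeformed $3$-point product $\star$, and you then attribute the factor $\tfrac1n$ to a ``wrap-around'' $3$-point invariant. Neither of these is correct. Applying the homogeneity constraint~\eqref{virtual-degree} with $\deg_\C\sJ_m=\tfrac{m-1}{n}$ and $\widehat c_W=\tfrac{n-2}{n}$ to $\LD\sJ_2,\sJ_i,\sJ_j,\sJ_2,\ldots,\sJ_2\RD_{0,k+3}$ gives $(n-1)k=i+j+1-n$, which permits $k=0$ (forcing $j=n-1-i$) \emph{and} $k=1$ (forcing $i=j=n-1$); Proposition~\ref{prop-convergence} only bounds $k$, it does not set it to zero, and the string equation is silent here because $\sJ_1$ does not appear among the insertions. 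There is in fact no $3$-point invariant producing $\tfrac1n$: the degree constraint forces $a+b+c=n+1$ for any nonzero $\LD\sJ_a,\sJ_b,\sJ_c\RD_{0,3}$, so $\LD\sJ_2,\sJ_{n-1},\sJ_j\RD_{0,3}$ would need $j=0$, which is not a valid index (equivalently $x\cdot x^{n-2}=0$ in $\Jac(x^n)=\C[x]/(x^{n-1})$, so the undeformed product of $\sJ_2$ and $\sJ_{n-1}$ vanishes).

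The $\tfrac1n$ comes from the genuinely quantum $k=1$ term: the $4$-point invariant $\LD\sJ_2,\sJ_2,\sJ_{n-1},\sJ_{n-1}\RD_{0,4}=\tfrac1n$, which the paper evaluates by the concavity axiom of~\cite[Theorem 4.1.8]{FJR}, together with $\LD\sJ_2,\sJ_i,\sJ_{n-1-i}\RD_{0,3}=1$ for the $k=0$ terms. Your argument as written would incorrectly give $\sJ_2\star_{\sJ_2}\sJ_{n-1}=0$. To repair the proof you should drop the claim that $k=0$ is forced, carry out the case analysis $k\in\{0,1\}$ dictated by~\eqref{virtual-degree}, and cite concavity for the two explicit invariant values.
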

\begin{proof}
By the degree constraint~\eqref{virtual-degree}, if $\LD\sJ_2, \sJ_i, \sJ_j, \sJ_2, \ldots, \sJ_2\RD_{0, k+3}\neq0$, we will have $(n-1)k=i+j+1-n\leq n-1$. So if $k=0$, then $n-j=i+1$; if $k=1$, then $i=j=n-1$. 
Now the result follows from computation of the following FJRW invariants
$$
\begin{dcases}
\LD\sJ_2, \sJ_i, \sJ_{n-1-i}\RD_{0,3}=1; \\
\LD\sJ_2, \sJ_2, \sJ_{n-1}, \sJ_{n-1}\RD_{0,4}={1\over n}.
\end{dcases}
$$
Both invariants are obtained by the {\em concavity axiom} in~\cite[Theorem 4.1.8]{FJR}.
\end{proof}
We summarize the result as below.
\begin{proposition}
For the LG pair $(W=x_1^n, \<J\>)$ with $n\geq 3$, 
we have
\begin{enumerate}
\item
The set $\{\sJ_1, \tau', \ldots, (\tau')^{n-2}\}$ is a basis of $\cH_{W, \<J\>}$.
\item There is a quantum relation
\begin{equation}\label{A-relation}
 (\tau')^{n-1}={1\over n}\sJ_1.
 \end{equation}
\item
Conjecture~\ref{quantum-spectrum-conj} holds because the quantum spectrum of ${\nu\over d}\tau'\star_{\tau}$ is
$$\fE=\left\{{n-1\over n}\left({1\over n}\right)^{1\over n-1} e^{2\pi\sqrt{-1}j\over n-1}\mid  j=0, 1, \ldots,  n-2\right\}.$$
\end{enumerate}
\end{proposition}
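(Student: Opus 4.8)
The plan is to compute the quantum multiplication by $\tau' = \sJ_2$ explicitly on the basis $\{\sJ_1, \ldots, \sJ_{n-1}\}$, deduce the algebra structure, and then read off the eigenvalues. The starting point is the preceding lemma, which identifies $\sJ_2\star_{\sJ_2}\sJ_i = \sJ_{i+1}$ for $i<n-1$ and $\sJ_2\star_{\sJ_2}\sJ_{n-1} = \tfrac1n\sJ_1$. First I would establish part (1): since $\tau' = \sJ_2$ and the recursion $\sJ_2\star_{\sJ_2}\sJ_i = \sJ_{i+1}$ holds for $1\le i\le n-2$, an immediate induction gives $(\tau')^j = \sJ_{j+1}$ for $0\le j\le n-2$ (with the convention $(\tau')^0 = \sJ_1 = \one$, the quantum unit). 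Hence $\{\sJ_1, \tau', (\tau')^2, \ldots, (\tau')^{n-2}\}$ is just a relabelling of the basis $\{\sJ_1, \sJ_2, \ldots, \sJ_{n-1}\}$, so it is a basis. For part (2), applying $\sJ_2\star_{\sJ_2}$ once more to $(\tau')^{n-2} = \sJ_{n-1}$ and using the second case of the lemma gives $(\tau')^{n-1} = \sJ_2\star_{\sJ_2}\sJ_{n-1} = \tfrac1n\sJ_1$, which is exactly relation~\eqref{A-relation}.

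For part (3), the two previous parts show that $\left(\cH_{W,\<J\>}, \star_\tau\right) \cong \C[u]/(u^{n-1} - \tfrac1n)$ as a $\C$-algebra, with $u$ corresponding to $\tau' = \sJ_2$. The operator $\tau'\star_\tau$ acts as multiplication by $u$ on this quotient ring, so its eigenvalues are precisely the roots of $u^{n-1} = \tfrac1n$, namely $u = \left(\tfrac1n\right)^{1/(n-1)} e^{2\pi\sqrt{-1}j/(n-1)}$ for $j = 0, 1, \ldots, n-2$; each has multiplicity one since the $n-1$ roots are distinct. Multiplying by the scalar $\tfrac{\nu}{d} = \tfrac{n-1}{n}$ then yields
$$
\fE = \left\{\frac{n-1}{n}\left(\frac1n\right)^{\frac{1}{n-1}} e^{\frac{2\pi\sqrt{-1}j}{n-1}} \,\middle|\, j = 0, 1, \ldots, n-2\right\},
$$
all of the same modulus $T = \tfrac{n-1}{n} n^{-1/(n-1)}$, which matches~\eqref{largest-positive-number} for the weight system $(n;1)$, since $\nu(d^{-d}\prod w_j^{w_j})^{1/\nu} = (n-1)(n^{-n})^{1/(n-1)} = \tfrac{n-1}{n}n^{-1/(n-1)}$. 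This verifies all three conditions of Conjecture~\ref{quantum-spectrum-conj}.

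The $n=2$ case is handled separately at the start, as already done in the excerpt: there $\tau = \tfrac14\sJ_1$, $\tau' = \tfrac12\sJ_1$, and $\tfrac\nu d\tau'\star_\tau = \tfrac14\mathrm{Id}$, so $\fE = \{\tfrac14\}$, which again agrees with $T$. There is no real obstacle here: the FJRW invariants entering the lemma are computed by the concavity axiom, the degree constraint~\eqref{virtual-degree} rules out all but the two listed contributions, and the rest is elementary algebra of the cyclic ring $\C[u]/(u^{n-1}-\tfrac1n)$. The only mild subtlety worth stating carefully is confirming that $(\tau')^0$ should be interpreted as $\one = \sJ_1$, so that the power notation~\eqref{power-of-x} is consistent with $\sJ_2$ being the unit-shift operator; this is immediate from $\sJ_1$ being the quantum identity.
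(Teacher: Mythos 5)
Your proof is correct and follows the same route the paper takes: the paper proves the preceding lemma on $\sJ_2\star_{\sJ_2}\sJ_i$ and then states the proposition as an immediate consequence, and your explicit derivation — identifying $(\tau')^j=\sJ_{j+1}$, the algebra isomorphism $\C[u]/(u^{n-1}-\tfrac1n)$, and the resulting eigenvalue computation — simply spells out what the paper leaves implicit. The only extraneous remark is the $n=2$ case, which lies outside the hypothesis $n\geq 3$ of this proposition (the paper disposes of it in the text just before).
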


\subsubsection{Quantum spectrum of $D^T_{n+1}$-singularities}
Now we consider the $D^T_{n+1}$-singularity $$W=x_1^nx_2+x_2^2, \quad n\geq 3.$$
The weight system is 
$(d; w_1, w_2)=(2n; 1, n)$. 
The index is $\nu=n-1\geq 2$. 
We have a basis of $\cH_{W, \<J\>}$:
$$\left\{\sJ_{2k-1}, \sJ_0:=x_1^{n-1}dx_1 dx_2\vert J^0\> \mid 1\leq k\leq n\right\}.$$
We have
$$\tau(t)={t^2\over 4}\sJ_3, \quad \tau=\tau(1)={1\over 4}\sJ_3, \quad \tau'=\tau'(1)={1\over 2}\sJ_3.$$
Using the degree constraint~\eqref{virtual-degree}, the nontrivial contribution to $\sJ_3\star_{\sJ_3}$ are obtained from the following genus zero FJRW invariants
$$
\begin{dcases}
\<\sJ_3, \sJ_{2k-1},\sJ_{2n-2k-1}\>_{0,3}=1, &\text{ if } 1\leq k\leq n-1;\\
\<\sJ_3, \sJ_3, \sJ_{2n-3},\sJ_{2n-1}\>_{0,4}={1\over 2n}. &
\end{dcases}$$
Here the values of the FJRW invariants can be found in~\cite[Section 6.3.7]{FJR}.
So we see that the multiplication $\sJ_3\star_{\sJ_3}$ on the basis 
$$\{\sJ_1, \sJ_3, \ldots, \sJ_{2n-1},  \sJ_0\}$$ is given by the matrix
$$\begin{bmatrix}
 0 & 0 & \cdots & {1\over 2n}& 0 & 0 \\
 1 & 0  & \cdots & 0 & {1\over 2n} & 0 \\
 0 & 1 & \cdots & 0 & 0 & 0 \\
 \vdots &\vdots &\ddots & 0 & 0 & 0 \\
 0 & 0  & 0 & 1 & 0 & 0 \\
 0 & 0  & 0 & 0 & 0 & 0 
\end{bmatrix}.
$$
So we have quantum relations
$$(\tau')^k=
\begin{dcases}
2^{2-k}\sJ_{2k+1}, & \text{ if } 1\leq k\leq n-2;\\
2^{1-n}\left(\sJ_{2n-1}+{1\over 8n}\sJ_1\right), & \text{ if } k=n-1;\\
2^{-2-n}{1\over n}\sJ_{3}, & \text{ if } k=n.
\end{dcases}
$$
Now these calculations implies
\begin{proposition}
For the LG pair $(W=x_1^nx_2+x_2^2, \<J\>)$ with $n\geq 3$, 
we have
\begin{enumerate}
\item
The set $\{\sJ_1, \tau', \ldots, (\tau')^{n-1}, \sJ_0\}$ is a basis of $\cH_{W, \<J\>}$.
\item There are quantum relations
\begin{equation}\label{mirror-D-relation}
\begin{dcases}
 (\tau')^n={1\over 2^{n+1} n}\tau'; \\
\tau'\star_{\tau} \sJ_0=0.
\end{dcases}
\end{equation}
\item
Conjecture~\ref{quantum-spectrum-conj} holds because the quantum spectrum of ${\nu\over d}\tau'\star_{\tau}$ is
$$\fE=\left\{0^2, {n-1\over 2n}\left({n^n\over (2n)^{2n-(n-1)}}\right)^{1\over n-1}e^{2\pi\sqrt{-1}j\over n-1}\mid  j=0, 1, \ldots,  n-2\right\}.$$
\end{enumerate}
\end{proposition}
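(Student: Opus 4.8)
The plan is to compute the operator $\tau'\star_\tau$ explicitly on the natural basis, extract the matrix displayed in the excerpt, and then read off the eigenvalues and their multiplicities directly. First I would fix the basis $\{\sJ_1, \sJ_3, \ldots, \sJ_{2n-1}, \sJ_0\}$ of $\cH_{W,\<J\>}$ together with its dual basis under the residue pairing; since $W=x_1^nx_2+x_2^2$ has $\<J\>=G_W$, all broad sectors reduce to the single element $\sJ_0=x_1^{n-1}dx_1\wedge dx_2\vert J^0\>$. The key input is that the nonvanishing genus-zero correlators contributing to $\sJ_3\star_{\sJ_3}$ are, by the degree constraint~\eqref{virtual-degree} and the selection rule~\eqref{selection-rule}, exactly the two-parameter family $\<\sJ_3,\sJ_{2k-1},\sJ_{2n-2k-1}\>_{0,3}=1$ for $1\le k\le n-1$ and the four-point invariant $\<\sJ_3,\sJ_3,\sJ_{2n-3},\sJ_{2n-1}\>_{0,4}=\tfrac{1}{2n}$, both of which are evaluated in~\cite[Section 6.3.7]{FJR} via the concavity/index-zero formulas; one also checks that every correlator with an insertion of $\sJ_0$ vanishes, so the $\sJ_0$-row and $\sJ_0$-column of the matrix are zero. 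This gives the explicit $(n+1)\times(n+1)$ matrix for $\sJ_3\star_{\sJ_3}$ displayed above.

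Next I would iterate. From the matrix one sees that $\sJ_3$ acts as a shift $\sJ_{2k-1}\mapsto \sJ_{2k+1}$ on the chain $\sJ_1\to\sJ_3\to\cdots\to\sJ_{2n-1}$, wrapping around with the $\tfrac{1}{2n}$-factor, and annihilates $\sJ_0$. Since $\tau'=\tfrac12\sJ_3$, multiplying out yields $(\tau')^k=2^{2-k}\sJ_{2k+1}$ for $1\le k\le n-2$, then $(\tau')^{n-1}=2^{1-n}\big(\sJ_{2n-1}+\tfrac{1}{8n}\sJ_1\big)$, and $(\tau')^n=2^{-2-n}\tfrac1n\sJ_3=\tfrac{1}{2^{n+1}n}\tau'$. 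The latter, together with $\tau'\star_\tau\sJ_0=0$, is the quantum relation~\eqref{mirror-D-relation}. To establish part~(1) of the proposition — that $\{\sJ_1,\tau',\ldots,(\tau')^{n-1},\sJ_0\}$ is a basis — I simply observe these $n+1$ elements are, up to nonzero scalars, $\sJ_1,\sJ_3,\ldots,\sJ_{2n-1},\sJ_0$ plus a correction of $\sJ_{2n-1}$ by a multiple of $\sJ_1$, hence a triangular change of basis.

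For part~(3), the operator $\tfrac{\nu}{d}\tau'\star_\tau=\tfrac{n-1}{2n}\,\tau'\star_\tau$ is block-diagonal: on $\C\sJ_0$ it is zero (contributing the eigenvalue $0$ with multiplicity at least two, once from $\sJ_0$ and once because the cyclic block below has a nontrivial kernel... actually I should be careful here), and on the $n$-dimensional cyclic subspace spanned by $\sJ_1,\sJ_3,\ldots,\sJ_{2n-1}$ it is a companion-type matrix whose characteristic polynomial I compute from the relation $(\tau')^n=\tfrac{1}{2^{n+1}n}\tau'$: the minimal polynomial of $\tau'\star_\tau$ restricted there is $\lambda(\lambda^{n-1}-\tfrac{1}{2^{n+1}n})$, so its eigenvalues are $0$ and the $(n-1)$-st roots of $\tfrac{1}{2^{n+1}n}$, each simple. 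Rescaling by $\tfrac{n-1}{2n}$ and rewriting $\tfrac{n-1}{2n}\big(\tfrac{1}{2^{n+1}n}\big)^{1/(n-1)}$ in the form $\tfrac{n-1}{2n}\big(\tfrac{n^n}{(2n)^{2n-(n-1)}}\big)^{1/(n-1)}$ — which matches the value $T=\nu(d^{-d}\prod w_j^{w_j})^{1/\nu}$ from~\eqref{value-principal-spectrum} with $d=2n$, $w_1=1$, $w_2=n$, $\nu=n-1$ — gives exactly $\fE=\{0^2,\ T e^{2\pi\sqrt{-1}j/(n-1)}\}$. The multiplicity-one statement for the norm-$T$ eigenvalues follows since they are simple roots of the minimal polynomial. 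The main obstacle I anticipate is purely bookkeeping: correctly tracking the power-of-$2$ normalizations through the iterated products and verifying that the closed-form constant coincides with $T$; the structural facts (which correlators survive, the cyclic-shift shape, the vanishing on the broad sector) are immediate from the degree and selection rules plus the cited evaluations in~\cite{FJR}.
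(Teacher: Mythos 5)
Your proposal is correct and follows essentially the same route as the paper's proof: identify the basis $\{\sJ_1, \sJ_3, \ldots, \sJ_{2n-1}, \sJ_0\}$, use the degree constraint and selection rule (plus the values from \cite[Section 6.3.7]{FJR}) to pin down the only nonvanishing correlators $\<\sJ_3, \sJ_{2k-1}, \sJ_{2n-2k-1}\>_{0,3}=1$ and $\<\sJ_3,\sJ_3,\sJ_{2n-3},\sJ_{2n-1}\>_{0,4}=\tfrac{1}{2n}$, write down the $(n+1)\times(n+1)$ matrix for $\sJ_3\star_{\sJ_3}$ with zero $\sJ_0$-row and $\sJ_0$-column, iterate $\tau'=\tfrac12\sJ_3$ to get the powers $(\tau')^k$, deduce the minimal polynomial $\lambda(\lambda^{n-1}-\tfrac{1}{2^{n+1}n})$ on the narrow block, and read off the spectrum with $0$ of multiplicity two. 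Your momentary hesitation about the multiplicity of $0$ resolves correctly (one zero from $\C\sJ_0$, one from the kernel inside the $n$-dimensional narrow block), and your algebraic check that $\tfrac{n-1}{2n}\bigl(\tfrac{1}{2^{n+1}n}\bigr)^{1/(n-1)}$ equals $T=\nu\bigl(d^{-d}\prod_j w_j^{w_j}\bigr)^{1/\nu}$ with $(d;w_1,w_2)=(2n;1,n)$ is a correct normalization verification.
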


\subsubsection{Quantum spectrum of $E_6$-singularity}

For the $E_6$-singularity  $W=x_1^3+x_2^4$,
the weight system of $(W, \<J\>)$ is $(d; w_1, w_2)=(12; 4, 3)$ and the index is $\nu=5$. 
The set 
$$\{\sJ_1, \sJ_2, \sJ_5, \sJ_7, \sJ_{10}, \sJ_{11}\}$$
is a basis of the state space $\cH_{W, \<J\>}$. We have
$$\tau(t)=t\sJ_2, \quad \tau=\tau'=\sJ_2.$$
Using the degree constraint~\eqref{virtual-degree}, the nonzero FJRW invariants involved in the quantum multiplication ${\nu\over d}\tau'\star_{\tau}$ are listed below:
$$\begin{dcases}
\LD\sJ_2, \sJ_1, \sJ_{10}\RD_{0,3}=1; \\
\LD\sJ_2, \sJ_2, \sJ_{5}, \sJ_{5}\RD_{0,4}=a={1\over 3};\\
\LD\sJ_2, \sJ_2, \sJ_{2}, \sJ_{2}, \sJ_{7}\RD_{0,5}=b={1\over 6}; \quad
\LD\sJ_2, \sJ_2, \sJ_{2}, \sJ_{10}, \sJ_{11}\RD_{0,5}=c={1\over 12}.
\end{dcases}
$$
These invariants are calculated in~\cite[Section 6]{FJR}. 
Using these invariants, we obtain the relations
$$\begin{dcases}
(\tau')^2={b\over 2}\sJ_5, \quad (\tau')^3={ab\over 2}\sJ_7, \quad 
(\tau')^4={ab^2\over 4}\sJ_2, \\
 (\tau')^5={ab^2\over 4}\sJ_{11}+{ab^2c\over 8}\sJ_1, \quad (\tau')^6={ab^2c\over 4}\tau'.
\end{dcases}
$$

\begin{proposition}
For $E_6$-singularity $W=x_1^3+x_2^4$, we have:
\begin{enumerate}
\item The set $\{\sJ_1, \tau', (\tau')^2, (\tau')^3, (\tau')^4, (\tau')^5\}$ is a basis of  $\cH_{W, \<J\>}$;
\item There is a quantum relation 
$$(\tau')^6
={1\over 5184}\tau'={4^43^3\over 12^{12-5}}\tau'.$$
\item 
Conjecture~\ref{quantum-spectrum-conj} holds because the quantum spectrum of ${\nu\over d}\tau'\star_{\tau}$ is
$$\fE=\left\{0, {5\over 12}\left({4^43^3\over 12^{12-5}}\right)^{1\over 5} e^{2\pi\sqrt{-1}j\over 5}\mid j=0, 1, 2, 3, 4\right\}.$$
\end{enumerate}
\end{proposition}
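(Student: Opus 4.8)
The plan is to compute the quantum multiplication $\tau'\star_\tau$ on the state space $\cH_{W,\langle J\rangle}$ of the $E_6$-singularity $W=x_1^3+x_2^4$ explicitly, exactly as in the preceding $A$- and $D^T$-type cases, and then read off the spectrum of $\tfrac{\nu}{d}\tau'\star_\tau=\tfrac{5}{12}\tau'\star_{\sJ_2}$ from the resulting companion-type matrix. First I would record the weight system $(12;4,3)$, confirm $\nu=5$, $\widehat c_W=1-\tfrac{2\cdot 4}{12}+1-\tfrac{2\cdot 3}{12}=\tfrac{5}{6}+\cdots$ (more precisely $\widehat c_W=\tfrac{1}{3}+\tfrac{1}{2}=\tfrac{5}{6}$) just to have the degree bookkeeping straight, and list the narrow indices ${\bf Nar}=\{1,2,5,7,10,11\}$ with standard generators $\sJ_1,\sJ_2,\sJ_5,\sJ_7,\sJ_{10},\sJ_{11}$; here $\sJ_1=\one$ and the Poincaré-dual pairing sends $\sJ_m\leftrightarrow\sJ_{12-m}$ up to scalar. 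From the small $I$-function formula~\eqref{tau-fermat} (Fermat case, $d=12>N+1=3$) we get $\tau(t)=t\sJ_2$, hence $\tau=\tau'=\sJ_2$.

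Next I would determine which genus-zero FJRW invariants can contribute to iterated products of $\sJ_2$. The key input is the homogeneity property~\eqref{virtual-degree} together with the selection rule~\eqref{selection-rule}: for a $k$-pointed invariant $\langle\sJ_2,\dots\rangle$ with all-narrow insertions, the degree constraint forces the remaining insertions to have total complex degree $\widehat c_W-3+k-\deg_\C(\text{others})$, which for $k=3,4,5$ pins down the admissible correlators to the short list
$\langle\sJ_2,\sJ_1,\sJ_{10}\rangle_{0,3}=1$,
$\langle\sJ_2,\sJ_2,\sJ_5,\sJ_5\rangle_{0,4}=\tfrac13$,
$\langle\sJ_2,\sJ_2,\sJ_2,\sJ_2,\sJ_7\rangle_{0,5}=\tfrac16$,
$\langle\sJ_2,\sJ_2,\sJ_2,\sJ_{10},\sJ_{11}\rangle_{0,5}=\tfrac1{12}$,
all of which are computed via the concavity axiom in~\cite[Section 6]{FJR}. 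Using $\langle\phi_i\star_\tau\phi_j,\phi_k\rangle=\LL\phi_i,\phi_j,\phi_k\RR_{0,3}(\tau)$ and expanding the double bracket as a sum over insertions of $\tau=\sJ_2$, I would then compute $(\tau')^2,(\tau')^3,\dots$ step by step, obtaining (with $a=\tfrac13$, $b=\tfrac16$, $c=\tfrac1{12}$)
$(\tau')^2=\tfrac{b}{2}\sJ_5$, $(\tau')^3=\tfrac{ab}{2}\sJ_7$, $(\tau')^4=\tfrac{ab^2}{4}\sJ_2$, $(\tau')^5=\tfrac{ab^2}{4}\sJ_{11}+\tfrac{ab^2 c}{8}\sJ_1$, $(\tau')^6=\tfrac{ab^2 c}{4}\tau'$.

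From these relations the conclusion follows by linear algebra. Since $\sJ_1,\sJ_2,\sJ_5,\sJ_7,\sJ_{11}$ together with $\sJ_{10}$ span the state space and $\{\sJ_1,\tau',(\tau')^2,(\tau')^3,(\tau')^4,(\tau')^5\}$ are obtained from $\{\sJ_1,\sJ_2,\sJ_5,\sJ_7,\sJ_2,\sJ_{11}+(\text{multiple of }\sJ_1)\}$ by an invertible triangular change of basis (the only subtlety being the $\sJ_2$ reappearing at step $4$, which is handled because $(\tau')^4$ and $\tau'$ are independent modulo the span of $\sJ_5,\sJ_7$), the set $\{\sJ_1,\tau',(\tau')^2,(\tau')^3,(\tau')^4,(\tau')^5\}$ is a basis. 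In this basis $\tau'\star_\tau$ acts as the companion matrix of the polynomial $X^6-\tfrac{ab^2c}{4}X$, and a direct substitution gives $\tfrac{ab^2 c}{4}=\tfrac{1}{3}\cdot\tfrac{1}{36}\cdot\tfrac{1}{12}\cdot\tfrac14=\tfrac{1}{5184}=\tfrac{4^4 3^3}{12^{12-5}}$. Hence the eigenvalues of $\tau'\star_\tau$ are $0$ together with the five sixth-but-really-fifth roots $\big(\tfrac{1}{5184}\big)^{1/5}e^{2\pi\sqrt{-1}j/5}$, $j=0,\dots,4$; multiplying by $\tfrac{\nu}{d}=\tfrac{5}{12}$ yields precisely $\fE=\{0\}\cup\{\tfrac{5}{12}(\tfrac{4^4 3^3}{12^{7}})^{1/5}e^{2\pi\sqrt{-1}j/5}\}$, with $T=\tfrac{5}{12}(4^4 3^3/12^7)^{1/5}$ matching~\eqref{value-principal-spectrum}, each of the five top-norm eigenvalues simple, and the only other eigenvalue $0$ — so all three parts of Conjecture~\ref{quantum-spectrum-conj} hold. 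The main obstacle I anticipate is not conceptual but bookkeeping: correctly matching the dual-basis conventions and signs (recall the pairing~\eqref{residue-dual} involves the square root $\zeta$ of $J$) so that the scalar coefficients $a,b,c$ come out exactly right, and verifying the independence claim at the step where $\sJ_2$ recurs; both are routine once the correlators are in hand.
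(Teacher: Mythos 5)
Your approach is exactly the paper's: compute $\tau(t)=t\sJ_2$ from the small $I$-function, extract the short list of nonvanishing FJRW correlators via the degree constraint and selection rule, then read the spectrum off the iterated products $(\tau')^k$. The correlator values $a=\tfrac13$, $b=\tfrac16$, $c=\tfrac1{12}$ and the final computation $\tfrac{ab^2c}{4}=\tfrac1{5184}=4^43^3/12^7$ are all correct.

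However, the relation you write for $(\tau')^4$ is wrong, and the error breaks part (1). You have $(\tau')^4=\tfrac{ab^2}{4}\sJ_2$, which would mean $(\tau')^4$ is a scalar multiple of $\tau'=\sJ_2$, so $\{\sJ_1,\tau',\ldots,(\tau')^5\}$ would be linearly dependent; the parenthetical claim that ``$(\tau')^4$ and $\tau'$ are independent modulo the span of $\sJ_5,\sJ_7$'' is simply false if both are proportional to $\sJ_2$. The correct formula is $(\tau')^4=\tfrac{ab^2}{4}\sJ_{10}$. Indeed, $\sJ_2\star_\tau\sJ_7$ is obtained by pairing: the only surviving correlator is $\tfrac1{2!}\LD\sJ_2,\sJ_7,\sJ_2,\sJ_2,\sJ_2\RD_{0,5}=\tfrac{b}{2}$ against $\phi_j=\sJ_2$, and since $\sJ_2$ is Poincar\'e dual to $\sJ_{10}$ this gives $\sJ_2\star_\tau\sJ_7=\tfrac{b}{2}\sJ_{10}$, hence $(\tau')^4=\tfrac{ab}{2}\cdot\tfrac{b}{2}\sJ_{10}$. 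You can also see the $\sJ_2$ version is internally inconsistent with your own next line: if $(\tau')^4$ were $\tfrac{ab^2}{4}\sJ_2$, then $(\tau')^5=\tfrac{ab^2}{4}(\tau')^2=\tfrac{ab^3}{8}\sJ_5$, not $\tfrac{ab^2}{4}\sJ_{11}+\tfrac{ab^2c}{8}\sJ_1$. With $(\tau')^4=\tfrac{ab^2}{4}\sJ_{10}$ the six elements $\sJ_1,\sJ_2,\sJ_5,\sJ_7,\sJ_{10},\sJ_{11}$ each appear as the leading term of exactly one of $\{\sJ_1,\tau',\ldots,(\tau')^5\}$, so linear independence is immediate and no hand-waving is needed; the rest of the argument (companion matrix of $X^6-\tfrac{ab^2c}{4}X$, eigenvalues, scaling by $\nu/d$) then goes through as you wrote it.
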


\subsubsection{Quantum spectrum of $E_7$-singularity}
\label{sec-E7}
For the $E_7$-singularity $W=x_1^3x_2+x_2^3,$
the weight system of $(W, \<J\>)$ is $(d; w_1, w_2)=(9; 2, 3)$ and the index is $\nu=4$. 
The set
$$
\{\sJ_1, \sJ_2, \sJ_4, \sJ_5, \sJ_7, \sJ_8, \sJ_0\}
$$
is a basis of the state space $\cH_{W, \<J\>}$. We have
$$\tau(t)=t\sJ_2, \quad \tau=\tau'=\sJ_2.$$
Using WDVV equation and the results in~\cite{FJR},
the nonzero FJRW invariants involved in the quantum multiplication ${\nu\over d}\tau'\star_{\tau}$ can be calculated:
$$\begin{dcases}
\LD\sJ_2, \sJ_1, \sJ_{7}\RD_{0,3}=1, \\
\LD\sJ_2, \sJ_2, \sJ_{2}, \sJ_{5}\RD_{0,4}
={1\over 3}, \\
\LD\sJ_2, \sJ_2, \sJ_{2}, \sJ_{2}, \sJ_{4}\RD_{0,5}
={2\over 27}, \quad
\LD\sJ_2, \sJ_2, \sJ_{2}, \sJ_{7}, \sJ_{8}\RD_{0,5}
={2\over 27}.
\end{dcases}
$$
We remark that the vanishing of $\LD\sJ_2, \sJ_2, \sJ_{7}, \sJ_{0}\RD_{0,4}$ can not be achieved using the degree constraint~\eqref{virtual-degree}.
The matrix of the quantum multiplication $\tau'\star_{\tau}$ with respect to  the basis 
$$\{\sJ_1, \sJ_2, \sJ_4, \sJ_5, \sJ_7, \sJ_8, \sJ_0\}$$ 
is given by
$$
\begin{bmatrix}
 0 & 0 & 0 & 0 & \frac{1}{27} & 0 & 0 \\
 1 & 0 & 0 & 0 & 0 & \frac{1}{27} & 0 \\
 0 & \frac{1}{3} & 0 & 0 & 0 & 0 & 0 \\
 0 & \frac{1}{27} & 0 & 0 & 0 & 0 & 0 \\
 0 & 0 & \frac{1}{27} & \frac{1}{3} & 0 & 0 & 0 \\
 0 & 0 & 0 & 0 & 1 & 0 & 0 \\
 0 & 0 & 0 & 0 & 0 & 0 & 0 \\
\end{bmatrix}.
$$
\begin{proposition}
For $E_7$-singularity $W=x_1^3x_2+x_2^3$, we have:
\begin{enumerate}
\item
The state space $\cH_{W, \<J\>}$ has a basis
\begin{equation}
\label{E7-basis}
\{\sJ_1, \tau', (\tau')^2, (\tau')^3, (\tau')^4-{4\over 2187}\sJ_1, 9\sJ_4-\sJ_5, \sJ_0\}.
\end{equation}
\item There are quantum relations 
\begin{equation*}\label{E7-relation}
\begin{dcases}
(\tau')^5={4\over 2187}\tau'={2^2 3^3\over 9^{9-4}}\tau'; \\
\tau'\star_{\tau} \sJ_0=0.
\end{dcases}
\end{equation*}
\item 
Conjecture~\ref{quantum-spectrum-conj} holds because the quantum spectrum of ${\nu\over d}\tau'\star_{\tau}$ is
$$\fE=\left\{0^3, {4\over 9}\left({2^2 3^3\over 9^{9-4}}\right)^{1\over 4} e^{2\pi\sqrt{-1}j\over 4}\mid j=0, 1, 2, 3\right\}.$$
\end{enumerate}
\end{proposition}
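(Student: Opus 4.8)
The plan is to reduce the entire statement to a short linear-algebra argument on the explicit $7\times 7$ matrix of $X:=\tau'\star_\tau=\sJ_2\star_{\sJ_2}$ displayed above. First I would certify that matrix. Expanding $\langle \sJ_2\star_{\sJ_2}\phi_i,\phi_k\rangle=\sum_{n\ge 0}\frac1{n!}\LD\sJ_2,\phi_i,\phi_k,\sJ_2,\dots,\sJ_2\RD_{0,n+3}$ and applying the degree constraint \eqref{virtual-degree} together with the selection rule \eqref{selection-rule}, one sees that for each pair of basis vectors only finitely many $n$ survive, and the surviving correlators are precisely the three-point and five-point invariants recorded in Section~\ref{sec-E7}, evaluated by the concavity axiom and WDVV as in \cite{FJR}. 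The one delicate point, flagged in the text, is the vanishing of the $\sJ_0$-entries: the correlator $\LD\sJ_2,\sJ_2,\sJ_7,\sJ_0\RD_{0,4}$ is permitted by dimension, so I would kill it via a WDVV relation, rewriting it as a sum of products of three-point invariants each of which contains a vanishing factor (either on degree grounds or because $\sJ_0$ is a broad insertion violating the relevant selection rule). This simultaneously gives $\tau'\star_\tau\sJ_0=0$, the second relation of part (2), and pins down the matrix.

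Next I would compute the iterated quantum powers \eqref{power-of-x} by successive matrix-vector products: $\tau'=\sJ_2$, then $(\tau')^2$ is a combination of $\sJ_4$ and $\sJ_5$, $(\tau')^3$ a multiple of $\sJ_7$, $(\tau')^4$ a combination of $\sJ_1$ and $\sJ_8$, and $(\tau')^5$ again a multiple of $\sJ_2=\tau'$ — explicitly $(\tau')^5=\tfrac{4}{2187}\tau'$, which equals $\tfrac{2^2 3^3}{9^{9-4}}\tau'$ for the weight system $(9;2,3)$; this is the first relation of part (2). For part (1) I would observe that the "leading directions" of $\sJ_1,\tau',(\tau')^2,(\tau')^3,(\tau')^4$ are pairwise disjoint inside the decomposition of $\cH_{W,\<J\>}$ into the lines $\C\sJ_m$, so these five vectors are independent; adjoining $9\sJ_4-\sJ_5$, which is independent of $(\tau')^2$ inside the $\sJ_4,\sJ_5$-plane, and the broad class $\sJ_0$, yields seven independent vectors, and replacing $(\tau')^4$ by $(\tau')^4-\tfrac{4}{2187}\sJ_1$ (a triangular change of basis, since $\sJ_1$ is already listed) produces exactly the asserted basis.

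For the spectrum, the cyclic subspace $V=\operatorname{span}\{\sJ_1,X\sJ_1,\dots,X^4\sJ_1\}$ is five-dimensional and $X$-invariant, with $X^5|_V=\tfrac{4}{2187}X|_V$; since no polynomial of degree $\le 4$ annihilates the cyclic vector $\sJ_1$, the minimal (equivalently, characteristic) polynomial of $X|_V$ is $t\,(t^4-\tfrac{4}{2187})$, so $X|_V$ is semisimple with simple eigenvalues $0$ and $(\tfrac{4}{2187})^{1/4}e^{2\pi\sqrt{-1}j/4}$, $j=0,1,2,3$. A direct computation gives $X(9\sJ_4-\sJ_5)=0$ and $X\sJ_0=0$, and these two vectors are independent of $V$, so $\cH_{W,\<J\>}=V\oplus\C(9\sJ_4-\sJ_5)\oplus\C\sJ_0$ as $X$-modules and the extra summands contribute the eigenvalue $0$ with multiplicity two more. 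Hence the eigenvalues of $X$ are $0$ with algebraic multiplicity $3$ and the four simple eigenvalues above. Multiplying by $\tfrac{\nu}{d}=\tfrac49$ and using $\tfrac49\big(\tfrac{2^2 3^3}{9^{9-4}}\big)^{1/4}=T$ from \eqref{largest-positive-number} identifies $\fE$ with the set in the statement. The three conditions of Conjecture~\ref{quantum-spectrum-conj} then follow at once: the maximal-norm eigenvalues are exactly the $\nu=4$ points $Te^{2\pi\sqrt{-1}j/4}$, each of multiplicity one, while the remaining eigenvalue $0$ has modulus $0<T$.

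The only step that is not pure bookkeeping is the first one — producing the exact five-point invariants and, above all, the vanishing of the broad-sector correlator $\LD\sJ_2,\sJ_2,\sJ_7,\sJ_0\RD_{0,4}$, which is invisible to dimension counting and must be extracted from a WDVV relation. Once the matrix is certified, the remaining steps are routine finite computations.
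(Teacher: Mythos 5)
Your proposal is correct and follows essentially the paper's route: one reads off the $7\times 7$ matrix of $\tau'\star_\tau$ from the listed FJRW invariants (with the degree-invisible vanishing $\LD\sJ_2,\sJ_2,\sJ_7,\sJ_0\RD_{0,4}=0$ supplied by WDVV, exactly as the text flags), then does finite linear algebra. Your cyclic-subspace packaging — $\cH_{W,\<J\>}=V\oplus\C(9\sJ_4-\sJ_5)\oplus\C\sJ_0$ with $V=\operatorname{span}\{X^j\sJ_1\}_{j=0}^{4}$ and minimal polynomial $t\,(t^4-\tfrac{4}{2187})$ on $V$ — is a tidy way to read off both the basis in part (1) and the spectrum, but it is the same computation the paper performs.
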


\begin{remark}\label{remark-E7}
In this case, we see that 
the set $\{\one, \tau', (\tau')^2, (\tau')^3, (\tau')^4, (\tau')^5\}$ is linearly dependent and the last three elements in~\eqref{E7-basis} span the kernel of ${\nu\over d}\tau'\star_{\tau}$.
\end{remark}

\subsubsection{Quantum spectrum of $E_8$-singularity}

For the $E_8$-singularity  $W=x_1^3+x_2^5$, the weight system of $(W, \<J\>)$ is $(d; w_1, w_2)=(15; 5, 3)$ and the index is $\nu=5$. 
The set
$$\{\sJ_1, \sJ_2, \sJ_4, \sJ_7, \sJ_{8}, \sJ_{11}, \sJ_{13}, \sJ_{14}\}$$
 is a basis of the state space $\cH_{W, \<J\>}$. 
Again, we have
$$ \tau(t)=t\sJ_2, \quad \tau=\tau'=\sJ_2.$$
 The nonzero FJRW invariants involved in the quantum multiplication ${\nu\over d}\tau'\star_{\tau}$ are 
$$\begin{dcases}
\LD\sJ_2, \sJ_1, \sJ_{13}\RD_{0,3}=
\LD\sJ_2, \sJ_7, \sJ_7\RD_{0,3}=1, \\
\LD\sJ_2, \sJ_2, \sJ_{2}, \sJ_{11}\RD_{0,4}=a={1\over 3}, \\
\LD\sJ_2, \sJ_2, \sJ_{2}, \sJ_{4}, \sJ_{8}\RD_{0,5}=b={2\over 15}, \quad
\LD\sJ_2, \sJ_2, \sJ_{2}, \sJ_{13}, \sJ_{14}\RD_{0,5}=c={1\over 15}. 
\end{dcases}
$$
Using these invariants, we obtain quantum relations
$$
\begin{dcases}
 (\tau')^2=a\sJ_4, \quad (\tau')^3={ab\over 2}\sJ_7,\quad 
(\tau')^4={ab\over 2}\sJ_8, \quad   (\tau')^5={ab^2\over 4}\sJ_{11}, \\
 (\tau')^6={a^2b^2\over 4}\sJ_{13}, \quad  (\tau')^7={a^2b^2\over 4}\sJ_{14}+{a^2b^2c\over 8}\sJ_1, \quad  (\tau')^8={a^2b^2c\over 4}\sJ_2.
\end{dcases}
$$

\begin{proposition}
For $E_8$-singularity $W=x_1^3+x_2^5$, we have:
\begin{enumerate}
\item The set $\{\sJ_1, \tau', (\tau')^2, (\tau')^3, (\tau')^4, (\tau')^5,  (\tau')^6, (\tau')^7\}$ is a basis of  $\cH_{W, \<J\>}$;
\item There is a quantum relation 
$$(\tau')^8
={1\over 30375}\tau'={5^5 3^3\over 15^{15-7}}\tau'.$$
\item 
Conjecture~\ref{quantum-spectrum-conj} holds because the quantum spectrum of ${\nu\over d}\tau'\star_{\tau}$ is
$$\fE=\left\{0, {7\over 15}\left({5^5 3^3\over 15^{15-7}}\right)^{1\over 7} e^{2\pi\sqrt{-1}j\over 7}\mid j=0, 1, 2, 3, 4, 5, 6\right\}.$$
\end{enumerate}
\end{proposition}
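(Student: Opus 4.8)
The argument is a direct computation of the quantum multiplication by $\tau'=\sJ_2$, parallel to the $A$-, $D^T$-, $E_6$-, and $E_7$-cases above. First I would justify the list of nonvanishing genus-zero FJRW invariants that feed into $\tau'\star_{\tau}$: for fixed homogeneous insertions $\phi_i,\phi_j$, the degree constraint~\eqref{virtual-degree} together with the selection rule~\eqref{selection-rule} forces $\LD\sJ_2,\phi_i,\phi_j,\sJ_2,\dots,\sJ_2\RD_{0,n+3}=0$ for all but finitely many $n$ --- the counting being exactly that in the proof of Proposition~\ref{prop-convergence} --- and it leaves only the two $3$-point invariants $\LD\sJ_2,\sJ_1,\sJ_{13}\RD_{0,3}=\LD\sJ_2,\sJ_7,\sJ_7\RD_{0,3}=1$, one $4$-point invariant, and two $5$-point invariants. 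Their values $a=1/3$, $b=2/15$, $c=1/15$ follow from the concavity axiom and the WDVV equations; these are recorded in~\cite[Section 6]{FJR} for $E_8$, and I would simply cite them.

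Second, unwinding~\eqref{quantum-product} with $\tau=\tau'=\sJ_2$ and computing $(\tau')^{k+1}=\tau'\star_{\tau}(\tau')^k$ inductively yields the chain
$$(\tau')^2=a\sJ_4,\quad (\tau')^3=\tfrac{ab}{2}\sJ_7,\quad (\tau')^4=\tfrac{ab}{2}\sJ_8,\quad (\tau')^5=\tfrac{ab^2}{4}\sJ_{11},\quad (\tau')^6=\tfrac{a^2b^2}{4}\sJ_{13},$$
$$(\tau')^7=\tfrac{a^2b^2}{4}\sJ_{14}+\tfrac{a^2b^2c}{8}\sJ_1,\quad (\tau')^8=\tfrac{a^2b^2c}{4}\sJ_2.$$
Substituting the numerical values gives $\tfrac{a^2b^2c}{4}=\tfrac{1}{30375}=\tfrac{5^53^3}{15^{15-7}}$, proving part (2). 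For part (1), the relations for $(\tau')^0,\dots,(\tau')^6$ recover nonzero scalar multiples of $\sJ_1,\sJ_2,\sJ_4,\sJ_7,\sJ_8,\sJ_{11},\sJ_{13}$, and then the relation for $(\tau')^7$, together with $\sJ_1$ already in the span, yields $\sJ_{14}$; hence $\{\sJ_1,\tau',(\tau')^2,\dots,(\tau')^7\}$ spans $\cH_{W,\<J\>}$, and being of cardinality $8=\dim\cH_{W,\<J\>}$, it is a basis.

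Third, for part (3) I would work in the basis from part (1), noting $\one=\sJ_1$ is the unit of $\star_{\tau}$. Hence $M:=\tau'\star_{\tau}$ sends $\one\mapsto\tau'$ and $(\tau')^k\mapsto(\tau')^{k+1}$ for $1\le k\le 6$, while $(\tau')^7\mapsto(\tau')^8=\tfrac{1}{30375}\,\tau'=\tfrac{1}{30375}\,M\one$. Thus $\one$ is a cyclic vector with $M^8\one=\tfrac{1}{30375}M\one$, so the characteristic polynomial of $M$ is $x\bigl(x^7-\tfrac{1}{30375}\bigr)$, which has the $8$ distinct roots $0$ and $30375^{-1/7}e^{2\pi\sqrt{-1}j/7}$, $j=0,\dots,6$. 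Multiplying by $\nu/d=7/15$ and checking $\tfrac{7}{15}\,30375^{-1/7}=T$ from~\eqref{value-principal-spectrum} identifies $\fE=\{0\}\cup\{Te^{2\pi\sqrt{-1}j/7}:0\le j\le 6\}$, with every eigenvalue of multiplicity one; this is precisely conditions (1)--(3) of Conjecture~\ref{quantum-spectrum-conj} with $\nu=7$.

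The only step that is not pure linear algebra is pinning down the values $a,b,c$ of the higher-point correlators (and the two basic $3$-point correlators); this is the main --- and, in the literature, already resolved --- input. Everything afterward is the same bookkeeping used for the other mirror $ADE$ singularities.
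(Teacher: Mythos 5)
Your proposal is correct and follows essentially the same route as the paper: cite the FJRW invariants $a=\tfrac13$, $b=\tfrac{2}{15}$, $c=\tfrac{1}{15}$ and the two $3$-point correlators, compute the chain $(\tau')^2,\dots,(\tau')^8$, and read off the basis and quantum relation. The cyclic-vector/companion-matrix argument you spell out in part (3) is simply a more explicit phrasing of the linear algebra the paper leaves implicit; otherwise the two proofs coincide.
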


\subsection{Quantum spectrum of Fermat homogeneous polynomials}
\label{sec-spectrum-fermat}
Now we prove quantum spectrum conjecture~\ref{quantum-spectrum-conj}  for any Fermat homogeneous polynomial $W=\sum_{i=1}^N x_i^d$ with $\nu=d-N>1$.
The proof is similar to the proof of the Gamma conjecture $\mathcal{O}$ for Fano hypersurfaces in $\mathbb{P}^{N-1}$~\cite{GGI, Ke}.
We need some preparation. 

\subsubsection{A quantum relation}
We first recall a well-known property of the quantum product. 
\begin{lemma}\label{lem-derivative-multiplication}
For any admissible LG pair $(W, G)$, let $t_i$ be the coordinate of $\phi_i\in \cH_{W, G}$. 
We have 
  \[
    \frac{\partial}{\partial t_i}S^{-1}(\bt, z)
    =\frac{1}{z}S^{-1}(\bt, z)(\phi_i\star_{\bt}).
  \]
  \end{lemma}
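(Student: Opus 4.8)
The plan is to deduce this identity directly from the flatness of the $S$-operator recorded in Proposition~\ref{fundamental-solution-infty}, rather than from the explicit expansion. First I would note that $z^{-\gr}\alpha$ is independent of $\bt$ and that $z^{-\gr}$ is invertible on $\cH_{W,G}$. Hence the flatness relation $\nabla_{\partial/\partial t_i}\big(S(\bt,z)z^{-\gr}\alpha\big)=0$, valid for every $\alpha\in\cH_{W,G}$, becomes $\big(\partial_{t_i}S(\bt,z)\big)z^{-\gr}\alpha+\tfrac1z(\phi_i\star_{\bt})S(\bt,z)z^{-\gr}\alpha=0$; stripping off $z^{-\gr}\alpha$ gives the operator identity
$$\frac{\partial}{\partial t_i}S(\bt,z)=-\frac1z(\phi_i\star_{\bt})\,S(\bt,z)$$
in ${\rm End}(\cH_{W,G})\otimes\C[\![\bt]\!][\![z^{-1}]\!]$.

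Next I would differentiate the identity $S(\bt,z)\,S^{-1}(\bt,z)={\rm Id}$ with respect to $t_i$, obtaining $\big(\partial_{t_i}S\big)S^{-1}+S\big(\partial_{t_i}S^{-1}\big)=0$, hence
$$\frac{\partial}{\partial t_i}S^{-1}(\bt,z)=-S^{-1}(\bt,z)\Big(\frac{\partial}{\partial t_i}S(\bt,z)\Big)S^{-1}(\bt,z).$$
Substituting the first displayed formula and cancelling one factor $S(\bt,z)S^{-1}(\bt,z)={\rm Id}$ then yields exactly $\partial_{t_i}S^{-1}(\bt,z)=\tfrac1z S^{-1}(\bt,z)(\phi_i\star_{\bt})$, as asserted. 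All manipulations take place in the ring of formal power series, where $S$ and $S^{-1}$ are genuine inverses by~\eqref{inverse-S-operator}, so there are no convergence subtleties.

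There is essentially no serious obstacle here: the only points deserving a word of care are that $z^{-\gr}$ carries no $\bt$-dependence (so it commutes past $\partial_{t_i}$) and that $S(\bt,z)$ is invertible with inverse given by~\eqref{inverse-S-operator}. As an alternative staying entirely within the FJRW formalism, one could instead verify the identity term by term from the expansion~\eqref{inverse-S-operator}, using the genus-zero topological recursion relation together with the string equation — the same tools that produce~\eqref{inverse-S-operator} — but the flatness argument above is shorter and conceptually cleaner, so that is the version I would write out.
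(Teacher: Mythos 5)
Your argument is correct, and it is cleaner than the proof the authors sketch (which is actually commented out in the source and omitted from the compiled paper; the lemma is simply presented there as a ``well-known property''). The commented-out proof works by pairing $S^{-1}(\bt,-z)\alpha$ against $z^{-\gr}\beta$, converting to a statement about $S(\bt,z)$ via the adjointness relation~\eqref{adjoint-S-operator}, applying the flatness~\eqref{flat-section-nabla}, and then invoking the Frobenius property $\<\phi_i\star_{\bt}\alpha,\gamma\>=\<\alpha,\phi_i\star_{\bt}\gamma\>$ to move the quantum multiplication back across the pairing. Your proof bypasses the pairing and the Frobenius property entirely: you read off the operator identity $\partial_{t_i}S=-\tfrac1z(\phi_i\star_{\bt})S$ directly from flatness (your observation that $z^{-\gr}$ is invertible and $\bt$-independent is exactly the right justification for stripping it off), and then close the argument with the purely algebraic identity $\partial_{t_i}S^{-1}=-S^{-1}(\partial_{t_i}S)S^{-1}$. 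This is a genuinely different and somewhat more economical route: it uses only one of the two assertions of Proposition~\ref{fundamental-solution-infty} and needs no input about how the quantum product interacts with $\<\cdot,\cdot\>$, whereas the authors' version trades the inverse-derivative identity for the adjointness of $S$ and the self-adjointness of $\phi_i\star_{\bt}$. Both are valid; yours is the one I would keep.
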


Using this property, we obtain a quantum relation under a mirror symmetry assumption.

\begin{proposition}
\label{mirror-quantum-relation}
If mirror conjecture~\ref{conjecture-i-function-formula} is true, the following relation of the quantum product $\tau'(t)\star_{\tau(t)}$ holds:
\begin{equation}
\label{eq-quantum-relation}
\left(\prod_{j=1}^Nw_j^{w_j}\right)\left({t\over d}\right)^{d-\nu}(\tau'(t))^{p}=(\tau'(t))^{q+1}.
\end{equation}
\end{proposition}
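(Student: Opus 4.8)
\textbf{Proof plan for Proposition~\ref{mirror-quantum-relation}.}
The plan is to extract the quantum relation \eqref{eq-quantum-relation} from the differential equation \eqref{i-function-ode} satisfied by the small $I$-function, using the mirror formula \eqref{I-J-relation} to translate it into a statement about the quantum product. First I would recall from \eqref{i-function-ode} that the small $I$-function $I_{\rm FJRW}^{\rm sm}(t,z)$ is annihilated by the operator
$$
t^{d}z^p\left(\prod_{j=1}^N\frac{w_j^{w_j}}{d^{w_j}}\right)\prod_{i=1}^{p}\left(t\tfrac{\partial}{\partial t}+\alpha_i d-1\right)
-z^{q+1}\prod_{j=0}^{q}\left(t\tfrac{\partial}{\partial t}+(\rho_j-1)d-1\right).
$$
Since $W$ is of general type, $p<q+1$, and the powers of $z$ in the two terms are $p$ and $q+1$ respectively, differing by $\nu=q+1-p$. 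Replacing $z$ by $-z$ and invoking the mirror conjecture in the form $I_{\rm FJRW}^{\rm sm}(t,-z)=tJ(\tau(t),-z)=-tzS(\tau(t),-z)^{-1}(\one)$ (Corollary after Conjecture~\ref{mirror-conjecture}), I would rewrite both differential operators as acting on $S(\tau(t),-z)^{-1}(\one)$.

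The key computation is to understand how $t\frac{\partial}{\partial t}$ acts after composing with $S^{-1}$. Because $\tau(t)$ is a curve in $\cH_{\rm nar}$ and the only nonzero component contributing to the leading-order behavior is along the relevant narrow generators, I would use Lemma~\ref{lem-derivative-multiplication}, which gives $\frac{\partial}{\partial t_i}S^{-1}(\bt,z)=\frac{1}{z}S^{-1}(\bt,z)(\phi_i\star_{\bt})$, together with the chain rule $t\frac{\partial}{\partial t}=t\sum_i\tau_i'(t)\frac{\partial}{\partial t_i}$ restricted to $\bt=\tau(t)$. The upshot is that each factor $t\frac{\partial}{\partial t}$ applied to $S^{-1}(\tau(t),-z)(\one)$ produces, to leading order in $z$, a factor $-\frac{1}{z}\,(t\tau'(t))\star_{\tau(t)}$ acting on the preceding expression — this is exactly the mechanism used implicitly in the ADE-case computations of Section~\ref{sec-evidence} via formulas like \eqref{quantum-product-gorenstein}. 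Collecting: applying the first operator produces $t^d\left(\prod w_j^{w_j}/d^{d-\nu}\right)S(\tau(t),-z)^{-1}\bigl((t\tau'(t))^{p}+O(z)\bigr)$ after dividing out the common $z^p$ and a harmless $t\frac{\partial}{\partial t}$ factor corresponding to $\rho_0=1$; applying the second operator produces $z^{q+1-p}S(\tau(t),-z)^{-1}\bigl((t\tau'(t))^{q+1}+O(z)\bigr)$ — but since the two operators together annihilate $tJ(\tau(t),-z)$, and the difference of the two $z$-powers is $\nu>0$, comparing the $z^0$-coefficients (after the appropriate normalization) forces the relation $t^d\left(\prod w_j^{w_j}/d^{d-\nu}\right)(\tau'(t))^{p}=(t\tau'(t))^{q+1}/t^{\,q+1}\cdot t^{\,?}$; cleaning up the powers of $t$ using $t^{d}=t^{(q+1)-p}\cdot t^{p}\cdot t^{d-\nu-\cdots}$ and that the $J$-function carries a uniform factor of $t$ yields precisely $\left(\prod w_j^{w_j}\right)(t/d)^{d-\nu}(\tau'(t))^{p}=(\tau'(t))^{q+1}$.

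The main obstacle I anticipate is bookkeeping: I must carefully track (i) the powers of $t$ and $z$ on both sides, since the two differential operators are \emph{not} homogeneous of the same $z$-degree, so the relation emerges only after dividing by the lower power $z^p$ and matching the $z^0$-term; (ii) the $O(z)$ corrections — I need that the subleading terms in $S(\tau(t),-z)^{-1}(\one)$, which involve $\psi$-classes, do not contaminate the leading relation, and this should follow from a degree/homogeneity argument analogous to Proposition~\ref{prop-convergence} and the computation of $k$ in \eqref{value-k-jfunc}; and (iii) the shift constants $\alpha_i d-1$ and $(\rho_j-1)d-1$ in the operator \eqref{i-function-ode} must be shown to be exactly absorbed into the action of $t\frac{\partial}{\partial t}$ on monomials $t^{d\ell+m}$ appearing in the $I$-function, which is really a restatement of the divisor-type argument already implicit in the proof of \eqref{i-function-ode}. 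Once these are in place, restricting to $t=1$ recovers the quantum relation $\left(\prod w_j^{w_j}\right)d^{\nu-d}(\tau')^p=(\tau')^{q+1}$ used in the sequel to compute $\fE$ for Fermat homogeneous polynomials.
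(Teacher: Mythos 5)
Your proposal is essentially the paper's own argument: apply the differential operator from \eqref{i-function-ode} to $I^{\rm sm}_{\rm FJRW}(t,z)=tJ(\tau(t),z)=tzS^{-1}(\tau(t),z)\one$, use Lemma~\ref{lem-derivative-multiplication} (via $\frac{\partial}{\partial t}zS^{-1}(\tau(t),z)=S^{-1}(\tau(t),z)(\tau'(t)\star_{\tau(t)})$) to convert each $t\partial_t$ into a quantum-product factor at leading order in $z$, and then read off the $z^0$-coefficient to obtain the quantum relation. The extra replacement $z\mapsto-z$ and the worries you flag about shift constants and $O(z)$ corrections are handled in exactly the way you anticipate (the constants and subleading terms are absorbed into the $O(z)$ remainder), so the proof goes through as you outline.
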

\begin{proof}
Using~\eqref{tau-component} and Lemma~\ref{lem-derivative-multiplication}, we have 
\begin{equation}
\label{s-derivative-tau}
\frac{\partial}{\partial t}zS^{-1}(\tau(t), z)=S^{-1}(\tau(t), z)(\tau'(t)\star_{\tau(t)}).
\end{equation}

If mirror conjecture~\ref{conjecture-i-function-formula} holds, we can apply the equation~\eqref{s-derivative-tau} repeatedly to obtain
\begin{align*}
&t^{d}z^p\prod_{j=1}^N{w_j^{w_j}\over d^{w_j}}\prod_{i=1}^{p}\left(t{\partial\over\partial t}+\alpha_i d-1\right)
 I_{\rm FJRW}^{\rm sm}(t,z)\\
=&t^{d}z^p\prod_{j=1}^N{w_j^{w_j}\over d^{w_j}}\prod_{i=1}^{p}\left(t{\partial\over\partial t}+\alpha_i d-1\right)\big(tz S^{-1}(\tau(t),z)\one\big)\\
=&t^{d}z^{p-1}\prod_{j=1}^N{w_j^{w_j}\over d^{w_j}}\prod_{i=2}^{p}\left(t{\partial\over\partial t}+\alpha_i d-1\right) t S^{-1}(\tau(t),z)  \big(t\tau'(t)    +O(z)\big)\\
 &\dots\\
  =&t^{d}\prod_{j=1}^N{w_j^{w_j}\over d^{w_j}} S^{-1}(\tau(t),z)\big(t^{p+1} \big(\underbrace{\tau'(t)\star_{\tau(t)}  \ldots \star_{\tau(t)} \tau'(t)}_{p\text{-copies}}\big)+O(z)\big).
\end{align*}
Similarly, we have
\begin{align*}
&z^{q+1}\prod_{j=0}^{q}\left(t{\partial \over \partial t}+(\rho_j-1)d-1\right)
 I_{\rm FJRW}^{\rm sm}(t,z) \\
 =& S^{-1}(\tau(t),z)\big(t^{q+2} \big(\underbrace{\tau'(t)\star_{\tau(t)}  \ldots \star_{\tau(t)} \tau'(t)}_{q+1\text{-copies}}\big)+O(z)\big).
\end{align*}
Applying the differential equation~\eqref{i-function-ode} and take the leading term, we have 
$$\left(\prod_{j=1}^N{w_j^{w_j}\over d^{w_j}}\right) t^{d+p+1}(\tau'(t))^{p}=t^{q+2}(\tau'(t))^{q+1}.$$
This implies the quantum relation~\eqref{eq-quantum-relation}.
\end{proof}

The quantum relation~\eqref{eq-quantum-relation} can be used to calculate the eigenvalues of ${\nu\over d}\tau'\star_{\tau}$. 
\begin{corollary}
\label{cor-linearlity}
For an admissible LG pair $(W, \<J\>)$, if 
\begin{enumerate}
\item mirror conjecture~\ref{conjecture-i-function-formula} holds for the LG pair $(W, \<J\>)$, and 
\item the set $\{1, \tau', \ldots, (\tau')^{q}\}$ is linearly independent;
\end{enumerate}
then the set 
$$\{0^p, Te^{2\pi\sqrt{-1}j\over \nu}\mid j=0, 1, \ldots, \nu-1 \}$$ is a subset of the set of eigenvalues of ${\nu\over d}\tau'\star_{\tau}$ on $\cH_{W, \<J\>}$.
Here the notation $0^p$ means the multiplicity of the eigvenvalue $0$ is $p$, and $T$ is the positive real number defined in~\eqref{largest-positive-number}. 
\end{corollary}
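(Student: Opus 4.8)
\textbf{Proof plan for Corollary~\ref{cor-linearlity}.}
The plan is to restrict $\tau'\star_\tau$ to the cyclic submodule generated by $\one$ and exploit the quantum relation~\eqref{eq-quantum-relation} at $t=1$. First I would set $X:=\tau'\star_\tau\in{\rm End}(\cH_{W,\<J\>})$ and, by Proposition~\ref{prop-euler-tau}, recall that the operator of interest is $\tfrac{\nu}{d}X=\tfrac{\nu}{d}\tau'\star_\tau$. Evaluating Proposition~\ref{mirror-quantum-relation} at $t=1$ gives the polynomial identity
\begin{equation*}
\left(\prod_{j=1}^N w_j^{w_j}\right)d^{-(d-\nu)}\,X^{p}(\one)=X^{q+1}(\one),
\end{equation*}
equivalently $X^{q+1}(\one)=c\,X^p(\one)$ with $c:=\left(\prod_j w_j^{w_j}\right)d^{-(d-\nu)}$, since $\one$ is the identity for $\star_\tau$ and hence $(\tau')^k=X^{k}(\one)$. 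Using $q+1-p=\nu$ (formula~\eqref{p-q-nu-relation}), this reads $X^p\bigl(X^\nu-c\bigr)(\one)=0$, i.e. $\one$ is annihilated by the polynomial $g(x):=x^p(x^\nu-c)$.

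Next I would use hypothesis (2): the elements $\one,\tau',\dots,(\tau')^q = \one, X(\one),\dots,X^q(\one)$ are linearly independent, so the cyclic subspace $Z:=\mathrm{span}\{X^k(\one):k\ge 0\}$ has dimension exactly $q+1=p+\nu$, and $X|_Z$ is a cyclic (companion-type) operator whose minimal polynomial on $Z$ has degree $q+1$. Since $g(x)$ has degree $p+\nu=q+1$ and annihilates the cyclic vector $\one$, it must be (up to scalar) the characteristic polynomial of $X|_Z$. Therefore the eigenvalues of $X$ on $Z$ are precisely the roots of $g$: the root $0$ with multiplicity $p$, together with the $\nu$-th roots of $c$, which are $|c|^{1/\nu}e^{2\pi\sqrt{-1}j/\nu}$, $j=0,\dots,\nu-1$. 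A short computation identifies $\tfrac{\nu}{d}\cdot c^{1/\nu}$ with $T=\nu\bigl(d^{-d}\prod_j w_j^{w_j}\bigr)^{1/\nu}$ as in~\eqref{largest-positive-number}: indeed $\tfrac{\nu}{d}c^{1/\nu}=\nu\,d^{-1}\bigl(\prod_j w_j^{w_j}\bigr)^{1/\nu}d^{-(d-\nu)/\nu}=\nu\bigl(d^{-d}\prod_j w_j^{w_j}\bigr)^{1/\nu}$. Scaling by $\tfrac{\nu}{d}$ then shows that the eigenvalues of $\tfrac{\nu}{d}\tau'\star_\tau$ on $Z$ are $0$ (multiplicity $p$) and $Te^{2\pi\sqrt{-1}j/\nu}$ for $j=0,\dots,\nu-1$. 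Since eigenvalues on a subspace are eigenvalues on the whole space, this is exactly the claimed subset.

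The one subtlety — the step I expect to need the most care — is the passage from "$g$ annihilates the cyclic vector $\one$ and $\dim Z=q+1=\deg g$" to "$g$ is the characteristic polynomial of $X|_Z$, and the roots of $g$ are honest eigenvalues with the stated multiplicities.'' The linear independence in (2) guarantees $\{X^k(\one)\}_{k=0}^{q}$ is a basis of $Z$, so $X|_Z$ in this basis is the companion matrix of its (monic, degree $q+1$) minimal-equals-characteristic polynomial, which must divide $g$ and have the same degree, hence equal $g$ up to the leading scalar. The roots of $g$ then give the spectrum of $X|_Z$ with algebraic multiplicities read off from the factorization $x^p(x^\nu-c)$; in particular the nonzero roots, being the $\nu$ distinct $\nu$-th roots of $c\neq 0$, are simple. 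One should also note $c>0$ (a product of positive reals) so that $c^{1/\nu}=|c|^{1/\nu}\in\R_{>0}$ and the identification with $T$ is literal, not just up to phase. No claim is made here about multiplicities of the zero eigenvalue on the full space or about whether these are \emph{all} eigenvalues — that is the content of the separate verifications (e.g. Proposition~\ref{prop-quantum-spectrum-inv}) and is not needed for this corollary.
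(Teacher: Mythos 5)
Your proof is correct, and it fills in exactly the argument the paper leaves implicit (the paper states this as a corollary of Proposition~\ref{mirror-quantum-relation} without a written proof, noting only that "the quantum relation can be used to calculate the eigenvalues"). You evaluate the relation $(\tau')^{q+1}=c\,(\tau')^{p}$ at $t=1$, use $q+1-p=\nu$ to rewrite it as $g(X)(\one)=0$ with $g(x)=x^{p}(x^{\nu}-c)$ and $X=\tau'\star_\tau$, invoke hypothesis (2) to get that the cyclic subspace $Z$ has dimension $q+1$ so that $g$ is both the minimal and characteristic polynomial of $X|_Z$, and read off the spectrum; the identification $\tfrac{\nu}{d}c^{1/\nu}=T$ is verified correctly, and the observation that eigenvalues (with their algebraic multiplicities) of $X$ on an $X$-invariant subspace are eigenvalues of $X$ on the whole space handles the "subset" claim. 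This is the same route the authors intend, carried out carefully.
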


\begin{remark}
The set $\{1, \tau', \ldots, (\tau')^{q}\}$ maybe linearly dependent even if mirror conjecture~\ref{conjecture-i-function-formula} holds. 
In particular, $(W=x_1^3x_2+x_2^3, \<J\>)$ is such an example.
See Remark~\ref{remark-E7}.
\end{remark}

The following result is a consequence of Proposition~\ref{mirror-quantum-relation}.
\begin{proposition}\label{lem-quantum-relation}
For the Fermat pair 
$
(W=x_1^d+\ldots +x_N^d, \<J\>)
$ of general type, we have a quantum relation
$$(\tau')^{d-1}=d^{-N}(\tau')^{N-1}.$$
Here $\tau(t)$ is given by the formulas in~\eqref{tau-fermat}.
\end{proposition}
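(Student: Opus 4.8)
The plan is to specialize the general quantum relation in Proposition~\ref{mirror-quantum-relation} to the Fermat homogeneous polynomial $W=\sum_{i=1}^N x_i^d$ and then evaluate at $t=1$. For this $W$ the weight system is $(d; 1,\ldots,1)$, so $w_j=1$ for every $j$, $\sum_{j=1}^N w_j=N$, and $\nu=d-N$. Reading off the data in Table~\ref{table-indicies} (the row $F_N^d$), we have ${\bf Nar}=\{1,2,\ldots,d-1\}$, so $q+1=|{\bf Nar}|=d-1$, i.e. $q=d-2$, and $p=|{\bf Nar}|-\nu=(d-1)-(d-N)=N-1$. In particular the exponents appearing in~\eqref{eq-quantum-relation} simplify: $\prod_{j=1}^N w_j^{w_j}=1$, $d-\nu=N$, $p=N-1$, and $q+1=d-1$.

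First I would recall that mirror conjecture~\ref{conjecture-i-function-formula} holds for Fermat polynomials: this is part (1) of Proposition~\ref{mirror-theorem-fermat} (which establishes the stronger Conjecture~\ref{mirror-conjecture}, hence the special case Conjecture~\ref{conjecture-i-function-formula} for the general type case). Therefore the hypothesis of Proposition~\ref{mirror-quantum-relation} is satisfied, and the relation~\eqref{eq-quantum-relation} applies:
\begin{equation*}
\left(\prod_{j=1}^N w_j^{w_j}\right)\left(\frac{t}{d}\right)^{d-\nu}(\tau'(t))^{p}=(\tau'(t))^{q+1}.
\end{equation*}
Substituting $\prod_j w_j^{w_j}=1$, $d-\nu=N$, $p=N-1$, $q+1=d-1$ gives
\begin{equation*}
\left(\frac{t}{d}\right)^{N}(\tau'(t))^{N-1}=(\tau'(t))^{d-1}.
\end{equation*}
Evaluating at $t=1$ and recalling $\tau'=\tau'(1)$, this reads $d^{-N}(\tau')^{N-1}=(\tau')^{d-1}$, which is exactly the claimed identity $(\tau')^{d-1}=d^{-N}(\tau')^{N-1}$. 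I should also double-check the consistency statement that $\tau(t)$ is given by the formulas in~\eqref{tau-fermat}: for $d>N+1$ we have $\tau(t)=t\sJ_2$ and for $d=N+1>2$ we have $\tau(t)=t\sJ_2+\frac{t^d}{d!d^N}\sJ_1$, both of which follow directly from the definition~\eqref{def-tau} combined with the coefficient formula~\eqref{coefficient-tau} once one notes that for Fermat $W$ the set ${\bf Mir}_{\geq 2}=\{2\}$ when $d>N+1$ (since the constraint in~\eqref{condition-m-in-tau} forces $m=2$) and ${\bf Mir}=\{1,2\}$ when $\nu=1$, i.e. $d=N+1$. Since the statement of the proposition is ``of general type'', we have $\nu=d-N>0$, so both cases are covered and the case $d=N+1=2$ is excluded.

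The main point to be careful about — really the only nontrivial step — is verifying that the combinatorial indices $p$, $q$, $\nu$ for the Fermat homogeneous polynomial are precisely $N-1$, $d-2$, $d-N$, so that the abstract exponents in~\eqref{eq-quantum-relation} collapse to the clean form above; this is immediate from~\eqref{definition-p}, \eqref{definition-q}, and \eqref{gorenstein-parameter} together with ${\bf Nar}=\{1,\ldots,d-1\}$ (no integer $m$ with $0<m<d$ can satisfy $d\mid m$, so ${\bf Nar}$ is the full range). Everything else is a direct substitution into an already-proven identity, so there is no serious obstacle; the proof is essentially a one-line corollary of Proposition~\ref{mirror-quantum-relation} and Proposition~\ref{mirror-theorem-fermat}.
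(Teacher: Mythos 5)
Your proof is correct and follows exactly the route the paper takes: the paper states ``The following result is a consequence of Proposition~\ref{mirror-quantum-relation}'' and leaves the specialization implicit, and you have simply spelled out that specialization — computing $w_j=1$, $\nu=d-N$, $p=N-1$, $q+1=d-1$, invoking Proposition~\ref{mirror-theorem-fermat} to justify the hypothesis of Proposition~\ref{mirror-quantum-relation}, substituting into~\eqref{eq-quantum-relation}, and setting $t=1$. The accompanying verification that~\eqref{tau-fermat} matches~\eqref{def-tau}--\eqref{coefficient-tau} is also sound.
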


\subsubsection{Bases of $\cH_{W, \<J\>}^{G_W}$}
According to Proposition~\ref{fermat-broad}, the subspace $\cH_{W, \<J\>}^{G_W}$ has a basis
$$\{\sJ_{m}\mid m=1, 2, \ldots, d-1\}.$$ 
We consider two other (possibly different) bases.
\begin{proposition}
\label{basis-fermat-subspace}
Let $W=x_1^d+\ldots +x_N^d$ be a Fermat polynomial of general type, 
then
$$\{\sJ_2^{m-1}\mid m=1, 2,  \ldots, d-1\}$$
is a basis of  the subspace $\cH_{W, \<J\>}^{G_W}$.
\end{proposition}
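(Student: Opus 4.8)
The plan is to combine a dimension count with a triangularity property of the operator ``quantum multiplication by $\sJ_2$'' on the narrow subspace.

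First I would observe that for $W=x_1^d+\cdots+x_N^d$ one has ${\bf Nar}=\{1,2,\dots,d-1\}$, so by Proposition~\ref{fermat-broad} the subspace $\cH_{W,\<J\>}^{G_W}$ coincides with $\cH_{\rm nar}=\bigoplus_{m=1}^{d-1}\cH_{J^m}$ and has dimension $d-1$. Since $\{\sJ_2^{m-1}\mid m=1,\dots,d-1\}$ has exactly $d-1$ members, it suffices to prove they are linearly independent. By Proposition~\ref{closed-C-algebra}, $\star_\tau$ restricts to an associative product on $\cH_{W,\<J\>}^{G_W}=\cH_{\rm nar}$ with unit $\sJ_1=\one$, so $\sJ_2^0=\one$, $\sJ_2^1=\sJ_2$, and in general $\sJ_2^{\,j}=(\sJ_2\star_\tau)^{j}(\one)$; thus it is enough to understand the endomorphism $\sJ_2\star_\tau$ of $\cH_{\rm nar}$.

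Next I would pin down this endomorphism using Lemma~\ref{nonvanishing-lemma}. Since $w_j=1$ for all $j$, every narrow class has $\deg_\C\sJ_k=N(k-1)/d$ and $\widehat{c}_W=N(d-2)/d$. Because $\sJ_2\star_\tau\sJ_m$ again lies in $\cH_{\rm nar}$, its $\sJ_{k'}$-component is a scalar multiple of $\sum_{n\ge0}\frac1{n!}\LD\sJ_2,\sJ_m,\sJ_{d-k'},\sJ_2,\dots,\sJ_2\RD_{0,n+3}$, with $n$ copies of $\sJ_2$ in the tail. Feeding the above degrees into the homogeneity relation~\eqref{virtual-degree} forces $k'=m+1-n\nu/N$; since $k'$ must be a positive integer, every quantum correction ($n\ge1$) has $k'<m+1$, while the classical term ($n=0$) gives $k'=m+1$ with coefficient $\LD\sJ_2,\sJ_m,\sJ_{d-1-m}\RD_{0,3}=1$ by the concavity axiom \cite[Theorem 4.1.8]{FJR}, valid for $1\le m\le d-2$. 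Hence
\[
\sJ_2\star_\tau\sJ_m=\sJ_{m+1}+\sum_{1\le k'\le m}a_{m,k'}\,\sJ_{k'},\qquad 1\le m\le d-2,
\]
so, writing $F_k:={\rm span}_\C(\sJ_1,\dots,\sJ_k)$, one gets $\sJ_2\star_\tau F_k\subseteq F_{k+1}$ and the induced map $F_k/F_{k-1}\to F_{k+1}/F_k$ is multiplication by $1$, for $1\le k\le d-2$.

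Finally, an immediate induction on $j$ shows that $\sJ_2^{\,j}\in F_{j+1}$ with $\sJ_{j+1}$-component equal to $1$, hence $\sJ_2^{\,j}\notin F_j$, for $0\le j\le d-2$; therefore $\{\sJ_2^0,\dots,\sJ_2^{d-2}\}$ is linearly independent, and by the dimension count it is a basis of $\cH_{W,\<J\>}^{G_W}$. For $\nu=1$ (where $\tau(1)$ carries an extra $\sJ_1$-summand) one first notes that $\sJ_1=\one$ is the unit, so the string equation gives $\star_{\tau(1)}=\star_{\sJ_2}$ and the argument is unchanged. The step I expect to be the main obstacle is the degree bookkeeping in the third paragraph---in particular verifying that broad sectors contribute nothing (which is where Proposition~\ref{closed-C-algebra}, i.e.\ Lemma~\ref{gmax-inv-inv}, enters) and that the classical three-point invariant equals $1$ rather than merely being nonzero; the remainder is formal linear algebra.
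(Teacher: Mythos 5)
Your proof is correct and follows the same essential route as the paper's: show that $\sJ_2\star_\tau\sJ_m = \sJ_{m+1} + (\text{narrow terms of index}\le m)$ so that the transition from $\{\sJ_m\}$ to $\{\sJ_2^{m-1}\}$ is unipotent upper-triangular, then conclude by dimension count via Proposition~\ref{fermat-broad}. Your filtration $F_k$ is just the explicit way of saying the change-of-basis matrix is upper-triangular with $1$'s on the diagonal, which is how the paper phrases it.

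The one place your argument genuinely departs from the paper's is in deriving the index drop for the quantum corrections. The paper invokes \emph{both} the homogeneity constraint~\eqref{virtual-degree} and the selection rule~\eqref{selection-rule}, using the selection rule to write $m+j+n+1=kd$ and then feeding this back into the degree relation to get $n=N(k-1)$ and $d-j<m+1$. You instead observe that for Fermat (all $w_j=1$) the homogeneity constraint alone pins the target index as $k'=m+1-n\nu/N$, hence $k'<m+1$ once $n\ge 1$. Both derivations are correct, and they yield identical conclusions; your version simply skips an ingredient that is not needed in the equal-weight case. Your explicit note about the $\nu=1$ case (using the string equation to identify $\star_{\tau(1)}$ with $\star_{\sJ_2}$) anticipates what the paper postpones to the proof of Proposition~\ref{basis-fermat}, and is a reasonable clarification.
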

\begin{proof}
We now prove that the change-of-basis matrix between the ordered sets
$$\{\sJ_{m}\mid m=1, 2, \ldots, d-1\}\ \quad \text{and}\quad \{\sJ_2^{m-1}\mid m=1, 2, \ldots, d-1\}$$ is upper triangular and the result follows. 

It is easy to see that $\sJ_j$ is dual to $\sJ_{d-j}$, $\<\sJ_2, \sJ_{m-1}, \sJ_{d-m}\>_{0,3}=1$, and
$$\sJ_2\star_{t\sJ_2}\sJ_{m}=\sJ_{m+1}+\sum_{j=1}^{d-1}\sJ_{d-j} \sum_{n\geq 1}{t^n\over n!}\<\sJ_2, \sJ_{m}, \sJ_j, \sJ_2, \ldots, \sJ_2\>_{0,n+3}.$$
If the coefficient of $t^n$ is nonzero, then by the homogeneity property~\eqref{virtual-degree} and the selection rule ~\eqref{selection-rule}, we must have 
$$
\begin{dcases}
{(d-2)N\over d}+n={N(n+1)\over d}+{N(m-1)\over d}+{N(j-1)\over d}, \\
-{n+1\over d}-{m+j\over d}\equiv 0 \mod \mathbb{Z}.
\end{dcases}
$$
The second formula says $m+j+n+1=kd$ for some $k\in\mathbb{Z}_+$. 
This implies the RHS of the first formula becomes ${N(kd-2)\over d}$ and thus we obtain $nd=N(k-1)d.$
This implies 
$$d-m-j-1=d-kd+n=n-{nd\over N}<0.$$
So we must have $d-j<m+1$. 
Thus the change-of-basis matrix is upper triangular. 
\end{proof}

\begin{proposition}
\label{basis-fermat}
Let $W=x_1^d+\ldots +x_N^d$ be a Fermat polynomial of general type, 
then
$$\{(\tau')^{m-1}\mid m=1, 2,  \ldots, d-1\}$$
is a basis of  the subspace $\cH_{W, \<J\>}^{G_W}$.
\end{proposition}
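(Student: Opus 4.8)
The plan is to deduce this from the two facts already in hand: the quantum relation of Proposition~\ref{lem-quantum-relation}, namely $(\tau')^{d-1}=d^{-N}(\tau')^{N-1}$, and the basis statement of Proposition~\ref{basis-fermat-subspace}, which says $\{(\tau')^{m-1}\mid m=1,\ldots,d-1\}$ spans $\cH_{W,\<J\>}^{G_W}$ once we know $\{\sJ_2^{m-1}\}$ does. The key point is that when $d>N+1$ we have $\tau(t)=t\sJ_2$ so $\tau'=\sJ_2$ and the two sets literally coincide; the only genuine content is the borderline case $d=N+1$, where $\tau(t)=t\sJ_2+\frac{t^d}{d!d^N}\sJ_1$ and hence $\tau'=\sJ_2+\frac{t^{d-1}}{d^N(d-1)!}\sJ_1\big\vert_{t=1}=\sJ_2+\frac{1}{d^N(d-1)!}\one$, a unipotent modification of $\sJ_2$.

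First I would dispose of the case $d\geq N+2$: here $\tau'=\sJ_2$ by the formula~\eqref{tau-fermat}, so the claim is exactly Proposition~\ref{basis-fermat-subspace}. Next, for $d=N+1$ (so $\nu=1$), I would argue that passing from $\sJ_2$ to $\tau'=\sJ_2+c\,\one$ with $c=\frac{1}{d^N(d-1)!}$ does not change the span of the powers. Concretely, in the commutative ring $(\cH_{W,\<J\>}^{G_W},\star_\tau)$ with unit $\one$, the elements $(\tau')^{m-1}=(\sJ_2+c\one)^{m-1}=\sum_{k}\binom{m-1}{k}c^{m-1-k}\sJ_2^{k}$ are obtained from $\{\one,\sJ_2,\ldots,\sJ_2^{m-2},\sJ_2^{m-1}\}$ by a change-of-basis matrix that is unipotent upper-triangular (in the degree/power filtration), hence invertible. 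Here I use Proposition~\ref{closed-C-algebra} to know the star-product restricts to $\cH_{W,\<J\>}^{G_W}$ and that $\sJ_2=\tau'-c\one$ lies there (indeed $\tau,\tau'\in\cH_{W,\<J\>}^{G_W}$ as noted before Proposition~\ref{closed-C-algebra}), so the binomial expansion makes sense and stays in the subspace. Therefore $\mathrm{span}\{(\tau')^{m-1}\mid m=1,\ldots,d-1\}=\mathrm{span}\{\sJ_2^{m-1}\mid m=1,\ldots,d-1\}=\cH_{W,\<J\>}^{G_W}$, and since both sets have $d-1=\dim\cH_{W,\<J\>}^{G_W}$ elements, they are bases.

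An alternative, uniform argument avoiding case analysis: show directly that the change-of-basis matrix from $\{\sJ_m\mid m=1,\ldots,d-1\}$ to $\{(\tau')^{m-1}\mid m=1,\ldots,d-1\}$ is upper-triangular with nonzero diagonal. This would repeat the structure of the proof of Proposition~\ref{basis-fermat-subspace}: expand $(\tau')^{m-1}=\tau'\star_\tau(\tau')^{m-2}$, use that $\tau'=\sJ_2$ or $\sJ_2+c\one$ together with the string equation to reduce $\star_\tau$ to $\star_{t\sJ_2}$ (as is done in the excerpt's treatment of $d=N+1$), and then invoke the homogeneity property~\eqref{virtual-degree} and selection rule~\eqref{selection-rule} to control which $\sJ_j$ can appear. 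The leading term of $(\tau')^{m-1}$ is $\sJ_m$ (plus lower terms from the $c\one$ corrections, which only decrease the index), giving the triangularity.

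The main obstacle is essentially bookkeeping rather than conceptual: one must be careful that the unipotent correction $c\one$ genuinely only contributes to strictly lower-index $\sJ_j$'s, i.e. that no wrap-around occurs in the power filtration — but this is forced by $\nu=1$ (the correction term $\frac{t^{d-1}}{d^N(d-1)!}$ carries a positive power of $t$, and by the degree count in Proposition~\ref{basis-fermat-subspace} adding $\one$-factors via the string equation kills those terms unless the power is zero). So the only real work is assembling the triangularity estimate, which mirrors arguments already carried out in the excerpt, and the proof is short.
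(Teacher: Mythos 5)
Your argument is essentially the paper's own proof: reduce to the borderline case $d=N+1>2$ (for $d>N+1$ one has $\tau'=\sJ_2$ and the claim \emph{is} Proposition~\ref{basis-fermat-subspace}), observe that $\tau'=\sJ_2+c\,\one$ with $c=\frac{1}{d!\,d^{N-1}}$, and conclude by a unipotent binomial change of basis from $\{\sJ_2^{m-1}\}$ to $\{(\tau')^{m-1}\}$ combined with Proposition~\ref{basis-fermat-subspace}.

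There is one step that the paper makes the explicit core of its proof but that you mention only in passing, in your ``alternative'' paragraph: for $d=N+1$ you must first show that $\star_{\tau(t)}=\star_{t\sJ_2}$ as operations. This follows from the string equation, which kills the extra $\sJ_1$-insertions arising from $\tau(t)-t\sJ_2=\frac{t^d}{d!\,d^N}\sJ_1$. Without this identification, the $\sJ_2^k$ appearing in your binomial expansion $(\sJ_2+c\one)^{m-1}=\sum_k\binom{m-1}{k}c^{\,m-1-k}\sJ_2^{k}$ are a priori $\star_\tau$-powers, while Proposition~\ref{basis-fermat-subspace}'s $\sJ_2^{m-1}$ are $\star_{t\sJ_2}$-powers, and your final equality of spans does not directly apply. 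Your invocation of Proposition~\ref{closed-C-algebra} is harmless but does not address this point — closure of the subalgebra concerns the subspace, not the identity of the two products. Promote the string-equation computation from the parenthetical remark to the main argument (as the paper does), and the proof is complete.
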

\begin{proof}
It suffices to prove the statement for the cases when $d=N+1>2$. We have 
\begin{eqnarray*}
\alpha\star_{\tau(t)}\beta&=&\sum_{i=1}^{r}\sum_{k=0}^{\infty}{\phi^i\over k!}\LD\alpha,\beta,\phi_i, t \sJ_2+\frac{t^{d}}{d!d^N}\sJ_1, \ldots, t\sJ_2+\frac{t^{d}}{d!d^N}\sJ_1 \RD_{0,k+3}\\
&=&\sum_{i=1}^{r}\sum_{k=0}^{\infty}\sum_{j=0}^{k}{\phi^i\over k!}{k\choose j}t^{k-j}\left(\frac{t^{d}}{d!d^N}\right)^{j}\LD\alpha,\beta,\phi_i, \underbrace{\sJ_2, \ldots \sJ_2}_{(k-j)\text{-copies}}, 
\underbrace{\sJ_1, \ldots \sJ_1}_{j\text{-copies}}\RD_{0,k+3}\\
&=&\sum_{i=1}^{r}\sum_{k=0}^{\infty}{\phi^i\over k!}t^{k}\LD\alpha,\beta,\phi_i, \underbrace{\sJ_2, \ldots \sJ_2}_{k\text{-copies}}\RD_{0,k+3}\\
&=&\alpha\star_{t\sJ_2}\beta.
\end{eqnarray*}
Here in the third equality, all the terms with $j>0$ vanishes by the string equation.
Now the result follows from the formula $\tau'=\sJ_2+\frac{1}{d!d^{N-1}}\sJ_1$ and Proposition~\ref{basis-fermat-subspace}.
\end{proof}

Now according to Corollary~\ref{cor-linearlity}, Proposition~\ref{mirror-theorem-fermat}, and Proposition~\ref{basis-fermat}, we have 
\begin{proposition}
\label{quantum-spectrum-fermat-invariant}
Quantum spectrum conjecture~\ref{quantum-spectrum-conj-invariant} holds true for Fermat homogenous polynomials of general type.
\end{proposition}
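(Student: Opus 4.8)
The plan is to compute the restriction of $\frac{\nu}{d}\tau'\star_\tau$ to the invariant subspace $\cH_{W,\<J\>}^{G_W}$ completely, using the power basis of Proposition~\ref{basis-fermat} together with the quantum relation of Proposition~\ref{lem-quantum-relation}. By Proposition~\ref{fermat-broad}, $\cH_{W,\<J\>}^{G_W}=\cH_{\rm nar}$, which has dimension $|{\bf Nar}|=d-1$; since the weight system of $W=x_1^d+\cdots+x_N^d$ is $(d;1,\dots,1)$ we have $q+1=d-1$ and $p=|{\bf Nar}|-\nu=N-1$. By Proposition~\ref{basis-fermat} the collection $\{\one,\tau',(\tau')^2,\dots,(\tau')^{d-2}\}$ is a basis of $\cH_{W,\<J\>}^{G_W}$. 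In this basis, multiplication by $\tau'$ carries $(\tau')^m$ to $(\tau')^{m+1}$ for $0\le m\le d-3$, while Proposition~\ref{lem-quantum-relation} gives $(\tau')^{d-1}=d^{-N}(\tau')^{N-1}$; hence the matrix of $\tau'\star_\tau$ on $\cH_{W,\<J\>}^{G_W}$ is the companion matrix of
\[
x^{d-1}-d^{-N}x^{N-1}=x^{N-1}\bigl(x^{\nu}-d^{-N}\bigr),
\]
where we used $d-1-(N-1)=d-N=\nu$.

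Next I would read off the spectrum. The roots of this polynomial are $0$ with multiplicity $N-1=p$ and the $\nu$ distinct $\nu$-th roots of $d^{-N}$, namely $d^{-N/\nu}e^{2\pi\sqrt{-1}j/\nu}$ for $j=0,\dots,\nu-1$, each simple. Rescaling by $\nu/d$ and using $w_j=1$ for all $j$, the constant of Conjecture~\ref{quantum-spectrum-conj-invariant} simplifies to $T=\nu d^{-d/\nu}$; a short check using $\nu+N=d$ shows $\frac{\nu}{d}\,d^{-N/\nu}=\nu\,d^{-(\nu+N)/\nu}=T$. Therefore
\[
\fE_{G_W}=\{0\}\cup\bigl\{\,T e^{2\pi\sqrt{-1}j/\nu}\ \big|\ j=0,\dots,\nu-1\,\bigr\},
\]
with $0$ occurring with multiplicity $N-1$ (and absent when $N=1$) and each $T e^{2\pi\sqrt{-1}j/\nu}$ simple.

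It then remains to verify the three conditions of Conjecture~\ref{quantum-spectrum-conj-invariant}: every eigenvalue has absolute value $0$ or $T$, so $|\lambda|\le T$ for all $\lambda\in\fE_{G_W}$; the eigenvalues of absolute value $T$ are exactly the $\nu$-th roots $T e^{2\pi\sqrt{-1}j/\nu}$; and each of these has multiplicity one because $x^{\nu}-d^{-N}$ has simple roots. I would also dispose of the degenerate cases $d=N+1$, where $\tau(t)$ picks up the extra $\sJ_1$-term of~\eqref{tau-fermat}, and $N=1$: the former is already handled inside Proposition~\ref{basis-fermat} via the identity $\star_{\tau(t)}=\star_{t\sJ_2}$ on the relevant insertions (a string-equation computation), and the latter simply makes $p=0$.

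The main point requiring care is that Propositions~\ref{basis-fermat} and~\ref{lem-quantum-relation} together determine the characteristic polynomial of the restricted operator \emph{exactly}, rather than merely exhibiting a subset of the spectrum as in Corollary~\ref{cor-linearlity}; once this is in hand the verification is purely a matter of elementary algebra and the normalization bookkeeping. I do not expect any genuine analytic obstacle here, since the mirror input (Proposition~\ref{mirror-theorem-fermat}) has already been used to derive the quantum relation.
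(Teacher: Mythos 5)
Your proof is correct and is essentially the argument the paper intends, merely written out in full detail: the paper's proof is a one-line citation of Corollary~\ref{cor-linearlity}, Proposition~\ref{mirror-theorem-fermat}, and Proposition~\ref{basis-fermat}, whose combined content is exactly your companion-matrix computation. The only small sharpening worth noting is that Corollary~\ref{cor-linearlity} as stated only gives a \emph{subset} of the spectrum; what actually closes the argument — and what you state explicitly — is that since $\{\one,\tau',\dots,(\tau')^{d-2}\}$ is a basis of the $(d-1)$-dimensional space $\cH_{W,\<J\>}^{G_W}$, the quantum relation $(\tau')^{d-1}=d^{-N}(\tau')^{N-1}$ determines the full characteristic polynomial, not merely some roots, so all three conditions of Conjecture~\ref{quantum-spectrum-conj-invariant} (the bound, the set of maximal-norm eigenvalues, and multiplicity one) follow at once. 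Your handling of the degenerate $d=N+1$ case via the $\star_{\tau(t)}=\star_{t\sJ_2}$ reduction and of $N=1$ matches the paper's treatment inside the proof of Proposition~\ref{basis-fermat}, and the normalization check $\frac{\nu}{d}d^{-N/\nu}=\nu d^{-d/\nu}=T$ is correct.
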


\subsubsection{A proof of Proposition~\ref{prop-quantum-spectrum-inv}}
Now we consider the cases when $\nu>1$. For these cases, we can prove that quantum spectrum conjecture~\ref{quantum-spectrum-conj} holds. 
It suffices to show that Proposition~\ref{prop-quantum-spectrum-inv} holds as the action $\sJ_2\star_{t\sJ_2}$ on the broad subspace $\cH_{\rm bro}$  is trivial. 
\begin{lemma}
\label{trivial-action-nu>1}
Let $W=x_1^d+\ldots +x_N^d$. If $\nu>1$, then we have $\sJ_2\star_t\phi=0$ for each homogeneous element $\phi\in \cH_{\rm bro}$.
That is, $$\sJ_2\star_t\vert_{\cH_{J^0}}=0.$$
\end{lemma}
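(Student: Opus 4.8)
The statement asserts that for a Fermat homogeneous polynomial $W=x_1^d+\cdots+x_N^d$ with index $\nu=d-N>1$, quantum multiplication by $\sJ_2$ (with mirror parameter $t\sJ_2$) annihilates the entire broad subspace $\cH_{\rm bro}=\cH_{J^0}$. The plan is to argue by a degree/selection-rule obstruction. Recall that for a Fermat polynomial, $\cH_{J^0}$ is the broad sector $\big({\rm Jac}(W)\cdot dx_1\wedge\cdots\wedge dx_N\big)^{\<J\>}$, spanned by monomials $\prod_j x_j^{b_j}\,dx_1\wedge\cdots\wedge dx_N$ with $0\le b_j\le d-2$ subject to the $\<J\>$-invariance condition $\sum_j (b_j+1)\equiv 0 \pmod d$. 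To compute $\sJ_2\star_{t\sJ_2}\phi$ for such a $\phi$, by definition I must pair against an arbitrary homogeneous $\psi\in\cH_{W,\<J\>}$ and show all the contributing FJRW invariants $\LD \sJ_2,\phi,\psi,\sJ_2,\ldots,\sJ_2\RD_{0,k+3}$ vanish.

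First I would record that, by the string equation, it suffices to consider $k=0$ together with the corrections coming from insertions of $t\sJ_2$; but more efficiently, I would simply apply the homogeneity property \eqref{virtual-degree} and the selection rule \eqref{selection-rule} (Lemma~\ref{nonvanishing-lemma}) to a general invariant $\LD \sJ_2,\phi,\psi,\sJ_2,\ldots,\sJ_2\RD_{0,n+3}$ with $n$ copies of $\sJ_2$. The key point is that $\phi\in\cH_{J^0}$ has $N_{J^0}=N$, so $\deg_\C\phi = N/2 + \iota(J^0)=N/2 + \sum_j b_j/d$, which is comparatively large; meanwhile $\deg_\C\sJ_2 = \widehat c_W/2 + \mu(J^2)= \widehat c_W/2 + (1-\tfrac{\nu}{d})-\tfrac{\widehat c_W}{2}$ uses \eqref{hodge-grading-operator} and \eqref{degree-hodge}. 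Writing out \eqref{virtual-degree} for the $(n+3)$-pointed invariant and substituting these degrees, the contribution of the $(n+1)$ copies of $\sJ_2$ (counting $\sJ_2$ in the first slot) forces, via the $\nu>1$ hypothesis, a numerical inequality that cannot be met unless the invariant vanishes — this is exactly the mechanism used in the proof of Proposition~\ref{prop-convergence}, but now the extra room consumed by $\deg_\C\phi$ being a half-integer plus a large $\iota$ makes the balance impossible. Concretely, I expect the degree equation together with the selection rule (which forces the twisted sectors of $\phi$, $\psi$ and the $\sJ_2$'s to be compatible modulo $d$) to pin down $\psi$ to a sector whose degree is then too small, giving a strict inequality.

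The main obstacle, and the step requiring genuine care, will be handling the broad insertion $\phi$ in the selection rule: unlike narrow insertions, $\phi$ lives in $\cH_{J^0}$ where $\theta_j(J^0)=0$ for all $j$, so the selection rule \eqref{selection-rule} becomes $\tfrac{w_j}{d}(k+1)-\sum_i \theta_j(g_i)\in\Z$ with only $\sJ_2$'s and $\psi$ contributing $\theta_j$-values; I need to check this is consistent with $\psi$ being forced into a narrow sector $\cH_{J^{m}}$ and then show the resulting $\deg_\C\psi$ violates \eqref{virtual-degree}. A clean way to organize this is: (i) show by the selection rule that any $\psi$ pairing nontrivially must be narrow, say $\psi\in\cH_{J^m}$ with $m$ determined mod $d$ by $\phi$ and $n$; (ii) substitute $\deg_\C\psi = 1-\tfrac{\nu(m-1)}{d}$ (or $1$ if $m=1$) from \eqref{degree-narrow} into \eqref{virtual-degree}; (iii) solve for $n$ and $m$ and observe that because $\sum_j b_j \ge 0$ and $\nu\ge 2$, no solution with nonnegative $\ell_i$ and valid $m\in{\bf Nar}$ exists, so the invariant is zero. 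Then $\sJ_2\star_{t\sJ_2}\phi=0$ since its pairing with every basis element vanishes; equivalently $\sJ_2\star_t\big|_{\cH_{J^0}}=0$. I would also remark that this immediately gives $(\tau')^j\star_{\tau}\phi=0$ for $\phi\in\cH_{\rm bro}$, since $\tau'$ is a polynomial in $\sJ_2$ under $\star_{\tau}$ by the computation in the proof of Proposition~\ref{basis-fermat}, which is what is needed to reduce Proposition~\ref{prop-quantum-spectrum-inv} to the $G_W$-invariant computation already carried out.
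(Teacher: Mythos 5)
Your high-level strategy (use the homogeneity constraint \eqref{virtual-degree} and the selection rule \eqref{selection-rule} to show a nonvanishing invariant $\LD\sJ_2,\phi,\psi,\sJ_2,\ldots,\sJ_2\RD_{0,n+3}$ cannot exist) matches the paper's, but there is a genuine gap in step (i), and the paper's argument is the \emph{opposite} of what you propose.

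You claim the selection rule forces $\psi$ to be narrow. That is false, and more importantly it points you away from the real problem. For Fermat $W=\sum x_i^d$ all $w_j=1$, so with $\phi\in\cH_{J^0}$ the selection rule gives $m\equiv -(n+1)\bmod d$ for $\psi\in\cH_{J^m}$: if $d\nmid(n+1)$ then $\psi$ is narrow, but if $d\mid(n+1)$ then $m\equiv 0$, i.e.\ $\psi$ is \emph{broad}. The latter is exactly the dangerous case. The paper handles it in one stroke using the $G_W$-invariance vanishing Lemma~\ref{gmax-inv-inv}: since $\sJ_2$ and every narrow $\sJ_m$ lie in $\cH_{W,\<J\>}^{G_W}$ but $\phi$ does not (Proposition~\ref{fermat-broad}), the invariant is automatically zero whenever $\psi$ is $G_W$-invariant. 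Hence any \emph{nonzero} invariant forces $\psi\in\cH_{\rm bro}=\cH_{J^0}$, i.e.\ broad, not narrow. You never invoke this lemma and hence have nothing to kill the broad-$\psi$ case; meanwhile your plan spends effort on the narrow case which is already dead. (The degree argument alone does \emph{not} kill the narrow case either: with $\psi=\sJ_m$ the constraints reduce to $n=N(k-\tfrac12)$ and $m=k\nu+\tfrac N2-1$, which has solutions for $N$ even, e.g.\ $W=x_1^5+x_2^5$, $n=1$, $m=3$.)

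Your degree bookkeeping also disagrees with the paper's conventions, and the discrepancy matters. In~\eqref{complex-degree} the complex degree of \emph{any} element of $\cH_{J^0}$ is $N_{J^0}/2+\iota(J^0)=\tfrac N2-\tfrac Nd=\widehat c_W/2$; it does \emph{not} carry the term $\sum_j b_j/d$ you introduced. The paper's proof relies precisely on this: with both $\phi,\psi\in\cH_{J^0}$ of degree $\widehat c_W/2$ and $\deg_\C\sJ_2=1-\nu/d$, the homogeneity property forces $\frac{n+1}{d}=\frac1\nu$, while the selection rule forces $\frac{n+1}{d}\in\Z$, whence $\nu=1$, contradicting $\nu>1$. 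Likewise your formula $\deg_\C\sJ_m=1-\nu(m-1)/d$ is not the identity~\eqref{degree-narrow}: that identity is only established for $m\in\mathbf{Mir}_{\ge 2}$, which in the Fermat case is $\{2\}$; for general narrow $m$ in the Fermat case $\deg_\C\sJ_m=N(m-1)/d$.

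In short: you have the right tools in hand (Lemma~\ref{nonvanishing-lemma}) but the missing ingredient is Lemma~\ref{gmax-inv-inv}, which reduces the problem to $\psi$ broad, and the incorrect direction of step (i) together with the degree slips would derail the argument. I would replace step (i) with ``apply Lemma~\ref{gmax-inv-inv} and Proposition~\ref{fermat-broad} to conclude $\psi\in\cH_{J^0}$,'' then run the degree and selection-rule computation only in that broad case.
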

\begin{proof}
According to Proposition~\ref{fermat-broad}, each homogeneous element $\phi\in \cH_{\rm bro}$ is a broad element in $\cH_{J^0}$. 
Using the Hodge grading operator formula~\eqref{hodge-grading-operator} and degree formula~\eqref{degree-hodge}, we must have 
$$\deg_\C\phi={\widehat{c}_W\over 2}={N\over 2}-{N\over d}.$$
Now we consider the quantum product 
\begin{equation}
\label{fermat-broad-vanishi}
\sJ_2\star_{t}\phi=\sum_{j=1}^{r}\sum_{n=0}^{\infty}{t^n\phi^j\over n!}\LD\sJ_2, \phi, \phi_j, \underbrace{\sJ_2, \ldots \sJ_2}_{n\text{-copies}}\RD_{0,n+3}.
\end{equation}
Here $\{\phi_j\}$ is a homogeneous basis of $\cH_{W, \<J\>}$ and $\{\phi^j\}$ is the dual basis.
Using Lemma~\ref{gmax-inv-inv}, we observe that the nonzero invariant 
$\LD\sJ_2, \phi, \phi_j, \sJ_2, \ldots, \sJ_2\RD_{0,n+3}$ must come from some element $\phi_j\in \cH_{\rm bro}.$
Thus we must have 
$$\deg_\C\phi_j={\widehat{c}_W\over 2}.$$
Since we have the degree formula $\deg_\C\sJ_2=1-{\nu\over d}$, we can apply the homogeneity property~\eqref{virtual-degree} to the invariant $\LD\sJ_2, \phi, \phi_j, \sJ_2, \ldots, \sJ_2\RD_{0,n+3}$ and obtain $${n+1\over d}={1\over d-N}.$$
On the other hand, the selection rule~\eqref{selection-rule} implies that 
$${n+1\over d}\equiv 0 \mod \Z.$$
But we assume $d-N=\nu>1$, this is a contradiction. 
So all the FJRW invariants in the formula~\eqref{fermat-broad-vanishi} vanish and the result follows.
\end{proof}

\vskip .1in

\noindent{\small Department of Mathematics, University of Oregon, Eugene, OR 97403,
USA}

\noindent{\small E-mail: yfshen@uoregon.edu}

\vskip .1in

\noindent{\small Department of Mathematics, University of California, San Diego, La Jolla, CA 92093, USA}

\noindent{\small E-mail: miz017@ucsd.edu}



\begin{thebibliography}{999}

\bibitem
[Aco14]{Aco}
Pedro Acosta,
{\em Asymptotic Expansion and the LG/(Fano, General Type) Correspondence.}
arXiv:1411.4162 

\bibitem[AS18]{AS}
Pedro Acosta, Mark Shoemaker, 
{\em Quantum cohomology of toric blowups and Landau-Ginzburg correspondences.}
Algebr. Geom.5(2018), no.2, 239--263.


 
 \bibitem[Buc21]{Buc}
 Ragnar-Olaf Buchweitz,
 {\em Maximal Cohen-Macaulay modules and Tate cohomology.}
Math. Surveys Monogr., 262
American Mathematical Society, Providence, RI, 2021, xii+175 pp.


\bibitem[BFK14a]{BFK1}
Matthew Ballard, David Favero, Ludmil Katzarkov,
{\em A category of kernels for equivariant factorizations and its implications for Hodge theory.}
Publ. Math. Inst. Hautes \'Etudes Sci. 120 (2014), 1--111. 

\bibitem[BFK14b]{BFK2}
Matthew Ballard, David Favero, Ludmil Katzarkov,
{\em A category of kernels for equivariant factorizations, II: further implications.}
J. Math. Pures Appl. (9) 102 (2014), no. 4, 702--757. 

\bibitem[B1907]{Bar}
E. W. Barnes,
{\em The Asymptotic Expansion of Integral Functions Defined by Generalised Hypergeometric Series.}
Proc. London Math. Soc. (2) 5 (1907), 59--116.

\bibitem[BO95]{BO}
A. Bondal, D. Orlov,
{\em Semiorthogonal decomposition for algebraic varieties,}
arXiv:alg-geom/9506012.

 
 \bibitem[CV93]{CV}
 Sergio Cecotti, Cumrun Vafa,
{\em  On classification of  N=2  supersymmetric theories,} 
Comm. Math. Phys. 158 (1993), no. 3, 569--644.

 \bibitem[CLL15]{CLL}
Huai-Liang Chang, Jun Li, Wei-Ping Li,
 {\em Witten's top Chern class via cosection localization.}
 Invent. Math. 200 (2015), no. 3, 1015--1063.
 
 \bibitem
[CIR14]{CIR} 
Alessandro Chiodo, Hiroshi Iritani, Yongbin Ruan,
{\em Landau-Ginzburg/Calabi-Yau correspondence, global mirror symmetry and Orlov equivalence.}
Publ. Math. Inst. Hautes Etudes Sci. 119 (2014), 127--216.

\bibitem[CDG18]
{CDG}
Giordano Cotti, Boris Dubrovin, Davide Guzzetti,
{\em Helix Structures in Quantum Cohomology of Fano Varieties.}
arXiv:1811.09235 [math.AG]


\bibitem[DHMS20]{DHMS}
Andrea D'Agnolo, Marco Hien, Giovanni Morando, Claude Sabbah,
{\em Topological computation of some Stokes phenomena on the affine line.} 
Ann. Inst. Fourier (Grenoble) 70 (2020), no. 2, 739--808.

\bibitem[Dub96]{Dub}
Boris Dubrovin,
{\em Geometry of 2D topological field theories.} 
Integrable systems and quantum groups (Montecatini Terme, 1993), 120–348, Lecture Notes in Math., 1620, Fond. CIME/CIME Found. Subser., Springer, Berlin, 1996.


\bibitem[Dub98]{Dub-conj}
Boris Dubrovin,
{\em Geometry and analytic theory of Frobenius manifolds.}
Proceedings of the International Congress of Mathematicians, Vol. II (Berlin, 1998). Doc. Math. 1998, Extra Vol. II, 315--326.



\bibitem[DM89]{DM}
Anne Duval, Claude Mitschi,
{\em Matrices de Stokes et groupe de Galois des équations hypergéométriques confluentes généralisées.} (French) Pacific J. Math. 138 (1989), no. 1, 25–56.

\bibitem[Dyc11]{Dyc}
Tobias Dyckerhoff,
{\em Compact generators in categories of matrix factorizations.}
Duke Math. J. 159 (2011), no. 2, 223--274.

\bibitem[Eis80]{Eis}
David Eisenbud,
{\em Homological algebra on a complete intersection, with an application to group representations.}
Trans. Amer. Math. Soc.260(1980), no.1, 35--64.

\bibitem[FJR07]{FJR-book} 
Huijun Fan, Tyler J. Jarvis, Yongbin Ruan, 
{\em The Witten equation and its virtual fundamental cycle.}
arXiv:0712.4025 [math.AG]


\bibitem
[FJR13]{FJR} 
Huijun Fan, Tyler J. Jarvis, Yongbin Ruan,
{\em The Witten equation, mirror symmetry and quantum singularity theory.}
Ann. Math. 178(1) (2013), 1--106.

\bibitem[Fie72]{Fie}
Jerry L.  Fields, 
{\em The asymptotic expansion of the Meijer G-function.} Math. Comp. 26 (1972), 757--765.


\bibitem
[GI19]{GI}
Sergey Galkin, Hiroshi Iritani,
{\em Gamma conjecture via mirror symmetry.}
Adv. Stud. Pure Math.
Primitive Forms and Related Subjects -- Kavli IPMU 2014, K. Hori, C. Li, S. Li and K. Saito, eds. (Tokyo: Mathematical Society of Japan, 2019), 55 -- 115


\bibitem
[GGI16]{GGI}
Sergey Galkin, Vasily Golyshev, Hiroshi Iritani, 
{\em Gamma classes and quantum cohomology of Fano manifolds: gamma conjectures.}
Duke Math. J. 165 (2016), no. 11, 2005--2077.

\bibitem[Gue16]{Gue}
Jeremy Gu\'er\'e,
{\em A Landau-Ginzburg mirror theorem without concavity.}
Duke Math. J. 165 (2016), 2461--2527.

\bibitem[Giv04]{Giv}
Alexander Givental,
{\em Symplectic geometry of Frobenius structures.} 
Frobenius manifolds, 91--112, Aspects Math., E36, Friedr. Vieweg, Wiesbaden, 2004. 

\bibitem[Guz99]{Guz}
Davide Guzzetti, 
{\em Stokes matrices and monodromy of the quantum cohomology of projective spaces.}
 Comm. Math. Phys. 207 (1999), no. 2, 341--383.
 
\bibitem[Har66]{Har}
Robin Hartshorne, 
{\em Residues and Duality.} Lecture Notes in Math. 20, Springer-Verlag,
Berlin, 1966.

 \bibitem[HLSW22]{HLSW}
Weiqiang He, Si Li, Yefeng Shen, Rachel Webb, 
{\em Landau-Ginzburg mirror symmetry conjecture.}
J. Eur. Math. Soc. 24 (2022), no. 8, pp. 2915--2978
 
 \bibitem[HR13]{HR}
 Kentaro Hori, Mauricio Romo,
{\em Exact Results In Two-Dimensional (2,2) Supersymmetric Gauge Theories With Boundary.}
arXiv:1308.2438 [hep-th]

 

\bibitem[Iri09]{Iri}
Hiroshi Iritani, 
{\em An integral structure in quantum cohomology and mirror symmetry for toric orbifolds.}
Adv. Math. 222 (2009), no. 3, 1016--1079

\bibitem[Iri23]{Iri23}
Hiroshi Iritani, 
{\em Quantum cohomology of blowups.}
arXiv:2307.13555 [math.AG]


\bibitem[KL03]{KaL}
Anton Kapustin, Yi Li, 
{\em  D-branes in Landau-Ginzburg models and algebraic geometry.}
J. High Energy Phys. 2003, no. 12, 005, 44 pp.

\bibitem[KKP08]{KKP}
L. Katzarkov, M. Kontsevich, T. Pantev,
{\em  Hodge theoretic aspects of mirror symmetry. From Hodge theory to integrability and TQFT $tt*$-geometry,}
 87--174, Proc. Sympos. Pure Math., 78, Amer. Math. Soc., Providence, RI, 2008.


\bibitem[Ke18]{Ke}
Hua-Zhong Ke,
{\em On Conjecture $\mathcal{O}$ for projective complete intersections.}
arXiv:1809.10869 [math.AG]

\bibitem[KL20]{KL} 
Young-Hoon Kiem, Jun Li,
{\em Quantum singularity theory via cosection localization.}
J. Reine Angew. Math. 766 (2020), 73--107.

\bibitem[KRS21]{KRS}
Johanna Knapp, Mauricio Romo, Emanuel Scheidegger,
{\em D-brane central charge and Landau-Ginzburg orbifolds.}
Comm. Math. Phys. 384 (2021), no. 1, 609--697. 

\bibitem[Kon19]{Kon19}
Maxim Kontsevich, {\em Birational invariants from quantum cohomology,} Talk at Higher School of Economics
on 27 May 2019, as part of Homological Mirror Symmetry at HSE, 27 May – 1 June 2019.

\bibitem[Kon20]{Kon20}
Maxim Kontsevich, {\em Quantum spectrum in algebraic geometry I,II,III,} Talks at Homological Mirror Symmetry and Topological Recursion, Miami, 27 January – 1 February, 2020,

\bibitem[Kon21]{Kon21}
Maxim Kontsevich, {\em Blow-up equivalence,} Talks at Simons Collaboration on Homological Mirror Symmetry Annual Meeting 2021, 18-19 November, 2021.



\bibitem[Kra09]{Kra}
Marc Krawitz,
{\em FJRW rings and Landau-Ginzburg Mirror Symmetry.}
arXiv:0906.0796 [math.AG]

\bibitem[KS92]{KrS}
Maximillian Kreuzer, Harald Skarke 
{\em On the classification of quasihomogeneous functions.} Comm. Math. Phys. 150 (1992), no. 1, 137--147.


\bibitem
[Luk69]{Luk}
Yudell L. Luke, 
{\em The special functions and their approximations, Vol. I.}
Mathematics in Science and Engineering, Vol. 53 Academic Press, New York-London 1969 xx+349 pp.

\bibitem[Mei46]{Mei}
C S Meijer,
{\em On the  G -function. I-IV,}
Nederl. Akad. Wetensch., Proc. 49 (1946), 227--237, 344--356, 457--469, 632--641.


\bibitem[DLMF]{dlmf} {\em NIST Digital Library of Mathematical Functions.} https://dlmf.nist.gov/

\bibitem[Orl04]{Orl1}
Dmitri Orlov,
{\em  Triangulated categories of singularities and D-branes in Landau-Ginzburg models.}
(Russian) Tr. Mat. Inst. Steklova 246 (2004), Algebr. Geom. Metody, Svyazi i Prilozh., 240--262; translation in Proc. Steklov Inst. Math. 2004, no. 3(246), 227--248

\bibitem[Orl09]{Orl}
Dmitri Orlov,
{\em Derived categories of coherent sheaves and triangulated categories of singularities,}
in: Algebra, arithmetic, and geometry. In honor of Yu. I. Manin. Vol. II, Birkh\"auser-Verlag, Boston (2009), 503--531.


\bibitem[PV12]{PV}
Alexander Polishchuk, Arkady Vaintrob,
{\em  Chern characters and Hirzebruch-Riemann-Roch formula for matrix factorizations.}
 Duke Math. J. 161 (2012), no. 10, 1863--1926.
 
\bibitem[PV16]{PV-cohft}
Alexander Polishchuk, Arkady Vaintrob,
{\em Matrix factorizations and cohomological field theories.}
J. Reine Angew. Math. 714 (2016), 1--122.
 
\bibitem[Qui09]{Qui}
Alexander Quintero V\'elez,
{\em McKay correspondence for Landau-Ginzburg models,}
Commun. Number Theory Phys.3 (2009), Volume 3, Number 1, 173--208.


 \bibitem[RR17]{RR}
Dustin Ross, Yongbin Ruan,
{\em Wall-crossing in genus zero Landau-Ginzburg theory.}
 J. Reine Angew. Math. 733 (2017), 183--201.
 
 \bibitem[SS20]{SS}
Fumihiko Sanda, Yota Shamoto,
{\em  An analogue of Dubrovin's conjecture.}
 Ann. Inst. Fourier (Grenoble) 70 (2020), no. 2, 621--682.
 
 \bibitem[Shk13]{Shk}
D Shklyarov,
{\em Hirzebruch-Riemann-Roch-type formula for DG algebras.}
 Proc. Lond. Math. Soc. (3) 106 (2013), no. 1, 1--32.
 
\bibitem[S1858]{Sto}
George Gabriel Stokes,
{\em On the discontinuity of arbitrary constants which appear in divergent developments,} Transactions of the Cambridge Philosophical Society, X (I): (1858) 105--128.
 
 \bibitem[Wal05]{Wal}
Johannes Walcher,
{\em Stability of Landau-Ginzburg branes.}
 J. Math. Phys. 46 (2005), no. 8, 082305, 29 pp.
 
\bibitem[Wit93]{Wit}
E Witten, 
{\em Algebraic geometry associated with matrix models of two-dimensional gravity.}
Topological methods in modern mathematics (Stony Brook, NY, 1991), 235--269, Publish or Perish, Houston, TX, 1993.

\end{thebibliography}
\end{document}